\newtheorem{thm}{Theorem}[section]
\newtheorem{cor}[thm]{Corollary}
\newtheorem{lem}[thm]{Lemma}
\newtheorem{prop}[thm]{Proposition}
\theoremstyle{definition}
\newtheorem{defn}[thm]{Definition}
\newtheorem{rem}[thm]{Remark}
\newtheorem{exa}[thm]{Example}
\DeclareMathOperator{\GL}{\mathbf{GL}}
\newcommand{\R}{\mathbb R}
\newcommand{\Z}{\mathbb Z}
\newcommand{\C}{\mathbb C}
\def\YEAR{\year}\newcount\VOL\VOL=\YEAR\advance\VOL by-1995
\def\firstpage{1}\def\lastpage{1000}
\def\received{}\def\revised{}
\def\communicated{}
\def\magnification{\afterassignment\m@g\count@}
\def\m@g{\mag=\count@\hsize6.5truein\vsize8.9truein\dimen\footins8truein}
\font\eightrm=cmr8
\font\caps=cmcsc10                    
\font\Caps=cmcsc10 scaled \magstep1   
\def\DocMath{}
\renewcommand{\@evenhead}{%
    \ifnum\thepage>\lastpage\rlap{\thepage}\hfill%
    \else\rlap{\thepage}\slshape\leftmark\hfill{\caps\SAuthor}\hfill\fi}%
\renewcommand{\@oddhead}{%
    \ifnum\thepage=\firstpage{\DocMath\hfill\llap{\thepage}}%
    \else{\slshape\rightmark}\hfill{\caps\STitle}\hfill\llap{\thepage}\fi}%
\def\TSkip{\bigskip}
\newbox\TheTitle{\obeylines\gdef\GetTitle #1
\ShortTitle  #2
\SubTitle    #3
\Author      #4
\ShortAuthor #5
\EndTitle
{\setbox\TheTitle=\vbox{\baselineskip=20pt\let\par=\cr\obeylines%
\halign{\centerline{\Caps##}\cr\noalign{\medskip}\cr#1\cr}}%
    \copy\TheTitle\TSkip\TSkip%
\def\next{#2}\ifx\next\empty\gdef\STitle{#1}\else\gdef\STitle{#2}\fi%
\def\next{#3}\ifx\next\empty%
    \else\setbox\TheTitle=\vbox{\baselineskip=20pt\let\par=\cr\obeylines%
    \halign{\centerline{\caps##} #3\cr}}\copy\TheTitle\TSkip\TSkip\fi%
\centerline{\caps #4}\TSkip\TSkip%
\def\next{#5}\ifx\next\empty\gdef\SAuthor{#4}\else\gdef\SAuthor{#5}\fi%
\ifx\received\empty\relax
    \else\centerline{\eightrm Received: \received}\fi%
\ifx\revised\empty\TSkip%
    \else\centerline{\eightrm Revised: \revised}\TSkip\fi%
\ifx\communicated\empty\relax
    \else\centerline{\eightrm Communicated by \communicated}\fi\TSkip\TSkip%
\catcode'015=5}}\def\Title{\obeylines\GetTitle}
\def\Abstract{\begingroup\narrower
    \parskip=\medskipamount\parindent=0pt{\caps Abstract. }}
\def\EndAbstract{\par\endgroup\TSkip}
\long\def\MSC#1\EndMSC{\def\arg{#1}\ifx\arg\empty\relax\else
     {\par\narrower\noindent%
     2000 Mathematics Subject Classification: #1\par}\fi}
\long\def\KEY#1\EndKEY{\def\arg{#1}\ifx\arg\empty\relax\else
    {\par\narrower\noindent Keywords and Phrases: #1\par}\fi\TSkip}
\newbox\TheAdd\def\Addresses{\vfill\copy\TheAdd\vfill
    \ifodd\number\lastpage\vfill\eject\phantom{.}\vfill\eject\fi}
{\obeylines\gdef\GetAddress #1
\Address #2
\Address #3
\Address #4
\EndAddress
{\def\xs{4.3truecm}\parindent=0pt
\setbox0=\vtop{{\obeylines\hsize=\xs#1\par}}\def\next{#2}
\ifx\next\empty 
     \setbox\TheAdd=\hbox to\hsize{\hfill\copy0\hfill}
\else\setbox1=\vtop{{\obeylines\hsize=\xs#2\par}}\def\next{#3}
\ifx\next\empty 
     \setbox\TheAdd=\hbox to\hsize{\hfill\copy0\hfill\copy1\hfill}
\else\setbox2=\vtop{{\obeylines\hsize=\xs#3\par}}\def\next{#4}
\ifx\next\empty\ 
     \setbox\TheAdd=\vtop{\hbox to\hsize{\hfill\copy0\hfill\copy1\hfill}
                \vskip20pt\hbox to\hsize{\hfill\copy2\hfill}}
\else\setbox3=\vtop{{\obeylines\hsize=\xs#4\par}}
     \setbox\TheAdd=\vtop{\hbox to\hsize{\hfill\copy0\hfill\copy1\hfill}
            \vskip20pt\hbox to\hsize{\hfill\copy2\hfill\copy3\hfill}}
\fi\fi\fi\catcode'015=5}}\gdef\Address{\obeylines\GetAddress}
\begin{document}
\Title
    A Lefschetz fixed point formula for singular arithmetic
              schemes with smooth generic fibres
\ShortTitle
Singular fixed point formula of Lefschetz type
\SubTitle
\Author
Shun Tang
\ShortAuthor
\EndTitle
\Abstract
In this article, we consider singular equivariant arithmetic
schemes whose generic fibres are smooth. For such schemes, we
prove a relative fixed point formula of Lefschetz type in the
context of Arakelov geometry. This formula is an analog, in the
arithmetic case, of the Lefschetz formula proved by R. W. Thomason
in \cite{Th}. In particular, our result implies a fixed point
formula which was conjectured by V. Maillot and D. R\"{o}ssler in
\cite{MR}.
\EndAbstract
\MSC 14C40, 14G40, 14L30, 58J20, 58J52
\EndMSC
\KEY fixed point formula, singular arithmetic scheme, Arakelov geometry
\EndKEY
\Address
         D\'{e}partement de Math\'{e}matiques
         B\^{a}timent 425
         Universit\'{e} Paris-Sud 11
         91405 Orsay, France
         shun.tang@math.u-psud.fr
\Address
\Address
\Address
\EndAddress
\section{Introduction}
It is the aim of this article to prove a singular Lefschetz fixed
point formula for some schemes which admit the actions of a
diagonalisable group scheme, in the context of Arakelov geometry.
We first roughly describe the history of the study of such
Lefschetz fixed point formulae and relative Lefschetz-Riemann-Roch
problems.

Let $k$ be an algebraically closed field and let $n$ be an integer
which is prime to the characteristic of $k$. A projective
$k-$variety $X$ which admits an automorphism $g$ of order $n$ will
be called an equivariant variety. An equivariant coherent sheaf on
$X$ is a coherent sheaf $F$ on $X$ together with a homomorphism
$\varphi: g^*F\rightarrow F$. It is clear that this homomorphism
induces a family of endomorphisms $H^i(\varphi)$ on cohomology
spaces $H^i(X,F)$.

A classical Lefschetz fixed point formula is to give an expression
of the alternating sum of the traces of $H^i(\varphi)$, as a sum
of the contributions from the components of the fixed point
subvariety $X_g$. On the other hand, roughly speaking, a
Lefschetz-Riemann-Roch theorem is a commutative diagram in
equivariant $K-$theory which can be regarded as a Grothendieck
type generalization of the Lefschetz fixed point formula. Indeed,
when we choose the base variety in such a commutative diagram to
be a point, we will get the ordinary Lefschetz fixed point
formula. If $X$ is nonsingular, P. Donovan has proved such a
theorem in \cite{Do} by using the results and some of the methods
of the paper of A. Borel and J. P. Serre on the
Grothendieck-Riemann-Roch theorem (cf. \cite{BS}). In \cite{BFQ},
P. Baum, W. Fulton and G. Quart generalized Donovan's theorem to
singular varieties, the key step of their proof heavily relies on
an elegant method called the deformation to the normal cone.
Denote by $G_0(X,g)$ (resp. $K_0(X,g)$) the Quillen's algebraic
$K$-group associated to the category of equivariant coherent
sheaves (resp. vector bundles of finite rank) on $X$, then
$K_0({\rm Pt},g)$ is isomorphic to the group ring $\Z[k]$ and
$G_0(X,g)$ (resp. $K_0(X,g)$) has a natural $K_0({\rm
Pt},g)$-module (resp. $K_0({\rm Pt},g)$-algebra) structure. Let
$f$ be an equivariantly projective morphism between two
equivariant varieties $X$ and $Y$, then it is possible to define a
push-forward morphism $f_*$ from $G_0(X,g)$ to $G_0(Y,g)$ in a
rather standard way. Let $\mathcal{R}$ be any flat $K_0({\rm
Pt},g)$-algebra in which $1-\zeta$ is invertible for each
non-trivial $n$-th root of unity $\zeta$ in $k$. The main result
of Baum, Fulton and Quart reads: there exists a family of group
homomorphisms $L.$ between $K$-groups making the following diagram
\begin{displaymath}
\xymatrix{
G_0(X,g) \ar[r]^-{L.} \ar[d]_{f_*} & G_0(X_g,g)\otimes_{\Z[k]}\mathcal{R} \ar[d]^{{f_g}_*} \\
 G_0(Y,g) \ar[r]^-{L.} & G_0(Y_g,g)\otimes_{\Z[k]}\mathcal{R}}
\end{displaymath} commutative. If $Z$ is a nonsingular equivariant variety such
that there exists an equivariant closed immersion from $X$ to $Z$,
then for every equivariant coherent sheaf $E$ on $X$ the
homomorphism $L.$ is exactly given by the formula
\begin{displaymath}
L.(E)=\lambda_{-1}^{-1}(N_{Z/{Z_g}}^\vee)\cdot\sum_j(-1)^j{\rm
Tor}_{\mathcal{O}_Z}^j(i_*E,\mathcal{O}_{Z_g})
\end{displaymath}
where $N_{Z/{Z_g}}$ stands for the normal bundle of $Z_g$ in $Z$
and
$\lambda_{-1}(N_{Z/{Z_g}}^\vee):=\sum(-1)^j\wedge^jN_{Z/{Z_g}}^\vee$.

We would like to indicate that one can use the same method so
called the deformation to the normal cone to extend Baum, Fulton
and Quart's result to general scheme case where $X$ and $Y$ are
Noetherian, separated schemes endowed with projective actions of
the diagonalisable group scheme $\mu_n$ associated to $\Z/{n\Z}$.
Here by a $\mu_n$-action on $X$ we understand a morphism $m_X:
\mu_n\times X\to X$ which statisfies some compatibility
properties. Denote by $p_X$ the projection from $\mu_n\times X$ to
$X$. For a coherent $\mathcal{O}_X$-module $E$ on $X$, a
$\mu_n$-action on $E$ we mean an isomorphism $m_E: p_X^*E\to
m_X^*E$ which satisfies certain associativity properties. We refer
to \cite{Ko} and \cite[Section 2]{KR1} for the group scheme action
theory we are talking about.

In \cite{Th}, R. W. Thomason used another way to generalize Baum,
Fulton and Quart's result to the scheme case, and he removed the
condition of projectivity. The strategy Thomason followed was to
use Quillen's localization sequence for higher equivariant
$K$-groups to prove an algebraic concentration theorem. Let $D$ be
an integral Noetherian ring, and let $\mu_n$ be the diagonalisable
group scheme over $D$ associated to $\Z/{n\Z}$. Denote the ring
$\Z[\Z/{n\Z}]\cong \Z[T]/{(1-T^n)}$ by $R(\mu_n)$. We consider the
prime ideal $\rho$ in $R(\mu_n)$ which is the kernel of the
canonical morphism $\Z[T]/{(1-T^n)}\rightarrow \Z[T]/{(\Phi_n)}$
where $\Phi_n$ stands for the $n$-th cyclotomic polynomial. By
construction the elements $1-T^k$ for $k=1,\ldots,n-1$ are not
contained in $\rho$. Let $X$ be a $\mu_n$-equivariant scheme over
$D$, then $G_0(X,\mu_n)$ (resp. $K_0(X,\mu_n)$) has a natural
$R(\mu_n)$-module (resp. $R(\mu_n)$-algebra) structure because
$K_0(D,\mu_n)$ is isomorphic to $K_0(D)[T]/{(1-T^n)}$. Denote by
$i$ the inclusion from $X_{\mu_n}$ to $X$. The algebraic
concentration theorem reads: there exists a natural group
homomorphism $i_*$ from $G_0(X_{\mu_n},\mu_n)_{\rho}$ to
$G_0(X,\mu_n)_{\rho}$ which is an isomorphism. Moreover, if $X$ is
regular, the inverse map of $i_*$ is given by
$\lambda_{-1}^{-1}(N_{X/{X_{\mu_n}}}^\vee)\cdot i^*$ where
$N_{X/{X_{\mu_n}}}$ is the normal bundle of $X_{\mu_n}$ in $X$.
This concentration theorem can be used to prove a singular
Lefschetz fixed point formula which is an extension of Baum,
Fulton and Quart's result in general scheme case. Thomason's
approach has nothing to do with the construction of the
deformation to the normal cone, and the localization he used is
slightly weaker than Baum, Fulton and Quart's in the sense that
the complement of the ideal $\rho$ in $R(\mu_n)$ is not the
smallest algebra in which the elements $1-T^k$ ($k=1,\ldots,n-1$)
are invertible. If one exactly chooses $\mathcal{R}$ to be the
complement of the ideal $\rho$ in $R(\mu_n)$, then these two
localizations are equal to each other.

In \cite{KR1}, K. K\"{o}hler and D. R\"{o}ssler generalized the
regular case of Baum, Fulton and Quart's result to Arakelov
geometry. To every regular $\mu_n$-equivariant arithmetic scheme
$X$, they associate an equivariant arithmetic $K_0$-group
$\widehat{K_0}(X,\mu_n)$ which contains some smooth form class on
$X_{\mu_n}(\C)$ as analytic datum. Such an equivariant arithmetic
$K_0$-group has a ring structure and moreover it can be made to be
an $R(\mu_n)$-algebra. Let $\overline{N}_{X/{X_{\mu_n}}}$ be the
normal bundle with respect to the regular immersion
$X_{\mu_n}\hookrightarrow X$ which is endowed with the quotient
metric induced by a chosen K\"{a}hler metric of $X(\C)$, then the
main theorem in \cite{KR1} reads: the element
$\lambda_{-1}(\overline{N}_{X/{X_{\mu_n}}}^\vee)$ is invertible in
$\widehat{K_0}(X_{\mu_n},\mu_n)\otimes_{R(\mu_n)}\mathcal{R}$ and
we have the following commutative diagram
\begin{displaymath}
\xymatrix{
\widehat{K_0}(X,\mu_n) \ar[rr]^-{\Lambda_R(f)^{-1}\cdot \tau} \ar[d]_{f_*} && \widehat{K_0}(X_{\mu_n},\mu_n)\otimes_{R(\mu_n)}\mathcal{R} \ar[d]^{{f_{\mu_n}}_*} \\
 \widehat{K_0}(D,\mu_n) \ar[rr]^-{\iota} && \widehat{K_0}(D,\mu_n)\otimes_{R(\mu_n)}\mathcal{R}}
\end{displaymath}
where
$\Lambda_R(f):=\lambda_{-1}(\overline{N}_{X/{X_{\mu_n}}}^\vee)\cdot(1+R_g(N_{X/{X_{\mu_n}}}))$,
$\tau$ stands for the restriction map and $\iota$ is the natural
morphism from a ring or a module to its localization which sends
an element $e$ to $\frac{e}{1}$. Here $R_g(\cdot)$ is the
equivariant $R$-genus, the definition of the two push-forward
morphisms $f_*$ and ${f_{\mu_n}}_*$ involves an important analytic
datum which is called the equivariant analytic torsion. The
strategy K\"{o}hler and R\"{o}ssler followed to prove such an
arithmetic Lefschetz-Riemann-Roch theorem was to use the
construction of the deformation to the normal cone to prove an
analog of this theorem for equivariant closed immersions. After
that, they decompose the morphism $f$ to a closed immersion $h$
from $X$ to some projective space $\mathbb{P}^r_D$ followed by a
smooth morphism $p$ from $\mathbb{P}^r_D$ to ${\rm Spec}(D)$. Then
the theorem in general situation follows from an argument of
investigating the behavior of the error term under the morphisms
$h$ and $p$.

Provided X. Ma's extension of equivariant analytic torsion to
higher equivariant analytic torsion form, it was conjectured by
K\"{o}hler and R\"{o}ssler in \cite{KR2} that an analog of
\cite[Theorem 4.4]{KR1} in relative setting holds. We have already
proved this conjecture in \cite{T1}. Our method is similar to
Thomason's, we first show that there exists an arithmetic
concentration theorem in Arakelov geometry and then deduce from it
the relative Lefschetz fixed point formula. The same as Thomason's
approach, our method has nothing to do with the construction of
the deformation to the normal cone, but unfortunately it only
works for regular arithmetic schemes.

One may naturally asked that whether it is possible to construct a
more general arithmetic $\widehat{G_0}$-theory and prove a
relative Lefschetz fixed point formula for singular arithmetic
schemes which is entirely an analog of Thomason's singular
Lefschetz formula in Arakelov geometry. The answer is Yes, and
this is what we have done in this article. To do this, one needs a
$\widehat{G_0}$-theoretic vanishing theorem which can be viewed as
an extension of K\"{o}hler and R\"{o}ssler's fixed point formula
for closed immersions to the singular case. The proof of such a
vanishing theorem occupies a lot of space in this article. Let $X$
and $Y$ be two singular equivariant arithmetic schemes with smooth
generic fibres, and let $f: X\to Y$ be an equivariant morphism
which is smooth on the complex numbers. Assume that the
$\mu_n$-action on $Y$ is trivial and $f$ can be decomposed to be
$h\circ i$ where $i$ is an equivariant closed immersion from $X$
to some regular arithmetic scheme $Z$ and $h: Z\to Y$ is
equivariant and smooth on the complex numbers. Let
$\overline{\eta}$ be an equivariant hermitian sheaf on $X$.
Referring to Section 6.1 for the explanations of various
notations, we announce that our main theorem in this article is
the following equality which holds in
$\widehat{G_0}(Y,\mu_n,\emptyset)_{\rho}$:
\begin{align*}
f_*(\overline{\eta})=&{f_{\mu_n}^Z}_*(i_{\mu_n}^*(\lambda_{-1}^{-1}(\overline{N}^\vee_{Z/{Z_{\mu_n}}}))\cdot\sum_k(-1)^k{\rm
Tor}_{\mathcal{O}_Z}^k(i_*\overline{\eta},\overline{\mathcal{O}}_{Z_{\mu_n}}))\\
&+\int_{X_g/Y}{\rm Td}(Tf_g,\omega^X){\rm
ch}_g(\overline{\eta})\widetilde{{\rm
Td}}_g(\overline{\mathcal{F}},\omega^X){\rm
Td}_g^{-1}(\overline{F})\\
&-\int_{X_g/Y}{\rm Td}_g(Tf){\rm
ch}_g(\eta)R_g(N_{X/{X_g}})\\
&+\int_{X_g/Y}\widetilde{{\rm Td}}(Tf_g,\omega^X,\omega^Z_X){\rm
ch}_g(\overline{\eta}){\rm Td}_g(\overline{N}_{Z/{Z_g}}){\rm
Td}_g^{-1}(\overline{F}).
\end{align*}

The structure of this article is as follows. In Section 2, we
recall some differential-geometric facts for the convenience of
the reader. In Section 3, we formulate and prove a vanishing
theorem for equivariant closed immersions in a purely analytic
setting. In Section 4, we define the arithmtic $G_0$-groups with
respect to fixed wave front sets which are necessary for our later
arguments. In Section 5 and Section 6, we formulate and prove the
arithmetic concentration theorem and the relative Lefschetz fixed
point formula for singular arithmetic schemes.

\textbf{Acknowledgements.} The author wishes to thank his thesis
advisor Damian R\"{o}ssler for providing such an interesting
topic, also for his constant encouragement and for many fruitful
discussions between them. The author is greatful to Xiaonan Ma,
from whom he gets many meaningful comments and suggestions.
Finally, thanks to the referee, for his/her excellent work.

\section{Differential-geometric preliminaries}

\subsection{Equivariant Chern-Weil theory}
Let $G$ be a compact Lie group and let $M$ be a compact complex
manifold which admits a holomorphic $G$-action. By an equivariant
hermitian vector bundle on $M$, we understand a hermitian vector
bundle on $M$ which admits a $G$-action compatible with the
$G$-structure of $M$ and whose metric is $G$-invariant. Let $g\in
G$ be an automorphism of $M$, we shall denote by $M_g=\{x\in M\mid
g\cdot x=x\}$ the fixed point submanifold. $M_g$ is also a compact
complex manifold.

Now let $\overline{E}$ be an equivariant hermitian vector bundle
on $M$, it is well known that the restriction of $\overline{E}$ to
$M_g$ splits as a direct sum
\begin{displaymath}
\overline{E}\mid_{M_g}=\bigoplus_{\zeta\in
S^1}\overline{E}_{\zeta}
\end{displaymath}
where the equivariant structure $g^E$ of $E$ acts on
$\overline{E}_{\zeta}$ as multiplication by $\zeta$. We often
write $\overline{E}_g$ for $\overline{E}_1$ and call it the
$0$-degree part of $\overline{E}\mid_{M_g}$. As usual,
$A^{p,q}(M)$ stands for the space of $(p,q)$-forms
$\Gamma^{\infty}(M,\Lambda^pT^{*(1,0)}M\wedge\Lambda^qT^{*(0,1)}M)$,
we define
\begin{displaymath}
\widetilde{A}(M)=\bigoplus_{p=0}^{{\rm dim}M}(A^{p,p}(M)/({\rm
Im}\partial+{\rm Im}\overline{\partial})).
\end{displaymath}
We denote by $\Omega^{\overline{E}_\zeta}\in A^{1,1}(M_g)$ the
curvature form associated to $\overline{E}_\zeta$. Let
$(\phi_\zeta)_{\zeta\in S^1}$ be a family of $\GL(\C)$-invariant
formal power series such that $\phi_\zeta\in \C[[\mathbf{gl}_{{\rm
rk}E_\zeta}(\C)]]$ where ${\rm rk}E_\zeta$ stands for the rank of
$E_\zeta$ which is a locally constant function on $M_g$. Moreover,
let $\phi\in \C[[\bigoplus_{\zeta\in S^1}\C]]$ be any formal power
series. We have the following definition.

\begin{defn}\label{s201}
The way to associate a smooth form to an equivariant hermitian
vector bundle $\overline{E}$ by setting
\begin{displaymath}
\phi_g(\overline{E}):=\phi((\phi_{\zeta}(-\frac{\Omega^{\overline{E}_\zeta}}{2\pi
i}))_{\zeta\in S^1})
\end{displaymath}
is called an $g$-equivariant Chern-Weil theory associated to
$(\phi_\zeta)_{\zeta\in S^1}$ and $\phi$. The class of
$\phi_g(\overline{E})$ in $\widetilde{A}(M_g)$ is independent of
the metric.
\end{defn}

Write ${\rm dd}^c$ for the differential operator
$\frac{\overline{\partial}\partial}{2\pi i}$. The theory of
equivariant secondary characteristic classes is described in the
following theorem.

\begin{thm}\label{s202}
To every short exact sequence $\overline{\varepsilon}:
0\rightarrow \overline{E}'\rightarrow \overline{E}\rightarrow
\overline{E}''\rightarrow 0$ of equivariant hermitian vector
bundles on $M$, there is a unique way to attach a class
$\widetilde{\phi}_g(\overline{\varepsilon})\in \widetilde{A}(M_g)$
which satisfies the following three conditions:

(i). $\widetilde{\phi}_g(\overline{\varepsilon})$ satisfies the
differential equation
\begin{displaymath}
{\rm
dd}^c\widetilde{\phi}_g(\overline{\varepsilon})=\phi_g(\overline{E}'\oplus\overline{E}'')-\phi_g(\overline{E});
\end{displaymath}

(ii). for every equivariant holomorphic map $f: M'\rightarrow M$,
$\widetilde{\phi}_g(f^*\overline{\varepsilon})=f_g^*\widetilde{\phi}_g(\overline{\varepsilon})$;

(iii). $\widetilde{\phi}_g(\overline{\varepsilon})=0$ if
$\overline{\varepsilon}$ is equivariantly and orthogonally split.
\end{thm}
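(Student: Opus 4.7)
The plan is to adapt the classical Bott-Chern transgression construction to the equivariant setting; the key observation is that if the auxiliary base is $\mathbb{P}^1$ carrying the trivial $G$-action, then $(M\times\mathbb{P}^1)_g=M_g\times\mathbb{P}^1$, so the $g$-equivariant Chern-Weil forms of Definition \ref{s201} are available on the total space and can be integrated along the fibre.

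\emph{Existence.} Given $\overline{\varepsilon}$, I would build a holomorphic $G$-equivariant vector bundle $\widetilde{E}$ on $M\times\mathbb{P}^1$ (the standard elementary transformation along $M\times\{\infty\}$) whose restriction to $M\times\{z\}$ is $E$ for $z\neq\infty$ and equals $E'\oplus E''$ at $z=\infty$, and endow it with a $G$-invariant hermitian metric $h^{\widetilde{E}}$ that matches $h^{E}$ on $M\times\{0\}$ and $h^{E'}\oplus h^{E''}$ on $M\times\{\infty\}$. I then set
\begin{equation*}
\widetilde{\phi}_g(\overline{\varepsilon}):=-\int_{\mathbb{P}^1}\phi_g(\widetilde{E},h^{\widetilde{E}})\log|z|^2\in\widetilde{A}(M_g),
\end{equation*}
the fibre integral being taken along $M_g\times\mathbb{P}^1\to M_g$ and interpreted by the usual regularization near $0$ and $\infty$. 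Property (i) follows from the Poincar\'e-Lelong identity $\mathrm{dd}^c\log|z|^2=\delta_0-\delta_\infty$ via Stokes after removing small discs at $0$ and $\infty$, using that $\phi_g(\widetilde{E})$ is itself $\mathrm{dd}^c$-closed on $M_g\times\mathbb{P}^1$. Property (ii) is immediate from naturality, since $(f\times\mathrm{id}_{\mathbb{P}^1})^*\widetilde{E}$ with the pulled-back metric is the corresponding deformation for $f^*\overline{\varepsilon}$. Property (iii) holds because for an equivariantly and orthogonally split sequence one may take $\widetilde{E}$ to be the pullback $p_M^*(E'\oplus E'')$ with the product metric; then $\phi_g(\widetilde{E})$ has no component in the $\mathbb{P}^1$-direction and the fibrewise integral of $\phi_g(\widetilde{E})\log|z|^2$ vanishes.

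\emph{Uniqueness.} Given two solutions $\widetilde{\phi}_g$ and $\widetilde{\phi}'_g$, their difference $\eta_g$ is a functorial, $\mathrm{dd}^c$-closed assignment that vanishes on equivariantly and orthogonally split sequences. Pulling back $\overline{\varepsilon}$ along the equivariant projection $p:\mathrm{Fl}(E)\to M$ from the complete flag bundle yields an exact sequence admitting an equivariant orthogonal holomorphic splitting (built from the tautological filtration on $p^*E$ together with the pulled-back metric, averaged over $G$ if necessary). By (ii) and the hypothesis on $\eta_g$, $p_g^*\eta_g(\overline{\varepsilon})=\eta_g(p^*\overline{\varepsilon})=0$; injectivity of $p_g^*$ on $\widetilde{A}$, which follows from the equivariant projection formula applied to a top Chern form of the tautological line bundles on $\mathrm{Fl}(E)_g$, forces $\eta_g=0$.

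\emph{Main obstacle.} The most delicate point is the regularization of the divergent integral $\int_{\mathbb{P}^1}\phi_g(\widetilde{E})\log|z|^2$ and the bookkeeping of the boundary terms in the verification of (i): one must excise $\epsilon$-discs at $0$ and $\infty$, apply Stokes, and check that in the limit $\epsilon\to 0$ the contributions along $M_g\times\{0\}$ and $M_g\times\{\infty\}$, organized via the eigenbundle decomposition $\widetilde{E}|_{M_g\times\mathbb{P}^1}=\bigoplus_\zeta\widetilde{E}_\zeta$, combine to produce exactly $\phi_g(\overline{E}'\oplus\overline{E}'')-\phi_g(\overline{E})$ in $\widetilde{A}(M_g)$. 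A secondary obstacle is arranging the holomorphic orthogonal splitting on $\mathrm{Fl}(E)$ to be $G$-equivariant and respecting the classes $\mathrm{Im}\,\partial+\mathrm{Im}\,\overline{\partial}$ modded out in the target $\widetilde{A}$.
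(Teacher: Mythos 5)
The paper proves this theorem simply by citing K\"ohler--R\"ossler (\cite[Theorem 3.4]{KR1}), so any self-contained argument is in that sense ``different.'' Your existence construction via the Grothendieck deformation $\widetilde{E}$ on $M\times\mathbb{P}^1$ and the fibre integral against $\log|z|^2$ is essentially the standard Bott--Chern transgression and is sound in outline: since the equivariant Chern--Weil form $\phi_g(\widetilde{E})$ is $d$-closed (not merely $\mathrm{dd}^c$-closed), the cross terms in $\mathrm{dd}^c\!\left(\phi_g(\widetilde{E})\log|z|^2\right)$ vanish and Poincar\'e--Lelong gives (i). One detail you should not leave implicit: you must also check that the resulting class is independent of the choice of extension $\widetilde{E}$ and of the interpolating metric $h^{\widetilde{E}}$; without this, property (ii) is not actually verified, since functoriality is a statement about the class, not about the particular integral with a particular $h^{\widetilde{E}}$.

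The genuine gap is in the uniqueness argument. Pulling back to the complete flag bundle $\mathrm{Fl}(E)$ does \emph{not} produce an exact sequence that splits holomorphically. The tautological data on $\mathrm{Fl}(E)$ is a filtration $0\subset F_1\subset\cdots\subset F_n=p^*E$ by subbundles; this filtration bears no relation to the subbundle $p^*E'\subset p^*E$, and in any case a holomorphic filtration is not a holomorphic direct-sum decomposition -- $p^*E$ is isomorphic to $\bigoplus_i F_i/F_{i-1}$ only as a $C^\infty$ bundle via the metric, never canonically as a holomorphic bundle. So the sequence $p^*\overline{\varepsilon}$ is not ``equivariantly and orthogonally split'' on $\mathrm{Fl}(E)$, and condition (iii) cannot be invoked there. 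The standard and correct route is to recycle the $\mathbb{P}^1$-deformation: the sequence $\overline{\varepsilon}$ extends to a $G$-equivariant sequence $\widetilde{\varepsilon}$ on $M\times\mathbb{P}^1$ restricting to $\overline{\varepsilon}$ at $z=0$ and to the orthogonally split sequence at $z=\infty$. If $\eta_g$ is the difference of two solutions, then by (i) the form $\eta_g(\widetilde{\varepsilon})$ on $M_g\times\mathbb{P}^1$ is $\mathrm{dd}^c$-closed; using $i_0^*\alpha - i_\infty^*\alpha = \mathrm{dd}^c\!\left(-\int_{\mathbb{P}^1}\alpha\log|z|^2\right)$ for such $\alpha$, its restrictions at $0$ and $\infty$ agree in $\widetilde{A}(M_g)$. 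Combined with (ii) and (iii) this gives $\eta_g(\overline{\varepsilon})=0$, with no flag bundle and no appeal to injectivity of $p_g^*$.
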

\begin{proof}
This is \cite[Theorem 3.4]{KR1}.
\end{proof}

We now give some examples of equivariant character forms and their
corresponding secondary characteristic classes.

\begin{exa}\label{s204}
(i). The equivariant Chern character form ${\rm
ch}_g(\overline{E}):=\sum_{\zeta\in S^1}\zeta{\rm
ch}(\overline{E}_\zeta)$.

(ii). The equivariant Todd form ${\rm
Td}_g(\overline{E}):=\frac{c_{{\rm rk}E_g}(\overline{E}_g)}{{\rm
ch}_g(\sum_{j=0}^{{\rm rk}E}(-1)^j\wedge^j\overline{E}^\vee)}$. As
in \cite[Thm. 10.1.1]{Hi} one can show that
\begin{displaymath}
{\rm Td}_g(\overline{E})={\rm
Td}(\overline{E}_g)\prod_{\zeta\neq1}{\rm
det}(\frac{1}{1-\zeta^{-1}e^{\frac{\Omega^{\overline{E}_\zeta}}{2\pi
i}}}). \end{displaymath}

(iii). Let $\overline{\varepsilon}: 0\rightarrow
\overline{E}'\rightarrow \overline{E}\rightarrow
\overline{E}''\rightarrow 0$ be a short exact sequence of
hermitian vector bundles. The secondary Bott-Chern characteristic
class is given by $\widetilde{{\rm
ch}}_g(\overline{\varepsilon})=\sum_{\zeta\in S^1}\zeta
\widetilde{{\rm ch}}(\overline{\varepsilon}_\zeta)$.

(iv). If the equivariant structure $g^\varepsilon$ has the
eigenvalues $\zeta_1,\cdots,\zeta_m$, then the secondary Todd
class is given by
\begin{displaymath}
\widetilde{{\rm
Td}}_g(\overline{\varepsilon})=\sum_{i=1}^m\prod_{j=1}^{i-1}{\rm
Td}_g(\overline{E}_{\zeta_j})\cdot\widetilde{{\rm
Td}}(\overline{\varepsilon}_{\zeta_i})\cdot\prod_{j=i+1}^m{\rm
Td}_g(\overline{E}'_{\zeta_j}+\overline{E}''_{\zeta_j}).
\end{displaymath}
\end{exa}

\begin{rem}\label{s205}
One can use Theorem~\ref{s202} to give a proof of the statements
(iii) and (iv) in the examples above.
\end{rem}

Let $E$ be an equivariant hermitian vector bundle with two
different hermitian metrics $h_1$ and $h_2$, we shall write
$\widetilde{\phi}_g(E,h_1,h_2)$ for the equivariant secondary
characteristic class associated to the exact sequence
\begin{displaymath}
0\to (E,h_1)\to (E,h_2)\to 0 \to 0
\end{displaymath} where the map
from $(E,h_1)$ to $(E,h_2)$ is the identity map.

\subsection{Equivariant analytic torsion forms}
In \cite{BK}, J.-M. Bismut and K. K\"{o}hler extended the
Ray-Singer analytic torsion to the higher analytic torsion form
$T$ for a holomorphic submersion. The purpose of making such an
extension is that the differential equation on ${\rm dd}^cT$ gives
a refinement of the Grothendieck-Riemann-Roch theorem. Later, in
his article \cite{Ma1}, X. Ma generalized J.-M. Bismut and K.
K\"{o}hler's results to the equivariant case. In this subsection,
we shall briefly recall Ma's construction of the equivariant
analytic torsion form. This construction is not very important for
understanding the rest of this article, but the equivariant
analytic torsion form itself will be used to define a reasonable
push-forward morphism between equivariant arithmetic $G_0$-groups.

We first fix some notations and assumptions. Let $f: M\rightarrow
B$ be a proper holomorphic submersion of complex manifolds, and
let $TM$, $TB$ be the holomorphic tangent bundle on $M$, $B$.
Denote by $J^{Tf}$ the complex structure on the real relative
tangent bundle $T_{\R}f$, and assume that $h^{Tf}$ is a hermitian
metric on $Tf$ which induces a Riemannian metric $g^{Tf}$. Let
$T^HM$ be a vector subbundle of $TM$ such that $TM=T^HM\oplus Tf$,
the following definition of K\"{a}hler fibration was given in
\cite[Def. 1.4]{BGS2}.

\begin{defn}\label{201}
The triple $(f,h^{Tf},T^HM)$ is said to define a K\"{a}hler
fibration if there exists a smooth real $(1,1)-$form $\omega$
which satisfies the following three conditions:

(i). $\omega$ is closed;

(ii). $T_{\R}^HM$ and $T_{\R}f$ are orthogonal with respect to
$\omega$;

(iii). if $X,Y\in T_{\R}f$, then $\omega(X,Y)=\langle
X,J^{Tf}Y\rangle_{g^{Tf}}$.
\end{defn}

It was shown in \cite[Thm. 1.5 and 1.7]{BGS2} that for a given
K\"{a}hler fibration, the form $\omega$ is unique up to addition
of a form $f^*\eta$ where $\eta$ is a real, closed $(1,1)$-form on
$B$. Moreover, for any real, closed $(1,1)$-form $\omega$ on $M$
such that the bilinear map $X,Y\in T_{\R}f\mapsto
\omega(J^{Tf}X,Y)\in \R$ defines a Riemannian metric and hence a
hermitian product $h^{Tf}$ on $Tf$, we can define a K\"{a}hler
fibration whose associated $(1,1)$-form is $\omega$. In
particular, for a given $f$, a K\"{a}hler metric on $M$ defines a
K\"{a}hler fibration if we choose $T^HM$ to be the orthogonal
complement of $Tf$ in $TM$ and $\omega$ to be the K\"{a}hler form
associated to this metric.

We now recall the Bismut superconnection of a K\"{a}hler
fibration. Let $(\xi,h^\xi)$ be a hermitian complex vector bundle
on $M$. Let $\nabla^{Tf}$, $\nabla^\xi$ be the holomorphic
hermitian connections on $(Tf,h^{Tf})$ and $(\xi,h^\xi)$. Let
$\nabla^{\Lambda(T^{*(0,1)}f)}$ be the connection induced by
$\nabla^{Tf}$ on $\Lambda(T^{*(0,1)}f)$. Then we may define a
connection on $\Lambda(T^{*(0,1)}f)\otimes\xi$ by setting
\begin{displaymath}
\nabla^{\Lambda(T^{*(0,1)}f)\otimes\xi}=\nabla^{\Lambda(T^{*(0,1)}f)}\otimes
1+1\otimes\nabla^\xi.
\end{displaymath} Let $E$ be the
infinite-dimensional bundle on $B$ whose fibre at each point $b\in
B$ consists of the $C^\infty$ sections of
$\Lambda(T^{*(0,1)}f)\otimes\xi\mid_{f^{-1}b}$. This bundle $E$ is
a smooth $\Z$-graded bundle. We define a connection $\nabla^E$ on
$E$ as follows. If $U\in T_{\R}B$, let $U^H$ be the lift of $U$ in
$T^H_{\R}M$ so that $f_*U^H=U$. Then for every smooth section $s$
of $E$ over $B$, we set
\begin{displaymath}
\nabla_U^Es=\nabla_{U^H}^{\Lambda(T^{*(0,1)}f)\otimes\xi}s.
\end{displaymath}
For $b\in B$, let $\overline{\partial}^{Z_b}$ be the Dolbeault
operator acting on $E_b$, and let $\overline{\partial}^{Z_b*}$ be
its formal adjoint with respect to the canonical hermitian product
on $E_b$ (cf. \cite[1.2]{Ma1}). Let $C(T_{\R}f)$ be the Clifford
algebra of $(T_{\R}f,h^{Tf})$, then the bundle
$\Lambda(T^{*(0,1)}f)\otimes\xi$ has a natural
$C(T_{\R}f)$-Clifford module structure. Actually, if $U\in Tf$,
let $U'\in T^{*(0,1)}f$ correspond to $U$ defined by
$U'(\cdot)=h^{Tf}(U,\cdot)$, then for $U, V\in Tf$ we set
\begin{displaymath}
c(U)=\sqrt{2}U'\wedge,\quad
c(\overline{V})=-\sqrt{2}i_{\overline{V}}
\end{displaymath} where
$i_{(\cdot)}$ is the contraction operator (cf. \cite[Definition
1.6]{BGV}). Moreover, if $U,V\in T_{\R}B$, we set
$T(U^H,V^H)=-P^{Tf}[U^H,V^H]$ where $P^{Tf}$ stands for the
canonical projection from $TM$ to $Tf$.

\begin{defn}\label{202}
Let $e_1,\ldots,e_{2m}$ be a basis of $T_{\R}B$, and let
$e^1,\ldots,e^{2m}$ be the dual basis of $T_{\R}^*B$. Then the
element
\begin{displaymath}
c(T)=\frac{1}{2}\sum_{1\leq\alpha,\beta\leq 2m}e^\alpha\wedge
e^\beta\widehat{\otimes}c(T(e_\alpha^H,e_\beta^H))
\end{displaymath}
is a section of $(f^*\Lambda(T^*_{\R}B)\widehat{\otimes}{\rm
End}(\Lambda(T^{*(0,1)}f)\otimes\xi))^{\rm odd}$.
\end{defn}

\begin{defn}\label{203}
For $u>0$, the Bismut superconnection on $E$ is the differential
operator
\begin{displaymath}
B_u=\nabla^E+\sqrt{u}(\overline{\partial}^Z+\overline{\partial}^{Z*})-\frac{1}{2\sqrt{2u}}c(T)
\end{displaymath}
on
$f^*(\Lambda(T_{\R}^*B))\widehat{\otimes}(\Lambda(T^{*(0,1)}f)\otimes\xi)$.
\end{defn}

\begin{defn}\label{204}
Let $N_V$ be the number operator on
$\Lambda(T^{*(0,1)}f)\otimes\xi$ and on $E$, namely $N_V$ acts as
multiplication by $p$ on $\Lambda^p(T^{*(0,1)}f)\otimes\xi$. For
$U,V\in T_{\R}B$, set
$\omega^{H\overline{H}}(U,V)=\omega^M(U^H,V^H)$ where $\omega^M$
is the closed form in the definition of K\"{a}hler fibration.
Furthermore, for $u>0$, set
$N_u=N_V+\frac{i\omega^{H\overline{H}}}{u}$.
\end{defn}

We now turn to the equivariant case. Let $G$ be a compact Lie
group, we shall assume that all complex manifolds, hermitian
vector bundles and holomorphic morphisms considered above are
$G-$equivariant and all metrics are $G$-invariant. We will
additionally assume that the direct images $R^kf_*\xi$ are all
locally free so that the $G$-equivariant coherent sheaf $R^\cdot
f_*\xi$ is locally free and hence a $G$-equivariant vector bundle
over $B$. \cite[1.2]{Ma1} gives a $G$-invariant hermitian metric
(the $L^2$-metric) $h^{R^\cdot f_*\xi}$ on the vector bundle
$R^\cdot f_*\xi$.

For $g\in G$, let $M_g=\{x\in M\mid g\cdot x=x\}$ and $B_g=\{b\in
B\mid g\cdot b=b\}$ be the fixed point submanifolds, then $f$
induces a holomorphic submersion $f_g: M_g\rightarrow B_g$. Let
$\Phi$ be the homomorphism $\alpha\mapsto (2i\pi)^{-{\rm
deg}\alpha/2}$ of $\Lambda^{\rm even}(T_{\R}^*B)$ into itself. We
put
\begin{displaymath}
{\rm ch}_g(R^\cdot f_*\xi,h^{R^\cdot f_*\xi})=\sum_{k=0}^{{\rm
dim}M-{\rm dim}B}(-1)^k{\rm ch}_g(R^k f_*\xi,h^{R^k f_*\xi})
\end{displaymath}
and
\begin{displaymath}
{\rm ch}_g'(R^\cdot f_*\xi,h^{R^\cdot f_*\xi})=\sum_{k=0}^{{\rm
dim}M-{\rm dim}B}(-1)^kk{\rm ch}_g(R^k f_*\xi,h^{R^k f_*\xi}).
\end{displaymath}

\begin{defn}\label{205}
For $s\in \C$ with ${\rm Re}(s)>1$, let
\begin{displaymath}
\zeta_1(s)=-\frac{1}{\Gamma(s)}\int_0^1 u^{s-1}(\Phi{\rm
Tr_s}[gN_u{\rm exp}(-B_u^2)]-{\rm ch}_g'(R^\cdot f_*\xi,h^{R^\cdot
f_*\xi})){\rm d}u
\end{displaymath} and similarly for $s\in \C$
with ${\rm Re}(s)<\frac{1}{2}$, let
\begin{displaymath}
\zeta_2(s)=-\frac{1}{\Gamma(s)}\int_1^\infty u^{s-1}(\Phi{\rm
Tr_s}[gN_u{\rm exp}(-B_u^2)]-{\rm ch}_g'(R^\cdot f_*\xi,h^{R^\cdot
f_*\xi})){\rm d}u.
\end{displaymath}
\end{defn}

X. Ma has proved that $\zeta_1(s)$ extends to a holomorphic
function of $s\in \C$ near $s=0$ and $\zeta_2(s)$ is a holomorphic
function of $s$.

\begin{defn}\label{206}
The smooth form $T_g(\omega^M,h^\xi):=\frac{\partial}{\partial
s}(\zeta_1+\zeta_2)(0)$ on $B_g$ is called the equivariant
analytic torsion form.
\end{defn}

\begin{thm}\label{207}
The form $T_g(\omega^M,h^\xi)$ lies in $\bigoplus_{p\geq
0}A^{p,p}(B_g)$ and satisfies the following differential equation
\begin{displaymath}
{\rm dd}^cT_g(\omega^M,h^\xi)={\rm ch}_g(R^\cdot f_*\xi,h^{R^\cdot
f_*\xi})-\int_{M_g/{B_g}}{\rm Td}_g(Tf,h^{Tf}){\rm
ch}_g(\xi,h^\xi).
\end{displaymath} Here $A^{p,p}(B_g)$ stands for
the space of smooth forms on $B_g$ of type $(p,p)$.
\end{thm}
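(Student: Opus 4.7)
The result is X. Ma's equivariant generalization \cite{Ma1} of the analytic torsion form theorem of Bismut--K\"ohler \cite{BK}. The plan is to follow the Bismut--K\"ohler strategy, replacing ordinary supertraces by $g$-equivariant ones and importing Ma's equivariant local family index theorem in place of the non-equivariant one. Write $\alpha_u := \Phi\,\mathrm{Tr}_s[gN_u\exp(-B_u^2)]$, so that by Definitions~\ref{205} and~\ref{206} the form $T_g(\omega^M, h^\xi)$ is the value at $s=0$ of the derivative of the regularized Mellin transform of $\alpha_u - \mathrm{ch}_g'(R^\cdot f_*\xi, h^{R^\cdot f_*\xi})$.

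The first step is a transgression identity. Differentiating $\exp(-B_u^2)$ in $u$ via Duhamel's principle and exploiting the fact that the equivariant supertrace $\mathrm{Tr}_s[g\,\cdot\,]$ vanishes on supercommutators, I would derive an identity of the form
$$\frac{\partial}{\partial u}\alpha_u = \mathrm{dd}^c \gamma_u + \delta_u,$$
where $\gamma_u$ is an explicit auxiliary form built from $gN_u$ and $\exp(-B_u^2)$ and $\delta_u$ collects elementary local boundary terms. This is a purely formal consequence of the structure of the Bismut superconnection and mirrors the non-equivariant identity in \cite{BK}.

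The second step is asymptotic analysis of $\alpha_u$. As $u \to \infty$, a fibrewise spectral-gap argument for the Dolbeault Laplacian shows that $\alpha_u$ converges exponentially to $\mathrm{ch}_g'(R^\cdot f_*\xi, h^{R^\cdot f_*\xi})$, which makes $\zeta_2(s)$ holomorphic in a half-plane containing $0$. As $u \to 0$, one invokes Ma's equivariant local family index theorem: a Getzler-type rescaling localized on $M_g$, combined with the Mehler-formula identification of the model operator on the normal bundle of $M_g$ in $M$, yields
$$\lim_{u \to 0}\Phi\,\mathrm{Tr}_s[g\exp(-B_u^2)] = \int_{M_g/B_g}\mathrm{Td}_g(Tf, h^{Tf})\,\mathrm{ch}_g(\xi, h^\xi),$$
together with a full small-$u$ expansion that ensures the holomorphy of $\zeta_1$ at $s=0$ and gives the required form-level uniform estimates.

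Finally, I would apply $\mathrm{dd}^c$ to $T_g(\omega^M, h^\xi)$ and commute it through the Mellin integral (legitimate by the uniform bounds just obtained). The transgression identity is then integrated by parts in $u$ over $(0, \infty)$; the boundary contribution at $u = \infty$ yields $\mathrm{ch}_g(R^\cdot f_*\xi, h^{R^\cdot f_*\xi})$, and the contribution at $u = 0$ yields $-\int_{M_g/B_g}\mathrm{Td}_g(Tf, h^{Tf})\,\mathrm{ch}_g(\xi, h^\xi)$, combining to give the claimed equation. The principal obstacle is unquestionably the $u \to 0$ asymptotic: the equivariant local family index theorem requires a delicate Getzler rescaling adapted to the fixed-point submanifold $M_g$, carried out in \cite{Ma1}. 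Once that analytic input is available, the remainder is essentially bookkeeping within the Bismut--K\"ohler framework, and the assertion that $T_g(\omega^M, h^\xi) \in \bigoplus_{p\geq 0} A^{p,p}(B_g)$ is read off from the bidegree structure of the superconnection heat kernel on the K\"ahler fibration.
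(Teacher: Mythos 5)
The paper does not prove this theorem: its ``proof'' is the one-line citation ``This is \cite[Theorem 2.12]{Ma1}.'' Your proposal is a reconstruction of the argument in the cited reference, and it does follow the right outline: transgression of the equivariant Bismut superconnection heat-kernel supertrace, large-time convergence via the fibrewise spectral gap, small-time asymptotics via Ma's equivariant local family index theorem (Getzler rescaling localized on $M_g$), and a Mellin-regularized integration by parts that produces the two boundary contributions. So on the merits of what the theorem requires, your sketch is sound and correctly identifies the $u\to 0$ asymptotic as the hard analytic input.

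One small caution on your first step: as stated, the transgression is backwards. The identity of Bismut--K\"ohler/Ma is of the shape
\begin{displaymath}
\frac{\partial}{\partial u}\,\Phi\,\mathrm{Tr}_s\!\left[g\,\exp(-B_u^2)\right] \;=\; -\frac{1}{u}\,\mathrm{dd}^c\,\Phi\,\mathrm{Tr}_s\!\left[gN_u\exp(-B_u^2)\right],
\end{displaymath}
i.e.\ it is the \emph{undifferentiated} degree form $\beta_u = \Phi\mathrm{Tr}_s[g\exp(-B_u^2)]$ whose $u$-derivative is $\mathrm{dd}^c$-exact, with the exact primitive essentially being your $\alpha_u$ (divided by $u$). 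Your version, $\partial_u\alpha_u = \mathrm{dd}^c\gamma_u + \delta_u$, is not the identity one actually integrates by parts against; keeping the roles of $\alpha_u$ and $\beta_u$ straight is what makes the $u\to 0$ boundary term come out as the fibre integral of $\mathrm{Td}_g\cdot\mathrm{ch}_g$ and the $u\to\infty$ term come out as $\mathrm{ch}_g(R^\cdot f_*\xi, h^{R^\cdot f_*\xi})$. Apart from that, your bookkeeping and your reliance on \cite{Ma1} for the local index computation match the structure of the cited proof, and your observation that the $\bigoplus A^{p,p}(B_g)$ statement is read off from the $(p,p)$-bidegree of the K\"ahler-fibration heat kernel is correct.
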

\begin{proof}
This is \cite[Theorem 2.12]{Ma1}.
\end{proof}

We define a secondary characteristic class
\begin{displaymath}
\widetilde{{\rm ch}}_g(R^\cdot f_*\xi,h^{R^\cdot
f_*\xi},h'^{R^\cdot f_*\xi}):=\sum_{k=0}^{{\rm dim}M-{\rm
dim}B}(-1)^k\widetilde{{\rm ch}}_g(R^k f_*\xi,h^{R^k
f_*\xi},h'^{R^k f_*\xi})
\end{displaymath}
such that it satisfies the following differential equation
\begin{displaymath}
{\rm dd}^c\widetilde{{\rm ch}}_g(R^\cdot f_*\xi,h^{R^\cdot
f_*\xi},h'^{R^\cdot f_*\xi})={\rm ch}_g(R^\cdot f_*\xi,h^{R^\cdot
f_*\xi})-{\rm ch}_g(R^\cdot f_*\xi,h'^{R^\cdot f_*\xi}),
\end{displaymath}
then the anomaly formula can be described as follows.

\begin{thm}\label{208}(Anomaly formula)
Let $\omega'$ be the form associated to another K\"{a}hler
fibration for $f: M\rightarrow B$. Let $h'^{Tf}$ be the metric on
$Tf$ in this new fibration and let $h'^\xi$ be another metric on
$\xi$. The following identity holds in
$\widetilde{A}(B_g):=\bigoplus_{p\geq 0}(A^{p,p}(B_g)/{({\rm
Im}\partial+{\rm Im}\overline{\partial})})$:
\begin{align*}
T_g(\omega^M,h^\xi)-T_g(\omega'^M,h'^\xi)=&\widetilde{{\rm
ch}}_g(R^\cdot f_*\xi,h^{R^\cdot f_*\xi},h'^{R^\cdot
f_*\xi})\\
&-\int_{M_g/{B_g}}[\widetilde{{\rm
Td}}_g(Tf,h^{Tf},h'^{Tf}){\rm ch}_g(\xi,h^\xi)\\
&+{\rm Td}_g(Tf,h'^{Tf})\widetilde{{\rm
ch}}_g(\xi,h^\xi,h'^\xi)].
\end{align*} In particular, the
class of $T_g(\omega^M,h^\xi)$ in $\widetilde{A}(B_g)$ only
depends on $(h^{Tf},h^\xi)$.
\end{thm}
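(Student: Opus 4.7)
The plan is to prove the anomaly formula by the standard double transgression technique of Bismut and K\"{o}hler, adapted to the equivariant setting as in Ma. First, I would break the variation $(\omega^M, h^\xi) \to (\omega'^M, h'^\xi)$ into two successive moves: keep the K\"{a}hler fibration fixed and vary only the metric on $\xi$ from $h^\xi$ to $h'^\xi$; then fix the new metric $h'^\xi$ and vary the K\"{a}hler fibration from $\omega^M$ to $\omega'^M$. The cocycle property of the secondary Bott--Chern classes (which is encoded in Theorem~\ref{s202}, applied to the composition of elementary short exact sequences) ensures that the two partial contributions add up to give the right--hand side of the stated identity; this reduction has the virtue of allowing us to treat the two types of variations by the same scheme.

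For each sub--variation I would choose a smooth one--parameter family indexed by $s\in[0,1]$ of $G$-invariant data: either a family $h_s^\xi$ of hermitian metrics on $\xi$ with $h_0^\xi=h^\xi$, $h_1^\xi=h'^\xi$, or a family $\omega_s^M$ of K\"{a}hler forms inducing families $(T_s^HM,h_s^{Tf})$. This produces a family of equivariant Bismut superconnections $B_{u,s}$ and of number operators $N_{u,s}$ on $E$, for $(u,s)\in(0,\infty)\times[0,1]$. The key computation is a transgression identity of the shape
\begin{equation*}
\tfrac{\partial}{\partial s}\bigl(\Phi\mathrm{Tr}_s[gN_{u,s}\exp(-B_{u,s}^2)]\bigr)
=\mathrm{dd}^c\alpha_{u,s}+u\tfrac{\partial}{\partial u}\beta_{u,s},
\end{equation*}
obtained by differentiating the heat semigroup via Duhamel's formula and using the naturality of $B_{u,s}^2$ with respect to Bianchi--type identities for superconnections.

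Integrating the Mellin transform $-\frac{\partial}{\partial z}\big|_{z=0}\frac{1}{\Gamma(z)}\int_0^\infty u^{z-1}(\cdots)\,\mathrm{d}u$ of this identity first in $u$ and then in $s\in[0,1]$, and analysing the boundary terms, yields the two types of contributions on the right--hand side. As $u\to\infty$, the equivariant heat kernel of $B_{u,s}^2$ on the fibres converges exponentially to the harmonic projection onto $R^\cdot f_*\xi$, and the $s$-integration turns this into $\widetilde{\mathrm{ch}}_g(R^\cdot f_*\xi, h^{R^\cdot f_*\xi}, h'^{R^\cdot f_*\xi})$. As $u\to 0^+$, the equivariant local family index theorem (in the form proved by Ma, generalising Bismut's local index theorem to fibrewise group actions) gives the limiting integrand $\int_{M_g/B_g}\mathrm{Td}_g(Tf)\,\mathrm{ch}_g(\xi)$, whose $s$-derivative pulls the variation inside the fibre integral and reconstructs the two secondary classes $\widetilde{\mathrm{Td}}_g(Tf,h^{Tf},h'^{Tf})\,\mathrm{ch}_g(\xi,h^\xi)$ and $\mathrm{Td}_g(Tf,h'^{Tf})\,\widetilde{\mathrm{ch}}_g(\xi,h^\xi,h'^\xi)$, with the ordering of the two metric changes matching the cocycle decomposition performed above.

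The hard part is the uniform control of $\Phi\mathrm{Tr}_s[gN_{u,s}\exp(-B_{u,s}^2)]$ in the parameter $s$, both as $u\to\infty$ (where one needs a spectral gap for the fibrewise Laplacian uniform in $s$) and as $u\to 0^+$ (where one needs the rescaling argument of Bismut's local index proof to work with $s$-dependent Clifford action and connection). Both points are established in the analytic core of \cite{Ma1}, on which I would rely; once they are in place, the anomaly formula follows from the elementary manipulations of the Mellin transform described above.
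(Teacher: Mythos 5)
The paper's own ``proof'' of Theorem~\ref{208} is simply the citation ``This is \cite[Theorem 2.13]{Ma1}''; the result is quoted verbatim from X.~Ma's paper, not reproved. Your proposal, by contrast, reconstructs the argument underlying that theorem. Your outline is essentially correct and is indeed the route Ma follows: (i) split the variation of $(\omega^M,h^\xi)$ into a pure metric variation on $\xi$ and a pure variation of the K\"{a}hler fibration, invoking the cocycle property of Bott--Chern classes; (ii) introduce a one-parameter family of equivariant Bismut superconnections and establish a double transgression identity for $\Phi\mathrm{Tr}_s[gN_{u,s}\exp(-B_{u,s}^2)]$; (iii) apply the Mellin transform and analyse the boundary contributions, with the $u\to\infty$ limit yielding the $\widetilde{\mathrm{ch}}_g$ term via harmonic projection onto $R^\cdot f_*\xi$, and the $u\to 0^+$ limit yielding the fibre integral via Ma's equivariant local family index theorem. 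You also correctly flag the two hard analytic points --- uniform spectral gap at $u\to\infty$ and uniform Bismut rescaling at $u\to 0^+$ --- and defer these to \cite{Ma1}, which is honest since that is precisely where the work lives.

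The net effect is that you have sketched the content of the cited theorem rather than found an independent route, so there is no genuine mathematical divergence: the paper cites, you summarise what the citation contains. That is a perfectly reasonable thing to do, but be aware that the cocycle step (breaking $(\omega^M,h^\xi)\to(\omega'^M,h'^\xi)$ into two elementary moves) is not entirely free: one must check that intermediate analytic torsion forms for the hybrid data $(\omega^M,h'^\xi)$ and $(\omega'^M,h'^\xi)$ are compatible under composition, which again uses the transgression machinery rather than following from Theorem~\ref{s202} alone. Ma's Theorem~2.13 treats the two variations simultaneously precisely to sidestep this bookkeeping. If you intend to present this as a complete proof rather than a roadmap, that compatibility and the $s$-uniform heat kernel estimates would need to be written out; as a pointer to where the proof lives, your account is accurate.
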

\begin{proof}
This is \cite[Theorem 2.13]{Ma1}.
\end{proof}

\subsection{Equivariant Bott-Chern singular currents}
The Bott-Chern singular current was defined by J.-M. Bismut, H.
Gillet and C. Soul\'{e} in \cite{BGS3} in order to generalize the
usual Bott-Chern secondary characteristic class to the case where
one considers the resolutions of hermitian vector bundles
associated to the closed immersions of complex manifolds. In
\cite{Bi}, J.-M. Bismut generalized this topic to the equivariant
case. We shall recall Bismut's construction of the equivariant
Bott-Chern singular current in this subsection. Similar to the
equivariant analytic torsion form, the construction itself is not
very important for understanding our later arguments, we just
recall it for the convenience of the reader. Bismut's construction
was realized via some current valued zeta function which involves
the supertraces of Quillen's superconnections. This is similar to
the non-equivariant case.

As before, let $g$ be the automorphism corresponding to an element
in a compact Lie group $G$. Let $i: Y\rightarrow X$ be an
equivariant closed immersion of $G$-equivariant K\"{a}hler
manifolds, and let $\overline{\eta}$ be an equivariant hermitian
vector bundle on $Y$. Assume that $\overline{\xi}.$ is a complex
(of homological type) of equivariant hermitian vector bundles on
$X$ which provides a resolution of $i_*\overline{\eta}$. We denote
the differential of the complex $\xi.$ by $v$. Note that $\xi.$ is
acyclic outside $Y$ and the homology sheaves of its restriction to
$Y$ are locally free and hence they are all vector bundles. We
write $H_n=\mathcal{H}_n(\xi.\mid_Y)$ and define a $\Z$-graded
bundle $H=\bigoplus_nH_n$. For each $y\in Y$ and $u\in TX_y$, we
denote by $\partial_uv(y)$ the derivative of $v$ at $y$ in the
direction $u$ in any given holomorphic trivialization of $\xi.$
near $y$. Then the map $\partial_uv(y)$ acts on $H_y$ as a chain
map, and this action only depends on the image $z$ of $u$ in $N_y$
where $N$ stands for the normal bundle of $i(Y)$ in $X$. So we get
a chain complex of holomorphic vector bundles $(H,\partial_zv)$.

Let $\pi$ be the projection from the normal bundle $N$ to $Y$,
then we have a canonical identification of $\Z$-graded chain
complexes
\begin{displaymath}
(\pi^*H,\partial_zv)\cong(\pi^*(\wedge N^\vee\otimes\eta),i_z).
\end{displaymath}
For this, one can see \cite{Bi2}, Section I.b. Moreover, such an
identification is an identification of $G$-bundles which induces a
family of canonical isomorphisms $\gamma_n: H_n\cong\wedge^n
N^\vee\otimes\eta$. Another way to describe these canonical
isomorphisms $\gamma_n$ is applying \cite{GBI}, Exp. VII, Lemma
2.4 and Proposition 2.5. These two constructions coincide because
they are both locally, on a suitable open covering $\{U_j\}_{j\in
J}$, determined by any complex morphism over the identity map of
$\eta\mid_{U_j}$ from $(\xi.\mid_{U_j},v)$ to the minimal
resolution of $\eta\mid_{U_j}$ (e.g. the Koszul resolution). The
advantage of using the construction given in \cite{GBI} is that it
remains valid for arithmetic varieties over any base instead of
the complex numbers. Later in \cite{Bi}, for the use of
normalization, J.-M. Bismut considered the automorphism of
$N^\vee$ defined by multiplying a constant $-\sqrt{-1}$, it
induces an isomorphism of chain complexes
\begin{displaymath}
(\pi^*(\wedge N^\vee\otimes\eta),i_z)\cong (\pi^*(\wedge
N^\vee\otimes\eta),\sqrt{-1}i_z)
\end{displaymath}
and hence
\begin{displaymath}
(\pi^*H,\partial_zv)\cong(\pi^*(\wedge
N^\vee\otimes\eta),\sqrt{-1}i_z).
\end{displaymath}
This identification induces a family of isomorphisms
$\widetilde{\gamma_n}: H_n\cong\wedge^n N^\vee\otimes\eta$. By
finite dimensional Hodge theory, for each $y\in Y$, there is a
canonical isomorphism
\begin{displaymath}
H_y\cong\{f\in \xi._y\mid vf=0, v^*f=0\}
\end{displaymath}
where $v^*$ is the dual of $v$ with respect to the metrics on
$\xi.$. This means that $H$ can be regarded as a smooth
$\Z$-graded $G$-equivariant subbundle of $\xi$ so that it carries
an induced $G$-invariant metric. On the other hand, we endow
$\wedge N^\vee\otimes \eta$ with the metric induced from
$\overline{N}$ and $\overline{\eta}$. J.-M. Bismut introduced the
following definition.

\begin{defn}\label{209}
We say that the metrics on the complex of equivariant hermitian
vector bundles $\overline{\xi}.$ satisfy Bismut assumption (A) if
the identification $(\pi^*H,\partial_zv)\cong(\pi^*(\wedge
N^\vee\otimes\eta),\sqrt{-1}i_z)$ also identifies the metrics, it
is equivalent to the condition that the identification
$(\pi^*H,\partial_zv)\cong(\pi^*(\wedge N^\vee\otimes\eta),i_z)$
identifies the metrics.
\end{defn}

\begin{prop}\label{210}
There always exist $G$-invariant metrics on $\xi.$ which satisfy
Bismut assumption (A) with respect to the equivariant hermitian
vector bundles $\overline{N}$ and $\overline{\eta}$.
\end{prop}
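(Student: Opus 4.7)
The plan is to reduce the statement to a local construction near $Y$ and then patch globally, using the compactness of $G$ to keep everything equivariant. The crucial observation is that Bismut assumption (A) is a condition that only involves the restriction of the metrics to $Y$: the Hodge-theoretic identification $H_y \cong \{f \in \xi._y \mid vf=0, v^*f=0\}$ at a point $y \in Y$ depends only on the values of the metrics on $\xi_n$ at $y$. Hence it will suffice to prescribe the metrics correctly in an arbitrarily small neighborhood of $Y$.

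First I would work on the normal bundle $N$, where the Koszul-type complex $(\pi^*(\wedge^\bullet N^\vee \otimes \eta), i_z)$ carries the canonical $G$-invariant Hermitian metric induced by $\overline{N}$ and $\overline{\eta}$. This metric tautologically satisfies assumption (A), since the Hodge identification on the zero section is the identity. Using a $G$-equivariant tubular neighborhood theorem, which is available because $G$ is compact, I would then choose a $G$-invariant open neighborhood $U$ of $Y$ in $X$ together with an equivariant chain map $\xi.|_U \to \pi^*(\wedge^\bullet N^\vee \otimes \eta)$ lifting the identity on $\eta$. Such a chain map exists by the usual acyclic-models argument applied to the two equivariant resolutions of $i_* \eta$ near $Y$, and $G$-equivariance is obtained by averaging any non-equivariant lift over the compact group $G$. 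Pulling back the canonical Koszul metric through this chain map, restricted to a smaller tubular neighborhood, yields $G$-invariant Hermitian metrics on the bundles $\xi_n|_U$ whose induced Hodge-theoretic identification $H \cong \wedge^\bullet N^\vee \otimes \eta$ coincides with the canonical isomorphism $\gamma_n$ described above. This is precisely Bismut assumption (A).

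Finally I would glue the local metrics to an arbitrary $G$-invariant Hermitian metric on $\xi.$ defined away from $Y$, obtained by averaging any smooth Hermitian metric over $G$. A $G$-invariant partition of unity subordinate to an equivariant two-set cover $\{U, X \setminus Y'\}$, where $Y'$ is a smaller closed neighborhood of $Y$ with $Y \subset Y' \subset \overline{Y'} \subset U$, produces a global $G$-invariant Hermitian metric on $\xi.$ that agrees with the Koszul-pulled-back metric in an open neighborhood of $Y$. Since assumption (A) only constrains the metrics along $Y$, it is preserved by the gluing. The main technical point will be the construction of the equivariant chain map in the second step, where the compactness of $G$ is essential both for averaging the chain map into an equivariant one and for arranging the tubular neighborhood and the partition of unity equivariantly.
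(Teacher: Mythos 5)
The paper's own proof is simply a citation to Bismut, \emph{Equivariant immersions and Quillen metrics}, Proposition~3.5, so you are attempting to reconstruct that argument from scratch. Your overall strategy — build the metric locally near $Y$, glue it to an arbitrary $G$-invariant metric away from $Y$ by an equivariant partition of unity, and note that assumption~(A) only depends on the restriction to $Y$ — is sound and is indeed the right shape of argument.

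The gap is in the local construction. You propose to choose an equivariant chain map $\phi\colon \xi.\vert_U \to \pi^*(\wedge^\bullet N^\vee\otimes\eta)$ lifting the identity on $\eta$ and then to ``pull back'' the Koszul metric along $\phi$. But a chain map between two resolutions of $i_*\eta$ obtained from the comparison theorem is \emph{not} degreewise injective in general: the Koszul complex is the minimal resolution of $i_*\eta$ near $Y$, while $\xi.$ is typically much fatter, so $\phi_n\colon \xi_n\to \wedge^n N^\vee\otimes\eta$ generically has a nonzero kernel bundle. The bilinear form $(x,y)\mapsto \langle \phi_n x,\phi_n y\rangle$ therefore vanishes on $\ker\phi_n$ and is only a degenerate semimetric, not a Hermitian metric. (Even setting this aside, for a non-isomorphism $\phi$ it is not automatic that the harmonic subspace $\ker v\cap\ker v^*$ for the pulled-back metric is carried by $\phi$ onto the harmonic subspace of the Koszul complex, so the induced identification with $\wedge^\bullet N^\vee\otimes\eta$ would still need to be checked against $\gamma_n$.)

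What actually works, and is essentially what Bismut does, is a splitting construction on $Y$ rather than a pullback. Restrict $\xi.$ to $Y$, let $Z_n=\ker v\subset\xi_n\vert_Y$ and $B_n=\operatorname{im}(v\colon\xi_{n+1}\to\xi_n)\vert_Y\subset Z_n$, and observe that the homology identification gives $Z_n/B_n\cong \wedge^n N^\vee\otimes\eta\vert_Y$. Choose $G$-invariant splittings of $0\to B_n\to Z_n\to Z_n/B_n\to 0$ and of $0\to Z_n\to\xi_n\vert_Y\to \xi_n\vert_Y/Z_n\to 0$ (these exist by averaging over $G$). Put the canonical metric coming from $\overline N$ and $\overline\eta$ on the copy of $\wedge^n N^\vee\otimes\eta$ inside $Z_n$, and arbitrary $G$-invariant metrics on the remaining summands $B_n$ and $\xi_n\vert_Y/Z_n$, orthogonally. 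Then $\ker v\cap\ker v^*$ in $\xi_n\vert_Y$ is exactly the chosen copy of $\wedge^n N^\vee\otimes\eta$ with its canonical metric, which is assumption~(A). Extend these metrics on $\xi.\vert_Y$ to $G$-invariant metrics on $\xi.$ over $X$ by your tubular-neighborhood and partition-of-unity argument; as you correctly observe, assumption~(A) is unaffected because it only sees the metric along $Y$. If you prefer the Koszul-complex picture, the correct statement to invoke is that near $Y$ one has a $G$-equivariant direct-sum decomposition $\xi.\cong \kappa.\oplus C.$ with $C.$ a split acyclic complex of bundles, and one puts the canonical metric on the $\kappa.$ factor — but that decomposition itself requires an argument and is not the same as having a mere chain map to $\kappa.$.
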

\begin{proof}
This is \cite[Proposition 3.5]{Bi}.
\end{proof}

From now on we always suppose that the metrics on a resolution
satisfy Bismut assumption (A). Let $\nabla^{\xi}$ be the canonical
hermitian holomorphic connection on $\xi.$, then for each $u>0$,
we may define a $G$-invariant superconnection
\begin{displaymath}
C_u:=\nabla^\xi+\sqrt{u}(v+v^*)
\end{displaymath} on the
$\Z_2$-graded vector bundle $\xi$. Moreover, let $\Phi$ be the map
$\alpha\in \wedge(T_\R^*X_g)\rightarrow (2\pi i)^{-{\rm
deg}\alpha/2}\alpha\in \wedge(T_\R^*X_g)$ and denote
\begin{displaymath}
({\rm Td}_g^{-1})'(\overline{N}):=\frac{\partial}{\partial
b}\mid_{b=0}({\rm Td}_g(b\cdot {\rm
Id}-\frac{\Omega^{\overline{N}}}{2\pi i})^{-1}) \end{displaymath}
where $\Omega^{\overline{N}}$ is the curvature form associated to
$\overline{N}$. We recall as follows the construction of the
equivariant singular current given in \cite[Section VI]{Bi}.

\begin{lem}\label{211}
Let $N_H$ be the number operator on the complex $\xi.$ i.e. it
acts on $\xi_j$ as multiplication by $j$, then for $s\in \C$ and
$0< {\rm Re}(s)<\frac{1}{2}$, the current valued zeta function
\begin{displaymath}
Z_g(\overline{\xi}.)(s):=\frac{1}{\Gamma(s)}\int_0^\infty
u^{s-1}[\Phi{\rm Tr_s}(N_Hg{\rm exp}(-C_u^2))+({\rm
Td}_g^{-1})'(\overline{N}){\rm
ch}_g(\overline{\eta})\delta_{Y_g}]{\rm d}u \end{displaymath} is
well-defined on $X_g$ and it has a meromorphic continuation to the
complex plane which is holomorphic at $s=0$.
\end{lem}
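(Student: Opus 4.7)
The plan is to follow the standard zeta-function regularization strategy of Bismut--Gillet--Soul\'e developed in \cite{BGS3} and carried over to the equivariant setting in \cite{Bi}. First I would split the defining integral of $Z_g(\overline{\xi}.)(s)$ at $u=1$ into $\int_0^1 + \int_1^\infty$ and treat the two pieces separately. Well-definedness, meromorphic continuation, and holomorphy at $s=0$ will all reduce to matching asymptotic expansions of $\Phi{\rm Tr_s}(N_H g\exp(-C_u^2))$ as $u\to\infty$ and as $u\to 0^+$, the second of which carries almost all the analytic content of the lemma.

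For the tail $\int_1^\infty$, the strategy is to show that $\Phi{\rm Tr_s}(N_H g\exp(-C_u^2))$ tends exponentially fast to its limit as $u\to\infty$. By Bismut assumption (A), the harmonic bundle $H$ is canonically isometric to $\pi^*(\wedge N^\vee \otimes \eta)$ with differential $\sqrt{-1}\,i_z$, so the $u\to\infty$ limit of the supertrace, viewed as a current on $X_g$, is an explicit Chern--Weil expression on $Y_g$ which one can check equals $-({\rm Td}_g^{-1})'(\overline{N}){\rm ch}_g(\overline{\eta})\delta_{Y_g}$. The spectral gap of $(v+v^*)^2$ away from the harmonic subspace gives exponential decay of the difference between the supertrace and its limit, so after cancellation with the added $\delta_{Y_g}$-term the integrand decays exponentially and $\int_1^\infty u^{s-1}[\cdots]du$ is entire in $s$.

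The small-$u$ end is the main obstacle and is where the real analytic work lies. What is needed is a current-valued asymptotic expansion on $X_g$ of the form
\begin{displaymath}
\Phi{\rm Tr_s}(N_H g\exp(-C_u^2)) \sim -({\rm Td}_g^{-1})'(\overline{N}){\rm ch}_g(\overline{\eta})\delta_{Y_g} + \sum_{k\geq 0} u^{k/2}\alpha_k
\end{displaymath}
as $u\to 0^+$, where each $\alpha_k$ is a current on $X_g$ depending smoothly on the data. Producing this expansion requires the Bismut--Getzler rescaling of $C_u$ in a tubular neighbourhood of $Y_g\subset X_g$, careful uniform estimates on the heat kernel of the fibrewise Koszul-type operator coming from the resolution $\overline{\xi}.$, and a Mehler-formula computation which recovers the equivariant Todd genus of $\overline{N}$ from the transverse Gaussian integral. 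This is precisely the content of \cite[Section VI]{Bi} in our setting, and Bismut assumption (A) is exactly what guarantees the clean cancellation of the leading $\delta_{Y_g}$-singularity against the term added inside the brackets of the integrand.

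Once both asymptotic expansions are in hand, the meromorphic continuation becomes routine. The added $\delta_{Y_g}$-term kills the singular $u^{-1}$ contribution at $u\to 0^+$, so each surviving $u^{k/2}$ term in the expansion contributes a simple pole $1/(s+k/2)$ to the Mellin transform on $(0,1]$. Combined with the entirety of the $\int_1^\infty$ piece, this produces a meromorphic function of $s\in\C$ with at worst simple poles at $s\in -\frac{1}{2}\Z_{\geq 0}$. Multiplication by $1/\Gamma(s)$, which has a simple zero at $s=0$, cancels the potential pole there, so $Z_g(\overline{\xi}.)(s)$ is holomorphic at $s=0$. The principal task is thus the uniform small-$u$ expansion, for which the key heat-kernel estimates and Getzler-type limit argument from \cite{Bi} must be carried out in the present equivariant context.
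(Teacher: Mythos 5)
Your overall template — split the Mellin transform at $u=1$, establish asymptotic expansions at both ends, extract poles, and cancel the pole at $s=0$ against the zero of $1/\Gamma(s)$ — is indeed the correct shape of the argument, and the paper itself offers no proof beyond the pointer to \cite[Section VI]{Bi} that you are trying to reconstruct. However, you have interchanged the roles of $u\to 0^+$ and $u\to\infty$, and this reversal breaks both halves of your argument.

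For the superconnection $C_u=\nabla^\xi+\sqrt{u}(v+v^*)$, the limit $u\to 0^+$ gives $C_u\to\nabla^\xi$, so $\Phi\,\mathrm{Tr_s}(N_H\,g\,\exp(-C_u^2))$ converges to the \emph{smooth} form $\sum_j(-1)^j j\,\mathrm{ch}_g(\overline{\xi}_j)$; there is no $\delta_{Y_g}$-singularity and no $u^{-1}$ divergence at the small-$u$ end, so your proposed expansion with leading term $-(\mathrm{Td}_g^{-1})'(\overline{N})\,\mathrm{ch}_g(\overline{\eta})\,\delta_{Y_g}$ as $u\to 0^+$ is false. It is the limit $u\to\infty$ that concentrates the heat kernel on the zero locus of $v$, and this is where the supertrace converges, \emph{as a current}, to $-(\mathrm{Td}_g^{-1})'(\overline{N})\,\mathrm{ch}_g(\overline{\eta})\,\delta_{Y_g}$; Bismut assumption (A), the rescaling of $C_u$ in a tubular neighbourhood of $Y_g$, and the Mehler formula all belong to the analysis of this large-$u$ limit. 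Moreover, the convergence there is not exponential: the spectral-gap argument gives exponential decay only away from $Y_g$, while transverse to $Y_g$ the Gaussian concentration yields an expansion in inverse half-integer powers of $u$, with the bracketed integrand decaying only like $O(u^{-1/2})$. This is precisely why the lemma imposes $\mathrm{Re}(s)<\tfrac12$: with polynomial rather than exponential decay, $\int_1^\infty u^{s-1}[\cdots]\,du$ converges only in a half-plane and is not entire as you claim. You should redo the argument with the two ends exchanged: the small-$u$ end gives a bounded integrand, convergent for $\mathrm{Re}(s)>0$, and supplies the continuation to the left with the $1/\Gamma(s)$ factor cancelling the pole at $s=0$; the large-$u$ end, after cancellation with the added $\delta_{Y_g}$-term, yields the $O(u^{-1/2})$ expansion and the continuation to the right.
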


\begin{defn}\label{212}
The equivariant singular Bott-Chern current on $X_g$ associated to
the resolution $\overline{\xi}.$ is defined as
\begin{displaymath}
T_g(\overline{\xi}.):=\frac{\partial}{\partial
s}\mid_{s=0}Z_g(\overline{\xi}.)(s). \end{displaymath}
\end{defn}

\begin{thm}\label{213}
The current $T_g(\overline{\xi}.)$ is a sum of $(p,p)$-currents
and it satisfies the differential equation
\begin{displaymath}
{\rm dd}^cT_g(\overline{\xi}.)={i_g}_*{\rm
ch}_g(\overline{\eta}){\rm
Td}_g^{-1}(\overline{N})-\sum_k(-1)^k{\rm ch}_g(\overline{\xi}_k).
\end{displaymath} Moreover, the wave front set of
$T_g(\overline{\xi}.)$ is contained in $N^\vee_{g,\R}$ where
$N^\vee_{g,\R}$ stands for the underlying real bundle of the dual
of $N_g$.
\end{thm}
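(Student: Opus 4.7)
The plan is to adapt, in the equivariant setting, the transgression-plus-Mellin-transform strategy of Bismut-Gillet-Soul\'{e} from \cite{BGS3}. The heart of the argument is a transgression identity for the current-valued characteristic forms of the $G$-invariant superconnection $C_u = \nabla^\xi + \sqrt{u}(v+v^*)$, together with a careful computation of the two limits $u \to 0$ and $u \to \infty$ of the closed current $\alpha_u := \Phi{\rm Tr_s}[g\, e^{-C_u^2}]$, and finally a microlocal analysis of the heat kernel of $C_u^2$ for the wave front set bound.

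First I would establish, by the standard Quillen-Chern-Weil calculus applied to the rescaled family $C_u$, the transgression identity
$$\frac{\partial}{\partial u}\alpha_u = -\frac{1}{u}{\rm dd}^c\Phi{\rm Tr_s}[g N_H e^{-C_u^2}],$$
valid as currents on $X_g$. The bidegree claim (that every quantity in sight is a sum of $(p,p)$-currents) then follows by tracking how $\Phi$ and the supertrace interact with the bigrading of $\Lambda T^*_\R X_g$ and with the type decomposition of $C_u^2$; this is a book-keeping exercise on the complexified Clifford structure. For the two limits: the $u \to 0$ asymptotics is governed by the equivariant local index theorem, giving $\alpha_u \to \sum_k (-1)^k {\rm ch}_g(\overline{\xi}_k)$. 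The $u \to \infty$ asymptotics uses that $u(v+v^*)^2$ localizes exponentially on $Y$; combined with Bismut assumption (A) and the canonical identification $(\pi^*H,\partial_z v) \cong (\pi^*(\wedge N^\vee \otimes \eta), \sqrt{-1}i_z)$, an explicit Gaussian integration in the normal directions identifies the limit as the current ${i_g}_*{\rm ch}_g(\overline{\eta}){\rm Td}_g^{-1}(\overline{N})$. The $\sqrt{-1}$ normalization built into Definition 2.9 is precisely what produces the correct Todd class at infinity.

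With these two limits in hand, I would apply ${\rm dd}^c$ to the current-valued zeta function $Z_g(\overline{\xi}.)(s)$ of Lemma 2.11, use the transgression identity to rewrite the $N_H$-insertion as $-u\,\partial_u \alpha_u$, and integrate by parts in $u$. After analytic continuation and evaluating $\partial_s$ at $s=0$, the boundary pieces at $u=0$ and $u=\infty$ produce respectively $\sum_k (-1)^k {\rm ch}_g(\overline{\xi}_k)$ and ${i_g}_*{\rm ch}_g(\overline{\eta}){\rm Td}_g^{-1}(\overline{N})$, the divergent constant piece of $\beta_u$ at infinity being cancelled by the $({\rm Td}_g^{-1})'(\overline{N}){\rm ch}_g(\overline{\eta})\delta_{Y_g}$ subtraction built into $Z_g$ (that subtraction is included precisely for this purpose). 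Assembling these pieces yields the differential equation claimed in the theorem.

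For the wave front set statement I would follow Bismut's microlocal analysis from \cite[Section VI]{Bi}. In normal coordinates around $Y_g$, the Schwartz kernel of $N_H\, g\, e^{-C_u^2}$ has, for large $u$, a Gaussian profile in the normal direction with phase variable dual to $N_{g,\R}$, so its Fourier transform in the normal directions decays rapidly off the conormal $N^\vee_{g,\R}$. Combined with uniform-in-$u$ heat kernel estimates and the Mellin representation of $T_g(\overline{\xi}.)$, this gives the inclusion ${\rm WF}(T_g(\overline{\xi}.)) \subset N^\vee_{g,\R}$. I expect this last step to be the main obstacle, as it requires heat kernel bounds that are uniform in $u$ over all of $(0,\infty)$ and compatible with the Bismut assumption (A) subtraction, so that the Mellin integration in $s$ can be interchanged with the microlocal decay estimates without losing the wave front bound.
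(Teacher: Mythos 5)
Your outline matches the strategy of the argument this theorem is actually cited from (Bismut, \emph{Equivariant immersions and Quillen metrics}, Theorem 6.7 and Remark 6.8); the paper gives no independent proof and simply defers to that reference. One small correction: no local index theorem enters at $u\to 0$, where $C_0=\nabla^\xi$ so $\alpha_0=\Phi\,{\rm Tr_s}[g\,e^{-(\nabla^\xi)^2}]$ restricts on $X_g$ to $\sum_k(-1)^k{\rm ch}_g(\overline{\xi}_k)$ by pure linear algebra; it is the $u\to\infty$ regime, via the Gaussian (Mathai--Quillen) localization along $Y_g$ and the large-$u$ asymptotic expansion of the rescaled heat kernel, that carries all the analytic weight, produces the $({\rm Td}_g^{-1})'$ counterterm, and supplies the conormal estimates behind the wave-front-set bound.
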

\begin{proof}
This follows from \cite[Theorem 6.7, Remark 6.8]{Bi}.
\end{proof}

Finally, we recall a theorem concerning the relationship of
equivariant Bott-Chern singular currents involved in a double
complex. This theorem will be used to show that our definition of
a general embedding morphism in equivariant arithmetic
$G_0$-theory is reasonable.

\begin{thm}\label{214}
Let \begin{displaymath} \overline{\chi}:\quad 0\rightarrow
\overline{\eta}_n\rightarrow\cdots\rightarrow\overline{\eta}_1\rightarrow\overline{\eta}_0\rightarrow0
\end{displaymath} be an exact sequence of equivariant hermitian
vector bundles on $Y$. Assume that we have the following double
complex consisting of resolutions of $i_*\overline{\chi}$ such
that all rows are exact sequences. \begin{displaymath}
\xymatrix{0
\ar[r] & \overline{\xi}_{n,\cdot} \ar[r] \ar[d] & \cdots \ar[r] &
\overline{\xi}_{1,\cdot} \ar[r] \ar[d] &
\overline{\xi}_{0,\cdot} \ar[r] \ar[d] & 0 \\
0 \ar[r] & i_*\overline{\eta}_n \ar[r] & \cdots \ar[r] &
i_*\overline{\eta}_1 \ar[r] & i_*\overline{\eta}_0 \ar[r] & 0.}
\end{displaymath} For each $k$, we write $\overline{\varepsilon}_k$
for the exact sequence \begin{displaymath} 0\rightarrow
\overline{\xi}_{n,k}\rightarrow \cdots\rightarrow
\overline{\xi}_{1,k}\rightarrow \overline{\xi}_{0,k}\rightarrow 0.
\end{displaymath} Then we have the following equality in
$\widetilde{\mathcal{U}}(X_g):=\bigoplus_{p\geq0}(D^{p,p}(X_g)/({\rm
Im}\partial+{\rm Im}\overline{\partial}))$
\begin{displaymath}
\sum_{j=0}^n(-1)^jT_g(\overline{\xi}_{j,\cdot})={i_g}_*\frac{\widetilde{{\rm
ch}}_g(\overline{\chi})}{{\rm
Td}_g(\overline{N})}-\sum_k(-1)^k\widetilde{{\rm
ch}}_g(\overline{\varepsilon}_k). \end{displaymath} Here
$D^{p,p}(X_g)$ stands for the space of currents on $X_g$ of type
$(p,p)$.
\end{thm}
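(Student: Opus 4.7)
The plan is to prove the equality by combining a ${\rm dd}^c$-level consistency check with induction on the length $n$ of $\overline{\chi}$, the core analytic input being a Bismut-type deformation formula for short exact sequences of resolutions.

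First I would apply ${\rm dd}^c$ to both sides. By Theorem~\ref{213}, the left-hand side becomes
$$\sum_{j=0}^n(-1)^j\Bigl[{i_g}_*\bigl({\rm ch}_g(\overline{\eta}_j){\rm Td}_g^{-1}(\overline{N})\bigr)-\sum_k(-1)^k{\rm ch}_g(\overline{\xi}_{j,k})\Bigr].$$
Extending the secondary Bott-Chern class $\widetilde{{\rm ch}}_g$ from short to long exact sequences by splicing at intermediate images and iterating Theorem~\ref{s202}, one has ${\rm dd}^c\widetilde{{\rm ch}}_g(\overline{\chi})=\sum_j(-1)^j{\rm ch}_g(\overline{\eta}_j)$ and ${\rm dd}^c\widetilde{{\rm ch}}_g(\overline{\varepsilon}_k)=\sum_j(-1)^j{\rm ch}_g(\overline{\xi}_{j,k})$, so the right-hand side produces the same expression. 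Hence both sides agree modulo $\partial\overline{\partial}$-closed currents, and the remaining task is to pin down this discrepancy.

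The base cases carry the essential content. For $n=1$, the maps $\overline{\eta}_1\to\overline{\eta}_0$ and $\overline{\xi}_{1,k}\to\overline{\xi}_{0,k}$ are isomorphisms, making the two columns equivariantly isomorphic as complexes of vector bundles differing only in their hermitian metrics; the claim reduces to the metric anomaly formula for the equivariant Bott-Chern singular current, provable as in~\cite{Bi} by interpolating the metrics along a smooth family $h_t$ ($t\in[0,1]$), forming the corresponding family of superconnections $C_{u,t}$ in Lemma~\ref{211}, and computing the $t$-derivative of the current-valued zeta function at $s=0$. For $n=2$, $\overline{\chi}$ is a genuine short exact sequence and the statement is essentially the equivariant version of the main comparison theorem of Bismut-Gillet-Soul\'e \cite{BGS3}, treated in the equivariant setting by Bismut \cite{Bi}; it is obtained by an analogous but more intricate deformation of superconnections that exploits the short exactness of the rows. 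The wave-front bound of Theorem~\ref{213} is used throughout to legitimize current manipulations near $Y_g$.

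For $n\geq 3$ one proceeds by induction. Pick $2\leq k\leq n-1$ and set $\overline{\mathcal{K}}:={\rm Im}(\overline{\eta}_k\to\overline{\eta}_{k-1})$ with any equivariant hermitian metric. Then $\overline{\chi}$ splits into strictly shorter exact sequences
$$\overline{\chi}':\;0\to\overline{\eta}_n\to\cdots\to\overline{\eta}_k\to\overline{\mathcal{K}}\to 0, \qquad \overline{\chi}'':\;0\to\overline{\mathcal{K}}\to\overline{\eta}_{k-1}\to\cdots\to\overline{\eta}_0\to 0.$$
By Proposition~\ref{210} choose a resolution $\overline{\zeta}.$ of $i_*\overline{\mathcal{K}}$ satisfying Bismut assumption (A) and extend to compatible double complexes resolving $\overline{\chi}'$ and $\overline{\chi}''$; their rows remain exact because the original rows were. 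Applying the induction hypothesis to each and summing, the terms involving $T_g(\overline{\zeta}.)$ have opposite signs and cancel, while the splicing formula for secondary Bott-Chern classes (a direct consequence of Theorem~\ref{s202}) reassembles $\widetilde{{\rm ch}}_g(\overline{\chi})$ and $\sum_k(-1)^k\widetilde{{\rm ch}}_g(\overline{\varepsilon}_k)$. The main obstacle in the whole argument is the base case $n=2$: comparing the three Bott-Chern singular currents attached to a short exact sequence of resolutions requires careful uniform control of the supertrace $\Phi\,{\rm Tr}_s(N_H g\exp(-C_u^2))$ along the deformation, especially in the regimes $u\to 0$ and $u\to\infty$ where the contributions from the embedding $i$ and from the short exact row structure must be delicately balanced.
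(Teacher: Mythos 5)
The paper's own ``proof'' of this statement is a one-line citation to \cite[Theorem 3.14]{KR1}, so there is no internal argument to compare against; what you have written is a from-scratch attempt, and it has two genuine gaps.

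First, the entire analytic content of the theorem is concentrated in your base case $n=2$, and there you do not actually prove anything: you assert it is ``essentially the equivariant version of the main comparison theorem of \cite{BGS3}, treated in the equivariant setting by \cite{Bi},'' but neither reference contains a formula comparing the three singular currents $T_g(\overline{\xi}_{2,\cdot}),T_g(\overline{\xi}_{1,\cdot}),T_g(\overline{\xi}_{0,\cdot})$ attached to a short exact sequence of resolutions. Bismut's results in \cite{Bi} give the anomaly formula (metric dependence of $T_g$ for a single resolution, which is in effect your $n=1$ case) and the fundamental ${\rm dd}^c$-equation, not the double-complex/short-exact-sequence comparison. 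Deriving it would require a new superconnection transgression adapted to the column filtration, which is precisely the work the theorem represents; the sentence ``obtained by an analogous but more intricate deformation of superconnections'' names the difficulty without surmounting it. (Your $n=1$ case is correct, but it is only the anomaly formula and carries little of the load.)

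Second, the induction step for $n\geq 3$ has a construction problem. Cutting $\overline{\chi}$ at $\overline{\mathcal{K}}={\rm Im}(\overline{\eta}_k\to\overline{\eta}_{k-1})$ forces the boundary column of the two sub-double-complexes to be the image bundles $Z_\cdot={\rm Im}(\xi_{k,\cdot}\to\xi_{k-1,\cdot})$, which is indeed a resolution of $i_*\mathcal{K}$, but which carries no canonical hermitian metric and certainly none guaranteed to satisfy Bismut assumption (A) with respect to $\overline{\mathcal{K}}$ and $\overline{N}$. Conversely, a resolution $\overline{\zeta}_\cdot$ chosen via Proposition~\ref{210} to satisfy (A) is in general a different complex, and it does not fit into the given double complex: lifting $\eta_k\to\mathcal{K}$ and $\mathcal{K}\to\eta_{k-1}$ to chain maps $\xi_{k,\cdot}\to\zeta_\cdot\to\xi_{k-1,\cdot}$ only reproduces the given differential up to chain homotopy, and exactness of the rows is not automatic. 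Bridging $Z_\cdot$ and $\zeta_\cdot$ requires another application of the comparison formula you are trying to prove, so the induction as written is circular. Finally, note that the ${\rm dd}^c$-consistency check, while a useful sanity test, only shows the discrepancy is ${\rm dd}^c$-closed; it does not reduce the problem, since the equality must hold modulo ${\rm Im}\,\partial+{\rm Im}\,\overline{\partial}$, not merely up to a closed current.
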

\begin{proof}
This is \cite[Theorem 3.14]{KR1}.
\end{proof}

\subsection{Bismut-Ma's immersion formula}
In this subsection, we shall recall Bismut-Ma's immersion formula
which reflects the behaviour of the equivariant analytic torsion
forms of a K\"{a}hler fibration under composition of an immersion
and a submersion. By translating to the equivariant arithmetic
$G_0$-theoretic language, such a formula can be used to measure,
in arithmetic $G_0$-theory, the difference between a push-forward
morphism and the composition formed as an embedding morphism
followed by a push-forward morphism. Although Bismut-Ma's
immersion formula plays a very important role in our arguments, we
shall not recall its proof since it is rather long and technical.

Let $i: Y\rightarrow X$ be an equivariant closed immersion of
$G$-equivariant K\"{a}hler manifolds. Let $S$ be a complex
manifold with the trivial $G$-action, and let $f: Y\rightarrow S$,
$l: X\rightarrow S$ be two equivariant holomorphic submersions
such that $f=l\circ i$. Assume that $\overline{\eta}$ is an
equivariant hermitian vector bundle on $Y$ and $\overline{\xi}.$
provides a resolution of $i_*\overline{\eta}$ on $X$ whose metrics
satisfy Bismut assumption (A). Let $\omega^Y$, $\omega^X$ be the
real, closed and $G$-invariant $(1,1)$-forms on $Y$, $X$ which
induce the K\"{a}hler fibrations with respect to $f$ and $l$
respectively. We additionally assume that $\omega^Y$ is the
pull-back of $\omega^X$ so that the K\"{a}hler metric on $Y$ is
induced by the K\"{a}hler metric on $X$. As before, denote by $N$
the normal bundle of $i(Y)$ in $X$. Consider the following exact
sequence
\begin{displaymath}
\overline{\mathcal{N}}:\quad 0\rightarrow \overline{Tf}\rightarrow
\overline{Tl}\mid_Y\rightarrow \overline{N}\rightarrow 0
\end{displaymath} where $N$ is endowed with the quotient metric, we
shall write $\widetilde{{\rm
Td}}_g(\overline{Tf},\overline{Tl}\mid_Y)$ for $\widetilde{{\rm
Td}}_g(\overline{\mathcal{N}})$ the equivariant Bott-Chern
secondary characteristic class associated to
$\overline{\mathcal{N}}$. It satisfies the following differential
equation \begin{displaymath} {\rm dd}^c\widetilde{{\rm
Td}}_g(\overline{Tf},\overline{Tl}\mid_Y)={\rm
Td}_g(Tf,h^{Tf}){\rm Td}_g(\overline{N})-{\rm
Td}_g(Tl\mid_Y,h^{Tl}). \end{displaymath}

For simplicity, we shall suppose that in the resolution $\xi.$,
$\xi_j$ are all $l-$acyclic and moreover $\eta$ is $f-$acyclic. By
an easy argument of long exact sequence, we have the following
exact sequence \begin{displaymath} \Xi:\quad 0\rightarrow
l_*(\xi_m)\rightarrow l_*(\xi_{m-1})\rightarrow\ldots\rightarrow
l_*(\xi_0)\rightarrow f_*\eta\rightarrow 0. \end{displaymath} By
the semi-continuity theorem, all the elements in the exact
sequence above are vector bundles. In this case, we recall the
definition of the $L^2$-metrics on direct images precisely as
follows. We just take $f_*h^\eta$ as an example. Note that the
semi-continuity theorem implies that the natural map
\begin{displaymath}
(R^0f_*\eta)_s\rightarrow H^0(Y_s,\eta\mid_{Y_s})
\end{displaymath} is an isomorphism for every point $s\in S$ where
$Y_s$ stands for the fibre over $s$. We may endow
$H^0(Y_s,\eta\mid_{Y_s})$ with a $L^2$-metric given by the formula
\begin{displaymath}
<u,v>_{L^2}:=\frac{1}{(2\pi)^{d_s}}\int_{Y_s}h^\eta(u,v)\frac{{\omega^Y}^{d_s}}{d_s!}
\end{displaymath} where $d_s$ is the complex dimension of the fibre
$Y_s$. It can be shown that these metrics depend on $s$ in a
$C^\infty$ manner (cf. \cite[p.278]{BGV}) and hence define a
hermitian metric on $f_*\eta$. We shall denote it by $f_*h^\eta$.

In order to understand the statement of Bismut-Ma's immersion
formula, we still have to recall an important concept defined by
J.-M. Bismut, the equivariant $R$-genus. Let $W$ be a
$G$-equivariant complex manifold, and let $\overline{E}$ be an
equivariant hermitian vector bundle on $W$. For $\zeta\in S^1$ and
$s>1$ consider the zeta function
\begin{displaymath}
L(\zeta,s)=\sum_{k=1}^\infty\frac{\zeta^k}{k^s} \end{displaymath}
and its meromorphic continuation to the whole complex plane.
Define the formal power series in $x$
\begin{displaymath}
\widetilde{R}(\zeta,x):=\sum_{n=0}^\infty\big(\frac{\partial
L}{\partial
s}(\zeta,-n)+L(\zeta,-n)\sum_{j=1}^n\frac{1}{2j}\big)\frac{x^n}{n!}.
\end{displaymath}

\begin{defn}\label{215}
The Bismut equivariant $R$-genus of an equivariant hermitian
vector bundle $\overline{E}$ with
$\overline{E}\mid_{X_g}=\sum_\zeta\overline{E}_\zeta$ is defined
as \begin{displaymath} R_g(\overline{E}):=\sum_{\zeta\in
S^1}\big({\rm
Tr}\widetilde{R}(\zeta,-\frac{\Omega^{\overline{E}_\zeta}}{2\pi
i})-{\rm
Tr}\widetilde{R}(1/\zeta,\frac{\Omega^{\overline{E}_\zeta}}{2\pi
i})\big) \end{displaymath} where $\Omega^{\overline{E}_\zeta}$ is
the curvature form associated to $\overline{E}_\zeta$. Actually,
the class of $R_g(\overline{E})$ in $\widetilde{A}(X_g)$ is
independent of the metric and we just write $R_g(E)$ for it.
Furthermore, the class $R_g(\cdot)$ is additive.
\end{defn}

\begin{thm}\label{216}(Immersion formula)
Let notations and assumptions be as above. Then the equality
\begin{align*}
&\sum_{i=0}^m(-1)^iT_g(\omega^X,h^{\xi_i})-T_g(\omega^Y,h^\eta)+\widetilde{{\rm
ch}}_g(\Xi,h^{L^2})\\
=&\int_{X_g/S}{\rm
Td}_g(Tl,h^{Tl})T_g(\overline{\xi}.)+\int_{Y_g/S}\frac{\widetilde{{\rm
Td}}_g(\overline{Tf},\overline{Tl}\mid_Y)}{{\rm
Td}_g(\overline{N})}{\rm ch}_g(\overline{\eta})\\
&+\int_{X_g/S}{\rm
Td}_g(Tl)R_g(Tl)\sum_{i=0}^m(-1)^i{\rm
ch}_g(\xi_i)-\int_{Y_g/S}{\rm Td}_g(Tf)R_g(Tf){\rm ch}_g(\eta)
\end{align*}
holds in $\widetilde{A}(S)$.
\end{thm}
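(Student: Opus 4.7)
The plan is to prove the identity by a deformation argument interpolating between the two pictures on the left-hand side via a two-parameter family of superconnections. Over $S$ I would introduce the family $B_{u,T} = \nabla^E + \sqrt{u}(\overline{\partial}^Z + \overline{\partial}^{Z*}) + T(v+v^*) - \tfrac{1}{2\sqrt{2u}} c(T)$ acting on the smooth sections of $\Lambda(T^{*(0,1)}l) \otimes \xi_\cdot$, where $u>0$ is the rescaling parameter from Definition~\ref{206} and $T \geq 0$ is a deformation parameter. At $T=0$ this is the Bismut superconnection of $l:X \to S$ coupled to the resolution $\xi_\cdot$; as $T \to \infty$, Bismut assumption (A) forces the operator to concentrate along $Y$ and the limit recovers the Bismut superconnection of $f: Y \to S$ coupled to $\eta$, together with the $L^2$-metrised complex $\Xi$.

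The central calculation is to apply Stokes' theorem on $(0,\infty)^2$ to the equivariant supertrace $\Phi\,\mathrm{Tr}_s[g N_u \exp(-B_{u,T}^2)]$, together with its natural $du$- and $dT$-primitives. This yields an identity between four boundary contributions. The sides $T=0$ and $T=\infty$ assemble, via Definition~\ref{206}, into $\sum_i(-1)^i T_g(\omega^X,h^{\xi_i}) - T_g(\omega^Y, h^\eta) + \widetilde{\mathrm{ch}}_g(\Xi, h^{L^2})$, where the secondary Chern character appears because the $L^2$-metrics on the $l_*\xi_j$ and $f_*\eta$ need not coincide with those delivered by the deformation. The side $u \to 0$ yields $\int_{X_g/S} \mathrm{Td}_g(Tl)\, T_g(\overline{\xi}_\cdot)$ from Definition~\ref{212} and $\int_{Y_g/S} \widetilde{\mathrm{Td}}_g(\overline{Tf},\overline{Tl}\mid_Y)/\mathrm{Td}_g(\overline{N}) \cdot \mathrm{ch}_g(\overline{\eta})$ from an equivariant local family index computation along the fibres of $l$ and $f$ near $Y_g \subset X_g$. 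The side $u \to \infty$ produces the two $R_g$-correction terms, coming from the comparison between $L^2$-metrics and the equivariant Quillen metric on cohomology.

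The main obstacle is establishing uniform asymptotics of the supertrace near all four corners of the parameter rectangle, so that these boundary contributions are isolated without spurious double-corner residues. The most delicate regime is the joint limit $T \to \infty$, $u \to 0$: here one must combine an equivariant Bismut--Getzler rescaling along $TX_g$ with a semiclassical harmonic-oscillator analysis in the conormal directions to $Y_g$ in $X_g$, and it is precisely Bismut assumption (A) that makes the limiting operator explicitly computable and produces the factor $\mathrm{Td}_g^{-1}(\overline{N})$ which is then absorbed into the singular current and the secondary Todd class. The large-$u$ regime requires the equivariant extension of the arguments of \cite{BK}, in which $R_g$ arises as the anomaly relating equivariant analytic torsion to the Quillen metric. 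Once these asymptotics are in place at every corner, the theorem follows by collecting boundary pieces in $\widetilde{A}(S)$ and applying the anomaly formula (Theorem~\ref{208}) to reconcile the various metric choices introduced along the deformation.
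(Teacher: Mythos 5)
The paper does not actually prove this theorem: its ``proof'' is a one-line citation to Theorems 0.1 and 0.2 of Bismut--Ma (reference \cite{BM}), which are the main results of that paper. So what you are proposing is an outline of the argument \emph{in the cited source}, not an alternative to anything in this paper.

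Your outline does capture the broad strategy that Bismut and Ma actually use --- a two-parameter family of superconnections in $(u,T)$, integration of a fundamental closed form over a rectangle in the parameter space, and isolation of the four boundary contributions --- and you correctly flag the joint limit $T\to\infty$, $u\to 0$ as the hardest regime, where a Getzler rescaling along $T X_g$ must be coupled with a harmonic-oscillator analysis in the conormal directions and Bismut assumption~(A) makes the limit computable. That much is an accurate summary of the shape of the argument. However, as a \emph{proof} the proposal has a genuine gap: the entire analytic content of \cite{BM} (and the companion estimates imported from Bismut--Lebeau and from \cite{Bi}) is compressed into the sentence ``once these asymptotics are in place at every corner, the theorem follows.'' Establishing those uniform estimates --- including the absence of spurious corner contributions, and the pointwise convergence of rescaled heat kernels --- is the bulk of the work and occupies tens of pages in the references; asserting that it can be done is not the same as doing it.

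Two smaller inaccuracies are worth flagging. First, your superconnection $B_{u,T}=\nabla^E+\sqrt{u}(\overline{\partial}^Z+\overline{\partial}^{Z*})+T(v+v^*)-\tfrac{1}{2\sqrt{2u}}c(T)$ has a notational clash: the symbol $c(T)$ in Definition~\ref{203} is the Clifford action of the torsion tensor of the fibration, and is unrelated to the deformation parameter you have also called $T$; moreover, in the Bismut--Lebeau--Ma normalisation the embedding term is scaled jointly with $u$, not independently, and getting this scaling right is what makes the corner asymptotics work. Second, the $R_g$-correction terms do \emph{not} come from ``the equivariant extension of the arguments of \cite{BK},'' i.e.\ the anomaly relating torsion to Quillen metrics; in Bismut--Ma they arise from Bismut's explicit evaluation in \cite{Bi} of the large-time behaviour of certain equivariant heat kernels associated with the embedding, which is where the function $\widetilde R(\zeta,x)$ of Definition~\ref{215} is actually produced. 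Attributing the $R$-genus to the anomaly formula misidentifies the mechanism and would not reproduce the precise combination $\int_{X_g/S}\mathrm{Td}_g(Tl)R_g(Tl)\sum(-1)^i\mathrm{ch}_g(\xi_i)-\int_{Y_g/S}\mathrm{Td}_g(Tf)R_g(Tf)\mathrm{ch}_g(\eta)$ appearing in the statement.
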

\begin{proof}
This is the combination of \cite[Theorem 0.1 and 0.2]{BM}, the
main theorems in that paper.
\end{proof}

\section{A vanishing theorem for equivariant closed immersions}

\subsection{The statement}
By a projective manifold we shall understand a compact complex
manifold which is projective algebraic, that means a projective
manifold is the complex analytic space $X(\C)$ associated to a
smooth projective variety $X$ over $\C$ (cf. \cite[Appendix
B]{Ha}). Let $\mu_n$ be the diagonalisable group variety over $\C$
associated to $\Z/{n\Z}$. We say $X$ is $\mu_n$-equivariant if it
admits a $\mu_n$-projective action, this means the associated
projective manifold $X(\C)$ admits an action by the group of
complex $n$-th roots of unity. Denote by $X_{\mu_n}$ the fixed
point subscheme of $X$, by GAGA principle, $X_{\mu_n}(\C)$ is
equal to $X(\C)_g$ where $g$ is the automorphism on $X(\C)$
corresponding to a fixed primitive $n$-th root of unity. If no
confusion arises, we shall not distinguish between $X$ and $X(\C)$
as well as $X_{\mu_n}$ and $X_g$. Since the classical arguments of
locally free resolutions may not be compatible with the
equivariant setting, we summarize some crucial facts we need as
follows.

(i). Every equivariant coherent sheaf on an equivariantly
projective scheme is an equivariant quotient of an equivariant
locally free coherent sheaf.

(ii). Every equivariant coherent sheaf on an equivariantly
projective scheme admits an equivariant locally free resolution.
It is finite if the equivariant scheme is regular.

(iii). An exact sequence of equivariant coherent sheaves on an
equivariantly projective scheme admits an exact sequence of
equivariant locally free resolutions.

(iv). Any two equivariant locally free resolutions of an
equivariant coherent sheaf on an equivariantly projective scheme
can be dominated by a third one.

Now let $i: Y \to X$ be a $\mu_n$-equivariant closed immersion of
projective manifolds with normal bundle $N$. Let $S$ be a
projective manifold with the trivial $\mu_n$-action and let $h:
X\to S$ be an equivariant holomorphic submersion whose restriction
$f: Y\to S$ is an equivariant holomorphic submersion. According to
our assumptions, we may define a K\"{a}hler fibration with respect
to $h$ by choosing a $\mu_n(\C)$-invariant K\"{a}hler form
$\omega^X$ on $X$. By restricting $\omega^X$ to $Y$ we obtain a
K\"{a}hler fibration with respect to $f$. The same thing goes to
$h_g: X_g\to S$ and $f_g: Y_g\to S$. Let $\overline{\eta}$ be an
equivariant hermitian holomorphic vector bundle on $Y$, assume
that $(\overline{\xi.},v)$ is a complex of equivariant hermitian
vector bundles on $X$ which provides a resolution of
$i_*\overline{\eta}$, whose metrics satisfy Bismut assumption (A).

Write $N_g$ for the $0$-degree part of $N\mid_{Y_g}$ which is
isomorphic to the normal bundle of $i_g(Y_g)$ in $X_g$ and denote
by $F$ the orthogonal complement of $N_g$. According to \cite[Exp.
VII, Lemma 2.4 and Proposition 2.5]{GBI} we know that there exists
a canonical isomorphism from the homology sheaf
$H(\xi.\mid_{X_g})$ to ${i_g}_*(\wedge^{\cdot}F^\vee\otimes
\eta\mid_{Y_g})$ which is equivariant. Then the restriction of
$(\xi.,v)$ to $X_g$ can always split into a series of short exact
sequences in the following way: \begin{displaymath} (*):\quad 0\to
{\rm Im}\to {\rm Ker}\to
{i_g}_*(\wedge^{\cdot}F^\vee\otimes\eta\mid_{Y_g})\to 0
\end{displaymath} and
\begin{displaymath}
(**):\quad 0\to {\rm Ker}\to \xi.\mid_{X_g} \to {\rm Im} \to 0.
\end{displaymath} Suppose that
$\wedge^{\cdot}F^\vee\otimes\eta\mid_{Y_g}$ and $\xi.\mid_{X_g}$
are all acyclic (higher direct images vanish). Then according to
an easy argument of long exact sequence, these short exact
sequences $(*)$ and $(**)$ induce a series of short exact
sequences of direct images:
\begin{displaymath}
\mathcal{H}(*):\quad 0\to R^0{h_g}_*({\rm Im})\to R^0{h_g}_*({\rm
Ker})\to R^0{f_g}_*(\wedge^{\cdot}F^\vee\otimes\eta\mid_{Y_g})\to
0 \end{displaymath} and \begin{displaymath} \mathcal{H}(**):\quad
0\to R^0{h_g}_*({\rm Ker})\to R^0{h_g}_*(\xi.\mid_{X_g})\to
R^0{h_g}_*({\rm Im})\to 0. \end{displaymath}

By semi-continuity theorem, all elements in the exact sequences
above are vector bundles. We endow $R^0{h_g}_*(\xi.\mid_{X_g})$
and $R^0{f_g}_*(\wedge^{\cdot}F^\vee\otimes\eta\mid_{Y_g})$ with
the $L^2$-metrics which are induced by the metrics on $\xi.$,
$\eta$ and $F$. Here the normal bundle $N$ admits the quotient
metric induced from the exact sequence \begin{displaymath} 0 \to
Tf \to Th\mid_{Y} \to N \to 0 \end{displaymath} and the bundle $F$
admits the metric induced by the metric on $N$. Moreover, we endow
$R^0{h_g}_*({\rm Im})$ and $R^0{h_g}_*({\rm Ker})$ with the
metrics induced by the $L^2$-metrics of
$R^0{h_g}_*(\xi.\mid_{X_g})$ so that $\mathcal{H}(*)$ and
$\mathcal{H}(**)$ become short exact sequences of equivariant
hermitian vector bundles. Denote by $\widetilde{{\rm
ch}}_g(\overline{\xi.},\overline{\eta})$ the alternating sum of
the equivariant secondary Bott-Chern characteristic classes of
$\mathcal{H}(*)$ and $\mathcal{H}(**)$ such that it satisfies the
following differential equation
\begin{align*}
{\rm dd}^c\widetilde{{\rm
ch}}_g(\overline{\xi.},\overline{\eta})=&\sum_{j}(-1)^{j}{\rm
ch}_g(R^0{f_g}_*(\wedge^{j}\overline{F}^\vee\otimes\overline{\eta}\mid_{Y_g}))\\
&\qquad\qquad\qquad\qquad\qquad-\sum_{j}(-1)^{j}{\rm
ch}_g(R^0{h_g}_*(\overline{\xi}_j\mid_{X_g})).
\end{align*}

Now the difference
\begin{align*}
\delta(i,\overline{\eta},\overline{\xi}.):=&\widetilde{{\rm
ch}}_g(\overline{\xi.},\overline{\eta})-\sum_k(-1)^kT_g(\omega^{Y_g},h^{\wedge^kF^\vee\otimes\eta\mid_{Y_g}})\\
&+\sum_k(-1)^kT_g(\omega^{X_g},h^{\xi_k\mid_{X_g}})-\int_{X_g/S}T_g(\overline{\xi.}){\rm
Td}(\overline{Th_g})\\
&-\int_{Y_g/S}{\rm Td}(Tf_g){\rm Td}_g^{-1}(F){\rm
ch}_g(\eta)R(N_g)\\
&-\int_{Y_g/S}{\rm ch}_g(\overline{\eta}){\rm
Td}_g^{-1}(\overline{N})\widetilde{{\rm
Td}}(\overline{Tf_g},\overline{Th_g}\mid_{Y_g})
\end{align*}
makes sense and it is an element in
$\bigoplus_{p\geq0}A^{p,p}(S)/{({\rm Im}\partial+{\rm
Im}\overline{\partial})}$. Here the symbols $T_g(\cdot)$ in the
summations stand for analytic torsion forms introduced in Section
2.1, the symbol $T_g(\overline{\xi.})$ in the integral is the
equivariant Bott-Chern singular current introduced in Section 2.2.

The vanishing theorem for equivariant closed immersions can be
formulated as the following.

\begin{thm}\label{301}
Let $i: Y\to X$ be an equivariant closed immersion of projective
manifolds, and let $S$ be a projective manifold with the trivial
$\mu_n$-action. Assume that we are given two equivariant
holomorphic submersions $f: Y\to S$ and $h: X\to S$ such that
$f=h\circ i$. Then $X$ admits an equivariant hermitian very ample
invertible sheaf $\overline{\mathcal{L}}$ relative to the morphism
$h$, and for any equivariant hermitian resolution $0\to
\overline{\xi}_m\to \cdots\overline{\xi}_1\to \overline{\xi}_0\to
i_*\overline{\eta}\to 0$ we have
\begin{displaymath}
\delta(i,\overline{\eta}\otimes i^*\overline{\mathcal{L}}^{\otimes
n},\overline{\xi.}\otimes \overline{\mathcal{L}}^{\otimes n})=0
\quad\quad \text{for} \quad n\gg0. \end{displaymath} Here the
metrics on the resolution are supposed to satisfy Bismut
assumption (A).
\end{thm}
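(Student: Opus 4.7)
My strategy is to deduce the vanishing from two coordinated applications of Bismut--Ma's immersion formula (Theorem~\ref{216}) together with Theorem~\ref{214}, after twisting the data by $\overline{\mathcal{L}}^{\otimes n}$ for $n$ sufficiently large. The twist plays only a bookkeeping role: by Serre vanishing and the semicontinuity theorem, for $n \gg 0$ the sheaves $\overline{\xi}_k \otimes \overline{\mathcal{L}}^{\otimes n}$ and $\overline{\eta} \otimes i^*\overline{\mathcal{L}}^{\otimes n}$, their restrictions to the fixed loci, and the Koszul bundles $\wedge^k \overline{F}^\vee \otimes \overline{\eta}|_{Y_g} \otimes \overline{\mathcal{L}}^{\otimes n}|_{Y_g}$ all become acyclic for the relevant submersions to $S$, so that the direct images are locally free with well-defined $L^2$-metrics, the direct image sequence $\Xi$ exists, and the sequences $\mathcal{H}(*), \mathcal{H}(**)$ are short exact sequences of equivariant hermitian vector bundles on $S$. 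The equivariant very ample $\overline{\mathcal{L}}$ itself is produced from any relatively very ample invertible sheaf on $X$ by a standard $\mu_n$-averaging construction and endowed with a $\mu_n$-invariant smooth metric.

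The first identity $(E_1)$ is obtained by applying Theorem~\ref{216} directly to the ambient picture: to $i: Y \to X$, submersions $f, h$, and the twisted resolution $\overline{\xi.} \otimes \overline{\mathcal{L}}^{\otimes n}$ of $i_*(\overline{\eta} \otimes i^*\overline{\mathcal{L}}^{\otimes n})$. This links the ambient equivariant torsion forms $T_g(\omega^X, h^{\xi_i \otimes \mathcal{L}^{\otimes n}})$ and $T_g(\omega^Y, h^{\eta \otimes i^*\mathcal{L}^{\otimes n}})$, the $L^2$-secondary class $\widetilde{\mathrm{ch}}_g(\Xi_n, h^{L^2})$, the singular current $T_g(\overline{\xi.} \otimes \overline{\mathcal{L}}^{\otimes n})$, and Bismut--Ma's $R_g$ and secondary-Todd correction integrals. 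The second identity $(E_2)$ arises from applying Theorem~\ref{216} on the fixed-point locus to $i_g: Y_g \to X_g$ with submersions $f_g, h_g$. The subtlety is that $(\overline{\xi.} \otimes \overline{\mathcal{L}}^{\otimes n})|_{X_g}$ is \emph{not} itself a resolution of a pushforward from $Y_g$: under the canonical identification of \cite{GBI}, its homology in degree $k$ is $\wedge^k \overline{F}^\vee \otimes \overline{\eta}|_{Y_g} \otimes \overline{\mathcal{L}}^{\otimes n}|_{Y_g}$. I would therefore decompose it via the exact sequences $(*)$ and $(**)$ recorded just before the theorem, apply Bismut--Ma to auxiliary Koszul-type resolutions of each homology piece on $X_g$ whose metrics are arranged to satisfy Bismut assumption~(A) with respect to the splitting $\overline{N}|_{Y_g} = \overline{N_g} \oplus \overline{F}$, and assemble the pieces via Theorem~\ref{214} so that their combined singular currents reduce to $T_g(\overline{\xi.} \otimes \overline{\mathcal{L}}^{\otimes n})$ up to a controlled correction.

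Subtracting $(E_2)$ from $(E_1)$ should then reproduce $\delta$ term by term: the ambient torsion forms cancel against the fixed-locus torsion sums once one accounts for the $g$-eigenvalue decomposition of the relevant bundles on the fixed loci; the $L^2$-secondary class $\widetilde{\mathrm{ch}}_g(\Xi_n, h^{L^2})$ matches $\widetilde{\mathrm{ch}}_g(\overline{\xi.}, \overline{\eta})$ using the base-change compatibility of $L^2$-metrics granted by the acyclicity; and the remaining integrals recombine, via the splittings $\overline{Th}|_Y = \overline{Tf} \oplus \overline{N}$ and $\overline{N}|_{Y_g} = \overline{N_g} \oplus \overline{F}$ together with the additivity and multiplicativity of $R_g$ and $\mathrm{Td}_g$, into the integrals appearing in $\delta$. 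The hardest step will be this last matching: organizing the many secondary Bott--Chern corrections coming from the two decompositions and verifying that after the twist every error term indeed vanishes in $\widetilde{A}(S)$. A subsidiary technical point is the preparation of the auxiliary Koszul-type resolutions on $X_g$ with metrics satisfying Bismut assumption~(A), which is what allows the Bismut--Ma applications inside $(E_2)$ to proceed without extra correction terms.
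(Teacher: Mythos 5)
Your strategy departs fundamentally from the paper's, and the departure introduces a gap that I do not see how to close. The central issue is your proposed $(E_1)$: applying Bismut--Ma's immersion formula (Theorem~\ref{216}) to the ambient immersion $i\colon Y\to X$ with submersions $f, h$ produces the \emph{ambient} equivariant analytic torsion forms $T_g(\omega^X, h^{\xi_j\otimes\mathcal{L}^{\otimes n}})$ for the family $h\colon X\to S$, but these quantities simply do not appear in $\delta$. The terms in the definition of $\delta$ are the \emph{fixed-locus} torsion forms $T_g(\omega^{X_g}, h^{\xi_k\mid_{X_g}})$ for $h_g\colon X_g\to S$ and $T_g(\omega^{Y_g}, h^{\wedge^kF^\vee\otimes\eta\mid_{Y_g}})$ for $f_g\colon Y_g\to S$. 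There is no elementary way to convert $T_g(\omega^X,\,\cdot\,)$ into $T_g(\omega^{X_g},\,\cdot\,)$; relating them is essentially the content of the equivariant fixed-point formula one is ultimately trying to establish, so using such a relation here would be circular. Consequently, when you ``subtract $(E_2)$ from $(E_1)$,'' the unmatched ambient torsion forms cannot cancel against anything produced by $(E_2)$ (which lives entirely on fixed loci), and the combination does not reproduce $\delta$. Note also that $\delta$ involves the \emph{equivariant} singular current $T_g(\overline{\xi.})$ of Section~2.3, not the non-equivariant singular currents of the auxiliary Koszul-type resolutions on $X_g$ that you propose to feed into $(E_2)$, which is a second mismatch you would have to reconcile.

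What is actually missing is the deformation to the normal cone, which is the central geometric mechanism of the paper's proof: Lemma~\ref{309} constructs the sheaf $\mathcal{L}$ specifically as the restriction to $X\cong X\times\{0\}$ of an equivariant very ample sheaf on $W(i)$ (not merely ``any relatively very ample sheaf on $X$ after $\mu_n$-averaging''), because the twist is needed precisely to impose condition (T) of Definition~\ref{316} on a resolution over $W$; the Deformation Theorem~\ref{318} then transports the problem to the special fibre $P=\mathbb{P}(N_{X/Y}\oplus\mathcal{O}_Y)$ via an intricate comparison involving Euler--Green currents and Lemma~\ref{315}; and the vanishing itself is finally obtained by the explicit Koszul computation of Proposition~\ref{319}, using the decomposition $\overline{\kappa}_\nabla\mid_{P_g}\cong\bigoplus_k\overline{\kappa}_\nabla^{(k)}[-k]$ and the splitting principle. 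None of these three components is present in your outline: you have no geometric reduction, no Koszul endgame, and the twist $n\gg 0$ plays no role in your argument beyond securing acyclicity --- whereas in the paper it is exactly what grants condition (T) on $W(i_g)$, which in turn is what makes the deformation argument close. Your sketch of $(E_2)$ --- decompose $\overline{\xi.}\mid_{X_g}$ and apply Bismut--Ma to Koszul-type resolutions on $X_g$ --- is morally close to manipulations that do occur \emph{inside} the proof of Theorem~\ref{318}, but there they take place on $W(i_g)$, where the component $\widetilde{X_g}$ (on which the resolution splits) and the relation $\overline{\mathcal{O}}(X_g)\cong\overline{\mathcal{O}}(P_g^0)\otimes\overline{\mathcal{O}}(\widetilde{X_g})$ provide the leverage that makes the cancellations actually happen; done directly on $X_g$ as you propose, this leverage is absent.
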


\subsection{Deformation to the normal cone}
To prove the vanishing theorem for closed immersions, we use a
geometric construction called the deformation to the normal cone
which allows us to deform a resolution of hermitian vector bundle
associated to a closed immersion of projective manifolds to a
simpler one. The $\delta$-difference of this new simpler
resolution is much easier to compute.

Let $i: Y\hookrightarrow X$ be a closed immersion of projective
manifolds with normal bundle $N_{X/Y}$. For a vector bundle $E$ on
$X$ or $Y$, the notation $\mathbb{P}(E)$ will stand for the
projective space bundle ${\rm Proj}({\rm Sym}(E^\vee))$.

\begin{defn}\label{302}
The deformation to the normal cone $W(i)$ of the immersion $i$ is
the blowing up of $X\times \mathbb{P}^1$ along $Y\times
\{\infty\}$. We shall just write $W$ for $W(i)$ if there is no
confusion about the immersion.
\end{defn}

There are too many geometric objects and morphisms appearing in
the construction of the deformation to the normal cone, we have to
fix various notations in a clear way. We denote by $p_X$ (resp.
$p_Y$) the projection $X\times \mathbb{P}^1\rightarrow X$ (resp.
$Y\times \mathbb{P}^1\rightarrow Y$) and by $\pi$ the blow-down
map $W\rightarrow X\times\mathbb{P}^1$. We also denote by $q_X$
(resp. $q_Y$) the projection $X\times \mathbb{P}^1\rightarrow
\mathbb{P}^1$ (resp. $Y\times \mathbb{P}^1\rightarrow
\mathbb{P}^1$) and by $q_W$ the composition $q_X\circ\pi$. It is
well known that the map $q_W$ is flat and for $t\in \mathbb{P}^1$,
we have \begin{displaymath}
q_W^{-1}(t)\cong\left\{
\begin{array}{ll}
    X\times \{t\}, & \hbox{if $t\neq\infty$,} \\
    P\cup \widetilde{X}, & \hbox{if $t=\infty$,} \\
\end{array}%
\right. \end{displaymath} where $\widetilde{X}$ is isomorphic to
the blowing up of $X$ along $Y$ and $P$ is isomorphic to the
projective completion of $N_{X/Y}$ i.e. the projective space
bundle $\mathbb{P}(N_{X/Y}\oplus\mathcal{O}_Y)$. Denote the
canonical projection from $\mathbb{P}(N_{X/Y}\oplus\mathcal{O}_Y)$
to $Y$ by $\pi_P$, then the morphism $\mathcal{O}_Y\rightarrow
N_{X/Y}\oplus\mathcal{O}_Y$ induces a canonical section $i_\infty:
Y\hookrightarrow \mathbb{P}(N_{X/Y}\oplus\mathcal{O}_Y)$ which is
called the zero section embedding. Moreover, let $j: Y\times
\mathbb{P}^1\rightarrow W$ be the canonical closed immersion
induced by $i\times {\rm Id}$, then the component $\widetilde{X}$
doesn't meet $j(Y\times \mathbb{P}^1)$ and the intersection of
$j(Y\times \mathbb{P}^1)$ with $P$ is exactly the image of $Y$
under the section $i_\infty$.

The advantage of the construction of the deformation to the normal
cone is that we may control the rational equivalence class of the
fibres $q_W^{-1}(t)$. More precisely, in the language of line
bundles, we have the isomorphisms $\mathcal{O}(X)\cong
\mathcal{O}(P+\widetilde{X})\cong\mathcal{O}(P)\otimes\mathcal{O}(\widetilde{X})$
which is an immediate consequence of the isomorphism
$\mathcal{O}(0)\cong\mathcal{O}(\infty)$ on $\mathbb{P}^1$.

On $P=\mathbb{P}(N_{X/Y}\oplus\mathcal{O}_Y)$, there exists a
tautological exact sequence \begin{displaymath} 0\rightarrow
\mathcal{O}(-1)\rightarrow
\pi_P^*(N_{X/Y}\oplus\mathcal{O}_Y)\rightarrow Q\rightarrow 0
\end{displaymath} where $Q$ is the tautological quotient bundle.
This exact sequence and the inclusion
$\mathcal{O}_P\rightarrow\pi_P^*(N_{X/Y}\oplus\mathcal{O}_Y)$
induce a section $\sigma: \mathcal{O}_P\rightarrow Q$ which
vanishes along the zero section $i_\infty(Y)$. By duality we get a
morphism $Q^\vee\rightarrow \mathcal{O}_P$, and this morphism
induces the following exact sequence
\begin{displaymath}
0\rightarrow
\wedge^nQ^\vee\rightarrow\cdots\rightarrow\wedge^2Q^\vee\rightarrow
Q^\vee\rightarrow\mathcal{O}_P\rightarrow
{i_\infty}_*\mathcal{O}_Y\rightarrow 0 \end{displaymath} where $n$
is the rank of $Q$. Note that $i_\infty$ is a section of $\pi_P$
i.e. $\pi_P\circ i_\infty={\rm Id}$, the projection formula
implies the following definition.

\begin{defn}\label{303}
For any vector bundle $\eta$ on $Y$, the following complex of
vector bundles \begin{displaymath} 0\rightarrow
\wedge^nQ^\vee\otimes\pi_P^*\eta\rightarrow\cdots\rightarrow\wedge^2Q^\vee\otimes\pi_P^*\eta\rightarrow
Q^\vee\otimes\pi_P^*\eta\rightarrow\pi_P^*\eta\rightarrow 0
\end{displaymath} provides a resolution of ${i_\infty}_*\eta$ on
$P$. This complex is called the Koszul resolution of
${i_\infty}_*\eta$ and will be denoted by $\kappa(\eta,N_{X/Y})$.
If the normal bundle $N_{X/Y}$ admits some hermitian metric, then
the tautological exact sequence induces a hermitian metric on $Q$.
If, moreover, the bundle $\eta$ also admits a hermitian metric,
then the Koszul resolution is a complex of hermitian vector
bundles and will be denoted by
$\overline{\kappa}(\overline{\eta},\overline{N}_{X/Y})$.
\end{defn}

Now, assume that $X$ is a $\mu_n$-equivariant projective manifold
and $E$ is an equivariant locally free sheaf on $X$. Then
according to \cite[(1.4) and (1.5)]{Ko}, $\mathbb{P}(E)$ admits a
canonical $\mu_n$-equivariant structure such that the projection
map $\mathbb{P}(E)\rightarrow X$ is equivariant and the canonical
bundle $\mathcal{O}(1)$ admits an equivariant structure. Moreover,
let $Y\to X$ be an equivariant closed immersion of projective
manifolds, according to \cite[(1.6)]{Ko} the action of $\mu_n$ on
$X$ can be extended to the blowing up ${\rm Bl}_YX$ such that the
blow-down map is equivariant and the canonical bundle
$\mathcal{O}(1)$ admits an equivariant structure. So by endowing
$\mathbb{P}^1$ with the trivial $\mu_n$-action, the construction
of the deformation to the normal cone described above is
compatible with the equivariant setting.

For the use of our later arguments, the K\"{a}hler metric chosen
on $W$ should be well controlled on the fibres of the deformation.
For this purpose, it is necessary to introduce the following
definition.

\begin{defn}\label{304}(R\"{o}ssler)
A metric $h$ on $W$ is said to be normal to the deformation if

(a). it is invariant and K\"{a}hler;

(b). the restriction $h\mid_{{j_g}_*(Y_g\times\mathbb{P}^1)}$ is a
product $h'\times h''$, where $h'$ is a K\"{a}hler metric on $Y_g$
and $h''$ is a K\"{a}hler metric on $\mathbb{P}^1$;

(c). the intersections of $X\times\{0\}$ with $j_*(Y\times
\mathbb{P}^1)$ and of $P$ with $j_*(Y\times\mathbb{P}^1)$ are
orthogonal at the fixed points.
\end{defn}

\begin{lem}\label{305}
For any $\mu_n$-invariant K\"{a}hler metric $h^X$ on $X$ which
induces an invariant K\"{a}hler metric $h^Y$ on $Y$, there exists
a metric $h^W$ on $W$ which is normal to the deformation and the
restriction of $h^W$ to $X\cong X\times\{0\}$ (resp. $Y\cong
Y\times\{\infty\}$) is exactly $h^X$ (resp. $h^Y$). Moreover, we
may require that the hermitian normal bundles
$\overline{N}_{{Y\times\mathbb{P}^1}/{Y\times\{0\}}}$ and
$\overline{N}_{{Y\times\mathbb{P}^1}/{Y\times\{\infty\}}}$ are
both isometric to the trivial bundles with trivial metrics.
\end{lem}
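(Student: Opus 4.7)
The plan is to construct $h^W$ from a base K\"{a}hler metric on the blow-up $W$, modify it inside invariant tubular neighbourhoods of the various fixed loci by $\partial\overline{\partial}$-exact forms, and average over $\mu_n$ to ensure invariance. First I would choose a K\"{a}hler metric $h^{\mathbb{P}^1}$ on $\mathbb{P}^1$ (carrying the trivial $\mu_n$-action) that induces the standard hermitian form of length one on both $T_0\mathbb{P}^1$ and $T_\infty\mathbb{P}^1$; a suitable rescaling of the Fubini--Study metric works. The product $h^X \oplus h^{\mathbb{P}^1}$ is then a $\mu_n$-invariant K\"{a}hler metric on $X\times\mathbb{P}^1$, restricting to $h^X$ on $X\times\{0\}$ and to $h^Y \oplus h^{\mathbb{P}^1}$ on $Y\times\mathbb{P}^1$. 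By the standard construction of K\"{a}hler metrics on blow-ups, the pullback of the associated K\"{a}hler form by $\pi: W \to X\times\mathbb{P}^1$, corrected by $\varepsilon\alpha$, where $\alpha$ is the first Chern form of an invariant hermitian metric on $\mathcal{O}_W(-P)$ supported in an arbitrarily small tubular neighbourhood of the exceptional divisor, yields an invariant K\"{a}hler form $\omega_W$ for all sufficiently small $\varepsilon>0$.

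Second, to enforce properties (b), (c) and the trivial normal bundle conditions, I would work inside invariant tubular neighbourhoods of $j_*(Y_g\times\mathbb{P}^1)$ and of $i_\infty(Y)=P\cap j_*(Y\times\mathbb{P}^1)$ in $W$. The equivariant holomorphic tubular neighbourhood theorem (applicable since $\mu_n$ is finite and acts linearisably) provides biholomorphic identifications with neighbourhoods of the zero sections of the corresponding normal bundles; in these charts I would prescribe the metric to be of product form, chosen so that the restriction to $T_\infty\mathbb{P}^1$ is constant along $Y$. Near $Y\times\{0\}$, which lies outside the support of $\alpha$, the initial metric $\omega_W$ is already a product, which already yields the triviality of $\overline{N}_{Y\times\mathbb{P}^1/Y\times\{0\}}$ and the orthogonality of $X\times\{0\}$ with $j_*(Y\times\mathbb{P}^1)$ at fixed points. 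Near the other loci, the local prescriptions and $\omega_W$ define the same cohomology class, so by the $\partial\overline{\partial}$-lemma their differences are $\partial\overline{\partial}\psi_i$ for invariant smooth potentials $\psi_i$. A $\mu_n$-invariant partition of unity $\{\rho_i\}$ then glues the corrections into a closed invariant form $\omega_W+\partial\overline{\partial}(\sum_i \rho_i \psi_i)$ that restricts to the prescribed product shapes on each fixed locus, giving (b), (c) and the remaining triviality of $\overline{N}_{Y\times\mathbb{P}^1/Y\times\{\infty\}}$.

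The main obstacle will be to keep the corrected $(1,1)$-form positive, since a partition-of-unity combination of closed positive forms is closed but not a priori positive. I would overcome this by smallness: after scaling the potentials $\psi_i$ down, the total correction $\partial\overline{\partial}(\sum_i \rho_i \psi_i)$ becomes arbitrarily small in $C^2$-norm, while $\omega_W$ admits a uniform positive lower bound on the compact manifold $W$, so a uniform choice of scaling preserves the K\"{a}hler condition. The equivariance is preserved throughout by averaging over the finite group $\mu_n$, which is harmless for K\"{a}hler positivity, so the resulting metric $h^W$ meets all the conditions of the lemma.
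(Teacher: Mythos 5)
Your route is genuinely different from the paper's. The paper does not construct the metric on $W$ directly from a blow-up-of-a-product metric; it invokes the Grassmannian graph model of the deformation, which embeds $W$ into $X\times\mathbb{P}^r\times\mathbb{P}^1$, and takes the $\mu_n$-average of the restriction of a product metric on that ambient manifold (following \cite[Lemma 6.13]{KR1} and \cite[Lemma 6.14]{Roe}). In that model the product structure on the ambient is inherited along $Y\times\mathbb{P}^1$ and the various fixed loci, so conditions (b) and (c) and the triviality of the normal bundles come for free once $h^X$ is invariant and $\mathbb{P}^1$ carries Fubini--Study; no a posteriori correction is needed.

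The patching step in your argument has a genuine gap. Conditions (b), (c) and the trivial-normal-bundle requirements are \emph{exact} constraints on the restriction of the final metric to specified submanifolds, and the potentials $\psi_i$ correcting $\omega_W$ to the prescribed product shapes there are forced by those constraints up to lower-order freedom; they are of a definite size, not arbitrarily small. Your positivity repair --- replace $\psi_i$ by $\lambda\psi_i$ with $\lambda\ll 1$ --- thus destroys exactly the constraints you set out to enforce: $\omega_W+\lambda\,\partial\bar\partial(\sum_i\rho_i\psi_i)$ restricts near the fixed loci to something close to $\omega_W$, not to $h'\times h''$. There is a second, independent issue: the cut-off correction is $\partial\bar\partial(\rho_i\psi_i)=\rho_i\partial\bar\partial\psi_i+\partial\rho_i\wedge\bar\partial\psi_i+\partial\psi_i\wedge\bar\partial\rho_i+\psi_i\,\partial\bar\partial\rho_i$, and the cross-terms involving derivatives of $\rho_i$ are not controlled by the requirement that $\omega_W+\partial\bar\partial\psi_i$ be positive on the neighbourhood; they can be large once $\psi_i$ is fixed, so even before any rescaling the glued form need not be positive. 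To close the gap you would need either to show that the prescribed local product metrics lie in the same K\"ahler class as $\omega_W$ with a potential that is itself $C^2$-small (which is not automatic), or to abandon the smallness argument and argue positivity of the glued form by a direct computation adapted to the specific geometry --- both substantially harder than the Grassmannian-graph shortcut the paper uses.
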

\begin{proof}
The existence of the metric which is normal to the deformation is
the content of \cite[Lemma 6.13]{KR1} and \cite[Lemma 6.14]{Roe},
such a metric is constructed via the Grassmannian graph
construction. Roughly speaking, according to another description
of the deformation to the normal cone via the Grassmannian graph
construction, we have an embedding $W\to X\times
\mathbb{P}^r\times\mathbb{P}^1$ and the metric $h^W$ is the
$\mu_n$-average of the restriction of a product metric on $X\times
\mathbb{P}^r\times\mathbb{P}^1$ (cf. \cite[Lemma 6.14]{Roe}). When
we endow $X$ in the product with the metric $h^X$, the
requirements on restrictions are automatically satisfied since
$h^X$ is $\mu_n$-invariant. To fulfill the requirements on
hermitian normal bundles, we may just choose the Fubini-Study
metric on $\mathbb{P}^1$.
\end{proof}

We summarize some very important results about the application of
the deformation to the normal cone as follows. Their proofs can be
found in \cite[Section 2 and 6.2]{KR1}.

\begin{thm}\label{306}
Let $i: Y\to X$ be an equivariant closed immersion of equivariant
projective manifolds, and let $W=W(i)$ be the deformation to the
normal cone of $i$. Assume that $\overline{\eta}$ is an
equivariant hermitian vector bundle on $Y$. Then

(i). there exists an equivariant hermitian resolution of
$j_*p_Y^*(\overline{\eta})$ on $W$, whose metrics satisfy Bismut
assumption (A) and whose restriction to $\widetilde{X}$ is
equivariantly and orthogonally split;

(ii). the natural morphism from the deformation to the normal cone
$W(i_g)$ to the fixed point submanifold $W(i)_g$ is a closed
immersion, this closed immersion induces the closed immersions
$\mathbb{P}(N_{X_g/{Y_g}}\oplus
\mathcal{O}_{Y_g})\rightarrow\mathbb{P}(N_{X/Y}\oplus\mathcal{O}_Y)_g$
and $\widetilde{X_g}\rightarrow \widetilde{X}_g$;

(iii). the fixed point submanifold of
$\mathbb{P}(N_{X/Y}\oplus\mathcal{O}_Y)$ is
$\mathbb{P}(N_{X_g/{Y_g}}\oplus \mathcal{O}_{Y_g})\coprod_{\zeta
\neq1} \mathbb{P}((N_{X/Y})_\zeta)$;

(iv). the closed immersion $i_{\infty,g}$ factors through
$\mathbb{P}(N_{X_g/{Y_g}}\oplus \mathcal{O}_{Y_g})$ and the other
components $\mathbb{P}((N_{X/Y})_\zeta)$ don't meet $Y$. Hence the
complex $\kappa(\mathcal{O}_Y,N_{X/Y})_g$, obtained by taking the
$0$-degree part of the Koszul resolution, provides a resolution of
$\mathcal{O}_{Y_g}$ on $\mathbb{P}(N_{X/Y}\oplus\mathcal{O}_Y)_g$.
\end{thm}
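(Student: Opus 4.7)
The plan is to prove the vanishing by the classical Bismut–Lebeau strategy: deform to the normal cone and then carry out an explicit computation on the projective completion of the normal bundle $N$, using Bismut–Ma's immersion formula (Theorem~\ref{216}) as the transfer mechanism between the two geometric situations. The role of the twist by $\overline{\mathcal{L}}^{\otimes n}$ for $n\gg 0$ is to enforce Serre-type vanishing of higher direct images, which both validates the acyclicity hypotheses in Theorem~\ref{216} and collapses every $L^2$-metric onto its algebraically computable counterpart. The existence of the relatively very ample equivariant hermitian sheaf $\overline{\mathcal{L}}$ is standard: embed $X$ equivariantly over $S$ into a projective $S$-bundle and take the pullback of $\mathcal{O}(1)$ with any $\mu_n$-invariant hermitian metric.

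First I would form the deformation $W=W(i)$ (Definition~\ref{302}), equip it with a metric normal to the deformation (Lemma~\ref{305}) that restricts to the given K\"ahler metric on $X\cong q_W^{-1}(0)$, and choose by Theorem~\ref{306}(i) an equivariant hermitian resolution $\overline{\tau.}$ of $j_*p_Y^*(\overline{\eta}\otimes i^*\overline{\mathcal{L}}^{\otimes n})$ on $W$ that satisfies Bismut (A), is equivariantly and orthogonally split on the proper transform $\widetilde{X}$, and restricts on $P\cong\mathbb{P}(N\oplus\mathcal{O}_Y)$ to the twisted Koszul resolution $\overline{\kappa}(\overline{\eta}\otimes i^*\overline{\mathcal{L}}^{\otimes n}|_Y,\overline{N})$ of the zero section $i_\infty$ (Definition~\ref{303}). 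For $n\gg 0$, every term of $\overline{\tau.}$ is $(h\circ p_X\circ\pi)$-acyclic and $p_Y^*(\overline{\eta}\otimes i^*\overline{\mathcal{L}}^{\otimes n})$ is $(f\circ p_Y)$-acyclic by Serre vanishing. Applying Theorem~\ref{216} to $j:Y\times\mathbb{P}^1\hookrightarrow W$ composed with the submersion $h\circ p_X\circ\pi:W\to S$, then restricting the resulting identity in $\widetilde{A}(S)$ to the fibers $t=0$ and $t=\infty$, the $\widetilde{X}$-component of the $t=\infty$ fiber contributes zero termwise by the orthogonal-split hypothesis, so the surviving $t=\infty$ contribution comes only from $P$. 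Using Theorem~\ref{214} to pass from the restriction $\overline{\tau.}|_X$ to the arbitrary given resolution $\overline{\xi.}\otimes\overline{\mathcal{L}}^{\otimes n}$ (the secondary Bott–Chern errors cancelling symmetrically on the two sides of the identity), one obtains the reduction
\begin{equation*}
\delta(i,\overline{\eta}\otimes i^*\overline{\mathcal{L}}^{\otimes n},\overline{\xi.}\otimes\overline{\mathcal{L}}^{\otimes n}) = \delta\bigl(i_\infty,\overline{\eta}\otimes i^*\overline{\mathcal{L}}^{\otimes n}|_Y,\overline{\kappa}\bigr).
\end{equation*}

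Next I would compute the right-hand side directly for $n\gg 0$. Serre vanishing for $\pi_P^*i^*\overline{\mathcal{L}}^{\otimes n}|_Y$ (suitably twisted by $\mathcal{O}_P(1)$) kills the higher direct images of every term in the twisted Koszul complex on $P$ and on each connected component of $P_g$ listed in Theorem~\ref{306}(iii); hence $\mathcal{H}(\ast)$ and $\mathcal{H}(\ast\ast)$ become computable by the projection formula and $\widetilde{\mathrm{ch}}_g$ of their $L^2$-sequences reduces to a sum of Bott–Chern characters attached to Koszul-type complexes on $Y_g$. The analytic torsion forms of the twisted Koszul complex on $P$ and on its fixed components are given by the classical projective-bundle torsion formulas (Bismut–Berthomieu on the nonequivariant directions, Ma on the equivariant pieces $\mathbb{P}(N_\zeta)$), and the equivariant singular current $T_g(\overline{\kappa})$ is likewise explicit. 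A termwise matching against $\mathrm{Td}_g(\overline{N})$, $R_g(N)$ and $\widetilde{\mathrm{Td}}_g$ then yields $\delta=0$.

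The hard part will be this final cancellation. Although the extra fixed components $\mathbb{P}(N_\zeta)$ for $\zeta\neq 1$ do not meet $i_\infty(Y)$, each contributes a nontrivial piece to every term of $\delta$ on $P$, and showing that their sum exactly absorbs both the $R_g(N)$-integral and the equivariant secondary-Todd integral appearing in the definition of $\delta$ requires delicate residue-type bookkeeping on each $\mathbb{P}(N_\zeta)$. The twist by $\overline{\mathcal{L}}^{\otimes n}$ is essential throughout: without it, the $L^2$-metrics on higher direct images differ from their algebraic counterparts by Bott–Chern secondary classes which obstruct the cancellation, and the acyclicities needed to invoke Theorem~\ref{216} at the outset would fail.
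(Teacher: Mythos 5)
Your proposal does not address the statement you were asked to prove. The statement is Theorem~\ref{306}, which asserts four geometric facts about the deformation to the normal cone: (i) the existence of an equivariant hermitian resolution of $j_*p_Y^*(\overline{\eta})$ satisfying Bismut assumption (A) and split on $\widetilde{X}$; (ii) that the natural map $W(i_g)\to W(i)_g$ is a closed immersion inducing $\mathbb{P}(N_{X_g/Y_g}\oplus\mathcal{O}_{Y_g})\to\mathbb{P}(N_{X/Y}\oplus\mathcal{O}_Y)_g$ and $\widetilde{X_g}\to\widetilde{X}_g$; (iii) the decomposition of the fixed point submanifold of $\mathbb{P}(N_{X/Y}\oplus\mathcal{O}_Y)$ as $\mathbb{P}(N_{X_g/Y_g}\oplus\mathcal{O}_{Y_g})\coprod_{\zeta\neq1}\mathbb{P}((N_{X/Y})_\zeta)$; and (iv) that $i_{\infty,g}$ factors through $\mathbb{P}(N_{X_g/Y_g}\oplus\mathcal{O}_{Y_g})$, the components $\mathbb{P}((N_{X/Y})_\zeta)$ miss $Y$, and hence $\kappa(\mathcal{O}_Y,N_{X/Y})_g$ resolves $\mathcal{O}_{Y_g}$. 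What you have written is instead a strategy for the vanishing theorem for closed immersions (Theorem~\ref{301}): you set up the deformation, invoke Bismut--Ma's immersion formula, reduce to the Koszul resolution on $P$, and argue $\delta=0$ after twisting by $\overline{\mathcal{L}}^{\otimes n}$. Worse, your argument explicitly \emph{invokes} Theorem~\ref{306}(i) to produce the resolution $\overline{\tau.}$ and Theorem~\ref{306}(iii) to list the fixed components of $P$, so with respect to the assigned statement your write-up is circular: it assumes exactly what is to be shown and proves none of the four assertions.

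For comparison, the paper disposes of Theorem~\ref{306} by citation to K\"ohler--R\"ossler (\cite[Section 2 and 6.2]{KR1}); a self-contained proof would have to (i) construct the resolution on $W$ (e.g.\ via the Grassmannian graph construction with averaged invariant metrics, arranging orthogonal splitting over $\widetilde{X}$), (ii) compare fixed points with blow-ups, i.e.\ show $W(i_g)={\rm Bl}_{Y_g\times\{\infty\}}(X_g\times\mathbb{P}^1)$ embeds as a closed submanifold of the fixed locus of ${\rm Bl}_{Y\times\{\infty\}}(X\times\mathbb{P}^1)$, (iii) use the standard eigenbundle decomposition of the fixed locus of a projective bundle $\mathbb{P}(E)$ under a fibrewise $\mu_n$-action, applied to $E=N_{X/Y}\oplus\mathcal{O}_Y$ where $\mathcal{O}_Y$ carries the trivial action, and (iv) observe that the zero section lands in the eigenvalue-$1$ component while the components $\mathbb{P}((N_{X/Y})_\zeta)$, $\zeta\neq1$, lie in $\mathbb{P}(N_{X/Y})$ and so are disjoint from $i_\infty(Y)$, which gives the claimed resolution of $\mathcal{O}_{Y_g}$ by the degree-zero part of the Koszul complex. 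None of these steps appears in your proposal, so as a proof of Theorem~\ref{306} it has a complete gap; as a sketch of Theorem~\ref{301} it is a reasonable outline but was not the task here.
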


\subsection{Proof of the vanishing theorem}
We shall first prove the first part of the vanishing theorem for
closed immersions i.e. the existence of an equivariant hermitian
very ample invertible sheaf on $X$ which is relative to the
morphism $h: X\to S$. Generally speaking, such an invertible sheaf
can be constructed rather easily since $X$ admits a
$\mu_n$-projective action and the $\mu_n$-action on $S$ is
supposed to be trivial, but for the whole proof of the vanishing
theorem we would like to construct a special one which is the
pull-back of some equivariant hermitian very ample invertible
sheaf on $W(i)$ under the identification $X\cong X\times\{0\}$.
Our starting point is the following.

\begin{defn}\label{307}
Let $M$ be a $\mu_n$-projective manifold, and let $\mathbb{P}_M^n$
be some relative projective space over $M$. A $\mu_n$-action on
$\mathbb{P}_M^n$ arising from some $\mu_n$-action on the free
sheaf $\mathcal{O}_M^{\oplus n+1}$ via the functorial properties
of the ${\rm Proj}$ symbol will be called a global $\mu_n$-action.
\end{defn}

The advantage of considering global $\mu_n$-action is that on a
projective space which admits a global $\mu_n$-action the twisted
line bundle $\mathcal{O}(1)$ is naturally $\mu_n$-equivariant.

\begin{lem}\label{308}
The morphism $h: X\to S$ factors though some relative projective
space $\mathbb{P}_S^r$ which admits a global $\mu_n$-action.
\end{lem}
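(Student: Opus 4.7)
The plan is to factor $h$ through two equivariant closed immersions of the form $X \hookrightarrow \mathbb{P}_S(E) \hookrightarrow \mathbb{P}_S^r$ and show that the second target carries a global $\mu_n$-action. First I would produce a $\mu_n$-equivariant line bundle on $X$ which is very ample relative to $h$. Starting from any $h$-very ample line bundle $\mathcal{L}_0$ on $X$ (such exists because $h$ is projective), I average over the group by setting $\mathcal{L} := \bigotimes_{\zeta \in \mu_n}\zeta^*\mathcal{L}_0$, equipped with the natural $\mu_n$-linearization permuting the tensor factors. Since a tensor product of $h$-very ample line bundles remains $h$-very ample, $\mathcal{L}$ is still $h$-very ample. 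For $k\gg 0$ the direct image $E := h_*\mathcal{L}^{\otimes k}$ is a locally free $\mathcal{O}_S$-module carrying an induced $\mu_n$-action, and the adjunction morphism $h^*E \twoheadrightarrow \mathcal{L}^{\otimes k}$ produces an equivariant closed immersion $X \hookrightarrow \mathbb{P}_S(E)$ over $S$.

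Next I would exploit the triviality of the $\mu_n$-action on $S$. Since $\mu_n$ is diagonalisable, the locally free sheaf $E$ decomposes equivariantly as $E = \bigoplus_\zeta E_\zeta$ into isotypical components indexed by the characters $\zeta$ of $\mu_n$. Fix a very ample line bundle $\mathcal{M}$ on $S$, equipped with the trivial $\mu_n$-action. For $m$ sufficiently large, each summand $E_\zeta \otimes \mathcal{M}^{\otimes m}$ is globally generated by its sections, yielding surjections $\mathcal{O}_S^{\oplus N_\zeta} \twoheadrightarrow E_\zeta \otimes \mathcal{M}^{\otimes m}$. Endowing each $\mathcal{O}_S^{\oplus N_\zeta}$ with the $\mu_n$-action of pure character $\zeta$, these assemble into an equivariant surjection $\bigoplus_\zeta \mathcal{O}_S^{\oplus N_\zeta} \twoheadrightarrow E \otimes \mathcal{M}^{\otimes m}$ from a free sheaf with a diagonal $\mu_n$-action.

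Projectivizing this surjection and invoking the canonical equivariant identification $\mathbb{P}_S(E) = \mathbb{P}_S(E\otimes \mathcal{M}^{\otimes m})$ (valid because $\mathcal{M}^{\otimes m}$ carries the trivial linearization, so the two bundles coincide as $\mu_n$-equivariant projective bundles), I obtain an equivariant closed immersion $\mathbb{P}_S(E) \hookrightarrow \mathbb{P}_S^r$. Here $\mathbb{P}_S^r$ is the projectivization of a free sheaf whose $\mu_n$-action is determined character-by-character on its factors, which is precisely a global $\mu_n$-action in the sense of Definition \ref{307}. Composing with the first immersion yields the sought factorization.

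The main technical point demanding care is tracking the $\mu_n$-equivariance throughout: verifying that the character assignments on the free summands make the surjection equivariant, and that twisting by a trivially linearized line bundle does not disturb the $\mu_n$-structure on the projective bundle. Everything else reduces to the classical facts that projective morphisms are cut out by relatively very ample line bundles and that coherent sheaves on a projective base become globally generated after sufficient twisting by an ample line bundle.
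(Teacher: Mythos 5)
Your argument is correct, but it takes a genuinely different route from the paper's. The paper leans on the hypothesis that $X$ carries a $\mu_n$-\emph{projective} action: by \cite[Lemma 2.4 and 2.5]{KR1} this already yields an equivariant closed immersion $X\hookrightarrow\mathbb{P}^r$ into an \emph{absolute} projective space endowed with a global $\mu_n$-action, and then the universal property of the fibre product $S\times\mathbb{P}^r=\mathbb{P}_S^r$ (on which the product action is still global because the $\mu_n$-action on $S$ is trivial) gives the desired relative embedding in a single line. You instead rebuild the relative embedding from scratch: average an $h$-very ample line bundle to obtain an equivariant one $\mathcal{L}$, embed $X\hookrightarrow\mathbb{P}_S(h_*\mathcal{L}^{\otimes k})$, decompose $h_*\mathcal{L}^{\otimes k}$ into $\mu_n$-isotypes (possible since $S$ carries the trivial action), and present each isotype as a quotient of a free sheaf after twisting by a trivially linearized ample line bundle on $S$. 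This is sound but longer, and in effect you re-derive relatively over $S$ part of what \cite{KR1} supplies absolutely. Two housekeeping remarks: the triviality of the $\mu_n$-action on $S$ is also what makes each $\zeta$ an $S$-automorphism and hence each $\zeta^*\mathcal{L}_0$ again $h$-very ample --- you use this implicitly in the averaging step; and the paper's convention is $\mathbb{P}(E)=\mathrm{Proj}(\mathrm{Sym}(E^\vee))$, so in the final projectivization step you should dualize your surjection onto $E\otimes\mathcal{M}^{\otimes m}$ to a subbundle inclusion $(E\otimes\mathcal{M}^{\otimes m})^\vee\hookrightarrow\mathcal{O}_S^{\oplus N}$, a purely notational adjustment.
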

\begin{proof}
By assumption, $X$ admits a $\mu_n$-projective action. Then
\cite[Lemma 2.4 and 2.5]{KR1} imply that there exists an
equivariant closed immersion from $X$ to some projective space
$\mathbb{P}^r$ endowed with a global action. By using the
universal property of fibre product, we obtain a morphism from $X$
to $\mathbb{P}_S^r=S\times \mathbb{P}^r$ which is equivariant.
Moreover, this morphism is clearly a closed immersion. Since the
action on $S$ is trivial, the induced action on the fibre product
$S\times \mathbb{P}^r$ is still global. So we are done.
\end{proof}

\begin{lem}\label{309}
Let $l: W(i)\to S$ be the composition $h\circ p_X\circ \pi$. Then
$W(i)$ admits an equivariant very ample invertible sheaf
$\mathcal{L}$ which is relative to $l$.
\end{lem}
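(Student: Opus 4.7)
The strategy is to produce $\mathcal{L}$ as a twist of the pullback to $W$ of a relatively very ample line bundle on $X\times\mathbb{P}^{1}$ by the canonical invertible sheaf of the blow-down $\pi\colon W\to X\times\mathbb{P}^{1}$. By Lemma~\ref{308}, $h$ factors as $X\hookrightarrow\mathbb{P}^{r}_{S}\to S$, where the first map is an equivariant closed immersion and $\mathbb{P}^{r}_{S}$ carries a global $\mu_{n}$-action; pulling back $\mathcal{O}_{\mathbb{P}^{r}_{S}}(1)$ then gives an equivariant $h$-very ample invertible sheaf $\mathcal{O}_{X}(1)$ on $X$. Endowing $\mathbb{P}^{1}$ with the trivial $\mu_{n}$-action and composing the obvious immersion $X\times\mathbb{P}^{1}\hookrightarrow\mathbb{P}^{r}_{S}\times_{S}\mathbb{P}^{1}_{S}$ with the Segre embedding into $\mathbb{P}^{N}_{S}$, one checks that
$$
\mathcal{M}:=p_{X}^{*}\mathcal{O}_{X}(1)\otimes q_{X}^{*}\mathcal{O}_{\mathbb{P}^{1}}(1)
$$
is equivariant and $(h\circ p_{X})$-very ample on $X\times\mathbb{P}^{1}$.

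The center $Y\times\{\infty\}$ of the blow-up is $\mu_{n}$-invariant, so the discussion preceding Definition~\ref{304} yields that $\pi$ is equivariant and that the canonical line bundle $\mathcal{O}_{W}(-P)$ on $W$ admits a canonical $\mu_{n}$-equivariant structure. A standard property of blow-ups says that $\mathcal{O}_{W}(-P)$ is $\pi$-very ample, and by the classical relative ampleness criterion the invertible sheaf
$$
\mathcal{L}:=\pi^{*}\mathcal{M}^{\otimes n}\otimes\mathcal{O}_{W}(-P)
$$
is $l$-very ample for $n$ sufficiently large. Being a tensor product of equivariant invertible sheaves, $\mathcal{L}$ inherits a $\mu_{n}$-equivariant structure, and this is the desired sheaf.

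The only nontrivial content is the choice of $n$, which is the relative version of the classical fact that on a blow-up along a smooth center, $\pi^{*}L^{\otimes n}(-E)$ becomes very ample for $n$ large once $L$ is very ample. I expect this to be the only genuine obstacle, and it is purely formal: apply the absolute statement to the fibres of $l$, which are blow-ups of the fibres of $h\circ p_{X}$ along the corresponding copies of $Y\times\{\infty\}$, and use properness of $h$ to get a uniform bound on $n$. Equivariance of the final invertible sheaf is then automatic from the compatibility of all the equivariant structures listed above.
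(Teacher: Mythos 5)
Your proposal is correct and follows essentially the same route as the paper: both take the $\pi$-very ample sheaf $\mathcal{O}_W(1)=\mathcal{O}_W(-P)$ coming from the Rees algebra and tensor with the pullback of an equivariant $(h\circ p_X)$-very ample sheaf on $X\times\mathbb{P}^1$ furnished by Lemma~\ref{308}. The "genuine obstacle" you flag (finding the right twist $n$) is exactly what the paper settles by invoking \cite[II, Prop.~7.10]{Ha} and then a one-step Segre-embedding argument, so your appeal to the classical relative ampleness criterion lands in the same place.
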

\begin{proof}
By Lemma~\ref{308}, $h: X\to S$ factors through some relative
projective space $\mathbb{P}_S^r$ which admits a global
$\mu_n$-action. So $X$ admits an equivariant very ample invertible
sheaf relative to $h$. Since the $\mu_n$-action on $S$ is supposed
to be trivial, $\mathbb{P}_X^1=X\times\mathbb{P}^1\cong
X\times_S\mathbb{P}_S^1$ also admits an equivariant very ample
invertible sheaf relative to the morphism $h\circ p_X$ which is
denoted by $\mathcal{G}$. Moreover, by construction, $W(i)$ admits
a very ample invertible sheaf $\mathcal{O}_W(1)\otimes
\pi^*\mathcal{G}^{\otimes b}$ for some $b\geq0$ which is relative
to the blow-down map $\pi$ (cf. \cite[II. Proposition 7.10]{Ha}).
Assume that $\mathbb{P}_X^1\times_S\mathbb{P}_S^m$ is the relative
projective space associated to $\mathcal{O}_W(1)\otimes
\pi^*\mathcal{G}^{\otimes b}$, and that $\mathbb{P}_S^n$ is the
relative projective space associated to $\mathcal{G}$. Then the
very ample invertible sheave on
$\mathbb{P}_X^1\times_S\mathbb{P}_S^m$ with respect to the
embedding
\begin{displaymath}
\mathbb{P}_X^1\times_S\mathbb{P}_S^m\hookrightarrow
\mathbb{P}_S^n\times_S\mathbb{P}_S^m \end{displaymath} can be
written as $\mathcal{G}\boxtimes \mathcal{O}_{\mathbb{P}_S^m}(1)$
whose restriction to $W(i)$ is equal to $\mathcal{O}_W(1)\otimes
\pi^*\mathcal{G}^{\otimes b+1}$. Therefore,
$\mathcal{O}_W(1)\otimes \pi^*\mathcal{G}^{\otimes b+1}$ is a very
ample invertible sheaf on $W(i)$ relative to $l: W(i)\to S$, this
invertible sheaf is clearly equivariant.
\end{proof}

From now on, we shall fix the equivariant very ample invertible
sheaf $\mathcal{L}$ constructed in Lemma~\ref{309}. We also fix a
$\mu_n$-invariant hermitian metric on $\mathcal{L}$, note that
this metric always exists according to an argument of partition of
unity. When we deal with the tensor product of a coherent sheaf
$\mathcal{F}$ with some power $\mathcal{L}^{\otimes n}$, we just
write it as $\mathcal{F}(n)$ for simplicity. Before we give the
proof of the rest of the vanishing theorem, we shall recall the
concept of equivariant standard complex and some technical
results.

\begin{defn}\label{310}
Let $S$ be a projective manifold and let $\overline{\xi}.$ be a
bounded complex of hermitian vector bundles on $S$. We say
$\overline{\xi}.$ is a standard complex if the homology sheaves of
$\overline{\xi}.$ are all locally free and they are endowed with
some hermitian metrics. We shall write a standard complex as
$(\overline{\xi}.,h^H)$ to emphasize the choice of the metrics on
the homology sheaves.
\end{defn}

\begin{defn}\label{311}
Let $S$ be an equivariant projective manifold. An equivariant
standard complex on $S$ is a bounded complex of equivariant
hermitian vector bundles on $S$ whose restriction to $S_g$ is
standard and the metrics on the homology sheaves are
$g$-invariant. Again we shall write an equivariant standard
complex as $(\overline{\xi}.,h^H)$ to emphasize the choice of the
metrics on the homology sheaves.
\end{defn}

Due to \cite[Theorem 5.9]{T1}, to every equivariant standard
complex $(\overline{\xi}.,h^H)$ on an equivariant projective
manifold $S$, there is a unique axiomatical way to associate an
element $\widetilde{{\rm ch}}_g(\overline{\xi.},h^H)$ in
$\bigoplus_{p\geq0}A^{p,p}(S_g)/{({\rm Im}\partial+{\rm
Im}\overline{\partial})}$ which satisfies the differential
equation \begin{displaymath} {\rm dd}^c\widetilde{{\rm
ch}}_g(\overline{\xi.},h^H)=\sum_{j}(-1)^{j}{\rm
ch}_g(H_j(\overline{\xi.}\mid_{S_g}))-\sum_{j}(-1)^{j}{\rm
ch}_g(\overline{\xi}_j). \end{displaymath} Let $0\to
\overline{\xi.}'\to \overline{\xi.}\to \overline{\xi.}''\to 0$ be
a short exact sequence of equivariant standard complexes on $S$.
Then by restricting to the fixed point submanifold $S_g$, we get a
short exact sequence of standard complexes $0\to
\overline{\xi.}'\mid_{S_g}\to \overline{\xi.}\mid_{S_g}\to
\overline{\xi.}''\mid_{S_g}\to 0$. Hence we obtain a long exact
sequence of homology sheaves of these three standard complexes. We
shall make a stronger assumption. Suppose that for any $j\geq0$,
we have short exact sequence $0\to
H_j(\overline{\xi.}'\mid_{S_g})\to
H_j(\overline{\xi.}\mid_{S_g})\to
H_j(\overline{\xi.}''\mid_{S_g})\to 0$ which is denoted by
$\overline{\chi}_j$. Moreover, for any $j\geq0$, denote by
$\overline{\varepsilon}_j$ the short exact sequence $0\to
\overline{\xi}'_j\to \overline{\xi}_j\to \overline{\xi}''_j\to 0$.

\begin{lem}\label{312}
Let notations and assumptions be as above. The identity
\begin{displaymath}
\widetilde{{\rm ch}}_g(\overline{\xi.}',h^H)-\widetilde{{\rm
ch}}_g(\overline{\xi.},h^H)+\widetilde{{\rm
ch}}_g(\overline{\xi.}'',h^H)=\sum(-1)^j\widetilde{{\rm
ch}}_g(\overline{\chi}_j)-\sum(-1)^j\widetilde{{\rm
ch}}_g(\overline{\varepsilon}_j) \end{displaymath} holds in
$\bigoplus_{p\geq0}A^{p,p}(S_g)/{({\rm Im}\partial+{\rm
Im}\overline{\partial})}$.
\end{lem}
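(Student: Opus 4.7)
The plan is to verify the identity by successive reduction to a configuration in which every exact sequence involved is equivariantly and orthogonally split, where both sides can be computed directly. The key ingredients are the defining differential equation together with the axiomatic characterization of $\widetilde{{\rm ch}}_g$ for standard complexes from \cite[Theorem 5.9]{T1}, the associated anomaly formula, and the normalization property of the ordinary equivariant Bott-Chern secondary classes recorded in Theorem~\ref{s202}.

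First, I would verify that both sides share the same $dd^c$. Applying the defining differential equation to $\widetilde{{\rm ch}}_g(\overline{\xi.}', h^H)$, $\widetilde{{\rm ch}}_g(\overline{\xi.}, h^H)$, $\widetilde{{\rm ch}}_g(\overline{\xi.}'', h^H)$ on the left and to each $\widetilde{{\rm ch}}_g(\overline{\chi}_j)$, $\widetilde{{\rm ch}}_g(\overline{\varepsilon}_j)$ on the right, a direct alternating combination shows that the two sides share the common $dd^c$-image
\begin{align*}
&\sum_{j}(-1)^j\bigl[{\rm ch}_g(H_j(\overline{\xi.}'|_{S_g})) - {\rm ch}_g(H_j(\overline{\xi.}|_{S_g})) + {\rm ch}_g(H_j(\overline{\xi.}''|_{S_g}))\bigr]\\
&\qquad -\sum_{j}(-1)^j\bigl[{\rm ch}_g(\overline{\xi}'_j) - {\rm ch}_g(\overline{\xi}_j) + {\rm ch}_g(\overline{\xi}''_j)\bigr].
\end{align*}
Hence the defect between the two sides is $dd^c$-closed in $\widetilde A(S_g)$.

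Next, I would reduce to the orthogonally split situation in two stages. In the first stage I deform the hermitian metrics on each homology bundle $H_j(\overline{\xi.}|_{S_g})$ until every $\overline{\chi}_j$ is orthogonally split. By the anomaly formula for $\widetilde{{\rm ch}}_g$ of standard complexes, the variation of the left-hand side under this deformation is captured by a sum of ordinary secondary Bott-Chern classes that matches, term by term, the variation of $\sum(-1)^j \widetilde{{\rm ch}}_g(\overline{\chi}_j)$ on the right; the identity is therefore preserved under the change of metric. In the second stage, with the $\overline{\chi}_j$'s already split, I deform the metrics on the terms $\overline{\xi}_j$, $\overline{\xi}'_j$, $\overline{\xi}''_j$ until every $\overline{\varepsilon}_j$ becomes orthogonally split, using the analogous anomaly matching.

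In the fully split configuration, every $\widetilde{{\rm ch}}_g(\overline{\varepsilon}_j)$ and $\widetilde{{\rm ch}}_g(\overline{\chi}_j)$ vanishes by property (iii) of Theorem~\ref{s202}, so the right-hand side is zero. On the other hand, the short exact sequence $0 \to \overline{\xi.}' \to \overline{\xi.} \to \overline{\xi.}'' \to 0$ now becomes equivariantly and orthogonally split as a sequence of standard complexes with compatibly split homology, so by the direct-sum additivity of $\widetilde{{\rm ch}}_g$ implied by the axioms of \cite[Theorem 5.9]{T1} one has $\widetilde{{\rm ch}}_g(\overline{\xi.}, h^H) = \widetilde{{\rm ch}}_g(\overline{\xi.}', h^H) + \widetilde{{\rm ch}}_g(\overline{\xi.}'', h^H)$, making the left-hand side vanish as well. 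The main obstacle will be the careful verification of the anomaly-matching statement in stage one: the variation of $\widetilde{{\rm ch}}_g$ of a standard complex under a change of metric on its homology is a priori a nontrivial expression, and one has to unpack the explicit construction in \cite{T1} to confirm that it agrees up to sign with the variation of the auxiliary classes $\widetilde{{\rm ch}}_g(\overline{\chi}_j)$.
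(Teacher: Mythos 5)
Your proposed reduction to the fully split configuration has a genuine gap at the final step. After deforming metrics so that every $\overline{\varepsilon}_j$ is equivariantly and orthogonally split, you have $\overline{\xi}_j \cong \overline{\xi}'_j \oplus \overline{\xi}''_j$ as hermitian bundles, but \emph{not} as complexes: the differential of $\overline{\xi.}$ written in this block decomposition is upper triangular with a cross term $s_j\colon \overline{\xi}''_j\to\overline{\xi}'_{j-1}$ governed by the extension class of the sequence of complexes, and no metric deformation can kill that term. Consequently the kernels and images of $\overline{\xi.}|_{S_g}$ do not decompose as direct sums of those of $\overline{\xi.}'$ and $\overline{\xi.}''$, and the direct-sum additivity $\widetilde{{\rm ch}}_g(\overline{\xi.},h^H)=\widetilde{{\rm ch}}_g(\overline{\xi.}',h^H)+\widetilde{{\rm ch}}_g(\overline{\xi.}'',h^H)$ you invoke is not available from the axioms in \cite[Theorem 5.9]{T1}, which would only grant it for a sequence split \emph{as a sequence of complexes}. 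The lemma is in fact precisely the statement that this failure is measured by $\sum(-1)^j\widetilde{{\rm ch}}_g(\overline{\chi}_j)-\sum(-1)^j\widetilde{{\rm ch}}_g(\overline{\varepsilon}_j)$, so asserting its vanishing in the split-metric case presupposes the result.

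There is a second, smaller gap that you yourself flag: the anomaly-matching claim in stage one is exactly of the same level of difficulty as the lemma and would need to be proved, not just asserted; without it the deformation argument does not propagate.

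The paper's own proof avoids both issues and is in fact shorter. It uses the explicit description (from the argument after Remark 5.10 in \cite{T1}) of $\widetilde{{\rm ch}}_g(\overline{\xi.},h^H)$ as the alternating sum of the equivariant Bott--Chern secondary classes of the two families of short exact sequences $0\to\overline{{\rm Im}}\to\overline{{\rm Ker}}\to\overline{H.}\to0$ and $0\to\overline{{\rm Ker}}\to\overline{\xi.}|_{S_g}\to\overline{{\rm Im}}\to0$. Once you assume, as the lemma does, that the long exact sequence of homologies degenerates into the short exact sequences $\overline{\chi}_j$, the Snake lemma produces short exact sequences of the corresponding kernels and images, giving a $3\times3$ grid of exact sequences at each level $j$. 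The identity then follows from the standard additivity of the ordinary equivariant Bott--Chern secondary class for such grids. If you want to salvage your deformation strategy, you would in any case have to unwind this Im/Ker/H description to carry out the anomaly matching, at which point the direct argument is already done.
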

\begin{proof}
On $S_g$, every equivariant standard complex
$(\overline{\xi.},h^H)$ splits into a series of short exact
sequences of equivariant hermitian vector bundles in the following
way \begin{displaymath} 0\rightarrow \overline{{\rm
Im}}\rightarrow \overline{{\rm Ker}}\rightarrow
\overline{H.}\rightarrow 0 \end{displaymath} and
\begin{displaymath}
0\rightarrow \overline{{\rm Ker}}\rightarrow
\overline{\xi.}\mid_{S_g}\rightarrow \overline{{\rm
Im}}\rightarrow 0. \end{displaymath} According to the argument
given after \cite[Remark 5.10]{T1}, $\widetilde{{\rm
ch}}_g(\overline{\xi.},h^H)$ is equal to the alternating sum of
the equivariant Bott-Chern secondary characteristic classes of the
short exact sequences above. Now since we have supposed that $0\to
H_j(\overline{\xi.}'\mid_{S_g})\to
H_j(\overline{\xi.}\mid_{S_g})\to
H_j(\overline{\xi.}''\mid_{S_g})\to 0$ are all exact, by using
Snake lemma, we know that $0\to {\rm
Im}(\overline{\xi.}'\mid_{S_g})\to {\rm
Im}(\overline{\xi.}\mid_{S_g})\to {\rm
Im}(\overline{\xi.}''\mid_{S_g})\to 0$ and $0\to {\rm
Ker}(\overline{\xi.}'\mid_{S_g})\to {\rm
Ker}(\overline{\xi.}\mid_{S_g})\to {\rm
Ker}(\overline{\xi.}''\mid_{S_g})\to 0$ are also all exact
sequences. Then the identity in the statement of this lemma
immediately follows from the construction of $\widetilde{{\rm
ch}}_g(\overline{\xi.},h^H)$ and the additivity property of the
equivariant Bott-Chern secondary characteristic classes.
\end{proof}

\begin{cor}\label{313}
Let $0\to \overline{\xi.}^{(m)}\to \cdots\to
\overline{\xi.}^{(1)}\to \overline{\xi.}^{(0)}\to 0$ be an exact
sequence of equivariant standard complexes on $S$ such that for
every $j\geq0$, $0\to H_j(\overline{\xi.}^{(m)}\mid_{S_g})\to
\cdots\to H_j(\overline{\xi.}^{(1)}\mid_{S_g})\to
H_j(\overline{\xi.}^{(0)}\mid_{S_g})\to 0$ is exact. Then the
identity \begin{displaymath} \sum_{k=0}^m(-1)^k\widetilde{{\rm
ch}}_g(\overline{\xi.}^{(k)},h^H)=\sum(-1)^j\widetilde{{\rm
ch}}_g(\overline{\chi}_j)-\sum(-1)^j\widetilde{{\rm
ch}}_g(\overline{\varepsilon}_j) \end{displaymath} holds in
$\bigoplus_{p\geq0}A^{p,p}(S_g)/{({\rm Im}\partial+{\rm
Im}\overline{\partial})}$.
\end{cor}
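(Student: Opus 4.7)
The plan is to argue by induction on $m$, using Lemma~\ref{s202} (more precisely Lemma 3.12, the three-term version stated above) as the base case. For the inductive step I would split the given long exact sequence into a short exact sequence and a strictly shorter exact sequence using an auxiliary subcomplex with induced metrics, and then verify that all contributions from the auxiliary complex cancel.

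More concretely, for $m=2$ the claim is exactly Lemma 3.12. Suppose the statement holds for exact sequences of length $\leq m$; given a long exact sequence of length $m+1$, define the subcomplex $Q$ of $\overline{\xi.}^{(1)}$ by setting $Q_j=\ker(\xi_j^{(1)}\to\xi_j^{(0)})$. Exactness in each degree shows that $Q_j$ is a subbundle, so $Q$ inherits equivariant hermitian metrics. We then have the short exact sequence of complexes
\begin{displaymath}
0\to Q\to \overline{\xi.}^{(1)}\to \overline{\xi.}^{(0)}\to 0
\end{displaymath}
and the shorter exact sequence
\begin{displaymath}
0\to \overline{\xi.}^{(m)}\to\cdots\to \overline{\xi.}^{(2)}\to Q\to 0
\end{displaymath}
of length $m$. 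Restricting to $S_g$ preserves exactness of complexes of vector bundles, and combining the resulting long exact sequence in homology with the surjectivity of $H_j(\xi^{(1)}|_{S_g})\to H_j(\xi^{(0)}|_{S_g})$ assumed in the hypothesis yields a short exact sequence $0\to H_j(Q|_{S_g})\to H_j(\xi^{(1)}|_{S_g})\to H_j(\xi^{(0)}|_{S_g})\to 0$. We endow $H_j(Q|_{S_g})$ with the induced subbundle metric, making $Q$ into an equivariant standard complex for which Lemma 3.12 applies. Chopping off the last two terms of the assumed homology exact sequence then shows that the shorter complex exact sequence also satisfies the homology exactness hypothesis, so the inductive assumption applies there.

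Applying Lemma 3.12 to the short sequence and the inductive hypothesis to the longer one produces two identities in $\bigoplus_{p\geq0}A^{p,p}(S_g)/{({\rm Im}\partial+{\rm Im}\overline{\partial})}$. Taking the first identity minus the second (with the natural alternating signs coming from the position of $Q$), the two occurrences of $\widetilde{{\rm ch}}_g(Q,h^H)$ cancel, giving
\begin{displaymath}
\sum_{k=0}^{m}(-1)^k\widetilde{{\rm ch}}_g(\overline{\xi.}^{(k)},h^H)=E_{\mathrm{short}}-E_{\mathrm{long}},
\end{displaymath}
where $E_{\mathrm{short}}$ and $E_{\mathrm{long}}$ denote the right-hand sides from Lemma 3.12 and from the inductive hypothesis respectively. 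It remains to check that $E_{\mathrm{short}}-E_{\mathrm{long}}$ equals the claimed right-hand side $\sum_j(-1)^j\widetilde{{\rm ch}}_g(\overline{\chi}_j)-\sum_j(-1)^j\widetilde{{\rm ch}}_g(\overline{\varepsilon}_j)$. This reduces, degree by degree, to the corresponding identity for a single exact sequence of vector bundles (applied to the $\xi_j^{(k)}$ and to the $H_j(\xi^{(k)}|_{S_g})$), split using the auxiliary piece $Q_j$ (resp.\ $H_j(Q|_{S_g})$) with the metrics we have chosen; the additivity of the equivariant secondary Bott-Chern classes of Example~\ref{s204}(iii) gives exactly the needed cancellation.

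The main technical point to watch is the consistency of metric choices on $Q$ and on its homologies: the auxiliary contributions must appear with exactly the same metric data in both applications so they cancel identically, not merely up to a correction. This is automatic here because the same subbundle metrics on $Q_j$ and on $H_j(Q|_{S_g})$ are used in both invocations. Once this bookkeeping is in place, the induction closes and the identity holds in $\bigoplus_{p\geq0}A^{p,p}(S_g)/{({\rm Im}\partial+{\rm Im}\overline{\partial})}$.
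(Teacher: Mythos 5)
Your proof is correct and follows essentially the same approach as the paper's: both identify the kernel subcomplex as an equivariant standard complex (using the assumed surjectivity of the homology maps on the fixed-point locus), split off a short exact sequence of standard complexes, and invoke Lemma~\ref{312} together with the additivity of Bott--Chern secondary classes. The only presentational difference is that you phrase the iteration as an explicit induction and spell out the sign and metric-consistency bookkeeping, whereas the paper states it more compactly as splitting the long exact sequence into a series of short exact sequences.
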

\begin{proof}
We claim that for every $1\leq k\leq m$, the kernel of the complex
morphism $\overline{\xi.}^{(k)}\to \overline{\xi.}^{(k-1)}$ is
still an equivariant standard complex on $S$. It is clear that we
only need to prove this for $k=1$. Firstly, the kernel of
$\overline{\xi.}^{(1)}\to \overline{\xi.}^{(0)}$ is a complex of
equivariant hermitian vector bundles, let's denote it by
$\overline{K}$. By restricting to $S_g$ and using an argument of
long exact sequence, we know that the homology sheaves of
$\overline{K}\mid_{S_g}$ are all equivariant hermitian vector
bundles since for any $j\geq0$ the bundle morphism
$H_j(\overline{\xi.}^{(1)}\mid_{S_g})\to
H_j(\overline{\xi.}^{(0)}\mid_{S_g})$ is already surjective.
Therefore, the assumption of exactness on homologies implies that
we can split $0\to \overline{\xi.}^{(m)}\to \cdots\to
\overline{\xi.}^{(1)}\to \overline{\xi.}^{(0)}\to 0$ into a series
of short exact sequences of equivariant standard complexes, so the
identity in the statement of this corollary follows from
Lemma~\ref{312}.
\end{proof}

\begin{rem}\label{314}
A generalized version of Corollary~\ref{313}, in which the exact
sequence of (equivariant) standard complexes is replaced by an
(equivariant) double standard complex was obtained in Xiaonan Ma's
Ph.D thesis (cf. \cite{Ma2}) where more discussions concerning
spectral sequences were involved. Anyway, for arithmetical reason,
we only need these special versions as in Lemma~\ref{312} and
Corollary~\ref{313}.
\end{rem}

Now we turn back to our proof of the vanishing theorem. As before,
let $W=W(i)$ be the deformation to the normal cone associated to
an equivariant closed immersion of projective manifolds $i: Y\to
X$. For simplicity, denote by $P_g^0$ the projective space bundle
$\mathbb{P}(N_{X_g/{Y_g}}\oplus \mathcal{O}_{Y_g})$. Moreover,
given an invariant K\"{a}hler metric on $X$, we fix an invariant
K\"{a}hler metric on $W$ which is constructed in Lemma~\ref{305}.
In this situation, all normal bundles appearing in the
construction of the deformation to the normal cone will be endowed
with the quotient metrics. We recall the following lemma.

\begin{lem}\label{315}
Over $W(i_g)$, there are hermitian metrics on $\mathcal{O}(X_g)$,
$\mathcal{O}(P_g^0)$ and $\mathcal{O}(\widetilde{X_g})$ such that
the isometry
$\overline{\mathcal{O}}(X_g)\cong\overline{\mathcal{O}}(P_g^0)\otimes\overline{\mathcal{O}}(\widetilde{X_g})$
holds and such that the restriction of
$\overline{\mathcal{O}}(X_g)$ to $X_g$ yields the metric of
$N_{W(i_g)/{X_g}}$, the restriction of
$\overline{\mathcal{O}}(\widetilde{X_g})$ to $\widetilde{X_g}$
yields the metric of $N_{W(i_g)/{\widetilde{X_g}}}$ and the
restriction of $\overline{\mathcal{O}}(P_g^0)$ to $P_g^0$ induces
the metric of $N_{W(i_g)/{P_g^0}}$.
\end{lem}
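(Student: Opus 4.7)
The plan is to exploit the global line bundle isomorphism $\mathcal{O}(X_g)\cong\mathcal{O}(P_g^0)\otimes\mathcal{O}(\widetilde{X_g})$ on $W(i_g)$, which is obtained by pulling back the isomorphism $\mathcal{O}_{\mathbb{P}^1}(0)\cong\mathcal{O}_{\mathbb{P}^1}(\infty)$ along the flat morphism $q_{W(i_g)}\colon W(i_g)\to\mathbb{P}^1$, together with the standard adjunction isomorphisms $\mathcal{O}(D)|_D\cong N_{W(i_g)/D}$ for each of the three divisors $D\in\{X_g,P_g^0,\widetilde{X_g}\}$. The strategy is to build the metrics on $\mathcal{O}(P_g^0)$ and $\mathcal{O}(\widetilde{X_g})$ directly so that the restriction conditions hold there, then to \emph{define} the metric on $\mathcal{O}(X_g)$ as the tensor product (making the isometry automatic), and finally to rescale by a conformal factor supported away from $P_g^0$ and $\widetilde{X_g}$ in order to fix the restriction condition on $X_g$.

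Concretely, I would first produce hermitian metrics $h_2$ on $\mathcal{O}(P_g^0)$ and $h_3$ on $\mathcal{O}(\widetilde{X_g})$ whose restrictions along the respective divisors coincide with the metrics on $N_{W(i_g)/P_g^0}$ and $N_{W(i_g)/\widetilde{X_g}}$ induced by the normal-to-deformation K\"ahler metric $h^W$ furnished by Lemma~\ref{305}. This is a routine construction: near each smooth divisor, the canonical defining section together with the prescribed normal bundle metric determines a metric on $\mathcal{O}(D)$ in a tubular neighborhood, and one patches with any chosen metric on the complement by a partition of unity. Setting $h_1:=h_2\otimes h_3$ through the global isomorphism then realizes $\overline{\mathcal{O}}(X_g)\cong\overline{\mathcal{O}}(P_g^0)\otimes\overline{\mathcal{O}}(\widetilde{X_g})$ as an isometry by construction.

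At this stage $h_1|_{X_g}$, viewed as a metric on $N_{W(i_g)/X_g}$ via the adjunction, will in general differ from the $h^W$-induced one by a smooth positive function $\psi$ on $X_g$. Since $X_g$ is disjoint from $P_g^0\cup\widetilde{X_g}$ (the former being the fibre of $q_{W(i_g)}$ over $0$, the latter the fibre over $\infty$), a standard partition of unity yields a smooth positive function $\Psi$ on $W(i_g)$ with $\Psi|_{X_g}=\psi^{-1}$ and $\Psi|_{P_g^0}\equiv 1\equiv\Psi|_{\widetilde{X_g}}$. Replacing $h_2$ by $\Psi\cdot h_2$, and hence $h_1$ by $\Psi\cdot h_1$, preserves the restriction conditions on $P_g^0$ and on $\widetilde{X_g}$ while correcting the restriction on $X_g$ to the desired $h^W$-induced normal-bundle metric.

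The only point requiring genuine care, and the one I would single out as the main bookkeeping obstacle, is the compatibility of the adjunction $\mathcal{O}(X_g)|_{X_g}\cong N_{W(i_g)/X_g}$ with the tensor-product decomposition: along $X_g$ the canonical sections of $\mathcal{O}(P_g^0)$ and of $\mathcal{O}(\widetilde{X_g})$ are nonvanishing and furnish canonical trivializations of $\mathcal{O}(P_g^0)|_{X_g}$ and $\mathcal{O}(\widetilde{X_g})|_{X_g}$, through which the product of the restrictions of $h_2$ and $h_3$ translates unambiguously to the relevant metric on $\mathcal{O}(X_g)|_{X_g}$, and one must match this with the adjunction identification. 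Beyond this verification the construction is formal, so once Lemma~\ref{305} is in place no deeper analytic obstacle is expected.
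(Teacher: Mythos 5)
Your construction is sound, and since the paper itself simply invokes \cite[Lemma 6.15]{KR1} rather than reproducing an argument, the relevant comparison is with that source; what you describe — produce metrics on $\mathcal{O}(P_g^0)$ and $\mathcal{O}(\widetilde{X_g})$ satisfying the respective restriction conditions via tubular neighborhoods and a partition of unity, declare the metric on $\mathcal{O}(X_g)$ to be their tensor product under the canonical isomorphism $\mathcal{O}(X_g)\cong\mathcal{O}(P_g^0)\otimes\mathcal{O}(\widetilde{X_g})$, and then conformally rescale to repair the restriction on $X_g$, using that $X_g$ is disjoint from the fibre $P_g^0\cup\widetilde{X_g}$ over $\infty$ — is precisely the standard construction behind that lemma. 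The one thing worth flagging is that the ``main bookkeeping obstacle'' you single out at the end is not in fact an obstacle: whatever the discrepancy between the adjunction-induced metric on $N_{W(i_g)/X_g}$ and the tensor-product metric restricted to $X_g$, it is a smooth positive function $\psi$ on $X_g$, and your conformal correction $\Psi$ is designed precisely to absorb it, so no separate compatibility verification between the adjunction identification and the tensor-product trivialization is required — the correction step renders that comparison irrelevant.
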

\begin{proof}
This is \cite[Lemma 6.15]{KR1}.
\end{proof}

\begin{defn}\label{316}
Let $\overline{\eta}$ be an equivariant hermitian vector bundle on
$Y$, we say that a resolution
\begin{displaymath}
\overline{\Xi}:\quad 0\to \overline{\widetilde{\xi}}_m\to \cdots
\to \overline{\widetilde{\xi}}_0 \to j_*p_Y^*(\overline{\eta})\to
0 \end{displaymath} satisfies the condition (T) if

(i). the metrics on $\widetilde{\xi.}$ satisfy Bismut assumption
(A);

(ii). the restriction of $\overline{\Xi}$ to $\widetilde{X}$ is an
equivariantly and orthogonally split exact sequence;

(iii). the restrictions of $\overline{\Xi}_\nabla$ to $W(i_g)$,
$X_g$, $P_g^0$, $\widetilde{X_g}$ and $P_g^0\cap \widetilde{X_g}$
are complexes with $l$-acyclic elements and $l$-acyclic
homologies, here $\overline{\Xi}_\nabla$ is the complex of
hermitian vector bundles obtained by omitting the last term
$j_*p_Y^*(\overline{\eta})$ in $\overline{\Xi}$;

(iv). the tensor products
$\overline{\Xi}_\nabla\mid_{W(i_g)}\otimes
\overline{\mathcal{O}}(-X_g)$,
$\overline{\Xi}_\nabla\mid_{W(i_g)}\otimes
\overline{\mathcal{O}}(-P_g^0)$ and
$\overline{\Xi}_\nabla\mid_{W(i_g)}\otimes
\overline{\mathcal{O}}(-\widetilde{X_g})$ are complexes with
$l$-acyclic elements and $l$-acyclic homologies.
\end{defn}

From Theorem~\ref{306} (i), we already know that there always
exists a resolution of $j_*p_Y^*(\overline{\eta})$ which satisfies
the conditions (i) and (ii) in Definition~\ref{316}. Let
$\overline{\Xi}$ be such a resolution, we have the following.

\begin{prop}\label{317}
For $n\gg 0$, $\overline{\Xi}(n)$ satisfies the condition (T).
\end{prop}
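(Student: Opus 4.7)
The plan is to verify each of the four clauses of Definition~\ref{316} for the twisted complex $\overline{\Xi}(n)=\overline{\Xi}\otimes\overline{\mathcal{L}}^{\otimes n}$, observing that clauses (i) and (ii) are formal consequences of the corresponding properties of $\overline{\Xi}$, while clauses (iii) and (iv) will reduce to finitely many applications of Serre's relative vanishing theorem for the proper map $l\colon W\to S$.

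For clause (i), I would note that tensoring an equivariant hermitian resolution by an equivariant hermitian line bundle is compatible with Bismut assumption (A): the canonical identification $(\pi^*H,\partial_zv)\cong(\pi^*(\wedge N^\vee\otimes\eta),\sqrt{-1}i_z)$ appearing in Definition~\ref{209} is functorial under twisting, and tensoring both sides by $\overline{\mathcal{L}}^{\otimes n}$ preserves the identification of metrics while merely replacing $\overline{\eta}$ by $\overline{\eta}\otimes j^*\overline{\mathcal{L}}^{\otimes n}\mid_Y$. Clause (ii) is equally immediate, since an equivariantly and orthogonally split complex remains equivariantly and orthogonally split after tensoring by a line bundle. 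Hence (i) and (ii) hold for every $n$.

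For clauses (iii) and (iv), the plan is to invoke Serre's relative vanishing theorem. By Lemma~\ref{309}, $\mathcal{L}$ is very ample on $W$ relative to $l$, so its restriction to any closed subscheme $Z\subset W$ is very ample relative to $l\mid_Z$. Consequently, for every coherent sheaf $\mathcal{F}$ on such a $Z$ there is an integer $n_0(\mathcal{F})$ such that $\mathcal{F}\otimes\mathcal{L}^{\otimes n}\mid_Z$ is $l\mid_Z$-acyclic for all $n\geq n_0(\mathcal{F})$. The finitely many sheaves to which this must be applied are the terms $\widetilde{\xi}_i$ and the homology sheaves of $\overline{\Xi}_\nabla$ restricted to each of $W(i_g)$, $X_g$, $P_g^0$, $\widetilde{X_g}$ and $P_g^0\cap\widetilde{X_g}$, together with their further tensor products on $W(i_g)$ with $\mathcal{O}(-X_g)$, $\mathcal{O}(-P_g^0)$ and $\mathcal{O}(-\widetilde{X_g})$. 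Since tensoring by a line bundle is exact, the homology sheaves of $\overline{\Xi}(n)_\nabla$ restricted to any of the subschemes above are simply the homology sheaves of $\overline{\Xi}_\nabla\mid_Z$ tensored by $\mathcal{L}^{\otimes n}\mid_Z$, and the same observation covers the twists in clause (iv). Taking $n$ larger than the maximum of the finitely many thresholds $n_0(\mathcal{F})$ will then yield clauses (iii) and (iv) simultaneously.

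The hard part, if any, is essentially bookkeeping: one must verify that the list of coherent sheaves requiring Serre vanishing is genuinely finite. This is clear because $\overline{\Xi}$ has finitely many terms, the relevant subschemes are explicitly enumerated, and the auxiliary twists in clause (iv) are by three fixed line bundles, so a single large enough $n$ serves all purposes at once.
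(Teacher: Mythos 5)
Your proof is correct and follows the same route as the paper's (which is compressed into one sentence: the relevant loci are closed submanifolds of $W$, so Serre's relative vanishing for the $l$-very ample sheaf $\mathcal{L}$ applies). You have simply made explicit the finite bookkeeping for clauses (iii) and (iv) and the routine observation that tensoring by a hermitian line bundle preserves Bismut assumption (A) and orthogonal splitness for clauses (i) and (ii), which the paper leaves implicit since $\overline{\Xi}$ is already taken to satisfy them.
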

\begin{proof}
The reason is that $W(i_g)$, $X_g$, $P_g^0$, $\widetilde{X_g}$ and
$P_g^0\cap \widetilde{X_g}$ are all closed submanifolds of $W$.
\end{proof}

It is well known that both two squares in the following
deformation diagram \begin{displaymath}
\xymatrix{
Y\times\{0\} \ar[rr]^-{s_0} \ar[d]_{i} && Y\times\mathbb{P}^1 \ar[d]^{j} & Y\times\{\infty\} \ar[l]_-{s_\infty} \ar[d]^{i_\infty} \\
 X\times\{0\} \ar[rr] && W &
 \mathbb{P}(N_{X/Y}\oplus N_{\mathbb{P}^1/\infty}) \ar[l]}
\end{displaymath} are ${\rm Tor}$-independent. Moreover, according
to our choices of the K\"{a}hler metrics, we may identify
$Y\times\{0\}$ with $Y$, $X\times\{0\}$ with $X$,
$Y\times\{\infty\}$ with $Y$ and $\mathbb{P}(N_{X/Y}\oplus
N_{\mathbb{P}^1/\infty})$ with $P=\mathbb{P}(N_{X/Y}\oplus
\mathcal{O}_Y)$. So if $\overline{\Xi}$ is a resolution of
$j_*p_Y^*(\overline{\eta})$ on $W$, then the restriction of
$\overline{\Xi}$ to $X$ (resp. $P$) provides a resolution of
$i_*\overline{\eta}$ (resp. ${i_\infty}_*\overline{\eta}$). The
following theorem is the kernel of the whole proof of the
vanishing theorem.

\begin{thm}\label{318}(Deformation theorem)
Let $\overline{\Xi}$ be a resolution of
$j_*p_Y^*(\overline{\eta})$ on $W$ which satisfies the condition
(T), then we have
$\delta(\overline{\Xi}\mid_X)=\delta(\overline{\Xi}\mid_P)$.
\end{thm}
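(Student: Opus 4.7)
The plan is to view $\delta(\overline{\Xi}|_X) - \delta(\overline{\Xi}|_P)$ as a single expression over $W(i_g)$ and show it vanishes by combining three ingredients: Bismut--Ma's immersion formula (Theorem~\ref{216}), the additivity of secondary Bott--Chern classes for standard complexes (Corollary~\ref{313}), and the isometric splitting $\overline{\mathcal{O}}(X_g) \cong \overline{\mathcal{O}}(P_g^0) \otimes \overline{\mathcal{O}}(\widetilde{X_g})$ of Lemma~\ref{315}.

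The first observation is that the six summands defining $\delta(i, \overline{\eta}, \overline{\xi}.)$ correspond, term by term, to the left-hand side minus the right-hand side of Bismut--Ma's immersion formula applied to $i: Y \hookrightarrow X$ with submersions $f, h$, except that the direct-image secondary Bott--Chern class $\widetilde{{\rm ch}}_g(\Xi, h^{L^2})$ has been replaced by $\widetilde{{\rm ch}}_g(\overline{\xi}., \overline{\eta})$ coming from the fixed-point splittings of $\overline{\xi}.|_{X_g}$. Condition (T) is engineered precisely so that, after pushing forward by $l: W \to S$, every tensor product $\overline{\Xi}_\nabla|_{W(i_g)} \otimes \overline{\mathcal{O}}(-X_g)$ and its analogues for $P_g^0$ and $\widetilde{X_g}$ becomes a standard complex on $S$. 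The Koszul short exact sequences
\[
0 \to \mathcal{O}_W(-X_g) \to \mathcal{O}_W \to \mathcal{O}_{X_g} \to 0
\]
and their analogues for $P_g^0$ and $\widetilde{X_g}$, tensored with $\overline{\Xi}_\nabla$ and combined with the isometry of Lemma~\ref{315}, allow Corollary~\ref{313} to identify the secondary Bott--Chern class terms in $\delta(\overline{\Xi}|_X)$ and $\delta(\overline{\Xi}|_P)$ modulo a contribution supported on $\widetilde{X_g}$.

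By Definition~\ref{316}~(ii), the restriction $\overline{\Xi}|_{\widetilde{X}}$ is equivariantly and orthogonally split, so every associated invariant vanishes: the secondary Bott--Chern classes by Theorem~\ref{s202}~(iii), the singular current by a direct computation from the defining zeta function in Lemma~\ref{211}, and the analytic torsion form by the anomaly formula applied to the trivial deformation. This eliminates the $\widetilde{X_g}$-contribution entirely. To finish, I would apply Bismut--Ma's immersion formula to $\overline{\Xi}$ on $W$ with respect to $l: W \to S$ and the immersion $j$; the resulting identity on $S$, compared with the restrictions to the two fibers of $q_W: W \to \mathbb{P}^1$ and interpreted through the Koszul decompositions above, forces the remaining terms of $\delta(\overline{\Xi}|_X) - \delta(\overline{\Xi}|_P)$---the $R_g$ contributions, the $\widetilde{{\rm Td}}_g$ transgressions, and the singular-current integrals---to cancel pairwise. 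The compatible normal-bundle metrics provided by Lemmas~\ref{305} and~\ref{315} are what make this cancellation explicit.

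The main obstacle I expect is the careful bookkeeping required to match secondary Bott--Chern classes under restriction. The Koszul-type decomposition of $\mathcal{O}_{X_g}$ in terms of $\mathcal{O}_W(-P_g^0)$ and $\mathcal{O}_W(-\widetilde{X_g})$ has length greater than one, so the full strength of Corollary~\ref{313}---not merely the two-term version in Lemma~\ref{312}---is required. Aligning the two singular-current integrals $\int_{X_g/S} T_g(\overline{\xi}.) {\rm Td}(\overline{Th_g})$ and its $P_g^0$-analogue with the two $\widetilde{{\rm Td}}_g$ transgressions, so that the $R_g$ contributions on the two fibers annihilate, is the delicate step: here Bismut's assumption (A) on $\overline{\Xi}$ is indispensable because it guarantees that $T_g(\overline{\Xi})$ restricts predictably to each fiber.
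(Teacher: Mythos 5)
Your proposal correctly identifies the three organizing ideas of the paper's argument --- tensoring $\overline{\Xi}_\nabla|_{W(i_g)}$ with the Koszul resolutions of the special divisors $X_g$, $P^0_g$, $\widetilde{X_g}$ (and their intersection $\widetilde{X_g}\cap P^0_g$) over $W(i_g)$, feeding the resulting short exact sequences of standard complexes into Lemma~\ref{312}/Corollary~\ref{313}, and exploiting the orthogonal splitting of $\overline{\Xi}|_{\widetilde{X}}$ to kill contributions over $\widetilde{X_g}$. Up to that point you are tracking the paper's strategy faithfully.

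The gap is in your concluding step: you propose to apply Bismut--Ma's immersion formula to the global immersion $j\colon Y\times\mathbb{P}^1\hookrightarrow W$ with the submersion $l\colon W\to S$, and to deduce the remaining cancellations by comparing this identity across the two fibers of $q_W$. That is not what the paper does, and it is not clear it could work. The immersion formula is in fact applied \emph{row-by-row} to the codimension-one divisorial immersions $X_g\hookrightarrow W(i_g)$, $P^0_g\hookrightarrow W(i_g)$, $\widetilde{X_g}\hookrightarrow W(i_g)$, $\widetilde{X_g}\cap P^0_g\hookrightarrow W(i_g)$ with coefficients $\overline{\widetilde{\xi}}_j|_{W(i_g)}$ (producing the $\widetilde{{\rm ch}}_g(\overline{\varepsilon}_j)$), and to the sections $Y_{g,0},Y_{g,\infty}\hookrightarrow Y_g\times\mathbb{P}^1$ with coefficients $\wedge^j\overline{\widetilde{F}}^\vee\otimes p^*_{Y_g}\overline{\eta}$ (producing the $\widetilde{{\rm ch}}_g(\overline{\chi}_j)$). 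The four resulting identities are then combined as $(1)-(2)-(3)+(4)$. To make the combination collapse, the paper needs two additional, quantitatively specific inputs that your sketch omits: the identity
\begin{displaymath}
{\rm Td}^{-1}(\overline{X_g})c_1(\overline{\mathcal{O}}(X_g))-{\rm Td}^{-1}(\overline{P^0_g})c_1(\overline{\mathcal{O}}(P^0_g))-{\rm Td}^{-1}(\overline{\widetilde{X_g}})c_1(\overline{\mathcal{O}}(\widetilde{X_g}))+{\rm Td}^{-1}(\overline{\widetilde{X_g}}){\rm Td}^{-1}(\overline{P_g^0})c_1(\overline{\mathcal{O}}(P^0_g))c_1(\overline{\mathcal{O}}(\widetilde{X_g}))=0
\end{displaymath}
(which is equation~(23) of \cite{KR1} and is the analytical counterpart of the isometry in Lemma~\ref{315} once Euler--Green currents are introduced), together with the explicit restriction formulas $\widetilde{{\rm Td}}(\overline{Tu_g},\overline{Tl_g^0}|_{Y_g\times\mathbb{P}^1})|_{Y_{g,0}}=\widetilde{{\rm Td}}(\overline{Tf_g},\overline{Th_g}|_{Y_g})$ and its $Y_{g,\infty}$ analogue. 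Without these the ``pairwise cancellation'' you assert is not forced; in particular, the two $R_g$ contributions in $\delta(\overline{\Xi}|_X)$ and $\delta(\overline{\Xi}|_P)$ do \emph{not} cancel against each other directly --- each is killed separately using the Atiyah--Segal--Singer push-forward identity and the triviality of $N_{W(i_g)/X_g}|_{Y_g}$ (respectively $N_{W(i_g)/P^0_g}|_{Y_g}$), while the $R$-terms over $Y_g\times\mathbb{P}^1$ vanish because $N_{Y_g\times\mathbb{P}^1/Y_{g,0}}$ and $N_{Y_g\times\mathbb{P}^1/Y_{g,\infty}}$ are metrically trivial. Your final appeal to Bismut--Ma applied to $j$ does not supply these cancellations and would instead produce torsion forms for the submersion $u=l\circ j$ that do not appear in either $\delta$-expression.

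A secondary inaccuracy: you claim $T_g(\overline{\xi.}|_{\widetilde{X}})$ vanishes ``by a direct computation from the defining zeta function.'' What the paper actually uses is the weaker fact that $\sum_j(-1)^j{\rm ch}_g(\overline{\widetilde{\xi}}_j)$ vanishes on $\widetilde{X_g}$ --- enough to drop all integrals taken over $\widetilde{X_g}$ --- together with the differential equation ${\rm dd}^cT_g(\overline{\widetilde{\xi}.})=\delta_{Y_g\times\mathbb{P}^1}\cdot(\cdots)-\sum(-1)^j{\rm ch}_g(\overline{\widetilde{\xi}}_j)$, which lets the paper replace the alternating sum of Chern characters by $\delta$-currents supported on $Y_g\times\mathbb{P}^1$ and then restrict. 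The point is not that $T_g$ of the restricted complex is zero, but that the integrals localize onto $Y_g\times\mathbb{P}^1$ and the other fibers.
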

\begin{proof}
Consider the following tensor product of
$\overline{\Xi}_\nabla\mid_{W(i_g)}$ with the Koszul resolution
associated to the immersion $X_g\hookrightarrow W(i_g)$
\begin{displaymath}
0\to
\overline{\Xi}_\nabla\mid_{W(i_g)}\otimes\overline{\mathcal{O}}(-X_g)\to
\overline{\Xi}_\nabla\mid_{W(i_g)}\otimes\overline{\mathcal{O}}_{W(i_g)}\to
\overline{\Xi}_\nabla\mid_{W(i_g)}\otimes{i_{X_g}}_*\overline{\mathcal{O}}_{X_g}\to
0. \end{displaymath} We have to caution the reader that here the
tensor product is not the usual tensor product of two complexes,
precisely our resulting sequence is a double complex and we don't
take its total complex. Since we have assumed that
$\overline{\Xi}$ satisfies the condition (T), this tensor product
induces a short exact sequence of equivariant standard complexes
on $S$ by taking direct images. For $j\geq0$, its $j$-th row is
the following short exact sequence
\begin{displaymath}
\overline{\varepsilon}_j: 0\to
R^0{l_g^0}_*(\overline{\mathcal{O}}(-X_g)\otimes
\overline{\widetilde{\xi}}_j\mid_{W(i_g)})\to
R^0{l_g^0}_*(\overline{\widetilde{\xi}}_j\mid_{W(i_g)})\to
R^0{h_g}_*(\overline{\widetilde{\xi}}_j\mid_{X_g})\to 0
\end{displaymath} where $l_g^0$ is the composition of the inclusion
$W(i_g)\hookrightarrow W$ with the morphism $l$.

Note that the $j$-th homology of
$\overline{\Xi}_\nabla\mid_{W(i_g)}$ is equal to
${j_g}_*(\wedge^j\overline{\widetilde{F}}^\vee\otimes
p_{Y_g}^*\overline{\eta}\mid_{Y_g})\mid_{W(i_g)}$ where
$\widetilde{F}$ is the non-zero degree part of the normal bundle
associated to the immersion $j$. Actually $j_g$ factors through
$j^0_g: Y_g\times \mathbb{P}^1\hookrightarrow W(i_g)$, then the
$j$-th homology of $\overline{\Xi}_\nabla\mid_{W(i_g)}$ can be
rewritten as
${j^0_g}_*(\wedge^j\overline{\widetilde{F}}^\vee\otimes
p_{Y_g}^*\overline{\eta}\mid_{Y_g})$. Write
$Y_{g,0}:=Y_g\times\{0\}$ for simplicity. Using the fact that
${j^0_g}^*\mathcal{O}(-X_g)$ is isomorphic to
$\mathcal{O}(-Y_{g,0})$, we deduce from the short exact sequence
\begin{align*}
0\to
{j^0_g}_*(\overline{\mathcal{O}}(-Y_{g,0})\otimes\wedge^j\overline{\widetilde{F}}^\vee\otimes
p_{Y_g}^*\overline{\eta}\mid_{Y_g})\to
{j^0_g}_*(\overline{\mathcal{O}}_{Y_g\times
\mathbb{P}^1}\otimes\wedge^j\overline{\widetilde{F}}^\vee\otimes
p_{Y_g}^*\overline{\eta}\mid_{Y_g})\\
\to
{j^0_g}_*({i_{Y_g}}_*\overline{\mathcal{O}}_{Y_g}\otimes\wedge^j\overline{\widetilde{F}}^\vee\otimes
p_{Y_g}^*\overline{\eta}\mid_{Y_g})\to 0
\end{align*} that the
$j$-th homologies of the induced short exact sequence of
equivariant standard complexes form a short exact sequence
\begin{align*}
\overline{\chi}_j: 0\to
R^0{u_g}_*(\overline{\mathcal{O}}(-Y_{g,0})\otimes\wedge^j\overline{\widetilde{F}}^\vee\otimes
p_{Y_g}^*\overline{\eta}\mid_{Y_g})\to
R^0{u_g}_*(\wedge^j\overline{\widetilde{F}}^\vee\otimes
p_{Y_g}^*\overline{\eta}\mid_{Y_g})\\
\to R^0{f_g}_*(\wedge^j\overline{F}^\vee\otimes
\overline{\eta}\mid_{Y_g})\to 0
\end{align*}
where $u_g$ is the composition of the inclusion
$Y_g\times\mathbb{P}^1\hookrightarrow W(i_g)$ with the morphism
$l_g^0$.

The main idea of this proof is that the equivariant Bott-Chern
secondary characteristic class of the quotient term of the induced
short exact sequence of equivariant standard complexes is nothing
but $\widetilde{{\rm ch}}_g(\overline{\Xi}_\nabla\mid_X,h^H)$
which appears in the expression of
$\delta(\overline{\Xi}\mid_{X})$ and the equivariant secondary
characteristic classes of $\overline{\chi}_j,
\overline{\varepsilon}_j$ can be computed by Bismut-Ma's immersion
formula.

Precisely, denote by $g_{X_g}$ the Euler-Green current associated
to $X_g$ which was constructed by Bismut, Gillet and Soul\'{e} in
\cite[Section 3. (f)]{BGS4}, it satisfies the differential
equation ${\rm
dd}^cg_{X_g}=\delta_{X_g}-c_1(\overline{\mathcal{O}}(X_g))$. We
write ${\rm Td}(\overline{X_g})$ for ${\rm
Td}(\overline{\mathcal{O}}(X_g))$, \cite[Theorem 3.17]{BGS4}
implies that ${\rm Td}^{-1}(\overline{X_g})g_{X_g}$ is equal to
the singular Bott-Chern current of the Koszul resolution
associated to $X_g\hookrightarrow W(i_g)$ modulo ${\rm
Im}\partial+{\rm Im}\overline{\partial}$. Moreover, write
$\overline{\xi.}$ for the restriction
$\overline{\Xi}_\nabla\mid_X$. Then for any $j\geq0$, we compute
\begin{align*}
\widetilde{{\rm
ch}}_g(\overline{\varepsilon}_j)=&T_g(\omega^{X_g},h^{\xi_j\mid_{X_g}})-T_g(\omega^{W(i_g)},h^{\widetilde{\xi}_j\mid_{W(i_g)}})\\
&+T_g(\omega^{W(i_g)},h^{\mathcal{O}(-X_g)\otimes
\widetilde{\xi}_j\mid_{W(i_g)}})\\
&+\int_{W(i_g)/S}{\rm ch}_g(\overline{\widetilde{\xi}}_j){\rm
Td}(\overline{Tl_g^0}){\rm
Td}^{-1}(\overline{X_g})g_{X_g}\\
&+\int_{X_g/S}{\rm ch_g}(\overline{\xi}_j){\rm
Td}^{-1}(\overline{N}_{W(i_g)/{X_g}})\widetilde{{\rm
Td}}(\overline{Th_g},\overline{Tl_g^0}\mid_{X_g})\\
&+\int_{X_g/S}{\rm ch}_g(\xi_j)R(N_{W(i_g)/{X_g}}){\rm Td}(Th_g).
\end{align*}
Here, one should note that to simplify the last two terms in the
right-hand side of Bismut-Ma's immersion formula, we have used an
Atiyah-Segal-Singer type formula for immersion
\begin{displaymath}
{i_g}_*({\rm Td}_g^{-1}(N){\rm ch}_g(x))={\rm ch}_g(i_*(x)).
\end{displaymath} This formula is the content of \cite[Theorem
6.16]{KR1}. Similarly, for any $j\geq0$, we compute
\begin{align*}
\widetilde{{\rm
ch}}_g(\overline{\chi}_j)=&T_g(\omega^{Y_g},h^{\wedge^jF^\vee\otimes
\eta\mid_{Y_g}})-T_g(\omega^{Y_g\times
\mathbb{P}^1},h^{\wedge^j\widetilde{F}^\vee\otimes
p_{Y_g}^*\eta\mid_{Y_g}})\\
&+T_g(\omega^{Y_g\times
\mathbb{P}^1},h^{\mathcal{O}(-Y_{g,0})\otimes
\wedge^j\widetilde{F}^\vee\otimes
p_{Y_g}^*\eta\mid_{Y_g}})\\
&+\int_{Y_g\times\mathbb{P}^1/S}{\rm
ch}_g(\wedge^j\overline{\widetilde{F}}^\vee\otimes
p_{Y_g}^*\overline{\eta}\mid_{Y_g}){\rm Td}(\overline{Tu_g}){\rm
Td}^{-1}(\overline{Y_{g,0}})g_{Y_{g,0}}\\
&+\int_{Y_g/S}{\rm ch_g}(\wedge^j\overline{F}^\vee\otimes
\overline{\eta}\mid_{Y_g}){\rm
Td}^{-1}(\overline{N}_{Y_g\times\mathbb{P}^1/{Y_{g,0}}})\widetilde{{\rm
Td}}(\overline{Tf_g},\overline{Tu_g}\mid_{Y_{g,0}})\\
&+\int_{Y_g/S}{\rm ch}_g(\wedge^jF^\vee\otimes
\eta\mid_{Y_g})R(N_{Y_g\times\mathbb{P}^1/{Y_{g,0}}}){\rm
Td}(Tf_g).
\end{align*}

Denote by $\overline{\Omega}(W(i_g))$ (resp.
$\overline{\Omega}(-X_g)$) the middle (resp. sub) term of the
induced short exact sequence of equivariant standard complexes.
According to Lemma~\ref{312}, we have
\begin{align}
&\widetilde{{\rm
ch}}_g(\overline{\Xi}_\nabla\mid_X,h^H)-\widetilde{{\rm
ch}}_g(\overline{\Omega}(W(i_g)),h^H)+\widetilde{{\rm
ch}}_g(\overline{\Omega}(-X_g),h^H)\notag\\
=&\sum(-1)^j\widetilde{{\rm
ch}}_g(\overline{\chi}_j)-\sum(-1)^j\widetilde{{\rm
ch}}_g(\overline{\varepsilon}_j)\notag\\
=&\sum(-1)^jT_g(\omega^{Y_g},h^{\wedge^jF^\vee\otimes
\eta\mid_{Y_g}})-\sum(-1)^jT_g(\omega^{Y_g\times
\mathbb{P}^1},h^{\wedge^j\widetilde{F}^\vee\otimes
p_{Y_g}^*\eta\mid_{Y_g}})\notag\\
&+\sum(-1)^jT_g(\omega^{Y_g\times
\mathbb{P}^1},h^{\mathcal{O}(-Y_{g,0})\otimes
\wedge^j\widetilde{F}^\vee\otimes
p_{Y_g}^*\eta\mid_{Y_g}})\notag\\
&+\int_{Y_g\times\mathbb{P}^1/S}\sum(-1)^j{\rm
ch}_g(\wedge^j\overline{\widetilde{F}}^\vee\otimes
p_{Y_g}^*\overline{\eta}\mid_{Y_g}){\rm Td}(\overline{Tu_g}){\rm
Td}^{-1}(\overline{Y_{g,0}})g_{Y_{g,0}}\notag\\
&+\int_{Y_g/S}\sum(-1)^j{\rm
ch_g}(\wedge^j\overline{F}^\vee\otimes
\overline{\eta}\mid_{Y_g}){\rm
Td}^{-1}(\overline{N}_{Y_g\times\mathbb{P}^1/{Y_{g,0}}})\widetilde{{\rm
Td}}(\overline{Tf_g},\overline{Tu_g}\mid_{Y_{g,0}})\notag\\
&+\int_{Y_g/S}\sum(-1)^j{\rm ch}_g(\wedge^jF^\vee\otimes
\eta\mid_{Y_g})R(N_{Y_g\times\mathbb{P}^1/{Y_{g,0}}}){\rm
Td}(Tf_g)\notag\\
&-\sum(-1)^jT_g(\omega^{X_g},h^{\xi_j\mid_{X_g}})+\sum(-1)^jT_g(\omega^{W(i_g)},h^{\widetilde{\xi}_j\mid_{W(i_g)}})\notag\\
&-\sum(-1)^jT_g(\omega^{W(i_g)},h^{\mathcal{O}(-X_g)\otimes
\widetilde{\xi}_j\mid_{W(i_g)}})\notag\\
&-\int_{W(i_g)/S}\sum(-1)^j{\rm
ch}_g(\overline{\widetilde{\xi}}_j){\rm Td}(\overline{Tl_g^0}){\rm
Td}^{-1}(\overline{X_g})g_{X_g}\notag\\
&-\int_{X_g/S}\sum(-1)^j{\rm ch_g}(\overline{\xi}_j){\rm
Td}^{-1}(\overline{N}_{W(i_g)/{X_g}})\widetilde{{\rm
Td}}(\overline{Th_g},\overline{Tl_g^0}\mid_{X_g})\notag\\
&-\int_{X_g/S}\sum(-1)^j{\rm ch}_g(\xi_j)R(N_{W(i_g)/{X_g}}){\rm
Td}(Th_g).\label{Eq(1)}
\end{align}

Similarly, we consider the tensor products of
$\overline{\Xi}_\nabla\mid_{W(i_g)}$ with the following three
Koszul resolutions \begin{displaymath} 0\to
\overline{\mathcal{O}}(-P^0_g)\to
\overline{\mathcal{O}}_{W(i_g)}\to
{i_{P^0_g}}_*\overline{\mathcal{O}}_{P^0_g}\to 0,
\end{displaymath}
\begin{displaymath}
0\to \overline{\mathcal{O}}(-\widetilde{X_g})\to
\overline{\mathcal{O}}_{W(i_g)}\to
{i_{\widetilde{X_g}}}_*\overline{\mathcal{O}}_{\widetilde{X_g}}\to
0, \end{displaymath} and
\begin{align*}
0\to
\overline{\mathcal{O}}(-\widetilde{X_g})\otimes\overline{\mathcal{O}}(-P^0_g)\to
\overline{\mathcal{O}}(-\widetilde{X_g})\oplus&\overline{\mathcal{O}}(-P^0_g)\to
\overline{\mathcal{O}}_{W(i_g)}\\
&\to {i_{\widetilde{X_g}\cap
P^0_g}}_*\overline{\mathcal{O}}_{\widetilde{X_g}\cap P^0_g}\to
0.
\end{align*}
We shall still denote by $\overline{\chi.}$ (resp.
$\overline{\varepsilon.}$) the exact sequences consisting of
homologies (resp. elements) in the induced exact sequences of
equivariant standard complexes.

For the first one, denote by $g_{P^0_g}$ the Euler-Green current
associated to $P^0_g$ and write $\overline{\xi.}^\infty$ for the
restriction $\overline{\Xi}_\nabla\mid_P$. Moreover, denote by
$\overline{\Omega}(-P^0_g)$ the sub term of the induced short
exact sequence of equivariant standard complexes and denote by
$b_g$ the composition of the inclusion $P_g^0\hookrightarrow
W(i_g)$ with the morphism $l_g^0$. According to Lemma~\ref{312},
we have
\begin{align}
&\widetilde{{\rm
ch}}_g(\overline{\Xi}_\nabla\mid_{P^0_g},h^H)-\widetilde{{\rm
ch}}_g(\overline{\Omega}(W(i_g)),h^H)+\widetilde{{\rm
ch}}_g(\overline{\Omega}(-P^0_g),h^H)\notag\\
=&\sum(-1)^j\widetilde{{\rm
ch}}_g(\overline{\chi}_j)-\sum(-1)^j\widetilde{{\rm
ch}}_g(\overline{\varepsilon}_j)\notag\\
=&\sum(-1)^jT_g(\omega^{Y_g},h^{\wedge^jF_\infty^\vee\otimes
\eta\mid_{Y_g}})-\sum(-1)^jT_g(\omega^{Y_g\times
\mathbb{P}^1},h^{\wedge^j\widetilde{F}^\vee\otimes
p_{Y_g}^*\eta\mid_{Y_g}})\notag\\
&+\sum(-1)^jT_g(\omega^{Y_g\times
\mathbb{P}^1},h^{\mathcal{O}(-Y_{g,\infty})\otimes
\wedge^j\widetilde{F}^\vee\otimes
p_{Y_g}^*\eta\mid_{Y_g}})\notag\\
&+\int_{Y_g\times\mathbb{P}^1/S}\sum(-1)^j{\rm
ch}_g(\wedge^j\overline{\widetilde{F}}^\vee\otimes
p_{Y_g}^*\overline{\eta}\mid_{Y_g}){\rm Td}(\overline{Tu_g}){\rm
Td}^{-1}(\overline{Y_{g,\infty}})g_{Y_{g,\infty}}\notag\\
&+\int_{Y_g/S}\{\sum(-1)^j{\rm
ch_g}(\wedge^j\overline{F}_\infty^\vee\otimes
\overline{\eta}\mid_{Y_g}){\rm
Td}^{-1}(\overline{N}_{Y_g\times\mathbb{P}^1/{Y_{g,\infty}}})\notag\\
&\qquad\qquad\qquad\qquad\qquad\qquad\qquad\qquad\qquad\cdot\widetilde{{\rm
Td}}(\overline{Tf_g},\overline{Tu_g}\mid_{Y_{g,\infty}})\}\notag\\
&+\int_{Y_g/S}\sum(-1)^j{\rm ch}_g(\wedge^jF_\infty^\vee\otimes
\eta\mid_{Y_g})R(N_{Y_g\times\mathbb{P}^1/{Y_{g,\infty}}}){\rm
Td}(Tf_g)\notag\\
&-\sum(-1)^jT_g(\omega^{P^0_g},h^{\xi_j^\infty\mid_{P^0_g}})+\sum(-1)^jT_g(\omega^{W(i_g)},h^{\widetilde{\xi}_j\mid_{W(i_g)}})\notag\\
&-\sum(-1)^jT_g(\omega^{W(i_g)},h^{\mathcal{O}(-P^0_g)\otimes
\widetilde{\xi}_j\mid_{W(i_g)}})\notag\\
&-\int_{W(i_g)/S}\sum(-1)^j{\rm
ch}_g(\overline{\widetilde{\xi}}_j){\rm Td}(\overline{Tl_g^0}){\rm
Td}^{-1}(\overline{P^0_g})g_{P^0_g}\notag\\
&-\int_{P^0_g/S}\sum(-1)^j{\rm ch_g}(\overline{\xi}^\infty_j){\rm
Td}^{-1}(\overline{N}_{W(i_g)/{P^0_g}})\widetilde{{\rm
Td}}(\overline{Tb_g},\overline{Tl_g^0}\mid_{P^0_g})\notag\\
&-\int_{P^0_g/S}\sum(-1)^j{\rm
ch}_g(\xi^\infty_j)R(N_{W(i_g)/{P^0_g}}){\rm
Td}(Tb_g)\label{Eq(2)}
\end{align} where $\overline{F}_\infty$ is the non-zero degree
part of the hermitian normal bundle $\overline{N}_\infty$
associated to $i_\infty$.

For the second one, denote by $g_{\widetilde{X_g}}$ the
Euler-Green current associated to $\widetilde{X_g}$ and denote by
$\overline{\Omega}(-\widetilde{X_g})$ the sub term of the induced
short exact sequence of equivariant standard complexes. Since the
restriction of $\overline{\Xi}$ to the component $\widetilde{X}$
is equivariantly and orthogonally split, we know that
$\widetilde{{\rm ch}}_g(\overline{\Xi}\mid_{\widetilde{X_g}},h^H)$
is equal to $0$ and the summation $\sum(-1)^j{\rm
ch}_g(\overline{\widetilde{\xi}}_j)$ vanishes on
$\widetilde{X_g}$. Using again Lemma~\ref{312}, we obtain
\begin{align}
&-\widetilde{{\rm
ch}}_g(\overline{\Omega}(W(i_g)),h^H)+\widetilde{{\rm
ch}}_g(\overline{\Omega}(-\widetilde{X_g}),h^H)\notag\\
=&\sum(-1)^j\widetilde{{\rm
ch}}_g(\overline{\chi}_j)-\sum(-1)^j\widetilde{{\rm
ch}}_g(\overline{\varepsilon}_j)\notag\\
=&-\sum(-1)^jT_g(\omega^{Y_g\times
\mathbb{P}^1},h^{\wedge^j\widetilde{F}^\vee\otimes
p_{Y_g}^*\eta\mid_{Y_g}})\notag\\
&+\sum(-1)^jT_g(\omega^{Y_g\times
\mathbb{P}^1},h^{{j^0_g}^*\mathcal{O}(-\widetilde{X_g})\otimes
\wedge^j\widetilde{F}^\vee\otimes
p_{Y_g}^*\eta\mid_{Y_g}})\notag\\
&-\int_{Y_g\times\mathbb{P}^1/S}\{\sum(-1)^j{\rm
ch}_g(\wedge^j\overline{\widetilde{F}}^\vee\otimes
p_{Y_g}^*\overline{\eta}\mid_{Y_g}){\rm
Td}(\overline{Tu_g})\notag\\
&\qquad\qquad\qquad\qquad\qquad\qquad\qquad\cdot\widetilde{{\rm
ch}}({j^0_g}^*\overline{\mathcal{O}}(-\widetilde{X_g}),\overline{O}_{Y_g\times\mathbb{P}^1})\}\notag\\
&+\sum(-1)^jT_g(\omega^{W(i_g)},h^{\widetilde{\xi}_j\mid_{W(i_g)}})\notag\\
&-\sum(-1)^jT_g(\omega^{W(i_g)},h^{\mathcal{O}(-\widetilde{X_g})\otimes
\widetilde{\xi}_j\mid_{W(i_g)}})\notag\\
&-\int_{W(i_g)/S}\sum(-1)^j{\rm
ch}_g(\overline{\widetilde{\xi}}_j){\rm Td}(\overline{Tl_g^0}){\rm
Td}^{-1}(\overline{\widetilde{X_g}})g_{\widetilde{X_g}}.\label{Eq(3)}
\end{align}
Here the element $\widetilde{{\rm
ch}}({j^0_g}^*\overline{\mathcal{O}}(-\widetilde{X_g}),\overline{O}_{Y_g\times\mathbb{P}^1})$
is the equivariant secondary characteristic class of the following
short exact sequence \begin{displaymath} 0\to 0\to
{j^0_g}^*\overline{\mathcal{O}}(-\widetilde{X_g})\to
\overline{O}_{Y_g\times\mathbb{P}^1}\to 0. \end{displaymath}

We now consider the last one. This is also a Koszul resolution
because $\widetilde{X_g}$ and $P_g^0$ intersect transversally. By
\cite[Theorem 3.20]{BGS4}, the Euler-Green current associated to
$\widetilde{X_g}\cap P^0_g$ is the current
$c_1(\overline{\mathcal{O}}(P^0_g))g_{\widetilde{X_g}}+\delta_{\widetilde{X_g}}g_{P^0_g}$.
Then, by using the isometry
$\overline{\mathcal{O}}(X_g)\cong\overline{\mathcal{O}}(P_g^0)\otimes\overline{\mathcal{O}}(\widetilde{X_g})$
and Corollary~\ref{313}, we get
\begin{align}
&-\widetilde{{\rm
ch}}_g(\overline{\Omega}(W(i_g)),h^H)+\widetilde{{\rm
ch}}_g(\overline{\Omega}(-\widetilde{X_g}),h^H)\notag\\
&\qquad\qquad\qquad\qquad+\widetilde{{\rm
ch}}_g(\overline{\Omega}(-P^0_g),h^H)-\widetilde{{\rm
ch}}_g(\overline{\Omega}(-X_g),h^H)\notag\\
=&\sum(-1)^j\widetilde{{\rm
ch}}_g(\overline{\chi}_j)-\sum(-1)^j\widetilde{{\rm
ch}}_g(\overline{\varepsilon}_j)\notag\\
=&-\sum(-1)^jT_g(\omega^{Y_g\times
\mathbb{P}^1},h^{\wedge^j\widetilde{F}^\vee\otimes
p_{Y_g}^*\eta\mid_{Y_g}})\notag\\
&+\sum(-1)^jT_g(\omega^{Y_g\times
\mathbb{P}^1},h^{{j^0_g}^*\mathcal{O}(-\widetilde{X_g})\otimes
\wedge^j\widetilde{F}^\vee\otimes
p_{Y_g}^*\eta\mid_{Y_g}})\notag\\
&+\sum(-1)^jT_g(\omega^{Y_g\times
\mathbb{P}^1},h^{\mathcal{O}(-Y_{g,\infty})\otimes
\wedge^j\widetilde{F}^\vee\otimes
p_{Y_g}^*\eta\mid_{Y_g}})\notag\\
&-\sum(-1)^jT_g(\omega^{Y_g\times
\mathbb{P}^1},h^{\mathcal{O}(-Y_{g,0})\otimes
\wedge^j\widetilde{F}^\vee\otimes
p_{Y_g}^*\eta\mid_{Y_g}})\notag\\
&-\int_{Y_g\times\mathbb{P}^1/S}\sum(-1)^j{\rm
ch}_g(\wedge^j\overline{\widetilde{F}}^\vee\otimes
p_{Y_g}^*\overline{\eta}\mid_{Y_g}){\rm
Td}(\overline{Tu_g})\widetilde{{\rm
ch}}(\overline{\Theta})\notag\\
&+\sum(-1)^jT_g(\omega^{W(i_g)},h^{\widetilde{\xi}_j\mid_{W(i_g)}})\notag\\
&-\sum(-1)^jT_g(\omega^{W(i_g)},h^{\mathcal{O}(-\widetilde{X_g})\otimes
\widetilde{\xi}_j\mid_{W(i_g)}})\notag\\
&-\sum(-1)^jT_g(\omega^{W(i_g)},h^{\mathcal{O}(-P^0_g)\otimes
\widetilde{\xi}_j\mid_{W(i_g)}})\notag\\
&+\sum(-1)^jT_g(\omega^{W(i_g)},h^{\mathcal{O}(-X_g)\otimes
\widetilde{\xi}_j\mid_{W(i_g)}})\notag\\
&-\int_{W(i_g)/S}\{\sum(-1)^j{\rm
ch}_g(\overline{\widetilde{\xi}}_j){\rm Td}(\overline{Tl_g^0}){\rm
Td}^{-1}(\overline{\widetilde{X_g}}){\rm
Td}^{-1}(\overline{P^0_g})\notag\\
&\qquad\qquad\qquad\qquad\qquad\qquad\quad\cdot[c_1(\overline{\mathcal{O}}(P^0_g))g_{\widetilde{X_g}}+\delta_{\widetilde{X_g}}g_{P^0_g}]\}.\notag\\\label{Eq(4)}
\end{align}
Here the element $\widetilde{{\rm ch}}(\overline{\Theta})$ is the
equivariant secondary characteristic class of the following short
exact sequence \begin{displaymath} \overline{\Theta}:\quad 0\to
\overline{\mathcal{O}}(-Y_{g,0})\to
{j^0_g}^*\overline{\mathcal{O}}(-\widetilde{X_g})\oplus\overline{\mathcal{O}}(-Y_{g,\infty})\to
\overline{O}_{Y_g\times\mathbb{P}^1}\to 0. \end{displaymath}

Since $s_0: Y\times\{0\} \to Y\times\mathbb{P}^1$ and $s_\infty:
Y\times\{\infty\}\to Y\times\mathbb{P}^1$ are sections of smooth
morphism, the normal sequences
\begin{displaymath}
0\to \overline{Tf_g}\to \overline{Tu_g}\mid_{Y_{g,0}}\to
\overline{N}_{Y_g\times\mathbb{P}^1/{Y_{g,0}}}\to 0
\end{displaymath} and \begin{displaymath}
0\to \overline{Tf_g}\to \overline{Tu_g}\mid_{Y_{g,\infty}}\to
\overline{N}_{Y_g\times\mathbb{P}^1/{Y_{g,\infty}}}\to 0
\end{displaymath} are orthogonally split so that $\widetilde{{\rm
Td}}(\overline{Tf_g},\overline{Tu_g}\mid_{Y_{g,0}})$ and
$\widetilde{{\rm
Td}}(\overline{Tf_g},\overline{Tu_g}\mid_{Y_{g,\infty}})$ are both
equal to $0$. Moreover, the normal bundles
$N_{Y_g\times\mathbb{P}^1/{Y_{g,0}}}$ and
$N_{Y_g\times\mathbb{P}^1/{Y_{g,\infty}}}$ are isomorphic to
trivial bundles so that $R(N_{Y_g\times\mathbb{P}^1/{Y_{g,0}}})$
and $R(N_{Y_g\times\mathbb{P}^1/{Y_{g,\infty}}})$ are both equal
to $0$. Furthermore, we may drop all the terms where an integral
is taken over $\widetilde{X_g}$ because $\sum(-1)^j{\rm
ch}_g(\overline{\widetilde{\xi}}_j)$ vanishes on
$\widetilde{X_g}$.

Now, we compute
(\ref{Eq(1)})$-$(\ref{Eq(2)})$-$(\ref{Eq(3)})$+$(\ref{Eq(4)})
which is
\begin{align*}
&\widetilde{{\rm
ch}}_g(\overline{\Xi}_\nabla\mid_X,h^H)-\widetilde{{\rm
ch}}_g(\overline{\Xi}_\nabla\mid_{P_g^0},h^H)+\sum(-1)^jT_g(\omega^{X_g},h^{\xi_j\mid_{X_g}})\\
&-\sum(-1)^jT_g(\omega^{P^0_g},h^{\xi_j^\infty\mid_{P^0_g}})-\sum(-1)^jT_g(\omega^{Y_g},h^{\wedge^jF^\vee\otimes
\eta\mid_{Y_g}})\\
&\qquad\qquad\qquad\qquad\qquad\qquad+\sum(-1)^jT_g(\omega^{Y_g},h^{\wedge^jF_\infty^\vee\otimes
\eta\mid_{Y_g}})\\
=&\int_{Y_g\times\mathbb{P}^1/S}\{\sum(-1)^j{\rm
ch}_g(\wedge^j\overline{\widetilde{F}}^\vee\otimes
p_{Y_g}^*\overline{\eta}\mid_{Y_g}){\rm
Td}(\overline{Tu_g})\cdot[{\rm
Td}^{-1}(\overline{Y_{g,0}})g_{Y_{g,0}}\\
&\qquad\qquad-{\rm
Td}^{-1}(\overline{Y_{g,\infty}})g_{Y_{g,\infty}}+\widetilde{{\rm
ch}}({j^0_g}^*\overline{\mathcal{O}}(-\widetilde{X_g}),\overline{O}_{Y_g\times\mathbb{P}^1})-\widetilde{{\rm
ch}}_g(\overline{\Theta})]\}\\
&-\int_{W(i_g)/S}\sum(-1)^j{\rm
ch}_g(\overline{\widetilde{\xi}}_j){\rm Td}(\overline{Tl_g^0}){\rm
Td}^{-1}(\overline{X_g})g_{X_g}\\
&-\int_{X_g/S}\sum(-1)^j{\rm ch_g}(\overline{\xi}_j){\rm
Td}^{-1}(\overline{N}_{W(i_g)/{X_g}})\widetilde{{\rm
Td}}(\overline{Th_g},\overline{Tl_g^0}\mid_{X_g})\\
&-\int_{X_g/S}\sum(-1)^j{\rm ch}_g(\xi_j)R(N_{W(i_g)/{X_g}}){\rm
Td}(Th_g)\\
&+\int_{W(i_g)/S}\sum(-1)^j{\rm
ch}_g(\overline{\widetilde{\xi}}_j){\rm Td}(\overline{Tl_g^0}){\rm
Td}^{-1}(\overline{P^0_g})g_{P^0_g}\\
&+\int_{P^0_g/S}\sum(-1)^j{\rm ch_g}(\overline{\xi}^\infty_j){\rm
Td}^{-1}(\overline{N}_{W(i_g)/{P^0_g}})\widetilde{{\rm
Td}}(\overline{Tb_g},\overline{Tl_g^0}\mid_{P^0_g})\\
&+\int_{P^0_g/S}\sum(-1)^j{\rm
ch}_g(\xi^\infty_j)R(N_{W(i_g)/{P^0_g}}){\rm
Td}(Tb_g)\\
&+\int_{W(i_g)/S}\sum(-1)^j{\rm
ch}_g(\overline{\widetilde{\xi}}_j){\rm Td}(\overline{Tl_g^0}){\rm
Td}^{-1}(\overline{\widetilde{X_g}})g_{\widetilde{X_g}}\\
&-\int_{W(i_g)/S}\sum(-1)^j{\rm
ch}_g(\overline{\widetilde{\xi}}_j){\rm Td}(\overline{Tl_g^0}){\rm
Td}^{-1}(\overline{\widetilde{X_g}}){\rm
Td}^{-1}(\overline{P^0_g})c_1(\overline{\mathcal{O}}(P^0_g))g_{\widetilde{X_g}}.
\end{align*}

Denote by $i_X$ (resp. $i_P$) the inclusion from $X$ to $W(i)$
(resp. $P$ to $W(i)$). We may use the Atiyah-Segal-Singer type
formula for immersions and the projection formula in cohomology to
compute
\begin{align*}
&{{i_X}_g}_*\big(\sum(-1)^j{\rm
ch}_g(\xi_j)R(N_{W(i_g)/{X_g}}){\rm
Td}(Th_g)\big)\\
=&{{i_X}_g}_*\big(R(N_{W(i_g)/{X_g}}){\rm Td}(Th_g){i_g}_*({\rm
Td}_g^{-1}(N_{X/Y}){\rm ch}_g(\eta))\big)\\
=&({i_X}_g\circ i_g)_*\big(R(N_{W(i_g)/{X_g}}){\rm Td}(Th_g){\rm
Td}_g^{-1}(N_{X/Y}){\rm ch}_g(\eta)\big).
\end{align*}
Note that the restriction of $N_{W(i_g)/{X_g}}$ to $Y_g$ is
trivial so that the last expression vanishes. An entirely
analogous reasoning implies that
\begin{displaymath}
{{i_P}_g}_*\big(\sum(-1)^j{\rm
ch}_g(\xi_j^\infty)R(N_{W(i_g)/{P_g^0}}){\rm Td}(Tb_g)\big)=0.
\end{displaymath}

Thus, we are left with the equality
\begin{align*}
&\widetilde{{\rm
ch}}_g(\overline{\Xi}_\nabla\mid_X,h^H)-\widetilde{{\rm
ch}}_g(\overline{\Xi}_\nabla\mid_{P_g^0},h^H)+\sum(-1)^jT_g(\omega^{X_g},h^{\xi_j\mid_{X_g}})\\
&-\sum(-1)^jT_g(\omega^{P^0_g},h^{\xi_j^\infty\mid_{P^0_g}})-\sum(-1)^jT_g(\omega^{Y_g},h^{\wedge^jF^\vee\otimes
\eta\mid_{Y_g}})\\
&\qquad\qquad\qquad\qquad\qquad\qquad+\sum(-1)^jT_g(\omega^{Y_g},h^{\wedge^jF_\infty^\vee\otimes
\eta\mid_{Y_g}})\\
=&\int_{Y_g\times\mathbb{P}^1/S}\{\sum(-1)^j{\rm
ch}_g(\wedge^j\overline{\widetilde{F}}^\vee\otimes
p_{Y_g}^*\overline{\eta}\mid_{Y_g}){\rm
Td}(\overline{Tu_g})\cdot[{\rm
Td}^{-1}(\overline{Y_{g,0}})g_{Y_{g,0}}\\
&\qquad\qquad\qquad-{\rm
Td}^{-1}(\overline{Y_{g,\infty}})g_{Y_{g,\infty}}+\widetilde{{\rm
ch}}({j^0_g}^*\overline{\mathcal{O}}(-\widetilde{X_g}),\overline{O_{Y_g\times\mathbb{P}^1}})-\widetilde{{\rm
ch}}_g(\overline{\Theta})]\}\\
&-\int_{W(i_g)/S}\{\sum(-1)^j{\rm
ch}_g(\overline{\widetilde{\xi}}_j){\rm
Td}(\overline{Tl_g^0})\cdot[{\rm
Td}^{-1}(\overline{X_g})g_{X_g}-{\rm
Td}^{-1}(\overline{P^0_g})g_{P^0_g}\\
&\qquad\qquad\qquad\quad-{\rm
Td}^{-1}(\overline{\widetilde{X_g}})g_{\widetilde{X_g}}+{\rm
Td}^{-1}(\overline{\widetilde{X_g}}){\rm
Td}^{-1}(\overline{P_g^0})c_1(\overline{\mathcal{O}}(P^0_g))g_{\widetilde{X_g}}]\}\\
&-\int_{X_g/S}\sum(-1)^j{\rm ch_g}(\overline{\xi}_j){\rm
Td}^{-1}(\overline{N}_{W(i_g)/{X_g}})\widetilde{{\rm
Td}}(\overline{Th_g},\overline{Tl_g^0}\mid_{X_g})\\
&+\int_{P^0_g/S}\sum(-1)^j{\rm ch_g}(\overline{\xi}^\infty_j){\rm
Td}^{-1}(\overline{N}_{W(i_g)/{P^0_g}})\widetilde{{\rm
Td}}(\overline{Tb_g},\overline{Tl_g^0}\mid_{P^0_g}).
\end{align*}

Using the differential equation which
$T_g(\overline{\widetilde{\xi.}})$ satisfies, we compute
\begin{align}
&-\int_{W(i_g)/S}\{\sum(-1)^j{\rm
ch}_g(\overline{\widetilde{\xi}}_j){\rm
Td}(\overline{Tl_g^0})\cdot[{\rm
Td}^{-1}(\overline{X_g})g_{X_g}-{\rm
Td}^{-1}(\overline{P^0_g})g_{P^0_g}\notag\\
&\qquad\qquad\qquad-{\rm
Td}^{-1}(\overline{\widetilde{X_g}})g_{\widetilde{X_g}}+{\rm
Td}^{-1}(\overline{\widetilde{X_g}}){\rm
Td}^{-1}(\overline{P_g^0})c_1(\overline{\mathcal{O}}(P^0_g))g_{\widetilde{X_g}}]\}\notag\\
=&\int_{W(i_g)/S}\{{\rm
Td}(\overline{Tl_g^0})T_g(\overline{\widetilde{\xi.}})\cdot[{\rm
Td}^{-1}(\overline{X_g})\delta_{X_g}-{\rm
Td}^{-1}(\overline{P^0_g})\delta_{P^0_g}\notag\\
&\qquad\qquad\qquad-{\rm
Td}^{-1}(\overline{\widetilde{X_g}})\delta_{\widetilde{X_g}}+{\rm
Td}^{-1}(\overline{\widetilde{X_g}}){\rm
Td}^{-1}(\overline{P_g^0})c_1(\overline{\mathcal{O}}(P^0_g))\delta_{\widetilde{X_g}}]\}\notag\\
&-\int_{W(i_g)/S}\{{\rm Td}(\overline{Tl_g^0}){\rm
ch}_g(p_Y^*\overline{\eta}){\rm
Td}_g^{-1}(\overline{N}_{W/{Y\times\mathbb{P}^1}})\delta_{Y_g\times\mathbb{P}^1}\cdot[{\rm Td}^{-1}(\overline{X_g})g_{X_g}\notag\\
&-{\rm
Td}^{-1}(\overline{P^0_g})g_{P^0_g}-{\rm
Td}^{-1}(\overline{\widetilde{X_g}})g_{\widetilde{X_g}}+{\rm
Td}^{-1}(\overline{\widetilde{X_g}}){\rm
Td}^{-1}(\overline{P_g^0})c_1(\overline{\mathcal{O}}(P^0_g))g_{\widetilde{X_g}}]\}.\notag\\
\label{eq5}
\end{align}
Here we have used the equation
\begin{align} {\rm
Td}^{-1}(\overline{X_g})c_1(\overline{\mathcal{O}}(X_g))-&{\rm
Td}^{-1}(\overline{P^0_g})c_1(\overline{\mathcal{O}}(P^0_g))-{\rm
Td}^{-1}(\overline{\widetilde{X_g}})c_1(\overline{\mathcal{O}}(\widetilde{X_g}))\notag\\
+&{\rm Td}^{-1}(\overline{\widetilde{X_g}}){\rm
Td}^{-1}(\overline{P_g^0})c_1(\overline{\mathcal{O}}(P^0_g))c_1(\overline{\mathcal{O}}(\widetilde{X_g}))=0\label{eq505}
\end{align}
which is \cite[(23)]{KR1}.

Again using the fact that $\overline{\widetilde{\xi.}}$ is
equivariantly and orthogonally split on $\widetilde{X}$, the first
integral in the right-hand side of (\ref{eq5}) is equal to
\begin{align*}
&\int_{X_g/S}{\rm
Td}(\overline{Tl_g^0})T_g(\overline{\xi.}){\rm
Td}^{-1}(\overline{N}_{W(i_g)/{X_g}})\\
&\qquad\qquad\qquad\qquad\qquad\qquad-\int_{P^0_g/S}{\rm
Td}(\overline{Tl_g^0})T_g(\overline{\xi.}^\infty){\rm
Td}^{-1}(\overline{N}_{W(i_g)/{P^0_g}}).
\end{align*}
According to the normal sequence $0\to \overline{Th_g}\to
\overline{Tl_g^0}\mid_{X_g}\to \overline{N}_{W(i_g)/{X_g}}\to 0$,
we may write \begin{displaymath} {\rm Td}(\overline{Tl_g^0})={\rm
Td}(\overline{Th_g}){\rm Td}(\overline{N}_{W(i_g)/{X_g}})-{\rm
dd}^c\widetilde{{\rm
Td}}(\overline{Th_g},\overline{Tl_g^0}\mid_{X_g}).
\end{displaymath} So we get
\begin{align*}
&\int_{X_g/S}{\rm Td}(\overline{Tl_g^0})T_g(\overline{\xi.}){\rm
Td}^{-1}(\overline{N}_{W(i_g)/{X_g}})\\
=&\int_{X_g/S}{\rm
Td}(\overline{Th_g})T_g(\overline{\xi.})\\
&-\int_{X_g/S}\widetilde{{\rm
Td}}(\overline{Th_g},\overline{Tl_g^0}\mid_{X_g})\delta_{Y_g}{\rm
ch}_g(\overline{\eta}){\rm Td}_g^{-1}(\overline{N}){\rm
Td}^{-1}(\overline{N}_{W(i_g)/{X_g}})\\
&+\int_{X_g/S}\sum(-1)^j{\rm ch_g}(\overline{\xi}_j){\rm
Td}^{-1}(\overline{N}_{W(i_g)/{X_g}})\widetilde{{\rm
Td}}(\overline{Th_g},\overline{Tl_g^0}\mid_{X_g}).
\end{align*}
Similarly we have
\begin{align*}
&\int_{P^0_g/S}{\rm
Td}(\overline{Tl_g^0})T_g(\overline{\xi.}^\infty){\rm
Td}^{-1}(\overline{N}_{W(i_g)/{P^0_g}})\\
=&\int_{P^0_g/S}{\rm
Td}(\overline{Tb_g})T_g(\overline{\xi.}^\infty)\\
&-\int_{P^0_g/S}\widetilde{{\rm
Td}}(\overline{Tb_g},\overline{Tl_g^0}\mid_{P^0_g})\delta_{Y_g}{\rm
ch}_g(\overline{\eta}){\rm Td}_g^{-1}(\overline{N}_\infty){\rm
Td}^{-1}(\overline{N}_{W(i_g)/{P^0_g}})\\
&+\int_{P^0_g/S}\sum(-1)^j{\rm ch_g}(\overline{\xi}^\infty_j){\rm
Td}^{-1}(\overline{N}_{W(i_g)/{P^0_g}})\widetilde{{\rm
Td}}(\overline{Tb_g},\overline{Tl_g^0}\mid_{P^0_g}).
\end{align*}
Note that the normal sequence of $\overline{Th_g}$
in $\overline{Tl_g^0}$ (resp. $\overline{Tb_g}$ in
$\overline{Tl_g^0}$) is orthogonally split on $Y_g\times\{0\}$
(resp. $Y_g\times\{\infty\}$), then $\widetilde{{\rm
Td}}(\overline{Th_g},\overline{Tl_g^0}\mid_{X_g})\delta_{Y_g}$ and
$\widetilde{{\rm
Td}}(\overline{Tb_g},\overline{Tl_g^0}\mid_{P^0_g})\delta_{Y_g}$
are both equal to $0$. Combining these computations above we get
\begin{align}
&\int_{X_g/S}{\rm Td}(\overline{Tl_g^0})T_g(\overline{\xi.}){\rm
Td}^{-1}(\overline{N}_{W(i_g)/{X_g}})\notag\\
&\qquad\qquad\qquad\qquad-\int_{P^0_g/S}{\rm
Td}(\overline{Tl_g^0})T_g(\overline{\xi.}^\infty){\rm
Td}^{-1}(\overline{N}_{W(i_g)/{P^0_g}})\notag\\
=&\int_{X_g/S}{\rm
Td}(\overline{Th_g})T_g(\overline{\xi.})\notag\\
&+\int_{X_g/S}\sum(-1)^j{\rm
ch_g}(\overline{\xi}_j){\rm
Td}^{-1}(\overline{N}_{W(i_g)/{X_g}})\widetilde{{\rm
Td}}(\overline{Th_g},\overline{Tl_g^0}\mid_{X_g})\notag\\
&-\int_{P^0_g/S}{\rm
Td}(\overline{Tb_g})T_g(\overline{\xi.}^\infty)\notag\\
&-\int_{P^0_g/S}\sum(-1)^j{\rm
ch_g}(\overline{\xi}^\infty_j){\rm
Td}^{-1}(\overline{N}_{W(i_g)/{P^0_g}})\widetilde{{\rm
Td}}(\overline{Tb_g},\overline{Tl_g^0}\mid_{P^0_g}).\notag\\
\label{eq6}
\end{align}

We now compute the second integral in the right-hand side of
(\ref{eq5}). According to the normal sequence
\begin{displaymath}
0\to \overline{Tu_g}\to
\overline{Tl_g^0}\mid_{Y_g\times\mathbb{P}^1}\to
\overline{N}_{W(i_g)/{Y_g\times\mathbb{P}^1}}\to 0,
\end{displaymath} we may write
\begin{displaymath}
{\rm Td}(\overline{Tl_g^0})={\rm Td}(\overline{Tu_g}){\rm
Td}(\overline{N}_{W(i_g)/{Y_g\times\mathbb{P}^1}})-{\rm
dd}^c\widetilde{{\rm
Td}}(\overline{Tu_g},\overline{Tl_g^0}\mid_{Y_g\times\mathbb{P}^1}).
\end{displaymath} Hence
\begin{align*}
&-\int_{W(i_g)/S}\{{\rm Td}(\overline{Tl_g^0}){\rm
ch}_g(p_Y^*\overline{\eta}){\rm
Td}_g^{-1}(\overline{N}_{W/{Y\times\mathbb{P}^1}})\delta_{Y_g\times\mathbb{P}^1}\cdot[{\rm Td}^{-1}(\overline{X_g})g_{X_g}\\
&-{\rm
Td}^{-1}(\overline{P^0_g})g_{P^0_g}-{\rm
Td}^{-1}(\overline{\widetilde{X_g}})g_{\widetilde{X_g}}+{\rm
Td}^{-1}(\overline{\widetilde{X_g}}){\rm
Td}^{-1}(\overline{P_g^0})c_1(\overline{\mathcal{O}}(P^0_g))g_{\widetilde{X_g}}]\}\\
=&-\int_{Y_g\times\mathbb{P}^1/S}\{{\rm Td}(\overline{Tu_g}){\rm
ch}_g(p_Y^*\overline{\eta}){\rm
Td}_g^{-1}(\overline{\widetilde{F}})\cdot {j^0_g}^*[{\rm Td}^{-1}(\overline{X_g})g_{X_g}\\
&-{\rm
Td}^{-1}(\overline{P^0_g})g_{P^0_g}-{\rm
Td}^{-1}(\overline{\widetilde{X_g}})g_{\widetilde{X_g}}+{\rm
Td}^{-1}(\overline{\widetilde{X_g}}){\rm
Td}^{-1}(\overline{P_g^0})c_1(\overline{\mathcal{O}}(P^0_g))g_{\widetilde{X_g}}]\}\\
&+\int_{Y_g\times\mathbb{P}^1/S}\{\widetilde{{\rm
Td}}(\overline{Tu_g},\overline{Tl_g^0}\mid_{Y_g\times\mathbb{P}^1}){\rm
ch}_g(p_Y^*\overline{\eta}){\rm
Td}_g^{-1}(\overline{N}_{W/{Y\times\mathbb{P}^1}})\\
&\cdot[{\rm
Td}^{-1}(\overline{X_g})(\delta_{X_g}-c_1(\overline{\mathcal{O}}(X_g)))-{\rm
Td}^{-1}(\overline{P^0_g})(\delta_{P^0_g}-c_1(\overline{\mathcal{O}}(P^0_g)))\\
&-{\rm
Td}^{-1}(\overline{\widetilde{X_g}})(\delta_{\widetilde{X_g}}-c_1(\overline{\mathcal{O}}(\widetilde{X_g})))\\
&+{\rm Td}^{-1}(\overline{\widetilde{X_g}}){\rm
Td}^{-1}(\overline{P_g^0})c_1(\overline{\mathcal{O}}(P^0_g))(\delta_{\widetilde{X_g}}-c_1(\overline{\mathcal{O}}(\widetilde{X_g})))]\}.
\end{align*}
By our choices of the metrics, we have ${\rm
Td}_g^{-1}(\overline{N}_{W/{Y\times\mathbb{P}^1}})\mid_{Y_{g,0}}={\rm
Td}_g^{-1}(\overline{N})$, ${\rm
Td}(\overline{X_g})\mid_{Y_{g,0}}=1$ and ${\rm
Td}_g^{-1}(\overline{N}_{W/{Y\times\mathbb{P}^1}})\mid_{Y_{g,\infty}}={\rm
Td}_g^{-1}(\overline{N}_\infty)$, ${\rm
Td}(\overline{P^0_g})\mid_{Y_{g,\infty}}=1$. Furthermore, by
replacing all tangent bundles by relative tangent bundles, on can
carry through the proof given in \cite[P. 378-379]{KR1} to show
that \begin{displaymath} \widetilde{{\rm
Td}}(\overline{Tu_g},\overline{Tl_g^0}\mid_{Y_g\times\mathbb{P}^1})\mid_{Y_{g,0}}=\widetilde{{\rm
Td}}(\overline{Tf_g},\overline{Th_g}\mid_{Y_g}) \end{displaymath}
and \begin{displaymath} \widetilde{{\rm
Td}}(\overline{Tu_g},\overline{Tl_g^0}\mid_{Y_g\times\mathbb{P}^1})\mid_{Y_{g,\infty}}=\widetilde{{\rm
Td}}(\overline{Tf_g},\overline{Tb_g}\mid_{Y_g}). \end{displaymath}
So combining with the equation (\ref{eq505}), we get
\begin{align}
&-\int_{W(i_g)/S}\{{\rm Td}(\overline{Tl_g^0}){\rm
ch}_g(p_Y^*\overline{\eta}){\rm
Td}_g^{-1}(\overline{N}_{W/{Y\times\mathbb{P}^1}})\delta_{Y_g\times\mathbb{P}^1}\cdot[{\rm Td}^{-1}(\overline{X_g})g_{X_g}\notag\\
&-{\rm
Td}^{-1}(\overline{P^0_g})g_{P^0_g}-{\rm
Td}^{-1}(\overline{\widetilde{X_g}})g_{\widetilde{X_g}}+{\rm
Td}^{-1}(\overline{\widetilde{X_g}}){\rm
Td}^{-1}(\overline{P_g^0})c_1(\overline{\mathcal{O}}(P^0_g))g_{\widetilde{X_g}}]\}\notag\\
=&-\int_{Y_g\times\mathbb{P}^1/S}\{{\rm Td}(\overline{Tu_g}){\rm
ch}_g(p_Y^*\overline{\eta}){\rm
Td}_g^{-1}(\overline{\widetilde{F}})\cdot{j^0_g}^*[{\rm Td}^{-1}(\overline{X_g})g_{X_g}\notag\\
&-{\rm
Td}^{-1}(\overline{P^0_g})g_{P^0_g}-{\rm
Td}^{-1}(\overline{\widetilde{X_g}})g_{\widetilde{X_g}}+{\rm
Td}^{-1}(\overline{\widetilde{X_g}}){\rm
Td}^{-1}(\overline{P_g^0})c_1(\overline{\mathcal{O}}(P^0_g))g_{\widetilde{X_g}}]\}\notag\\
&+\int_{Y_g/S}{\rm ch}_g(\overline{\eta}){\rm
Td}_g^{-1}(\overline{N})\widetilde{{\rm
Td}}(\overline{Tf_g},\overline{Th_g}\mid_{Y_g})\notag\\
&-\int_{Y_g/S}{\rm ch}_g(\overline{\eta}){\rm
Td}_g^{-1}(\overline{N}_\infty)\widetilde{{\rm
Td}}(\overline{Tf_g},\overline{Tb_g}\mid_{Y_g}).\notag\\
\label{eq7}
\end{align}

At last, using the fact that the intersections in the deformation
diagram are transversal and the fact that
$j_g^0(Y_g\times\mathbb{P}^1)$ has no intersection with
$\widetilde{X_g}$, we can compute
\begin{align}
&\int_{Y_g\times\mathbb{P}^1/S}\{\sum(-1)^j{\rm
ch}_g(\wedge^j\overline{\widetilde{F}}^\vee\otimes
p_{Y_g}^*\overline{\eta}\mid_{Y_g}){\rm
Td}(\overline{Tu_g})\cdot[{\rm Td}^{-1}(\overline{Y_{g,0}})g_{Y_{g,0}}\notag\\
&\qquad\qquad\qquad-{\rm
Td}^{-1}(\overline{Y_{g,\infty}})g_{Y_{g,\infty}}+\widetilde{{\rm
ch}}({j^0_g}^*\overline{\mathcal{O}}(-\widetilde{X_g}),\overline{O}_{Y_g\times\mathbb{P}^1})-\widetilde{{\rm
ch}}_g(\overline{\Theta})]\}\notag\\
=&\int_{Y_g\times\mathbb{P}^1/S}\{{\rm Td}(\overline{Tu_g}){\rm
ch}_g(p_Y^*\overline{\eta}){\rm
Td}_g^{-1}(\overline{\widetilde{F}})\cdot {j^0_g}^*[{\rm
Td}^{-1}(\overline{X_g})g_{X_g}-{\rm
Td}^{-1}(\overline{P^0_g})g_{P^0_g}\notag\\
&\qquad\qquad\qquad-{\rm
Td}^{-1}(\overline{\widetilde{X_g}})g_{\widetilde{X_g}}+{\rm
Td}^{-1}(\overline{\widetilde{X_g}}){\rm
Td}^{-1}(\overline{P_g^0})c_1(\overline{\mathcal{O}}(P^0_g))g_{\widetilde{X_g}}]\}.\notag\\
\label{eq8}
\end{align}

Gathering (\ref{eq5}), (\ref{eq6}), (\ref{eq7}) and (\ref{eq8}) we
finally get
\begin{align}
&\widetilde{{\rm
ch}}_g(\overline{\Xi}_\nabla\mid_X,h^H)-\widetilde{{\rm
ch}}_g(\overline{\Xi}_\nabla\mid_{P_g^0},h^H)+\sum(-1)^jT_g(\omega^{X_g},h^{\xi_j\mid_{X_g}})\notag\\
&-\sum(-1)^jT_g(\omega^{P^0_g},h^{\xi_j^\infty\mid_{P^0_g}})-\sum(-1)^jT_g(\omega^{Y_g},h^{\wedge^jF^\vee\otimes
\eta\mid_{Y_g}})\notag\\
&\qquad\qquad\qquad\qquad\qquad\qquad+\sum(-1)^jT_g(\omega^{Y_g},h^{\wedge^jF_\infty^\vee\otimes
\eta\mid_{Y_g}})\notag\\
=&\int_{X_g/S}{\rm
Td}(\overline{Th_g})T_g(\overline{\xi.})-\int_{P^0_g/S}{\rm
Td}(\overline{Tb_g})T_g(\overline{\xi.}^\infty)\notag\\
&+\int_{Y_g/S}{\rm ch}_g(\overline{\eta}){\rm
Td}_g^{-1}(\overline{N})\widetilde{{\rm
Td}}(\overline{Tf_g},\overline{Th_g}\mid_{Y_g})\notag\\
&-\int_{Y_g/S}{\rm ch}_g(\overline{\eta}){\rm
Td}_g^{-1}(\overline{N}_\infty)\widetilde{{\rm
Td}}(\overline{Tf_g},\overline{Tb_g}\mid_{Y_g}).\label{Eq(A)}
\end{align}

On the other hand, by definition, we have
\begin{align*}
\delta(\overline{\Xi}\mid_P):=&\widetilde{{\rm
ch}}_g(\overline{\xi.}^\infty,\overline{\eta})-\sum_k(-1)^kT_g(\omega^{Y_g},h^{\wedge^kF_\infty^\vee\otimes\eta\mid_{Y_g}})\\
&+\sum_k(-1)^kT_g(\omega^{P_g},h^{\xi^\infty_k\mid_{P_g}})\\
&-\int_{Y_g/S}{\rm Td}(Tf_g){\rm Td}_g^{-1}(F){\rm
ch}_g(\eta)R(N_g)\\
&-\int_{P_g/S}T_g(\overline{\xi.}^\infty){\rm
Td}(\overline{Tb'_g})\\
&-\int_{Y_g/S}{\rm ch}_g(\overline{\eta}){\rm
Td}_g^{-1}(\overline{N}_\infty)\widetilde{{\rm
Td}}(\overline{Tf_g},\overline{Tb'_g}\mid_{Y_g})
\end{align*}
where $b': P\to S$ is the composition of the inclusion
$P\hookrightarrow W(i)$ and the morphism $l$. Note that $P_g^0$ is
an open and closed submanifold of $P_g$ and
$\overline{\xi.}^\infty$ is orthogonally split on the other
components since they all belong to $\widetilde{X}_g$, then we can
rewrite $\delta(\overline{\Xi}\mid_P)$ as
\begin{align*}
\delta(\overline{\Xi}\mid_P)=&\widetilde{{\rm
ch}}_g(\overline{\Xi}_\nabla\mid_{P_g^0},h^H)-\sum_k(-1)^kT_g(\omega^{Y_g},h^{\wedge^kF_\infty^\vee\otimes\eta\mid_{Y_g}})\\
&+\sum_k(-1)^kT_g(\omega^{P^0_g},h^{\xi^\infty_k\mid_{P^0_g}})\\
&-\int_{Y_g/S}{\rm Td}(Tf_g){\rm Td}_g^{-1}(F){\rm
ch}_g(\eta)R(N_g)\\
&-\int_{P^0_g/S}T_g(\overline{\xi.}^\infty){\rm
Td}(\overline{Tb_g})\\
&-\int_{Y_g/S}{\rm ch}_g(\overline{\eta}){\rm
Td}_g^{-1}(\overline{N}_\infty)\widetilde{{\rm
Td}}(\overline{Tf_g},\overline{Tb_g}\mid_{Y_g}).
\end{align*}

Comparing with the definition of $\delta(\overline{\Xi}\mid_X)$,
the equality (\ref{Eq(A)}) implies that
\begin{displaymath}
\delta(\overline{\Xi}\mid_X)-\delta(\overline{\Xi}\mid_P)=0
\end{displaymath}
which completes the proof of this deformation theorem.
\end{proof}

Now we consider the zero section imbedding $i_\infty: Y\to
P=\mathbb{P}(N_\infty\oplus \mathcal{O}_Y)$. Here we use the fact
that $N_\infty$ is isomorphic to $N_{X/Y}$, we caution the reader
that this is not necessarily an isometry since
$\overline{N}_\infty$ carries the quotient metric induced by the
K\"{a}hler metric on $P$ but $N_{X/Y}$ carries the quotient metric
induced by the K\"{a}hler metric on $X$. We recall that on $P$ we
have a tautological exact sequence
\begin{displaymath}
0\to \mathcal{O}(-1)\to \pi_P^*(N_\infty\oplus\mathcal{O}_Y)\to
Q\to 0. \end{displaymath} The equivariant section $\sigma:
\mathcal{O}_P\to \pi_P^*(N_\infty\oplus\mathcal{O}_Y)\to Q$
induces the following Koszul resolution
\begin{displaymath}
0\rightarrow \wedge^{{\rm rk}Q}Q^\vee\rightarrow\cdots\rightarrow
Q^\vee\rightarrow\mathcal{O}_P\rightarrow
{i_\infty}_*\mathcal{O}_Y\rightarrow 0. \end{displaymath} Since
$\sigma$ is equivariant, the image of $\mathcal{O}_{P_g}$ under
$\sigma\mid_{P_g}$ is contained in $Q_g$. This means that
$\sigma\mid_{P_g}$ induces a Koszul resolution on $P_g$ of the
following form \begin{displaymath} 0\rightarrow \wedge^{{\rm
rk}Q_g}Q_g^\vee\rightarrow\cdots\rightarrow
Q_g^\vee\rightarrow\mathcal{O}_{P_g}\rightarrow
{i_{\infty,g}}_*\mathcal{O}_{Y_g}\rightarrow 0. \end{displaymath}

\begin{prop}\label{319}
Let
$\overline{\kappa}:=\overline{\kappa}(\overline{\eta},\overline{N}_\infty)$
be a hermitian Koszul resolution on $P$ defined in
Definition~\ref{303}. Then for $n\gg0$, we have
$\delta(\overline{\kappa}(n))=0$.
\end{prop}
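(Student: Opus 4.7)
The plan is to derive $\delta(\overline{\kappa}(n))=0$ as a direct consequence of Bismut--Ma's immersion formula (Theorem~\ref{216}), applied to the composition $Y\xrightarrow{i_\infty}P\xrightarrow{b'}S$ together with the resolution $\overline{\kappa}(n)$ of ${i_\infty}_*\overline{\eta}(n)$. First, for $n$ large enough, Serre's theorem guarantees acyclicity of $\eta(n)$, of each $\kappa_i(n)$ and of all their restrictions to $Y_g$, $P_g^0$, and the other fixed components $\mathbb{P}((N_\infty)_\zeta)$; this simultaneously makes every object in $\delta(\overline{\kappa}(n))$ well defined and produces the direct-image exact sequence $\Xi$ with its $L^2$ metric that is required by Theorem~\ref{216}.

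The decomposition $P_g=P_g^0\sqcup\bigsqcup_{\zeta\neq 1}\mathbb{P}((N_\infty)_\zeta)$ of Theorem~\ref{306}(iii), combined with the fact (Theorem~\ref{306}(iv)) that $i_\infty(Y)\subset P_g^0$, implies that the restriction of $\overline{\kappa}(n)$ to each $\mathbb{P}((N_\infty)_\zeta)$ with $\zeta\neq 1$ is equivariantly and orthogonally split: the Koszul differential becomes multiplication by a non-zero scalar proportional to $\zeta^{-1}-1$ on each graded piece. Consequently $T_g(\overline{\kappa.}(n))$ and $\sum_i(-1)^i{\rm ch}_g(\kappa_i(n))$ are concentrated on $P_g^0$ in $\widetilde{A}(P_g)$, and every $P_g$-integral on the right-hand side of Theorem~\ref{216} localizes to $P_g^0$. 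Using the Atiyah--Segal--Singer immersion identity ${i_{\infty,g}}_*({\rm Td}_g^{-1}(\overline{N}_\infty){\rm ch}_g(x))={\rm ch}_g({i_\infty}_*x)$ of \cite[Theorem~6.16]{KR1}, the $R$-genus integral may be pushed down from $P_g^0$ to $Y_g$; together with the additivity of $R_g$ on $0\to Tf\to Tb'|_Y\to N_\infty\to 0$, this yields precisely the term $\int_{Y_g/S}{\rm Td}(Tf_g){\rm Td}_g^{-1}(\overline{F}_\infty){\rm ch}_g(\overline{\eta}(n))R(N_{\infty,g})$ appearing in $\delta(\overline{\kappa}(n))$.

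The main technical point is the identification of $\widetilde{{\rm ch}}_g(\Xi,h^{L^2})$ with $\widetilde{{\rm ch}}_g(\overline{\kappa.}(n),\overline{\eta}(n))$. Because $\mu_n$ acts trivially on $S$, the $L^2$ direct images $R^0b'_*\kappa_j(n)$ and $R^0f_*\eta(n)$ split canonically on $S$ into character components, and each such component is $L^2$-isometric to the corresponding restricted direct image $R^0(b'_g)_*(\kappa_j(n)|_{P_g^0})$ or $R^0(f_g)_*(\wedge^kF_\infty^\vee\otimes\eta(n)|_{Y_g})$ that enters the short exact sequences $\mathcal{H}(\ast)$ and $\mathcal{H}(\ast\ast)$ defining $\widetilde{{\rm ch}}_g(\overline{\kappa.}(n),\overline{\eta}(n))$. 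Corollary~\ref{313} applied to the resulting exact sequence of equivariant standard complexes then gives the equality of the two secondary classes modulo precisely the Bott--Chern integral $\int_{Y_g/S}{\rm ch}_g(\overline{\eta}(n)){\rm Td}_g^{-1}(\overline{N}_\infty)\widetilde{{\rm Td}}(\overline{Tf_g},\overline{Tb'_g}|_{Y_g})$, which is exactly the remaining term of $\delta$. A parallel fixed-point localization for the equivariant analytic torsion forms, carried out by the same manipulations as in the proof of Theorem~\ref{318}, replaces $T_g(\omega^Y,h^{\eta(n)})$ and $\sum_i(-1)^iT_g(\omega^P,h^{\kappa_i(n)})$ by $\sum_k(-1)^kT_g(\omega^{Y_g},h^{\wedge^kF_\infty^\vee\otimes\eta(n)|_{Y_g}})$ and $\sum_k(-1)^kT_g(\omega^{P_g},h^{\kappa_k(n)|_{P_g}})$ respectively, at the price of the same secondary Bott--Chern contributions already accounted for above.

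I expect the most delicate step to be exactly this character-wise comparison between the $L^2$ metrics entering Theorem~\ref{216} and the Hodge-theoretic metrics entering $\widetilde{{\rm ch}}_g(\overline{\kappa.}(n),\overline{\eta}(n))$: although the isotypic decomposition is formal, verifying that it transports the $L^2$ Bott--Chern contribution to the fixed-point one up to precisely the $\widetilde{{\rm Td}}$-correction in $\delta$ requires the same careful bookkeeping of short exact sequences and applications of Corollary~\ref{313} that appeared in the proof of Theorem~\ref{318}. Once these identifications are in place, every term of Bismut--Ma's identity pairs with the correspondingly signed term in the definition of $\delta(\overline{\kappa}(n))$, the identity of Theorem~\ref{216} reads exactly $\delta(\overline{\kappa}(n))=0$, and the proposition follows.
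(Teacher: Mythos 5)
Your central step does not go through. You propose to apply Bismut--Ma's immersion formula (Theorem~\ref{216}) directly to $i_\infty\colon Y\to P$ with the resolution $\overline{\kappa}(n)$. That produces the \emph{full-manifold} equivariant torsion forms $T_g(\omega^P,h^{\kappa_i(n)})$, $T_g(\omega^Y,h^{\eta(n)})$ and the $L^2$ secondary class $\widetilde{{\rm ch}}_g(\Xi,h^{L^2})$ built from direct images along $b'$ and $f$. But every term in $\delta(\overline{\kappa}(n))$ is \emph{fixed-point} data: the torsion forms appearing there are $T_g(\omega^{P_g},h^{\kappa_i(n)\mid_{P_g}})$ and $T_g(\omega^{Y_g},h^{\wedge^kF_\infty^\vee\otimes\eta(n)\mid_{Y_g}})$ for the fibrations $b'_g\colon P_g\to S$ and $f_g\colon Y_g\to S$, and $\widetilde{{\rm ch}}_g(\overline{\kappa.}(n),\overline{\eta}(n))$ is assembled from $L^2$-metrics on $R^0(b'_g)_*$ and $R^0(f_g)_*$. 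You invoke a ``parallel fixed-point localization of the equivariant analytic torsion forms, carried out by the same manipulations as in the proof of Theorem~\ref{318}'' to pass between the two, and this is the gap. Every use of Theorem~\ref{216} in the proof of Theorem~\ref{318} takes place inside the fixed-point manifold $W(i_g)$, where the ambient $g$-action is already trivial; nowhere does that proof compare a full-manifold equivariant torsion form with a fixed-point one. Such a comparison is essentially the content of Theorem~\ref{301}, so assuming it here is circular. Your supporting claim that the $\zeta$-isotypic component of $R^0 b'_*\kappa_j(n)$ is $L^2$-isometric to $R^0(b'_g)_*(\kappa_j(n)\mid_{P_g^0})$ is also false in general: already on $\mathbb{P}^1$ with a nontrivial $\mu_n$-action, an isotypic piece of $H^0(\mathbb{P}^1,\mathcal{O}(m))$ has dimension of order $m/n$, while $H^0$ over the two fixed points has dimension at most two, and even where dimensions match the $L^2$-inner products come from integration over different manifolds.

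The paper avoids this by never applying Theorem~\ref{216} over $P$ at all. It restricts $\overline{\kappa}_\nabla$ to $P_g$ and uses the fact that the Koszul section $\sigma\mid_{P_g}$ lands in $Q_g$ to split the complex as $\overline{\kappa}_\nabla\mid_{P_g}\cong\bigoplus_{k\geq0}\overline{\kappa}_\nabla^{(k)}[-k]$, where $\overline{\kappa}^{(k)}$ is the Koszul resolution built from $\overline{Q}_g^\vee$ of ${i_{\infty,g}}_*(\wedge^k\overline{F}_\infty^\vee\otimes\overline{\eta}\mid_{Y_g})$, tensored with $\wedge^k\overline{Q}_\bot^\vee$. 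Bismut--Ma's formula is then applied to each $\overline{\kappa}^{(k)}$ \emph{on $P_g$}, where the ambient $g$-action is trivial, so the torsion forms it produces are precisely the fixed-point ones occurring in $\delta$. The decomposition handles the remaining comparisons as well: $\widetilde{{\rm ch}}_g(\overline{\kappa})=\sum_k(-1)^k\widetilde{{\rm ch}}_g({b'_g}_*\overline{\kappa}^{(k)})$ by additivity, and $\sum_k(-1)^kT_g(\overline{\kappa}^{(k)})=T_g(\overline{\kappa})$ by \cite[Lemma 3.15]{KR1}. That isotypic splitting of the Koszul complex over $P_g$ is the device your proposal is missing.
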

\begin{proof}
Denote the non-zero degree part of $Q\mid_{P_g}$ by $Q_\bot$, then
we have the following isometry
\begin{displaymath}
\wedge^i\overline{Q}^\vee\mid_{P_g}=\wedge^i(\overline{Q}_g^\vee\oplus
\overline{Q}_\bot^\vee)\cong
\bigoplus_{t+s=i}(\wedge^t\overline{Q}_g^\vee\otimes\wedge^s\overline{Q}_\bot^\vee).
\end{displaymath} Consider the following complex of equivariant hermitian
vector bundles on $P_g$
\begin{align*}
0\to \wedge^{{\rm
rk}Q_g}\overline{Q}_g^\vee\otimes(\wedge^k\overline{Q}_\bot^\vee\otimes
\pi_{P_g}^*\overline{\eta}\mid_{Y_g})\to \cdots\to
\overline{Q}_g^\vee&\otimes(\wedge^k\overline{Q}_\bot^\vee\otimes
\pi_{P_g}^*\overline{\eta}\mid_{Y_g})\\
&\to
\wedge^k\overline{Q}_\bot^\vee\otimes
\pi_{P_g}^*\overline{\eta}\mid_{Y_g}\to 0
\end{align*}
which provides a resolution of
${i_{\infty,g}}_*(\wedge^k\overline{F}_\infty^\vee\otimes\overline{\eta}\mid_{Y_g})$
where $F_\infty$, as before, is the non-zero degree part of the
normal bundle $N_\infty$ associated to $i_\infty$. We denote this
resolution by $\overline{\kappa}^{(k)}$, then according to the
arguments given before this proposition we have a decomposition of
complexes
$\overline{\kappa}_\nabla\mid_{P_g}\cong\bigoplus_{k\geq0}\overline{\kappa}_\nabla^{(k)}[-k]$
where $\overline{\kappa}_\nabla^{(k)}[-k]$ is obtained from
$\overline{\kappa}_\nabla^{(k)}$ by shifting degree. Replacing
$\overline{\kappa}$ by $\overline{\kappa}(n)$ for big enough $n$,
we may assume that all elements in $\overline{\kappa}$ and
$\overline{\kappa}^{(k)}$ are acyclic. Therefore, by Bisumt-Ma's
immersion formula we have the following equality
\begin{align*}
\widetilde{{\rm
ch}}_g({b'_g}_*\overline{\kappa}^{(k)})=&T_g(\omega^{Y_g},h^{\wedge^kF_\infty^\vee\otimes\eta\mid_{Y_g}})-\sum_{i=0}^{{\rm
rk}Q_g}(-1)^iT_g(\omega^{P_g},h^{\wedge^iQ_g^\vee\otimes\wedge^kQ_\bot^\vee\otimes\pi_{P_g}^*\eta\mid_{Y_g}})\\
&+\int_{Y_g/S}{\rm
ch}_g(\wedge^kF_\infty^\vee\otimes\eta\mid_{Y_g})R(N_{\infty,g}){\rm
Td}(Tf_g)\\
&+\int_{P_g/S}{\rm
Td}(\overline{Tb'_g})T_g(\overline{\kappa}^{(k)})\\
&+\int_{Y_g/S}{\rm
ch}_g(\wedge^k\overline{F}_\infty^\vee\otimes\overline{\eta}\mid_{Y_g}){\rm
Td}^{-1}(\overline{N}_{\infty,g})\widetilde{{\rm
Td}}(\overline{Tf_g},\overline{Tb'_g}\mid_{Y_g}).
\end{align*}
It is easily seen from the decomposition
$\overline{\kappa}_\nabla\mid_{P_g}=\bigoplus_{k\geq0}\overline{\kappa}_\nabla^{(k)}[-k]$
that the secondary characteristic class $\widetilde{{\rm
ch}}_g(\overline{\kappa})$ appearing in the definition of
$\delta(\overline{\kappa})$ is exactly $\sum(-1)^k\widetilde{{\rm
ch}}_g({b'_g}_*\overline{\kappa}^{(k)})$. So taking the
alternating sum of both two sides of the equality above and using
the fact that equivariant analytic torsion form is additive for
direct sum of acyclic bundles, we know that to prove
$\delta(\overline{\kappa})=0$, we are left to show that
$\sum(-1)^kT_g(\overline{\kappa}^{(k)})$ is equal to
$T_g(\overline{\kappa})$. In fact, by using \cite[Lemma
3.15]{KR1}, we have modulo ${\rm Im}\partial+{\rm
Im}\overline{\partial}$
\begin{align*}
\sum(-1)^kT_g(\overline{\kappa}^{(k)})&=\sum(-1)^k{\rm
ch}_g(\wedge^k\overline{Q}_\bot^\vee){\rm
ch}_g(\pi_{P_g}^*\overline{\eta}\mid_{Y_g})T_g(\overline{\wedge^\cdot
Q_g^\vee})\\
&={\rm Td}_g^{-1}(\overline{Q}_\bot){\rm
ch}_g(\pi_{P_g}^*\overline{\eta}\mid_{Y_g})T_g(\overline{\wedge^\cdot
Q_g^\vee})\\
&={\rm Td}_g^{-1}(\overline{Q}){\rm
ch}_g(\pi_{P_g}^*\overline{\eta}\mid_{Y_g})T_g(\overline{\wedge^\cdot
Q_g^\vee}){\rm Td}(\overline{Q}_g)\\
&={\rm
ch}_g(\pi_{P_g}^*\overline{\eta}\mid_{Y_g})T_g(\overline{\wedge^\cdot
Q^\vee})\\
&=T_g(\overline{\kappa}).
\end{align*}
So we are done.
\end{proof}

It's now ready to finish the proof of the vanishing theorem for
equivariant closed immersions. Let $\overline{\eta}$ be an
equivariant hermitian vector bundle on $Y$, assume that
\begin{displaymath}
\overline{\Psi}:\quad 0\to \overline{\xi}_m\to \cdots\to
\overline{\xi}_1\to \overline{\xi}_0\to i_*\overline{\eta}\to 0
\end{displaymath} is a resolution of $i_*\overline{\eta}$ by
equivariant hermitian vector bundles on $X$ which satisfies Bismut
assumption (A). We need to prove that for $n\gg0$,
$\delta(\overline{\Psi}(n))=0$.

\begin{proof}(of Theorem~\ref{301})
We first construct a resolution of $p_Y^*\overline{\eta}$ on
$W(i)$ as \begin{displaymath} \overline{\Xi}:\quad 0\to
\overline{\widetilde{\xi}}_m\to \cdots \to
\overline{\widetilde{\xi}}_0\to \overline{\widetilde{\xi}}_0 \to
j_*p_Y^*(\overline{\eta})\to 0 \end{displaymath} which satisfies
the condition (i) and (ii) in Definition~\ref{316}. Then the
restriction of $\overline{\Xi}$ to $X$ (resp. $P$) provides a
resolution of $i_*\overline{\eta}$ (resp.
${i_\infty}_*\overline{\eta}$). Over $X$, we can find a third
resolution $\overline{\Phi}$ of $i_*\overline{\eta}$ which
dominates $\overline{\Psi}$ and $\overline{\Xi}\mid_X$. Namely we
get short exact sequences of exact sequences \begin{displaymath} 0
\to \overline{{\rm Ker}}(n)\to \overline{\Phi}(n)\to
\overline{\Psi}(n)\to 0 \end{displaymath} and
\begin{displaymath}
0 \to\overline{{\rm Ker}}'(n)\to \overline{\Phi}(n)\to
\overline{\Xi}(n)\mid_X\to 0. \end{displaymath} Then after
omitting $i_*\overline{\eta}$ their restrictions to $X_g$ become
two exact sequences of complexes. Since $n\gg0$ we may assume that
all elements and homologies in the induced double complexes are
acyclic, so that by taking direct images we get two exact
sequences of equivariant standard complexes on $S$. These two
short exact sequences of equivariant standard complexes clearly
satisfy the assumptions in Lemma~\ref{312}. Therefore, using
Lemma~\ref{312}, Bismut-Ma's immersion formula and the double
complex formula of equivariant Bott-Chern singular currents (cf.
Theorem~\ref{214}), we obtain that
\begin{align*}
&\widetilde{{\rm ch}}_g(\overline{\Psi}(n))-\widetilde{{\rm
ch}}_g(\overline{\Phi}(n))+\widetilde{{\rm
ch}}_g(\overline{{\rm Ker}}(n))\\
&\qquad\qquad+T_g(\omega^{X_g},h^{\Psi(n)_\nabla})-T_g(\omega^{X_g},h^{\Phi(n)_\nabla})+T_g(\omega^{X_g},h^{{\rm Ker}(n)_\nabla})\\
=&\int_{X_g/S}[T_g(\overline{\Psi}(n)_\nabla)-T_g(\overline{\Phi}(n)_\nabla)+T_g(\overline{{\rm
Ker}}(n)_\nabla)]\cdot{\rm Td}(\overline{Th_g})
\end{align*}
which implies that
\begin{displaymath}
\delta(\overline{\Phi}(n))=\delta(\overline{\Psi}(n))+\delta(\overline{{\rm
Ker}}(n)).
\end{displaymath} By applying Bismut-Ma's immersion formula to the
case where the immersion is the identity map and $\overline{\eta}$
is equal to the zero bundle, we get $\delta(\overline{{\rm
Ker}}(n))=0$ so that
$\delta(\overline{\Phi}(n))=\delta(\overline{\Psi}(n))$.
Similarly, we have
$\delta(\overline{\Phi}(n))=\delta(\overline{\Xi}(n)\mid_X)$ and
hence
$\delta(\overline{\Psi}(n))=\delta(\overline{\Xi}(n)\mid_X)$. An
entirely analogous reasoning implies that
$\delta(\overline{\kappa}(n))=\delta(\overline{\Xi}(n)\mid_P)$.
Then the vanishing of $\delta(\overline{\Psi}(n))$ follows from
Theorem~\ref{318} and Proposition~\ref{319}.
\end{proof}

\section{Equivariant arithmetic Grothendieck groups with fixed wave front
sets} By an arithmetic ring $D$ we understand a regular,
excellent, Noetherian integral ring, together with a finite set
$\mathcal{S}$ of embeddings $D\hookrightarrow \C$, which is
invariant under a conjugate-linear involution $F_\infty$ (cf.
\cite[Def. 3.1.1]{GS}). Denote by $\mu_n$ the diagonalisable group
scheme over $D$ associated to $\Z/{n\Z}$. A $\mu_n$-equivariant
arithmetic scheme over $D$ is a Noetherian scheme of finite type,
endowed with a $\mu_n$-projective action over $D$ (cf.
\cite[Section 2]{KR1}). Let $X$ be a $\mu_n$-equivariant
arithmetic scheme whose generic fibre is smooth, then $X(\C)$, the
set of complex points of the variety $\coprod_{\sigma\in
\mathcal{S}}X\times_D\C$, is a disjoint union of projective
manifolds. This manifold admits an action of the group of complex
$n$-th roots of unity and an anti-holomorphic involution induced
by $F_\infty$ which is still denoted by $F_\infty$. It was shown
in \cite[Prop. 3.1]{Th} that if $X$ is regular, then the fixed
point subscheme $X_{\mu_n}$ is also regular. Fix a primitive
$n$-th root of unity $\zeta_n$ and denote its corresponding
holomorphic automorphism on $X(\C)$ by $g$, by GAGA principle we
have a natural isomorphism $X_{\mu_n}(\C)\cong X(\C)_g$.

\begin{defn}\label{401}
An equivariant hermitian sheaf (resp. vector bundle)
$\overline{E}$ on $X$ is a coherent sheaf (resp. vector bundle)
$E$ on $X$, assumed locally free on $X(\C)$, endowed with a
$\mu_n$-action which lifts the action of $\mu_n$ on $X$ and a
hermitian metric $h$ on the associated bundle $E_\C$, which is
invariant under $F_\infty$ and $g$.
\end{defn}

\begin{rem}\label{402}
Let $\overline{E}$ be an equivariant hermitian sheaf (resp. vector
bundle) on $X$, the restriction of $\overline{E}$ to the fixed
point subscheme $X_{\mu_n}$ has a natural $\Z/{n\Z}$-grading
structure $\overline{E}\mid_{X_{\mu_n}}\cong \oplus_{k\in
\Z/{n\Z}}\overline{E}_k$. We shall often write
$\overline{E}_{\mu_n}$ for $\overline{E}_0$. It is clear that the
associated bundle of $\overline{E}_{\mu_n}$ over $X(\C)$ is
exactly equal to $\overline{E}_g$.
\end{rem}

Over a complex manifold $M$, we may consider the current space
which is the continuous dual of the space of smooth complex valued
differential forms (cf. \cite[Chapter IX]{deRh}). The wave front
set ${\rm WF}(\omega)$ of a current $\omega$ over $M$ is a closed
conical subset of the cotangent bundle
$T_\R^*M_0:=T_\R^*M\backslash\{0\}$. This conical subset measures
the singularities of $\omega$, actually the projection of ${\rm
WF}(\omega)$ in $M$ is equal to the singular locus of the support
of $\omega$. It also allows us to define certain products and
pull-backs of currents. We refer to \cite[Chapter VIII]{Hoe} for
the definition and various properties of wave front set.

Now let $X$ be a $\mu_n$-equivariant arithmetic scheme with smooth
generic fibre and let $S$ be a conical subset of
$T_\R^*X(\C)_{g,0}$, denote by $D^{p,p}(X(\C)_g,S)$ the set of
currents $\omega$ of type $(p,p)$ on $X(\C)_g$ which satisfy
$F_\infty^*\omega=(-1)^p\omega$ and whose wave front sets are
contained in $S$, we shall write
$\widetilde{\mathcal{U}}(X_{\mu_n},S)$ for the current class
\begin{displaymath}
\widetilde{\mathcal{U}}(X(\C)_g,S):=\bigoplus_{p\geq
0}(D^{p,p}(X(\C)_g,S)/({\rm Im}\partial+{\rm
Im}\overline{\partial})). \end{displaymath}

Let $\overline{E}$ be an equivariant hermitian sheaf or vector
bundle on $X$. Following the same notations and definitions as in
\cite[Section 3]{KR1}, we write ${\rm ch}_g(\overline{E})$ for the
equivariant Chern character form ${\rm ch}_g((E_\C,h))$ associated
to the hermitian holomorphic vector bundle $(E_\C,h)$ on $X(\C)$.
Similarly, we have the equivariant Todd form ${\rm
Td}_g(\overline{E})$. Furthermore, let $\overline{\varepsilon}:
0\rightarrow \overline{E}'\rightarrow \overline{E}\rightarrow
\overline{E}''\rightarrow 0$ be an exact sequence of equivariant
hermitian sheaves or vector bundles on $X$, we can associate to it
an equivariant Bott-Chern secondary characteristic class
$\widetilde{{\rm ch}}_g(\overline{\varepsilon})\in
\widetilde{\mathcal{U}}(X_{\mu_n},\emptyset)$ which satisfies the
differential equation
\begin{displaymath}
{\rm dd}^c\widetilde{{\rm ch}}_g(\overline{\varepsilon})={\rm
ch}_g(\overline{E}')-{\rm ch}_g(\overline{E})+{\rm
ch}_g(\overline{E}''). \end{displaymath}

\begin{defn}\label{403}
Let $X$ be a $\mu_n$-equivariant arithmetic scheme with smooth
generic fibre and let $S$ be a conical subset of
$T_\R^*X(\C)_{g,0}$, we define the equivariant arithmetic
Grothendieck group $\widehat{G_0}(X,\mu_n,S)$ (resp.
$\widehat{K_0}(X,\mu_n,S)$) with respect to $X$ and $S$ as the
free abelian group generated by the elements of
$\widetilde{\mathcal{U}}(X_{\mu_n},S)$ and by the equivariant
isometry classes of equivariant hermitian sheaves (resp. vector
bundles) on $X$, together with the relations

(i). for every exact sequence $\overline{\varepsilon}$ as above,
$\widetilde{{\rm
ch}}_g(\overline{\varepsilon})=\overline{E}'-\overline{E}+\overline{E}''$;

(ii). if $\alpha\in \widetilde{\mathcal{U}}(X_{\mu_n},S)$ is the
sum of two elements $\alpha'$ and $\alpha''$ in
$\widetilde{\mathcal{U}}(X_{\mu_n},S)$, then the equality
$\alpha=\alpha'+\alpha''$ holds in $\widehat{G_0}(X,\mu_n,S)$
(resp. $\widehat{K_0}(X,\mu_n,S)$).
\end{defn}

\begin{rem}\label{404}
(i). When $S'\subset S$, \cite[Theorem 3.9 (ii)]{T2} implies that
the natural map from $\widetilde{\mathcal{U}}(X_{\mu_n},S')$ to
$\widetilde{\mathcal{U}}(X_{\mu_n},S)$ is injective. So the first
generating relation in Definition~\ref{403} does make sense.

(ii). When $X$ is regular, one can carry out the proof of
\cite[Proposition 4.2]{KR1} to show that the natural morphism from
$\widehat{K_0}(X,\mu_n,S)$ to $\widehat{G_0}(X,\mu_n,S)$ is an
isomorphism.

(iii). The definition of the equivariant arithmetic Grothendieck
group implies that there are exact sequences
\begin{displaymath}
\xymatrix{
 \widetilde{\mathcal{U}}(X_{\mu_n},S)
\ar[r]^-{a} & \widehat{G_0}(X,\mu_n,S) \ar[r]^-{\pi} &
G_0(X,\mu_n) \ar[r] & 0} \end{displaymath} and
\begin{displaymath}
\xymatrix{
 \widetilde{\mathcal{U}}(X_{\mu_n},S)
\ar[r]^-{a} & \widehat{K_0}(X,\mu_n,S) \ar[r]^-{\pi} &
K_0(X,\mu_n) \ar[r] & 0} \end{displaymath} where $a$ is the
natural map which sends $\alpha\in
\widetilde{\mathcal{U}}(X_{\mu_n},S)$ to the class of $\alpha$ in
$\widehat{G_0}(X,\mu_n,S)$ (resp. $\widehat{K_0}(X,\mu_n,S)$) and
$\pi$ is the forgetful map. Here the group $G_0(X,\mu_n)$ is the
Grothendieck group of $\mu_n$-equivariant coherent sheaves which
are locally free on $X(\C)$, by a theorem of Quillen (cf.
\cite[Thm. 3 Cor. 1]{Qu1}) we know that it is isomorphic to the
ordinary Grothendieck group of $\mu_n$-equivariant coherent
sheaves.
\end{rem}

In \cite[Section 3]{T1}, we have introduced the ring structure of
$\widehat{K_0}(X,\mu_n,\emptyset)$. Since we may have a
well-defined product of two currents if their wave front sets have
no intersection, and the wave front set is invariant under the
operation of multiplying a smooth current, we know that the
Grothendieck group $\widehat{K_0}(X,\mu_n,S)$ has a
$\widehat{K_0}(X,\mu_n,\emptyset)$-module structure. The same
thing goes to $\widehat{G_0}(X,\mu_n,S)$. Furthermore, let
$\overline{I}$ be the $\mu_n$-equivariant hermitian $D$-module
whose term of degree $1$ is $D$ endowed with the trivial metric
and whose other terms are $0$. Then we may make
$\widehat{K_0}(D,\mu_n,\emptyset)$ an $R(\mu_n)$-algebra under the
ring morphism which sends $T$ to $\overline{I}$. By doing
pull-backs, we may endow every arithmetic Grothendieck group we
defined before with an $R(\mu_n)$-module structure. Notice finally
that there is a well-defined map from
$\widehat{G_0}(X,\mu_n,\emptyset)$ (resp.
$\widehat{K_0}(X,\mu_n,\emptyset)$) to the space of complex closed
differential forms, which is defined by the formula ${\rm
ch}_g(\overline{E}+\alpha):={\rm ch}_g(\overline{E})+{\rm
dd}^c\alpha$ where $\overline{E}$ is an equivariant hermitian
sheaf (resp. vector bundle) and $\alpha\in
\widetilde{\mathcal{U}}(X_{\mu_n},\emptyset)$.

Now we investigate the wave front set of a current after doing
push-forward. Let $f$ be a holomorphic map of compact complex
manifolds, we may define a push-forward $f_*$ on current space
which is the dual map of the pull-back of smooth forms. When $f$
is smooth, the push-forward $f_*$ extends the integration of
smooth forms over the fibre. Assume that we are given a smooth
morphism $f: U\to V$ of compact complex manifolds, then $f_*$
induces a current $K$ over the product space $V\times U$ defined
as \begin{displaymath} K(\alpha\otimes \beta)=(f_*\beta)(\alpha)
\end{displaymath} where $\alpha$ and $\beta$ are smooth forms over
$V$ and $U$ respectively. Define \begin{displaymath} M=\{(v,u)\in
V\times U \mid f(u)=v\} \end{displaymath} which is a submanifold
in $V\times U$. From the fact that $f_*\beta$ is just the
integration of smooth forms over the fibre, it is easily seen that
the current $K\in D^*(V\times U)$ is exactly the object ${\rm
d}S_M$ in \cite[Theorem 8.1.5]{Hoe}. Then by that theorem, the
wave front set of $K$ is equal to \begin{displaymath} {\rm
WF}(K)=\{(v,u,\xi,-f^*(\xi))\in T_\R^*V\times T_\R^*U \mid f(u)=v,
\xi\neq0\}.\end{displaymath}

Let $S$ be a conical subset of $T_\R^*U_0$, we fix some notations
as follows.
\begin{align*}
{\rm WF}(K)_V=&\{(v,\xi)\in T_\R^*V_0 \mid \exists u\in U,
(v,u,\xi,0)\in {\rm WF}(K)\}\\
{\rm WF}'(K)_U=&\{(u,\eta)\in T_\R^*U_0 \mid \exists v\in V,
(v,u,0,-\eta)\in {\rm WF}(K)\}\\
{\rm WF}'(K)_V\circ S=&\{(v,\xi)\in T_\R^*V_0 \mid \exists
(u,\eta)\in S, (v,u,\xi,-\eta)\in {\rm WF}(K)\}.
\end{align*}

\begin{thm}\label{405}
Let notations and assumptions be as above. Assume that $\omega$ is
a current over $U$ whose wave front set is contained in $S$ with
$S\cap {\rm WF}'(K)_U=\emptyset$, then the wave front set of
$f_*\omega$ is contained in \begin{displaymath} S':={\rm
WF}(K)_V\cup{\rm WF}'(K)\circ S. \end{displaymath}
\end{thm}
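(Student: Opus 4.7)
The plan is to realize $f_*\omega$ as a composition of three elementary operations on currents over $V$, $V\times U$, and $U$---namely pullback along the projection $\pi_U: V\times U \to U$, multiplication with the kernel current $K$, and pushforward along the projection $\pi_V: V\times U \to V$---and then to track wave front sets through each step using the microlocal calculus of H\"ormander. Concretely, one verifies directly from the defining formula $K(\alpha\otimes\beta)=(f_*\beta)(\alpha)$ that, whenever the right-hand side is admissible,
\[
f_*\omega \;=\; (\pi_V)_*\bigl(K\cdot \pi_U^*\omega\bigr).
\]
The hypothesis $S\cap {\rm WF}'(K)_U = \emptyset$ is designed precisely to guarantee that the product on the right-hand side is well-defined as a current on $V\times U$.

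Next I would track wave front sets at each stage. Since $\pi_U$ is a submersion, the pullback theorem yields ${\rm WF}(\pi_U^*\omega) \subset \{(v,u,0,\eta):(u,\eta)\in S\}$. The assumption on $S$ then rules out any pair of covectors of $K$ and of $\pi_U^*\omega$ at a common point whose sum is zero, so by H\"ormander's product theorem (\cite[Theorem 8.2.10]{Hoe}) the product $K\cdot\pi_U^*\omega$ exists and its wave front set is contained in ${\rm WF}(K)\cup{\rm WF}(\pi_U^*\omega)\cup ({\rm WF}(K)+{\rm WF}(\pi_U^*\omega))$. Finally, pushforward along the proper submersion $\pi_V: V\times U\to V$ contributes only covectors $(v,\xi)$ such that $(v,u,\xi,0)$ belongs to the wave front set of the argument, by the pushforward formula \cite[Theorem 8.2.13]{Hoe}.

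Combining these three steps and unpacking the definitions of ${\rm WF}(K)_V$, ${\rm WF}'(K)_U$ and ${\rm WF}'(K)_V\circ S$, every covector $(v,\xi)\in{\rm WF}(f_*\omega)$ falls into one of two cases: either it descends from an element $(v,u,\xi,0)\in{\rm WF}(K)$, placing it in ${\rm WF}(K)_V$; or it descends from a sum of an element $(v,u,\xi,-\eta)\in{\rm WF}(K)$ with $(v,u,0,\eta)$ arising from $\pi_U^*\omega$ for some $(u,\eta)\in S$, placing it in ${\rm WF}'(K)_V\circ S$. This is precisely the claimed inclusion $S'={\rm WF}(K)_V\cup {\rm WF}'(K)_V\circ S$. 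The main technical obstacle is essentially bookkeeping: keeping the sign conventions for covectors, the distinction between ${\rm WF}$ and ${\rm WF}'$, and the direction of each projection consistent with the definitions stated before the theorem. Once that bookkeeping is carried out, the result is a direct specialization of H\"ormander's kernel theorem to the integration kernel $K=\mathrm{d}S_M$.
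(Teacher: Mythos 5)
Your proof is correct, but it takes a different route from the paper's: the paper simply invokes H\"ormander's Theorem 8.2.12 directly, since that theorem \emph{is} precisely the statement being proved (the wave front set bound for the action of an operator with distributional kernel $K$ on a distribution $u$ whose wave front set avoids ${\rm WF}'(K)_Y$). What you have done is re-derive Theorem 8.2.12 from the more primitive pieces of H\"ormander's microlocal calculus: the pullback theorem for submersions, the product theorem (8.2.10), and the pushforward bound for integration along fibres, linked together by the identity $f_*\omega = (\pi_V)_*(K\cdot\pi_U^*\omega)$. This decomposition is in fact essentially the standard proof of 8.2.12, so while your argument is self-contained and correct, it is not conceptually distinct---it reconstructs the cited theorem rather than bypassing it. Two small points worth tightening: you should note explicitly that the middle case of the product formula, in which $(v,u,\xi,0)\in{\rm WF}(\pi_U^*\omega)$ alone, is vacuous because ${\rm WF}(\pi_U^*\omega)$ lives in $\{(v,u,0,\eta)\}$ and so $(v,u,\xi,0)$ would be the zero covector; and the attribution of the pushforward bound to Theorem 8.2.13 is imprecise---8.2.13 is closer to the extension of 8.2.12 to non-compactly-supported distributions, while the fibre-integration wave front bound is usually extracted as a corollary of 8.2.12 or the tensor-product/diagonal-restriction machinery. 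The paper's choice to cite 8.2.12 and 8.2.13 together covers exactly the statement and the relevant support hypotheses, and is the cleaner route.
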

\begin{proof}
This follows from \cite[Theorem 8.2.12 and 8.2.13]{Hoe}.
\end{proof}

\begin{rem}\label{406}
(i) In our situation, the condition $S\cap {\rm
WF}'(K)_U=\emptyset$ is always satisfied because by definition we
have ${\rm WF}'(K)_U=\emptyset$.

(ii). In our situation, $S'$ is always equal to ${\rm WF}'(K)\circ
S$ because ${\rm WF}(K)_V=\emptyset$.

(iii). If $S$ is the empty set, then $S'$ is also empty. This is
compatible with the push-forward of smooth forms.

(iv). Assume that the restriction of $f$ to a closed submanifold
$W$ is also smooth. Denote by $N_{U/W}$ the normal bundle of $W$
in $U$. If $S=N_{U/W,\R}^\vee\setminus\{0\}$, then $S'=\emptyset$.
\end{rem}

We now turn to the arithmetic case. Let $X$, $Y$ be two
$\mu_n$-equivariant arithmetic schemes with smooth generic fibres,
and let $f: X\rightarrow Y$ be an equivariant morphism over $D$
which is smooth on the complex numbers. Fix a
$\mu_n(\C)$-invariant K\"{a}hler metric on $X(\C)$ so that we get
a K\"{a}hler fibration with respect to the holomorphic submersion
$f_\C: X(\C)\rightarrow Y(\C)$. Let $\overline{E}$ be an
$f$-acyclic $\mu_n$-equivariant hermitian sheaf on $X$, we know
that the direct image $f_*E$ is locally free on $Y(\C)$ and it can
be endowed with a natural equivariant structure and the
$L^2$-metric. Let $\widehat{G_0}^{\rm ac}(X,\mu_n,S)$ be the group
generated by $f$-acyclic equivariant hermitian sheaves on $X$ and
the elements of $\widetilde{\mathcal{U}}(X_{\mu_n},S)$, with the
same relations as in Definition~\ref{403}. A theorem of Quillen
(cf. \cite[Cor.3 P. 111]{Qu1}) for the algebraic analogs of these
groups implies that the natural map $\widehat{G_0}^{\rm
ac}(X,\mu_n,S)\to \widehat{G_0}(X,\mu_n,S)$ is an isomorphism. So
the following definition does make sense.

\begin{defn}\label{407}
Let notations and assumptions be as above. The push-forward
morphism $f_*:\widehat{G_0}(X,\mu_n,S)\rightarrow
\widehat{G_0}(Y,\mu_n,S')$ is defined in the following way.

(i). For every $f$-acyclic $\mu_n$-equivariant hermitian sheaf
$\overline{E}$ on $X$,
$f_*\overline{E}=(f_*E,f_*h^E)-T_g(\omega^X,h^E)$.

(ii). For every element $\alpha\in
\widetilde{\mathcal{U}}(X_{\mu_n},S)$,
$f_*\alpha=\int_{X_g/{Y_g}}{\rm Td}_g(Tf,h^{Tf})\alpha\in
\widetilde{\mathcal{U}}(Y_{\mu_n},S')$.
\end{defn}

\begin{rem}\label{tang}
If $Y$ is regular, by Remark~\ref{404} (ii) we know that
$\widehat{K_0}(Y,\mu_n,S')$ is naturally isomorphic to
$\widehat{G_0}(Y,\mu_n,S')$ so that $(f_*E,f_*h^E)$ admits a
finite equivariant hermitian resolution; if the morphism $f$ is
flat and $Y$ is reduced, then $(f_*E,f_*h^E)$ is locally free when
$E$ is so. Therefore in both two cases above, one can also define
a reasonable push-forward morphism
$f_*:\widehat{K_0}(X,\mu_n,S)\rightarrow
\widehat{K_0}(Y,\mu_n,S')$.
\end{rem}

\begin{thm}\label{408}
The push-forward morphism $f_*$ is a well-defined group
homomorphism.
\end{thm}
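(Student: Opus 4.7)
The plan is to verify, in turn, that each of the two assignments in Definition~\ref{407} is well defined on its respective domain, that together they respect both families of generating relations in Definition~\ref{403}, and that the target indeed lies in $\widehat{G_0}(Y,\mu_n,S')$. For an element $\alpha\in\widetilde{\mathcal{U}}(X_{\mu_n},S)$, the class $\int_{X_g/Y_g}{\rm Td}_g(Tf,h^{Tf})\cdot\alpha$ is, after multiplication by the smooth form ${\rm Td}_g(Tf,h^{Tf})$ (which cannot enlarge the wave front set), the push-forward as currents along the smooth morphism $f_g:X(\C)_g\to Y(\C)_g$. Theorem~\ref{405} combined with Remark~\ref{406}~(ii) then guarantees that the resulting wave front set is contained in $S'$, so that the image indeed defines a class in $\widetilde{\mathcal{U}}(Y_{\mu_n},S')$; additivity in $\alpha$, i.e. compatibility with relation (ii) of Definition~\ref{403}, is immediate from the linearity of integration.

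For an $f$-acyclic equivariant hermitian sheaf $\overline{E}$, $f$-acyclicity together with the semi-continuity theorem ensures that $f_*E$ is locally free on $Y(\C)$ and carries a well defined $L^2$-metric $f_*h^E$, while Theorem~\ref{207} says that $T_g(\omega^X,h^E)$ is a smooth $(p,p)$-form on $Y(\C)_g$ and hence defines a class in $\widetilde{\mathcal{U}}(Y_{\mu_n},\emptyset)\subset\widetilde{\mathcal{U}}(Y_{\mu_n},S')$. All these constructions are $F_\infty$-compatible because the K\"{a}hler metric on $X(\C)$ and the hermitian metric on $\overline{E}$ are $F_\infty$-invariant by assumption.

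The main obstacle is the compatibility of $f_*$ with the relation attached to a short exact sequence $\overline{\varepsilon}:0\to\overline{E}'\to\overline{E}\to\overline{E}''\to 0$ of $f$-acyclic equivariant hermitian sheaves on $X$. By $f$-acyclicity the induced sequence of direct images $\overline{\Xi}:0\to f_*\overline{E}'\to f_*\overline{E}\to f_*\overline{E}''\to 0$ is an exact sequence of equivariant hermitian vector bundles on $Y$, each carrying its own $L^2$-metric. Unravelling Definition~\ref{407}~(i) and applying relation (i) of Definition~\ref{403} to $\overline{\Xi}$, the desired equality $f_*\overline{E}'-f_*\overline{E}+f_*\overline{E}''=f_*\widetilde{{\rm ch}}_g(\overline{\varepsilon})$ reduces to the analytic identity
\begin{equation*}
T_g(\omega^X,h^{E'})-T_g(\omega^X,h^E)+T_g(\omega^X,h^{E''})=\widetilde{{\rm ch}}_g(\overline{\Xi})-\int_{X_g/Y_g}{\rm Td}_g(Tf,h^{Tf})\,\widetilde{{\rm ch}}_g(\overline{\varepsilon})
\end{equation*}
in $\widetilde{\mathcal{U}}(Y_{\mu_n},\emptyset)$. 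This is the equivariant extension, for higher analytic torsion forms, of the short exact sequence formula of Bismut and K\"{o}hler, and it is established in \cite{Ma1} by a double transgression argument modelled on the non-equivariant case. Once this identity is granted, all the checks assemble into the statement that $f_*$ is a well defined group homomorphism, and the theorem follows.
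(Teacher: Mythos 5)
Your reconstruction is correct and follows the argument that the paper delegates by citation to \cite[Theorem 6.2]{T1}: you check well-definedness of each assignment, compatibility with both generating relations of Definition~\ref{403}, and the wave-front-set containment via Theorem~\ref{405} and Remark~\ref{406} (you invoke part (ii), but part (i) is what guarantees the hypothesis $S\cap{\rm WF}'(K)_U=\emptyset$ of Theorem~\ref{405}; both are needed). One small remark on the crux: the analytic identity
\begin{displaymath}
T_g(\omega^X,h^{E'})-T_g(\omega^X,h^E)+T_g(\omega^X,h^{E''})=\widetilde{{\rm ch}}_g(\overline{\Xi})-\int_{X_g/Y_g}{\rm Td}_g(Tf,h^{Tf})\,\widetilde{{\rm ch}}_g(\overline{\varepsilon})
\end{displaymath}
need not be cited as a separate result of \cite{Ma1}; it follows directly from the anomaly formula (Theorem~\ref{208}) applied with $\omega^X$ fixed and $h^E$ deformed to a metric $\tilde h^E$ making $\overline{\varepsilon}$ orthogonally split, together with additivity of $T_g$ for orthogonal direct sums and the relations $\widetilde{{\rm ch}}_g(E,h^E,\tilde h^E)=-\widetilde{{\rm ch}}_g(\overline{\varepsilon})$, $\widetilde{{\rm ch}}_g(f_*E,h^{f_*E},\tilde h^{f_*E})=-\widetilde{{\rm ch}}_g(\overline{\Xi})$ coming from Theorem~\ref{s202}. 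Making this derivation explicit would keep the proof self-contained within the tools recalled in Section~2, rather than deferring to a statement that may not appear verbatim in \cite{Ma1}.
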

\begin{proof}
The argument is the same as in the proof of \cite[Theorem
6.2]{T1}.
\end{proof}

\begin{lem}\label{409}(Projection formula)
For any elements $y\in \widehat{K_0}(Y,\mu_n,\emptyset)$ and $x\in
\widehat{G_0}(X,\mu_n,S)$, the identity $f_*(f^*y\cdot x)=y\cdot
f_*x$ holds in $\widehat{G_0}(Y,\mu_n,S')$.
\end{lem}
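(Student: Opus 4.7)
The plan is to verify the identity separately on the generators of the two groups and then extend by bilinearity. Since $\widehat{K_0}(Y,\mu_n,\emptyset)$ is generated by equivariant hermitian vector bundles on $Y$ and by elements of $\widetilde{\mathcal{U}}(Y_{\mu_n},\emptyset)$, and similarly $\widehat{G_0}(X,\mu_n,S)$ is generated by equivariant hermitian sheaves on $X$ (locally free on $X(\C)$) and by elements of $\widetilde{\mathcal{U}}(X_{\mu_n},S)$, there are essentially four cases to check.

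The core case is $y=\overline{F}$ a hermitian vector bundle on $Y$ and $x=\overline{E}$ a hermitian sheaf on $X$. By the isomorphism $\widehat{G_0}^{\rm ac}(X,\mu_n,S)\to \widehat{G_0}(X,\mu_n,S)$ mentioned before Definition~\ref{407}, we may assume $\overline{E}$ is $f$-acyclic; then $f^*\overline{F}\otimes \overline{E}$ is also $f$-acyclic since tensoring with a pulled-back locally free sheaf preserves higher-direct-image vanishing. The classical projection formula yields a canonical isomorphism $f_*(f^*F\otimes E)\cong F\otimes f_*E$ of equivariant coherent sheaves, and a direct check on the definition of the $L^2$-metric (using that $f^*h^F$ is constant along the fibres of $f_\C$) shows that the hermitian structures agree on both sides. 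It then remains to compare torsion contributions, i.e.\ to show
\begin{equation*}
T_g(\omega^X, h^{f^*F\otimes E}) \;=\; {\rm ch}_g(\overline{F})\cdot T_g(\omega^X, h^E)
\end{equation*}
modulo $\mathrm{Im}\,\partial+\mathrm{Im}\,\overline{\partial}$ on $Y_{\mu_n}(\C)$. This is the place where I expect real work: strictly, it is the equivariant analogue of the Bismut--K\"ohler projection formula for higher analytic torsion forms under twisting by a pulled-back hermitian bundle. The proof proceeds by observing that the Bismut superconnection for $f^*F\otimes E$ factors as the original one for $E$ twisted by the pull-back of the hermitian holomorphic connection on $F$, so that the supertraces in Definition~\ref{205} factor as $f_g^*$ of an equivariant Chern-Weil form on $F$ times the corresponding supertrace for $E$; passing to $\partial_s(\zeta_1+\zeta_2)(0)$ then yields the claimed identity. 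I would cite (or recover from) X.~Ma's work the equivariant version of this multiplicativity property rather than redo the asymptotic analysis.

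The remaining three cases are formal consequences of standard projection formulas for currents. When $y=\overline{F}$ is a vector bundle and $x=\alpha\in\widetilde{\mathcal{U}}(X_{\mu_n},S)$, one uses ${\rm ch}_g(f^*\overline{F})=f_g^*{\rm ch}_g(\overline{F})$ and the elementary identity
\begin{equation*}
\int_{X_g/Y_g}{\rm Td}_g(Tf,h^{Tf})\cdot f_g^*{\rm ch}_g(\overline{F})\cdot \alpha \;=\; {\rm ch}_g(\overline{F})\cdot \int_{X_g/Y_g}{\rm Td}_g(Tf,h^{Tf})\cdot\alpha,
\end{equation*}
which is the usual projection formula for integration over the fibre; both sides make sense as currents with wave front set in $S'$ by Theorem~\ref{405}. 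When $y=\beta\in\widetilde{\mathcal{U}}(Y_{\mu_n},\emptyset)$ is smooth, $f^*y=f_g^*\beta$ is again smooth, so the wave-front hypothesis for forming the product $f^*y\cdot x$ is automatic; the case $x=\overline{E}$ an $f$-acyclic hermitian sheaf reduces, via the differential equation of Theorem~\ref{207} for $T_g(\omega^X,h^E)$ and the fact that smooth forms annihilate $\partial$- and $\overline\partial$-exact terms, to the same fibre-integration identity. The case $x=\alpha$ a current is then immediate from the same projection formula applied to $f_g^*\beta\cdot\alpha$.

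The main obstacle, as flagged above, is the multiplicativity property of the equivariant higher analytic torsion form under pull-back twisting. Once this is granted, the four case-checks assemble without further difficulty, and bilinearity together with the defining relations of Definition~\ref{403} (which are preserved by all the operations involved) yields the projection formula on all of $\widehat{K_0}(Y,\mu_n,\emptyset)\otimes\widehat{G_0}(X,\mu_n,S)$.
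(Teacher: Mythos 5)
Your proof takes essentially the same approach as the paper: verify the identity on the four types of generators (vector bundle or smooth form in $y$, acyclic sheaf or current in $x$) and extend by bilinearity, using the classical projection formula for direct images with $L^2$-metrics and the projection formula for fibre integration against currents. The torsion multiplicativity $T_g(\omega^X, h^{f^*F\otimes E}) = {\rm ch}_g(\overline{F})\cdot T_g(\omega^X, h^E)$ that you single out as the place requiring ``real work'' is exactly the step the paper invokes without further comment, and your sketch of its proof via the factorization of the Bismut superconnection under twisting by a pulled-back hermitian bundle is the correct justification.
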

\begin{proof}
Assume that $y=\overline{E}$ is an equivariant hermitian vector
bundle and $x=\overline{F}$ is an $f$-acyclic equivariant
hermitian sheaf, then $f^*y\cdot x=f^*\overline{E}\otimes
\overline{F}$. By projection formula for direct images and the
definition of $L^2$-metric, we know that
$f_*(f^*\overline{E}\otimes \overline{F})$ is isometric to
$\overline{E}\otimes f_*\overline{F}$. Moreover, concerning the
analytic torsion form, we have $T_g(\omega^X,h^{f^*E\otimes
F})={\rm ch}_g(\overline{E})T_g(\omega^X,h^F)$. So the projection
formula $f_*(f^*y\cdot x)=y\cdot f_*x$ holds in this case.

Assume that $y=\overline{E}$ is an equivariant hermitian vector
bundle and $x=\alpha$ is represented by some singular current. We
write $f_g^*$ and ${f_g}_*$ for the pull-back and push-forward of
currents respectively, then
\begin{align*}
f_*(f^*y\cdot x)=&f_*(f_g^*{\rm
ch}_g(\overline{E})\alpha)={f_g}_*(f_g^*{\rm
ch}_g(\overline{E})\alpha{\rm Td}_g(\overline{Tf}))\\
=&{\rm
ch}_g(\overline{E}){f_g}_*(\alpha{\rm Td}_g(\overline{Tf}))\\
=&{\rm ch}_g(\overline{E})\int_{X_g/{Y_g}}\alpha{\rm
Td}_g(\overline{Tf})=y\cdot f_*x.
\end{align*} Here we have
used an extension of projection formula of smooth forms
$p_*(p^*\alpha_1\wedge\alpha_2)=\alpha_1\wedge p_*\alpha_2$ (cf.
\cite[Prop. IX p. 303]{GHV}) to the case where the second variable
$\alpha_2$ is replaced by a singular current. The fact that this
extension is valid follows from the definition of $p_*$ and the
definition of the product of smooth form and singular current.

Assume that $y=\beta$ is represented by some smooth form and
$x=\overline{E}$ is an $f$-acyclic hermitian sheaf, then
\begin{align*}
f_*(f^*y\cdot x)=&f_*(f_g^*(\beta){\rm
ch}_g(\overline{F}))={f_g}_*(f_g^*(\beta){\rm
ch}_g(\overline{F}){\rm Td}_g(\overline{Tf}))\\
=&\beta{f_g}_*({\rm
ch}_g(\overline{E}){\rm
Td}_g(\overline{Tf}))\\
=&\beta\int_{X_g/{Y_g}}{\rm ch}_g(\overline{E}){\rm
Td}_g(\overline{Tf})=\beta({\rm ch}_g(\overline{f_*F})-{\rm
dd}^cT_g(\omega^X,h^F))
\end{align*}
which is exactly $y\cdot f_*x$.

Finally, assume that $y=\beta$ is represented by some smooth form
and $x=\alpha$ is represented by some singular current, then
\begin{align*}
f_*(f^*y\cdot x)=&f_*(f_g^*(\beta){\rm
dd}^c\alpha)={f_g}_*(f_g^*(\beta){\rm dd}^c\alpha{\rm
Td}_g(\overline{Tf}))\\
=&\beta{\rm dd}^c{f_g}_*(\alpha{\rm
Td}_g(\overline{Tf}))
\end{align*} which is also equal to $y\cdot f_*x$.

Since $f_*$ and $f^*$ are both group homomorphisms, we may
conclude the projection formula by linear extension.
\end{proof}

\begin{rem}\label{410}
Lemma~\ref{409} implies that $f_*$ is a homomorphism of
$R(\mu_n)$-modules, and hence it induces a push-forward morphism
after taking localization.
\end{rem}

To end this section, we recall an important lemma which will be
used frequently in our later arguments.

\begin{lem}\label{411}(\cite[Lemma 4.5]{KR1})
Let $X$ be a regular $\mu_n$-equivariant arithmetic scheme and let
$\overline{E}$ be an equivariant hermitian vector bundle on
$X_{\mu_n}$ such that $\overline{E}_{\mu_n}=0$. Then the element
$\lambda_{-1}(\overline{E})$ is invertible in
$\widehat{K_0}(X_{\mu_n},\mu_n,S)_{\rho}$.
\end{lem}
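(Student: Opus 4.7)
The plan is to reduce, via an orthogonal character decomposition of $\overline E$ together with a splitting argument, to the invertibility of $1-\overline L$ in $\widehat{K_0}(X_{\mu_n},\mu_n,\emptyset)_\rho$ for an equivariant hermitian line bundle $\overline L$ of non-trivial $\mu_n$-character; once this is established, multiplication by the invertible element gives an automorphism of the module $\widehat{K_0}(X_{\mu_n},\mu_n,S)_\rho$.

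Since $\mu_n$ acts trivially on $X_{\mu_n}$, the hypothesis $\overline E_{\mu_n}=0$ gives an orthogonal splitting $\overline E=\bigoplus_{\zeta\neq 1}\overline E_\zeta$. Multiplicativity of $\lambda_{-1}$ on direct sums, followed by the equivariant splitting principle --- lifting $\overline E_\zeta$ to a tower of equivariant projective bundles on which it becomes filtered by line bundles of character $\zeta$, and using conservativity of the pull-back on the $\rho$-localised $K_0$ --- lets me assume that $\overline E_\zeta$ is a single line bundle $\overline L$ with character $\zeta\neq 1$, so that $\lambda_{-1}(\overline E)=1-\overline L$.

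At the algebraic level, $K_0(X_{\mu_n},\mu_n)=K_0(X_{\mu_n})\otimes_{\Z}R(\mu_n)$ and the class of $\overline L$ equals $L_0\cdot T^a$, with $L_0$ the underlying non-equivariant line bundle and $T^a$ corresponding to $\zeta$. By regularity and finite Krull dimension of $X_{\mu_n}$, the element $L_0-1$ lies in the topological filtration of $K_0(X_{\mu_n})$ and is therefore nilpotent; combining this with the fact that $1-T^a$ is a unit in $R(\mu_n)_\rho$ by the very definition of $\rho$, the factorisation
\[
1-L_0T^a=(1-T^a)\bigl(1-(1-T^a)^{-1}(L_0-1)T^a\bigr)
\]
is inverted by a finite geometric series. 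To lift this inverse to $\widehat{K_0}(X_{\mu_n},\mu_n,\emptyset)_\rho$, I invoke the exact sequence of Remark~\ref{404}(iii) together with the product rule $a(\alpha)\cdot a(\beta)=a(\alpha\wedge dd^c\beta)$: the bidegree of $a(\alpha)^k$ strictly increases with $k$, so $a(\widetilde{\mathcal{U}}(X_{\mu_n},\emptyset)_\rho)$ is a nilpotent ideal since $X_{\mu_n}(\C)$ is finite-dimensional. Lifting a $K_0$-inverse $\bar z$ of $\pi(1-\overline L)$ to some $z\in\widehat{K_0}$, the discrepancy $(1-\overline L)\cdot z-1$ lies in this nilpotent ideal, and one more finite geometric series yields the desired inverse.

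The main obstacle will be carrying out the equivariant splitting-principle reduction compatibly with the arithmetic structure: one must pick equivariant hermitian filtrations on $\overline E_\zeta$, control the Bott-Chern secondary classes they introduce, and check that conservativity of the projective-bundle pull-back on $K_0$ survives localisation at $\rho$. A lesser technical point is to verify the nilpotence of $a(\widetilde{\mathcal{U}})$ in the $\rho$-localised ring by unwinding the product description recalled in Section~3 of \cite{T1}, which becomes routine once the bidegree cut-off imposed by $\dim_{\C}X_{\mu_n}(\C)<\infty$ is made explicit.
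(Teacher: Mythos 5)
Your overall strategy is the right one and is essentially the route of \cite[Lemma 4.5]{KR1}, which this paper merely cites: (i) observe that the ideal generated by the image of $a\colon\widetilde{\mathcal{U}}(X_{\mu_n},\emptyset)_\rho\to\widehat{K_0}(X_{\mu_n},\mu_n,\emptyset)_\rho$ is nilpotent, because $a(\alpha)\cdot a(\beta)=a(\alpha\, \mathrm{dd}^c\beta)$ forces the degree to climb past $\dim_{\C}X_{\mu_n}(\C)$; (ii) reduce to showing that the image of $\lambda_{-1}(E)$ in $K_0(X_{\mu_n},\mu_n)_\rho=K_0(X_{\mu_n})\otimes_{\Z}R(\mu_n)_\rho$ is a unit; (iii) use the character decomposition $E=\bigoplus_{\zeta\neq 1}E_\zeta$ (valid since $\mu_n$ acts trivially on $X_{\mu_n}$) and the nilpotence of virtual rank-zero classes on the regular, finite-dimensional, quasi-projective scheme $X_{\mu_n}$, together with invertibility of $1-T^k$ in $R(\mu_n)_\rho$; (iv) lift the $K_0$-inverse through the nilpotent kernel by a finite geometric series; (v) transport to general $S$ via the $\widehat{K_0}(X_{\mu_n},\mu_n,\emptyset)$-module structure on $\widehat{K_0}(X_{\mu_n},\mu_n,S)$. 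All of these steps are correct, and your identification of the nilpotence of $a(\widetilde{\mathcal{U}})$ as the crux of the lifting step is the key idea.

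The one genuine misjudgement is in your final paragraph: the equivariant splitting-principle reduction need not be carried out \emph{in the arithmetic group} $\widehat{K_0}$ at all, so there is no need to choose hermitian filtrations or to control Bott-Chern secondary classes. Since the whole analytic part of the group is killed at step (i), steps (ii)--(iii) take place entirely in algebraic $K_0(X_{\mu_n},\mu_n)_\rho$; there the flag-bundle reduction is classical and compatible with $\rho$-localisation. In fact even the flag-bundle tower can be dispensed with: working modulo the nilpotent ideal $I\otimes R(\mu_n)_\rho$ of virtual rank-zero classes, each factor $\lambda_{-1}(E_\zeta)$ reduces directly to $(1-T^{a_\zeta})^{\operatorname{rk}E_\zeta}$, which is already a unit of $R(\mu_n)_\rho$, so the factorisation $1-L_0T^a=(1-T^a)\bigl(1-(1-T^a)^{-1}(L_0-1)T^a\bigr)$ is not needed either. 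So what you flag as the main obstacle is in fact a non-issue, and the proof is shorter than you anticipate. A small notational slip: after passing to the flag bundle the nilpotence of $L_0-1$ should be asserted in the $K_0$ of that bundle, not of $X_{\mu_n}$; this does not affect the argument.
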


\section{Arithmetic concentration theorem}
In this section, we shall prove the arithmetic concentration
theorem which is an analog of Thomason's result in Arakelov
geometry. Let $X$ be a $\mu_n$-equivariant arithmetic scheme with
smooth generic fibre, we consider a special closed immersion $i:
X_{\mu_n}\hookrightarrow X$ where $X_{\mu_n}$ is the fixed point
subscheme of $X$. We shall first construct a well-defined group
homomorphism $i_*$ between equivariant arithmetic $G_0$-groups as
in the algebraic case. To construct $i_*$, some analytic datum,
which is the equivariant Bott-Chern singular current, should be
involved. Precisely speaking, let $\overline{\eta}$ be a
$\mu_n$-equivariant hermitian sheaf on $X_{\mu_n}$ and let
$\overline{\xi}.$ be a bounded complex of $\mu_n$-equivariant
hermitian sheaves which provides a resolution of
$i_*\overline{\eta}$ on $X$. Such a resolution always exists since
the generic fibre of $X$ is supposed to be smooth. Then we may
have an equivariant Bott-Chern singular current
$T_g(\overline{\xi}.)\in \widetilde{\mathcal{U}}(X_{\mu_n})$. Note
that on the complex numbers the $0$-degree part of the normal
bundle $N:=N_{X/{X_g}}$ vanishes (cf. \cite[Prop. 2.12]{KR1}) so
that the wave front set of $T_g(\overline{\xi}.)$ is the empty
set. This fact means that the following definition does make
sense.

\begin{defn}\label{501}
Let notations and assumptions be as above. The embedding morphism
\begin{displaymath}
i_*:\quad \widehat{G_0}(X_{\mu_n},\mu_n,S)\rightarrow
\widehat{G_0}(X,\mu_n,S) \end{displaymath} is defined in the
following way.

(i). For every $\mu_n$-equivariant hermitian sheaf
$\overline{\eta}$ on $X_{\mu_n}$, suppose that $\overline{\xi}.$
is a resolution of $i_*\overline{\eta}$ on $X$ whose metrics
satisfy Bismut assumption (A),
$i_*[\overline{\eta}]=\sum_k(-1)^k[\overline{\xi}_k]+T_g(\overline{\xi.})$.

(ii). For every $\alpha\in \widetilde{\mathcal{U}}(X_{\mu_n},S)$,
$i_*\alpha=\alpha{\rm Td}_g^{-1}(\overline{N})$.
\end{defn}

\begin{thm}\label{502}
The embedding morphism $i_*$ is a well-defined group homomorphism.
\end{thm}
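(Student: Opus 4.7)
The plan is to verify that Definition~\ref{501} descends to a group homomorphism on $\widehat{G_0}(X_{\mu_n},\mu_n,S)$. Four things need checking: (a) for an equivariant hermitian sheaf $\overline{\eta}$ on $X_{\mu_n}$, a resolution of the required kind exists and the resulting element indeed lies in $\widehat{G_0}(X,\mu_n,S)$; (b) part (ii) respects additivity of currents; (c) part (i) is independent of the chosen resolution; (d) the assignment respects the exact-sequence relations of $\widehat{G_0}(X_{\mu_n},\mu_n,S)$.

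Point (a) combines existence of equivariant locally free resolutions (facts (i)--(iv) at the start of Section~3, using that the generic fibre of $X$ is smooth) with Proposition~\ref{210} to arrange Bismut assumption (A). The output lives in $\widehat{G_0}(X,\mu_n,S)$ because, as noted in the paragraph before Definition~\ref{501}, the $0$-degree part of $N_{X/X_{\mu_n}}$ vanishes on $X(\C)$, so Theorem~\ref{213} forces ${\rm WF}(T_g(\overline{\xi}.))=\emptyset$; the current $T_g(\overline{\xi}.)$ is then smooth and belongs to $\widetilde{\mathcal{U}}(X_{\mu_n},S)$ for every $S$. Point (b) is immediate because multiplication by the smooth form ${\rm Td}_g^{-1}(\overline{N})$ is additive and preserves wave front sets.

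The substantive content is (c) and (d), both handled by Theorem~\ref{214}. The crucial simplification in our setting is that the $i_g$ appearing in that theorem is the identity map of $X_{\mu_n}$, since $X_{\mu_n}$ is already the fixed locus of the $\mu_n$-action on $X$. For (d), given $\overline{\chi}:0\to\overline{\eta}'\to\overline{\eta}\to\overline{\eta}''\to 0$ on $X_{\mu_n}$, property (iii) yields a double complex of equivariant hermitian resolutions $0\to\overline{\xi}.'\to\overline{\xi}.\to\overline{\xi}.''\to 0$ whose metrics can be chosen row by row to satisfy (A); Theorem~\ref{214} then reads
\begin{displaymath}
T_g(\overline{\xi}.')-T_g(\overline{\xi}.)+T_g(\overline{\xi}.'')=\widetilde{{\rm ch}}_g(\overline{\chi}){\rm Td}_g^{-1}(\overline{N})-\sum_{k}(-1)^{k}\widetilde{{\rm ch}}_g(\overline{\varepsilon}_k),
\end{displaymath}
and substituting the column-wise $\widehat{G_0}$-relation $\widetilde{{\rm ch}}_g(\overline{\varepsilon}_k)=[\overline{\xi}_k']-[\overline{\xi}_k]+[\overline{\xi}_k'']$ rearranges into $i_*[\overline{\eta}']-i_*[\overline{\eta}]+i_*[\overline{\eta}'']=\widetilde{{\rm ch}}_g(\overline{\chi}){\rm Td}_g^{-1}(\overline{N})=i_*\widetilde{{\rm ch}}_g(\overline{\chi})$.

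For (c), given admissible resolutions $\overline{\xi}.,\overline{\xi}'.$, property (iv) and Proposition~\ref{210} produce a common dominating resolution $\overline{\Phi}.$ satisfying (A) together with short exact sequences of complexes $0\to\overline{K}.\to\overline{\Phi}.\to\overline{\xi}.\to 0$ (and similarly for $\overline{\xi}'.$), in which $\overline{K}.$ and $\overline{K}'.$ are acyclic. Applying Theorem~\ref{214} to each of these (with trivial target $0\to 0\to i_*\overline{\eta}\to i_*\overline{\eta}\to 0$, for which $\widetilde{{\rm ch}}_g$ vanishes) and using the column-wise $\widehat{G_0}$-relation reduces the claim to showing that the class $\sum_k(-1)^k[\overline{K}_k]+T_g(\overline{K}.)$ of any acyclic complex of equivariant hermitian vector bundles vanishes in $\widehat{G_0}(X,\mu_n,S)$. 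This is the main technical obstacle; it is established by induction on the length of $\overline{K}.$, splitting $\overline{K}.$ into short exact sequences, using the defining relation of $\widehat{G_0}$ to convert each sheaf-level alternating sum into a secondary Bott--Chern class, and matching these against the behaviour of $T_g$ on short exact sequences predicted by Theorem~\ref{214} in the degenerate setting where the resolutions are concentrated in a single degree.
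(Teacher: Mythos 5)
Your argument is correct and is essentially the same verification the paper outsources to \cite[Theorem~5.2]{T1}: existence of resolutions with Bismut assumption (A), the observation that $i_g$ is an isomorphism so $T_g(\overline{\xi}.)$ is a smooth form lying in $\widetilde{\mathcal{U}}(X_{\mu_n},S)$, and then the double-complex formula of Theorem~\ref{214} to handle both resolution-independence and compatibility with the exact-sequence relations. The final reduction to the vanishing of $\sum_k(-1)^k[\overline{K}_k]+T_g(\overline{K}.)$ for acyclic complexes, established by splitting into short exact sequences and matching against the secondary Bott--Chern class, is exactly the standard way this is closed in \cite{KR1}/\cite{T1}.
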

\begin{proof}
The argument is the same as in the proof of \cite[Theorem
5.2]{T1}.
\end{proof}

\begin{lem}\label{506}(Projection formula)
For any elements $x\in \widehat{K_0}(X,\mu_n,\emptyset)$ and $y\in
\widehat{G_0}(X_{\mu_n},\mu_n,S)$, the identity $i_*(i^*x\cdot
y)=x\cdot i_*y$ holds in $\widehat{G_0}(X,\mu_n,S)$.
\end{lem}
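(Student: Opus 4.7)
The plan is to mirror the structure of the proof of Lemma~\ref{409} by reducing, via bilinearity of both sides in $x$ and $y$, to the four cases where each of $x\in\widehat{K_0}(X,\mu_n,\emptyset)$ and $y\in\widehat{G_0}(X_{\mu_n},\mu_n,S)$ is either of ``sheaf/bundle type'' (an equivariant hermitian vector bundle on $X$, resp.\ an equivariant hermitian sheaf on $X_{\mu_n}$) or of ``current type'' (in the image of the map $a$ from $\widetilde{\mathcal{U}}$). Once the equality $i_*(i^*x\cdot y)=x\cdot i_*y$ is verified in each case, the lemma follows by linear extension, exactly as in the smooth case.

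Three of the four cases are essentially formal. If $x=a(\beta)$ and $y=a(\alpha)$ are both currents, both sides unfold to $a(\beta\cdot{\rm dd}^c\alpha\cdot {\rm Td}_g^{-1}(\overline{N}))$, the key point being that ${\rm Td}_g^{-1}(\overline{N})$ is a closed smooth form, so that tensoring by it commutes with ${\rm dd}^c$. If $x=\overline{E}$ is a vector bundle and $y=a(\alpha)$, both sides equal $a(i^*{\rm ch}_g(\overline{E})\cdot\alpha\cdot {\rm Td}_g^{-1}(\overline{N}))$ directly from Definition~\ref{501}(ii) and the definition of the module structure. If $x=a(\beta)$ and $y=\overline{\eta}$, pick a resolution $\overline{\xi}.$ of $i_*\overline{\eta}$ satisfying Bismut assumption~(A); then $i_*(i^*x\cdot y)=a(\beta\cdot{\rm ch}_g(\overline{\eta})\cdot{\rm Td}_g^{-1}(\overline{N}))$, while
\begin{displaymath}
x\cdot i_*y = a\Big(\beta\cdot\big[\sum_k(-1)^k i^*{\rm ch}_g(\overline{\xi}_k)+{\rm dd}^c T_g(\overline{\xi}.)\big]\Big).
\end{displaymath}
Since in the present setting the immersion $i_g$ appearing in Theorem~\ref{213} is the identity of $X_{\mu_n}$, the differential equation satisfied by $T_g(\overline{\xi}.)$ shows that the bracket equals ${\rm ch}_g(\overline{\eta})\cdot{\rm Td}_g^{-1}(\overline{N})$, and the two sides agree.

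The main content lies in the case $x=\overline{E}$, $y=\overline{\eta}$. Given a resolution $\overline{\xi}.$ of $i_*\overline{\eta}$ satisfying assumption~(A), the sheaf-theoretic projection formula identifies $i_*\overline{\eta}\otimes\overline{E}$ with $i_*(\overline{\eta}\otimes i^*\overline{E})$, so that $\overline{\xi}.\otimes\overline{E}$ is a resolution of $i_*(\overline{\eta}\otimes i^*\overline{E})$; moreover, tensoring the canonical identification $(\pi^*H,\partial_z v)\cong(\pi^*(\wedge N^\vee\otimes\eta),i_z)$ by the hermitian bundle $\pi^*\overline{E}$ preserves isometries, so $\overline{\xi}.\otimes\overline{E}$ still satisfies~(A). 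Expanding both sides of the desired identity by Definition~\ref{501}(i), the claim reduces to the equality
\begin{displaymath}
T_g(\overline{\xi}.\otimes\overline{E}) = i^*{\rm ch}_g(\overline{E})\cdot T_g(\overline{\xi}.)\quad\text{in}\quad\widetilde{\mathcal{U}}(X_{\mu_n},\emptyset).
\end{displaymath}
I expect this tensor-product formula for the equivariant Bott-Chern singular current to be the hard part. It should follow from Bismut's construction: on tensoring $\overline{\xi}.$ by $\overline{E}$, the superconnection $C_u$ is replaced by $C_u\widehat{\otimes}1+1\widehat{\otimes}\nabla^{\overline{E}}$, and since $\nabla^{\overline{E}}$ is independent of $u$ and commutes with $N_H$, the current-valued zeta function of Lemma~\ref{211} factors, modulo $\partial$- and $\overline{\partial}$-exact terms, as the equivariant Chern character form of $i^*\overline{E}$ times the original zeta function. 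Alternatively, one may reduce to the case of a Koszul resolution by applying the double complex formula of Theorem~\ref{214} to a dominating double complex, and then verify the identity directly for the Koszul model, where tensoring by $\overline{E}$ merely multiplies each term by $\pi^*\overline{E}$ and the required factorisation is immediate.
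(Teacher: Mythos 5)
Your proposal is correct and follows essentially the same approach as the paper: reduce by bilinearity to the four generator-type cases and verify each. In the bundle-times-sheaf case the paper simply asserts, without comment, that $i_*(\overline{\eta}\otimes i^*\overline{E})$ is represented by $\sum(-1)^k[\overline{\xi}_k\otimes\overline{E}]+{\rm ch}_g(\overline{E})T_g(\overline{\xi}.)$, i.e.\ it silently invokes the multiplicativity $T_g(\overline{\xi}.\otimes\overline{E})={\rm ch}_g(\overline{E})T_g(\overline{\xi}.)$ of the equivariant Bott-Chern singular current (a standard property, cf.\ the use of \cite[Lemma 3.15]{KR1} in Proposition~\ref{319}); you make the same reduction but explicitly flag that identity and sketch two routes to it, which is, if anything, a more transparent presentation of the same argument.
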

\begin{proof}
Assume that $x=\overline{E}$ is an equivariant hermitian vector
bundle and $y=\overline{F}$ is an equivariant hermitian sheaf. Let
$\overline{\xi.}$ be a resolution of $i_*\overline{F}$ on $X$,
then $\overline{E}\otimes \overline{\xi.}$ provides a resolution
of $i_*(i^*\overline{E}\otimes \overline{F})$. By definition we
have \begin{displaymath} i_*(i^*x\cdot
y)=\sum(-1)^k[\overline{\xi} _k\otimes \overline{E}]+{\rm
ch}_g(\overline{E})T_g(\overline{\xi.}) \end{displaymath} which is
exactly $x\cdot i_*y$. Assume that $x=\alpha$ is represented by
some smooth form and $y=\overline{F}$ is an equivariant hermitian
sheaf. Again let $\overline{\xi.}$ be a resolution of
$i_*\overline{F}$ on $X$, then \begin{displaymath} i_*(i^*x\cdot
y)=\alpha{\rm Td}_g^{-1}(\overline{N}_{X/{X_g}}){\rm
ch}_g(\overline{F})=\alpha[{\rm
dd}^cT_g(\overline{\xi.})+\sum(-1)^k{\rm ch}_g(\overline{\xi}_k)]
\end{displaymath} which is exactly $x\cdot i_*y$. Now assume that
$x=\overline{E}$ is an equivariant hermitian vector bundle and
$y=\alpha$ is represented by some singular current, then
\begin{displaymath}
i_*(i^*x\cdot y)=i_*({\rm ch}_g(\overline{E})\alpha)={\rm
ch}_g(\overline{E})\alpha{\rm Td}_g^{-1}(\overline{N}_{X/{X_g}})
\end{displaymath} which is exactly $x\cdot i_*y$. Finally, if $x$
is represented by some smooth form and $y$ is represented by some
singular current then their product is well-defined and
$i_*(i^*x\cdot y)$ is obviously equal to $x\cdot i_*y$. Note that
$i_*$ and $i^*$ are group homomorphisms, so we may conclude the
projection formula from its correctness on generators. This
completes the proof.
\end{proof}

\begin{rem}\label{507}
Lemma~\ref{506} implies that $i_*$ is even a homomorphism of
$R(\mu_n)$-modules so that it induces a homomorphism between
arithmetic $G_0$-groups after taking localization.
\end{rem}

With Remark~\ref{507}, we may formulate the arithmetic
concentration theorem as follows.

\begin{thm}\label{503}
The embedding morphism $i_*:
\widehat{K_0}(X_{\mu_n},\mu_n,S)_{\rho}\rightarrow
\widehat{K_0}(X,\mu_n,S)_{\rho}$ is an isomorphism if $X$ is
regular. In this case, the inverse morphism of $i_*$ is given by
$\lambda_{-1}^{-1}(\overline{N}_{X/{X_{\mu_n}}}^\vee)\cdot i^*$
where $N_{X/{X_{\mu_n}}}$ is the normal bundle of $i(X_{\mu_n})$
in $X$.
\end{thm}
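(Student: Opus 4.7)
The plan is to mimic Thomason's algebraic strategy and the author's earlier proof in \cite{T1} for the empty wave front set case, checking that fixing an arbitrary conical set $S\subset T_\R^*X(\C)_{g,0}$ does not obstruct any step. Since $X$ is regular, Remark~\ref{404}(ii) lets us identify $\widehat{K_0}$ and $\widehat{G_0}$ on both $X$ and $X_{\mu_n}$ (the latter is regular by \cite[Prop.~3.1]{Th}), so I will argue throughout in $\widehat{K_0}$. Write $N:=N_{X/X_{\mu_n}}$; over $\C$ the fixed part $N_g$ vanishes (\cite[Prop.~2.12]{KR1}), so Lemma~\ref{411} makes $\lambda_{-1}(\overline{N}^\vee)$ invertible after localising by $\rho$, and the prospective inverse
\[
\phi:=\lambda_{-1}^{-1}(\overline{N}^\vee)\cdot i^*\colon\widehat{K_0}(X,\mu_n,S)_\rho\to\widehat{K_0}(X_{\mu_n},\mu_n,S)_\rho
\]
is well defined.

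The first task is to check $\phi\circ i_*=\mathrm{id}$. On the current summand one uses that $X_{\mu_n}(\C)=X(\C)_g$, so $i^*$ acts as the identity on $\widetilde{\mathcal U}(X_{\mu_n},S)$; since $N_g=0$, Example~\ref{s204}(ii) reduces to $\mathrm{Td}_g(\overline{N})=\mathrm{ch}_g(\lambda_{-1}(\overline{N}^\vee))^{-1}$, so $\phi\circ i_*$ becomes multiplication by $\lambda_{-1}^{-1}(\overline{N}^\vee)\cdot\mathrm{Td}_g^{-1}(\overline{N})=1$. On the class of an equivariant hermitian sheaf $\overline{\eta}$, I would pick a resolution $\overline{\xi.}$ of $i_*\overline{\eta}$ whose metrics satisfy Bismut assumption (A) and invoke the arithmetic self-intersection formula $i^*i_*\overline{\eta}=\lambda_{-1}(\overline{N}^\vee)\cdot\overline{\eta}$. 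Its proof (see \cite{T1}) has two ingredients: the Koszul-type identification of the homology of $i^*\overline{\xi.}$ with $\wedge^\cdot\overline{N}^\vee\otimes\overline{\eta}$ furnished by Theorem~\ref{306}(iv) applied to the trivial normal cone, and the cancellation of the resulting Bott-Chern secondary class against $T_g(\overline{\xi.})$ coming from the differential equation in Theorem~\ref{213}. Granting this, $\phi(i_*\overline{\eta})=\lambda_{-1}^{-1}(\overline{N}^\vee)\cdot\lambda_{-1}(\overline{N}^\vee)\cdot\overline{\eta}=\overline{\eta}$.

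A left inverse to a group homomorphism that is also surjective is a two-sided inverse, so it remains to prove surjectivity of $i_*$ after localisation. For this I would form the commutative diagram of right-exact sequences (Remark~\ref{404}(iii))
\begin{displaymath}
\xymatrix{
\widetilde{\mathcal U}(X_{\mu_n},S)_\rho \ar[r] \ar[d]_{\cdot\,\mathrm{Td}_g^{-1}(\overline{N})} & \widehat{K_0}(X_{\mu_n},\mu_n,S)_\rho \ar[r] \ar[d]_{i_*} & K_0(X_{\mu_n},\mu_n)_\rho \ar[r] \ar[d]_{i_*} & 0\\
\widetilde{\mathcal U}(X_{\mu_n},S)_\rho \ar[r] & \widehat{K_0}(X,\mu_n,S)_\rho \ar[r] & K_0(X,\mu_n)_\rho \ar[r] & 0
}
\end{displaymath}
using $X_{\mu_n}(\C)=X(\C)_g$ to identify the two leftmost entries. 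The rightmost vertical is Thomason's algebraic concentration isomorphism \cite{Th}; the leftmost vertical is multiplication by a smooth closed form whose constant term $\prod_{\zeta\neq 1}(1-\zeta^{-1})^{-\mathrm{rk}N_\zeta}$ becomes a unit in $R(\mu_n)_\rho$, hence is an isomorphism. A standard diagram chase on right-exact rows with isomorphic outer columns then delivers surjectivity of the middle $i_*$, completing the proof.

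I expect the main obstacle to be the arithmetic self-intersection formula used in the second paragraph; everything else is diagram-chasing and manipulations of power series in the curvature. Once $i^*i_*\overline{\eta}=\lambda_{-1}(\overline{N}^\vee)\cdot\overline{\eta}$ and the two invertibility statements above are established, the argument is purely formal and the enlargement of the wave front set plays no role beyond ensuring that the module structures and push-forwards used are well defined.
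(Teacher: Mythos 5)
Your proposal is correct and takes essentially the same route as the paper: the self-intersection identity $i^*i_*\overline{\eta}=\lambda_{-1}(\overline{N}^\vee)\cdot\overline{\eta}$ you invoke is precisely Lemma~\ref{504} (i.e.\ \cite[Lemma~5.13]{T1}), and your surjectivity step is the same diagram chase the paper carries out via its $3\times 3$ diagram over $U=X\setminus X_{\mu_n}$, where $K_0(U,\mu_n)_\rho=0$ and $\widetilde{\mathcal{U}}(U_{\mu_n},\emptyset)_\rho=0$ play the role of your outer-column isomorphisms. One small inaccuracy worth flagging: the leftmost vertical in your diagram is invertible simply because the constant term $\prod_{\zeta\neq 1}(1-\zeta^{-1})^{-\mathrm{rk}N_\zeta}$ of $\mathrm{Td}_g^{-1}(\overline N)$ is a nonzero complex number, so multiplication by it is already bijective on $\widetilde{\mathcal{U}}(X_{\mu_n},S)$ without any localization at $\rho$ — this constant is not an element of $R(\mu_n)$, so speaking of it becoming a unit in $R(\mu_n)_\rho$ is not quite the right justification.
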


Before we give the proof of this concentration theorem, we recall
a crucial lemma as follows.

\begin{lem}\label{504}
Let $\overline{\eta}$ be an equivariant hermitian vector bundle on
$X_{\mu_n}$. Assume that $\overline{\xi.}$ is an equivariant
hermitian resolution of $i_*\overline{\eta}$ on $X$ whose metrics
satisfy Bismut assumption (A). Then the equality
\begin{displaymath}
\lambda_{-1}(\overline{N}_{X/{X_{\mu_n}}}^\vee)\cdot
\overline{\eta}-\sum_j(-1)^ji^*(\overline{\xi}_j)=T_g(\overline{\xi}.)
\end{displaymath} holds in the group
$\widehat{K_0}(X_{\mu_n},\mu_n,S)$.
\end{lem}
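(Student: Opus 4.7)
The plan is to deduce the identity from Definition~\ref{501} by applying a pull-back morphism $i^*$ and then reducing to a model calculation on a Koszul resolution via deformation to the normal cone.

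Since $X$ is regular, $X_{\mu_n}$ is regular as well, and by Remark~\ref{404}(ii) the forgetful morphisms $\widehat{K_0}\to\widehat{G_0}$ are isomorphisms on both schemes. Thus the embedding morphism $i_*$ lands in $\widehat{K_0}(X,\mu_n,S)$, and a pull-back $i^*:\widehat{K_0}(X,\mu_n,S)\to\widehat{K_0}(X_{\mu_n},\mu_n,S)$ is at our disposal: it sends equivariant hermitian vector bundles to their restrictions and acts as the identity on classes in $\widetilde{\mathcal{U}}(X_{\mu_n},S)$, which already live on $X(\C)_g=X_{\mu_n}(\C)$. Applying $i^*$ to Definition~\ref{501}(i) reduces the lemma to the arithmetic self-intersection identity
\begin{displaymath}
i^*i_*\overline{\eta}=\lambda_{-1}(\overline{N}^\vee)\cdot\overline{\eta}\quad\text{in }\widehat{K_0}(X_{\mu_n},\mu_n,S).
\end{displaymath}
Under the forgetful map $\pi$ to $K_0(X_{\mu_n},\mu_n)$, both sides agree because for the regular closed immersion $i$ one has $\pi(i^*i_*\overline{\eta})=\sum_j(-1)^j[{\rm Tor}^{\mathcal{O}_X}_j(\mathcal{O}_{X_{\mu_n}},i_*\eta)]=[\lambda_{-1}(N^\vee)\otimes\eta]$. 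That the ${\rm dd}^c$ of the difference also vanishes follows from Theorem~\ref{213} combined with the identity ${\rm Td}_g^{-1}(\overline{N})={\rm ch}_g(\lambda_{-1}(\overline{N}^\vee))$, which holds because $\overline{N}_g=0$ on the complex points (Example~\ref{s204}(ii)).

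The difference therefore lies in the image of $a:\widetilde{\mathcal{U}}(X_{\mu_n},S)\to\widehat{K_0}(X_{\mu_n},\mu_n,S)$ and is ${\rm dd}^c$-closed, but this does not immediately give vanishing. The next step is to show that the expression $\lambda_{-1}(\overline{N}^\vee)\cdot\overline{\eta}-\sum_j(-1)^ji^*\overline{\xi}_j-T_g(\overline{\xi}.)$ is independent of the choice of resolution $\overline{\xi}.$: any two resolutions can be dominated by a third, and by the double complex formula (Theorem~\ref{214}) the variation of the singular current $T_g(\overline{\xi}.)$ across such a comparison is matched by the alternating sum of equivariant secondary Bott-Chern classes of the connecting short exact sequences, which by Definition~\ref{403}(i) measures precisely the variation of $\sum_j(-1)^ji^*\overline{\xi}_j$ in $\widehat{K_0}$.

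Finally, one verifies the identity for a canonical model: apply the deformation-to-the-normal-cone construction of Section~3 to the immersion $i:X_{\mu_n}\hookrightarrow X$, equip $W(i)$ with a K\"{a}hler metric normal to the deformation (Lemma~\ref{305}), and pick a resolution $\overline{\Xi}$ on $W(i)$ satisfying Bismut assumption~(A) (Theorem~\ref{306}(i)); its restriction to the $\infty$-fibre $\mathbb{P}(N\oplus\mathcal{O})$ is the Koszul resolution $\overline{\kappa}(\overline{\eta},\overline{N})$, for which $i^*\overline{\kappa}_j\cong\wedge^j\overline{N}^\vee\otimes\overline{\eta}$ is an isometry, so $\sum_j(-1)^ji^*\overline{\kappa}_j$ matches $\lambda_{-1}(\overline{N}^\vee)\cdot\overline{\eta}$ as equivariant hermitian sheaves on the nose. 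The whole argument thus reduces to verifying $T_g(\overline{\kappa})=0$ for this Koszul model and transporting the identity from the $\infty$-fibre back to the central fibre of $W(i)$. The main obstacle is precisely this last transport step: it mirrors the proof of the deformation theorem (Theorem~\ref{318}) together with Proposition~\ref{319}, and requires careful bookkeeping of the secondary Bott-Chern classes and equivariant $R$-genus contributions to ensure that everything cancels in the arithmetic setting.
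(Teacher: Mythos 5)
Your framework---check $\pi$ and ${\rm dd}^c$, show resolution-independence by domination, and close on a Koszul model on the deformation to the normal cone---is a plausible plan, and you are right that the $\pi$/${\rm dd}^c$ checks alone are not conclusive. But the concluding steps, where the lemma's content lives, contain two errors. First, the restriction of $\overline{\Xi}$ to the $\infty$-fibre $P=\mathbb{P}(N\oplus\mathcal{O})$ is \emph{not} the Koszul resolution $\overline{\kappa}(\overline{\eta},\overline{N})$; it is merely another resolution of ${i_\infty}_*\overline{\eta}$, and trading the two requires a common dominating resolution plus Theorem~\ref{214}, a non-trivial extra step that this paper itself performs in proving Theorem~\ref{301}. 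Second, your base case $T_g(\overline{\kappa})=0$ is false: the differential equation of Theorem~\ref{213} forces ${\rm dd}^cT_g(\overline{\kappa})\neq0$ as a current on $P_g$ (a delta current supported on the zero section minus a smooth form), and Bismut's computation of the equivariant singular current of a Koszul complex is the very source of the $R$-genus contribution in Theorem~\ref{216}. Whatever must vanish at the Koszul model is a much subtler combination, not $T_g(\overline{\kappa})$ itself.

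There is also a structural obstacle you have not addressed. Lemma~\ref{504} is specific to the fixed-point immersion $i:X_{\mu_n}\hookrightarrow X$: since $N_g=0$, the current $T_g(\overline{\xi}.)$ is a smooth form living on $X_g=X_{\mu_n}(\C)$ and hence a class in $\widetilde{\mathcal{U}}(X_{\mu_n},S)$, so the equality makes sense in $\widehat{K_0}(X_{\mu_n},\mu_n,S)$. After deforming, $i_\infty:X_{\mu_n}\hookrightarrow P$ is no longer an embedding of the full fixed locus; $T_g(\overline{\kappa})$ lives on $P_g=X_{\mu_n}\coprod_{\zeta\neq1}\mathbb{P}(N_\zeta)$, which strictly contains $i_\infty(X_{\mu_n})$, so there is no clean analogue of Lemma~\ref{504} at the $\infty$-fibre to transport back; an explicit restriction argument is required and you have not supplied it. Finally, the transport step (mirroring Theorem~\ref{318} together with Proposition~\ref{319}) that you defer as "careful bookkeeping" is precisely where the lemma would be proved; without it the argument is a plan, not a proof. (The paper itself gives no proof here, citing only \cite[Lemma 5.13]{T1}, so I cannot compare your route to the reference, but as an independent argument it has real gaps.)
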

\begin{proof}
This is \cite[Lemma 5.13]{T1}.
\end{proof}

\begin{proof}(of Theorem~\ref{503})
Denote by $U$ the complement of $X_{\mu_n}$ in $X$, then $j:
U\hookrightarrow X$ is a $\mu_n$-equivariant open subscheme of $X$
whose fixed point set is empty. We consider the following double
complex \begin{displaymath} \xymatrix{
\widetilde{\mathcal{U}}(X_{\mu_n},S)_{\rho} \ar[r]^-{i_*}
\ar[d]^{a} & \widetilde{\mathcal{U}}(X_{\mu_n},S)_{\rho}
\ar[r]^-{j^*} \ar[d]^{a} &
\widetilde{\mathcal{U}}(U_{\mu_n},\emptyset)_{\rho} \ar[r] \ar[d]^{a} & 0 \\
\widehat{K_0}(X_{\mu_n},\mu_n,S)_{\rho} \ar[r]^-{i_*} \ar[d]^\pi &
\widehat{K_0}(X,\mu_n,S)_{\rho} \ar[r]^-{j^*} \ar[d]^\pi &
\widehat{K_0}(U,\mu_n,\emptyset)_{\rho} \ar[r] \ar[d]^\pi & 0 \\
K_0(X_{\mu_n},\mu_n)_{\rho} \ar[r]^-{i_*} \ar[d] &
K_0(X,\mu_n)_{\rho} \ar[r]^-{j^*} \ar[d] & K_0(U,\mu_n)_{\rho}
\ar[r] \ar[d] & 0 \\
0 & 0 & 0 & } \end{displaymath} whose first and second columns are
both exact sequences according to Remark~\ref{404} (iii). For the
third column, $K_0(U,\mu_n)_{\rho}$ is equal to $0$ by
\cite[(2.1.3)]{Th},
$\widetilde{\mathcal{U}}(U_{\mu_n},\emptyset)_{\rho}$ is also
equal to $0$ since $U_{\mu_n}$ is empty. Then from
Remark~\ref{404} (iii) we know that
$\widehat{K_0}(U,\mu_n,\emptyset)_{\rho}$ is equal to $0$. We
claim that $i_*:
\widehat{K_0}(X_{\mu_n},\mu_n,S)_{\rho}\rightarrow
\widehat{K_0}(X,\mu_n,S)_{\rho}$ is surjective. Indeed, for any
element $x\in \widehat{K_0}(X,\mu_n,S)_{\rho}$ we may find an
element $y\in \widehat{K_0}(X_{\mu_n},\mu_n,S)_{\rho}$ such that
$i_*\pi(y)=\pi(x)$ because the third line is exact. This means
$x-i_*(y)$ is in the kernel of $\pi$, so there exists an element
$\alpha\in \widetilde{\mathcal{U}}(X_{\mu_n},S)_{\rho}$ such that
$\alpha=x-i_*(y)$ in $\widehat{K_0}(X,\mu_n,S)_{\rho}$. Set
$\beta=\alpha{\rm Td}_g(\overline{N})$, we get
$i_*(y+\beta)=i_*(y)+\alpha=x$ in
$\widehat{K_0}(X,\mu_n,S)_{\rho}$. Hence, $i_*$ is surjective.

We now prove that the embedding morphism $i_*:
\widehat{K_0}(X_{\mu_n},\mu_n,S)_{\rho}\rightarrow \widehat{K
_0}(X,\mu_n,S)_{\rho}$ is really an isomorphism by constructing
its inverse morphism. Let $\omega$ be an element in
$\widetilde{\mathcal{U}}(X_{\mu_n},S)$, by definition we have
\begin{align*}
\lambda_{-1}^{-1}(\overline{N}_{X/{X_{\mu_n}}}^\vee)\cdot
i^*i_*(\omega)=&\lambda_{-1}^{-1}(\overline{N}_{X/{X_{\mu_n}}}^\vee)\cdot
\omega{\rm Td}_g^{-1}(\overline{N}_{X/{X_g}})\\
=&{\rm
ch}_g(\lambda_{-1}^{-1}(\overline{N}_{X/{X_g}}^\vee))\omega{\rm
Td}_g^{-1}(\overline{N}_{X/{X_g}})\\
=&\omega.
\end{align*}
Let $\overline{\eta}$ be an equivariant hermitian vector bundle on
$X_{\mu_n}$ and assume that $\overline{\xi.}$ is an equivariant
hermitian resolution of $i_*\overline{\eta}$ on $X$ whose metrics
satisfy Bismut assumption (A), then by the definition of the
embedding morphism $i_*$ and Lemma~\ref{504} we have
\begin{align*}
\lambda_{-1}^{-1}(\overline{N}_{X/{X_{\mu_n}}}^\vee)\cdot
i^*i_*(\overline{\eta})=&\lambda_{-1}^{-1}(\overline{N}_{X/{X_{\mu_n}}}^\vee)\cdot
i^*(\sum_k(-1)^k\overline{\xi}_k+T_g(\overline{\xi.}))=\overline{\eta}.
\end{align*}
So the inverse morphism of $i_*$ is of the form
$\lambda_{-1}^{-1}(\overline{N}_{X/{X_{\mu_n}}}^\vee)\cdot i^*$
and we are done.
\end{proof}

\section{A Lefschetz fixed point formula for singular arithmetic schemes
with smooth generic fibres}

\subsection{The statement}
We formulate in this subsection the statement of our main theorem,
a singular Lefschetz fixed point formula for equivariant
arithmetic schemes with smooth generic fibres. Its proof will be
given in next two subsections. Let $f: X\to Y$ be a
$\mu_n$-equivariant morphism between two arithmetic schemes with
smooth generic fibres, which is smooth on the complex numbers.
This morphism $f$ is automatically projective and hence proper,
according to the definition of equivariant arithmetic scheme.
Suppose that $f$ factors through some regular equivariant
arithmetic scheme $Z$. More precisely, our assumption is that
there exist an equivariant closed immersion $i: X\hookrightarrow
Z$ and an equivariant morphism $h: Z\to Y$ such that $f=h\circ i$
and $h$ is also smooth on the complex numbers. Moreover, we shall
assume that the $\mu_n$-action on $Y$ is trivial.

Let $\eta$ be an equivariant coherent sheaf on $X$, then there
exists a bounded complex of equivariant vector bundles which
provides a resolution of $i_*\eta$ on $Z$ because $Z$ is regular.
Since any two equivariant resolution of $i_*\eta$ can be dominated
by a third one, the symbol ${\rm
Tor}_{\mathcal{O}_Z}^k(i_*\eta,\mathcal{O}_{Z_{\mu_n}})$ does make
sense.

We choose arbitrary $\mu_n$-invariant K\"{a}hler forms $\omega^Z$
and $\omega^X$ on $Z(\C)$ and $X(\C)$ respectively, the K\"{a}hler
form $\omega^X$ is not necessarily the K\"{a}hler form induced by
$\omega^Z$. The K\"{a}hler form on $X(\C)$ induced by $\omega^Z$
will be denoted by $\omega^Z_X$. Denote by $N$ the normal bundle
of $i_\C(X(\C))$ in $Z(\C)$, we endow it with the quotient metric
provided that $TX(\C)$ carries the K\"{a}hler metric corresponding
to $\omega^Z_X$. Let $\overline{F}$ be the non-zero degree part of
$\overline{N}$, then by \cite[Exp. VII, Lem. 2.4 and Prop.
2.5]{GBI} for any equivariant hermitian sheaf $\overline{\eta}$ on
$X$ there exists a canonical isomorphism on $X_g$
\begin{displaymath}
{\rm
Tor}_{\mathcal{O}_Z}^k(i_*\eta,\mathcal{O}_{Z_{\mu_n}})_\C\cong
\wedge^kF^\vee\otimes \eta_\C\mid_{X_g} \end{displaymath} which is
equivariant. This means we may endow ${\rm
Tor}_{\mathcal{O}_Z}^k(i_*\eta,\mathcal{O}_{Z_{\mu_n}})_\C$ with a
hermitian metric induced by the metrics on $F$ and $\eta$ so that
it becomes an equivariant hermitian sheaf on $X_{\mu_n}$.
Moreover, we know that the hermitian vector bundle $\overline{F}$
fits the following exact sequence
\begin{displaymath}
(\overline{\mathcal{F}},\omega^X):\quad 0\to
\overline{N}_{X/{X_g}}\to \overline{N}_{Z/{Z_g}}\to
\overline{F}\to 0 \end{displaymath} where $N_{Z/{Z_g}}$ admits the
quotient metric associated to $\omega^Z$ and $N_{X/{X_g}}$ admits
the quotient metric associated to $\omega^X$. Similarly, we shall
denote by $(\overline{\mathcal{F}},\omega^Z_X)$ the hermitian
exact sequence $\overline{\mathcal{F}}$ whose metric on
$N_{X/{X_g}}$ is induced by $\omega^Z_X$.

The push-forward homomorphism from the arithmetic $G_0$-group
$\widehat{G_0}(X,\mu_n,\emptyset)$ to
$\widehat{G_0}(Y,\mu_n,\emptyset)$ with respect to the K\"{a}hler
form $\omega^X$ is denoted by $f_*$ as usual. The push-forward
homomorphism from $\widehat{G_0}(X_{\mu_n},\mu_n,\emptyset)$ to
$\widehat{G_0}(Y,\mu_n,\emptyset)$ with respect to the K\"{a}hler
form $\omega^Z_X$ will be denoted by ${f_{\mu_n}^Z}_*$.

Write $\widetilde{{\rm Td}}(Tf_g,\omega^X,\omega^Z_X)$ for the
secondary characteristic class of the exact sequence
\begin{displaymath}
\xymatrix{0 \ar[r] & (Tf_g,\omega^X) \ar[r]^-{{\rm Id}} &
(Tf_g,\omega^Z_X) \ar[r] & 0 \ar[r] & 0 } \end{displaymath} where
the middle term carries the metric induced by $\omega^Z_X$ and the
sub term carries the metric induced by $\omega^X$. Then the
singular Lefschetz fixed point formula for equivariant arithmetic
schemes with smooth generic fibres can be formulated as follows.

\begin{thm}\label{601}
Let notations and assumptions be as above. Then for any
equivariant hermitian sheaf $\overline{\eta}$ on $X$, the equality
\begin{align*}
f_*(\overline{\eta})=&{f_{\mu_n}^Z}_*(i_{\mu_n}^*(\lambda_{-1}^{-1}(\overline{N}^\vee_{Z/{Z_{\mu_n}}}))\cdot\sum_k(-1)^k{\rm
Tor}_{\mathcal{O}_Z}^k(i_*\overline{\eta},\overline{\mathcal{O}}_{Z_{\mu_n}}))\\
&+\int_{X_g/Y}{\rm Td}(Tf_g,\omega^X){\rm
ch}_g(\overline{\eta})\widetilde{{\rm
Td}}_g(\overline{\mathcal{F}},\omega^X){\rm
Td}_g^{-1}(\overline{F})\\
&-\int_{X_g/Y}{\rm Td}_g(Tf){\rm
ch}_g(\eta)R_g(N_{X/{X_g}})\\
&+\int_{X_g/Y}\widetilde{{\rm Td}}(Tf_g,\omega^X,\omega^Z_X){\rm
ch}_g(\overline{\eta}){\rm Td}_g(\overline{N}_{Z/{Z_g}}){\rm
Td}_g^{-1}(\overline{F})
\end{align*}
holds in the group $\widehat{G_0}(Y,\mu_n,\emptyset)_{\rho}$.
\end{thm}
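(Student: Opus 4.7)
The strategy is to use the factorisation $f=h\circ i$ to reduce the problem, via the arithmetic concentration theorem, to a computation on the regular ambient scheme $Z$. First I would choose an equivariant hermitian resolution $\overline{\xi}_{\cdot} \to i_*\overline{\eta}$ on $Z$ whose metrics satisfy Bismut assumption (A); such a resolution exists because $Z$ is regular and the generic fibre of $X$ is smooth. By Definition~\ref{501}, the class $i_*[\overline{\eta}]=\sum_k(-1)^k[\overline{\xi}_k]+T_g(\overline{\xi}_{\cdot})$ lies in $\widehat{G_0}(Z,\mu_n,\emptyset)$, which by Remark~\ref{404}(ii) coincides with $\widehat{K_0}(Z,\mu_n,\emptyset)$.

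The central step is to compare $f_*(\overline{\eta})$ with $h_*(i_*[\overline{\eta}])$ in $\widehat{G_0}(Y,\mu_n,\emptyset)$. The two classes differ for two independent reasons: the equivariant analytic torsion form $T_g(\omega^X,h^\eta)$ of the K\"ahler fibration $f_\C$ is not equal to the combination of $\sum_i(-1)^iT_g(\omega^Z,h^{\xi_i})$ with the integral of the singular current $T_g(\overline{\xi}_{\cdot})$, and the K\"ahler metric used to define $f_*$ is $\omega^X$ whereas the composition through $Z$ naturally uses $\omega^Z_X$. The first discrepancy is measured precisely by Bismut--Ma's immersion formula (Theorem~\ref{216}) applied to the triple $(f,h,i)$, which produces the $R$-genus correction $-\int_{X_g/Y}\mathrm{Td}_g(Tf)\mathrm{ch}_g(\eta)R_g(N_{X/X_g})$ and the secondary Todd correction $\int_{X_g/Y}\mathrm{Td}(Tf_g,\omega^X)\mathrm{ch}_g(\overline{\eta})\widetilde{\mathrm{Td}}_g(\overline{\mathcal{F}},\omega^X)\mathrm{Td}_g^{-1}(\overline{F})$ coming from the normal sequence $\overline{\mathcal{F}}$. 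The second discrepancy is governed by the anomaly formula (Theorem~\ref{208}) and yields exactly the integral involving $\widetilde{\mathrm{Td}}(Tf_g,\omega^X,\omega^Z_X)$. The vanishing theorem for equivariant closed immersions (Theorem~\ref{301}), proved in the preceding section, is the ingredient that guarantees these identities are independent of the choice of resolution and that the correction terms assemble correctly in the arithmetic $G_0$-group.

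With this comparison in hand, I would then apply the arithmetic concentration theorem (Theorem~\ref{503}) to $Z$: inside $\widehat{K_0}(Z,\mu_n,\emptyset)_{\rho}$ one has
\[
i_*[\overline{\eta}]=(i_{Z_{\mu_n}})_*\!\left(\lambda_{-1}^{-1}(\overline{N}^\vee_{Z/Z_{\mu_n}})\cdot i_{Z_{\mu_n}}^*(i_*[\overline{\eta}])\right).
\]
Pulling the resolution $\overline{\xi}_{\cdot}$ back to $Z_{\mu_n}$ and using the canonical identification $\mathrm{Tor}^k_{\mathcal{O}_Z}(i_*\eta,\mathcal{O}_{Z_{\mu_n}})_\C\cong\wedge^kF^\vee\otimes\eta_\C|_{X_g}$ together with Lemma~\ref{504}, I would rewrite $i_{Z_{\mu_n}}^*(i_*[\overline{\eta}])$ as $\sum_k(-1)^k[\mathrm{Tor}^k_{\mathcal{O}_Z}(i_*\overline{\eta},\overline{\mathcal{O}}_{Z_{\mu_n}})]$. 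Since the Tor sheaves are supported on $X_{\mu_n}\subset Z_{\mu_n}$, the projection formula (Lemma~\ref{506}) moves the factor $\lambda_{-1}^{-1}(\overline{N}^\vee_{Z/Z_{\mu_n}})$ across $(i_{\mu_n})_*$, and applying $h_{*}$ turns the composition $h\circ i_{Z_{\mu_n}}\circ i_{\mu_n}=f\circ i_{X_{\mu_n}}$ into ${f^Z_{\mu_n}}_*$ (the pushforward with respect to the K\"ahler form $\omega^Z_X$), producing exactly the first term of the theorem.

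The main obstacle will be the bookkeeping in the comparison step: decomposing the discrepancy between $f_*$ and $h_*\circ i_*$ into precisely the three stated integral corrections, and verifying that the analytic torsion contributions from the resolution $\overline{\xi}_{\cdot}$ on $Z$ combine with the singular current $T_g(\overline{\xi}_{\cdot})$ via Theorem~\ref{216} and Theorem~\ref{208} in exactly the predicted way. Keeping track of the signs, the distinction between $\overline{F}$, $\overline{N}_{Z/Z_g}$ and $\overline{N}_{X/X_g}$, and the two K\"ahler metrics $\omega^X$ and $\omega^Z_X$ across all three corrections is the principal technical challenge; the vanishing theorem of Section~3 is what ultimately absorbs the remaining error and closes the computation.
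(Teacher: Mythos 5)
Your high-level plan (factor through $Z$, compare $f_*$ with $h_*\circ i_*$ via the immersion formula, invoke the arithmetic concentration theorem, correct for the metric change at the end) matches the paper's, but there is a concrete gap at the concentration step. You propose to rewrite $i_{Z_{\mu_n}}^*(i_*[\overline{\eta}])$ as $\sum_k(-1)^k[\mathrm{Tor}^k_{\mathcal{O}_Z}(i_*\overline{\eta},\overline{\mathcal{O}}_{Z_{\mu_n}})]$ ``together with Lemma~\ref{504}''. That identity holds only in ordinary $K_0$; in the \emph{arithmetic} group $\widehat{K_0}(Z_{\mu_n},\mu_n,\cdot)$ one gets $i_{Z_{\mu_n}}^*(i_*[\overline{\eta}])=\sum_k(-1)^k[\overline{\xi}_k|_{Z_{\mu_n}}]+T_g(\overline{\xi}.)$, and the restricted bundles $\overline{\xi}_k|_{Z_{\mu_n}}$ are supported on all of $Z_{\mu_n}$, not on $X_{\mu_n}$. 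Relating $\sum(-1)^k[\overline{\xi}_k|_{Z_{\mu_n}}]$ to the Tor sheaves introduces exactly the analytic torsion/secondary-class corrections that Theorem~\ref{603} and Corollary~\ref{605} quantify; without them your rewriting does not hold and the integral correction terms cannot be assembled. Lemma~\ref{504} is not the right tool: it pertains to the immersion $X_{\mu_n}\hookrightarrow X$ with $X$ regular, not to $X\hookrightarrow Z$, and in any case it does not provide the needed comparison of pushforwards ${f^Z_{\mu_n}}_*$ versus ${h_{\mu_n}}_*$.

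A second ingredient you omit is the comparison of $h_*{i_Z}_*$ with ${h_{\mu_n}}_*$. In the paper this is an explicit intermediate claim, proved by applying the arithmetic Bismut--Ma formula (Theorem~\ref{606}) to the immersion $i_Z\colon Z_{\mu_n}\hookrightarrow Z$, yielding $h_*{i_Z}_*(y)={h_{\mu_n}}_*\bigl(y\cdot(1-R_g(N_{Z/Z_{\mu_n}}))\bigr)$ because the relative normal sequence over $Z_g$ is orthogonally split. Without this step, the passage from $h_*i_*(\overline{\eta})$ to an expression involving ${f^Z_{\mu_n}}_*$ is only formal and the $R$-genus accounting does not close. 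In short: the skeleton is right, but the load-bearing part is precisely the equivariant arithmetic $G_0$-theoretic vanishing theorem (Theorem~\ref{603}, established from Theorem~\ref{301} via the projective space bundle $\mathbb{P}(\mathcal{E}^\vee)$ and the Koszul complex $\mathcal{H}$), and your proposal substitutes for it an identification that fails at the arithmetic level.
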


\begin{rem}\label{602}
This arithmetic Lefschetz fixed point formula was inspired by
\cite[Th\'{e}or\`{e}me 3.5]{Th}.
\end{rem}

\subsection{Equivariant arithmetic $G_0$-theoretic vanishing
theorem} The central actor in the proof of Theorem~\ref{601} is
the following vanishing theorem in equivariant arithmetic
$G_0$-theory, which can be viewed as a translation of
Theorem~\ref{301}.

\begin{thm}\label{603}
Let notations and assumptions be as in last subsection. Let
$\overline{\eta}$ be an equivariant hermitian sheaf on $X$, and
let
\begin{displaymath}
\overline{\Psi}:\quad 0\to \overline{\xi}_m\to \cdots\to
\overline{\xi}_1\to \overline{\xi}_0\to i_*\overline{\eta}\to 0
\end{displaymath} be a resolution of $i_*\overline{\eta}$ by
equivariant hermitian vector bundles on $Z$. Denote by
${h_{\mu_n}}_*$ the push-forward homomorphism from
$\widehat{K_0}(Z_{\mu_n},\mu_n,N^\vee_{g,\R}\setminus\{0\})$ to
$\widehat{G_0}(Y,\mu_n,\emptyset)$ with respect to the K\"{a}hler
form $\omega^Z$. Then the formula
\begin{align*}
&{f_{\mu_n}^Z}_*(\sum(-1)^k{\rm
Tor}^k_{\mathcal{O}_Z}(i_*\overline{\eta},\overline{\mathcal{O}}_{Z_{\mu_n}}))-{h_{\mu_n}}_*(\sum(-1)^k(\overline{\xi}_k\mid_{Z_{\mu_n}}))\\
=&\int_{Z_g/Y}T_g(\overline{\xi.}){\rm
Td}(\overline{Th_g})+\int_{X_g/Y}{\rm Td}(Tf_g){\rm
Td}_g^{-1}(F){\rm
ch}_g(\eta)R(N_g)\\
&+\int_{X_g/Y}{\rm ch}_g(\overline{\eta}){\rm
Td}_g^{-1}(\overline{N})\widetilde{{\rm
Td}}((Tf_g,\omega^Z_X),\overline{Th_g}\mid_{X_g})
\end{align*}
holds in $\widehat{G_0}(Y,\mu_n,\emptyset)$.
\end{thm}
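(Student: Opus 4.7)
The plan is to read Theorem~\ref{603} as the $\widehat{G_{0}}$-theoretic shadow of the analytic vanishing theorem of Section~3 (Theorem~\ref{s301}). Unfolding the two push-forwards on the left-hand side via Definition~\ref{407}, each splits into an algebraic direct image minus an equivariant analytic torsion form. After using the canonical identification of the ${\rm Tor}$-sheaves with the homology sheaves of $\overline{\xi.}|_{Z_{\mu_n}}$, and invoking the very construction of $\widetilde{{\rm ch}}_{g}(\overline{\xi.},\overline{\eta})$ via the exact sequences $\mathcal{H}(*)$ and $\mathcal{H}(**)$ of \S 3.1, the algebraic contributions should rearrange into $\widetilde{{\rm ch}}_{g}(\overline{\xi.},\overline{\eta})$. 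Together with the analytic torsion contributions, the left-hand side is then exhibited as the three integrals appearing on the right-hand side of the claim minus the invariant $\delta(i,\overline{\eta},\overline{\Psi})$, so the whole matter reduces to the vanishing $\delta=0$ provided by Theorem~\ref{s301}.

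\textbf{Carrying this out in the acyclic setting.} Theorem~\ref{s301} gives $\delta=0$ only after twisting by a sufficiently high power $\overline{\mathcal{L}}^{\otimes n}$ of the equivariant very ample hermitian line bundle of Lemma~\ref{309}; for such a twist the direct images $R^{0}{h_{g}}_{*}(\overline{\xi}_{k}(n)|_{Z_g})$ and $R^{0}{f_{g}}_{*}(\wedge^{k}\overline{F}^{\vee}\otimes\overline{\eta}(n)|_{X_g})$ become locally free, so that both the $L^{2}$-construction of $\widetilde{{\rm ch}}_{g}(\overline{\xi.}(n),\overline{\eta}(n))$ in \S 3.1 and the push-forwards of Definition~\ref{407} apply directly to the twisted data. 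Using \cite[Exp.~VII, Lem.~2.4]{GBI} to identify ${\rm Tor}^{k}_{\mathcal{O}_{Z}}(i_{*}\overline{\eta}(n),\overline{\mathcal{O}}_{Z_{\mu_n}})$ isometrically with $\wedge^{k}\overline{F}^{\vee}\otimes\overline{\eta}(n)|_{X_g}$ (the metrics agreeing by the construction of \S 6.1), a direct expansion of Definition~\ref{407} rewrites the left-hand side of Theorem~\ref{603} for the twisted data as
\[
\widetilde{{\rm ch}}_{g}(\overline{\xi.}(n),\overline{\eta}(n))-\sum_{k}(-1)^{k}T_{g}(\omega^{X_g},h^{\wedge^{k}F^{\vee}\otimes\eta(n)|_{X_g}})+\sum_{k}(-1)^{k}T_{g}(\omega^{Z_g},h^{\xi_{k}(n)|_{Z_g}}),
\]
which by inspection of the definition of $\delta$ in \S 3.1 equals $\delta(i,\overline{\eta}(n),\overline{\Psi}(n))$ plus the three integrals on the right-hand side of the claim (for the twisted data). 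Theorem~\ref{s301} then yields Theorem~\ref{603} for $(\overline{\eta}(n),\overline{\Psi}(n))$.

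\textbf{Removing the twist.} To descend from the twisted to the untwisted identity, I track separately how each side transforms when $(\overline{\eta},\overline{\Psi})$ is replaced by $(\overline{\eta}(n),\overline{\Psi}(n))$. Because $\overline{\mathcal{L}}$ lives on $Z$ (not on $Y$), the projection formula (Lemma~\ref{409}) does not allow $\overline{\mathcal{L}}^{\otimes n}$ to be factored out of the push-forwards in a single step; instead, one argues that the difference between the twisted and untwisted identities is an equality of differential-form classes which itself collapses to a tautological Bott-Chern identity, by combining the anomaly formula (Theorem~\ref{208}) for how the analytic torsion forms change under the twist with the double complex formula (Theorem~\ref{214}) applied to the natural bicomplex built from $\overline{\Psi}\otimes\overline{\mathcal{L}}^{\otimes\bullet}$. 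Along the way one also reduces to a resolution whose metrics satisfy Bismut assumption~(A) by dominating $\overline{\Psi}$ with a third resolution, exactly as in the final paragraph of the proof of Theorem~\ref{s301}, absorbing the resulting correction into a Bott-Chern class via Theorem~\ref{214}.

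\textbf{Main obstacle.} The hardest technical step is the metric bookkeeping forced by the two distinct K\"ahler structures on $X(\C)$: the push-forward ${f_{\mu_n}^{Z}}_{*}$ is defined using the metric $\omega^{Z}_{X}$ induced from $\omega^{Z}$, whereas the metric on $Tf_g$ coming from the short exact sequence $0\to Tf\to Th|_{X}\to N_{Z/X}\to 0$ is the quotient metric from $\omega^{Z}$ as well but applied in a different order. Matching these structures is precisely what forces the appearance of the secondary Todd class $\widetilde{{\rm Td}}((Tf_{g},\omega^{Z}_{X}),\overline{Th_{g}}|_{X_{g}})$ in the third integral on the right-hand side; I expect that untangling this compatibility, and then threading it through the cascade of Bismut-Ma-type secondary characteristic class identities implicit in the construction of $\delta$, will constitute the bulk of the work.
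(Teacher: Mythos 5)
Your overall strategy — unfolding both push-forwards via Definition~\ref{407}, identifying the algebraic piece with $\widetilde{{\rm ch}}_g(\overline{\xi.},\overline{\eta})$, and recognizing the left-hand side as $\delta(i,\overline{\eta},\overline{\Psi})$ plus the three integrals, so that the claim reduces to the vanishing $\delta=0$ of Theorem~\ref{301} after a sufficiently high twist — is exactly the paper's strategy, and your expansion in the acyclic/twisted case is correct.

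The gap is in the descent from the twisted identity to the untwisted one. You propose to control the difference by ``the anomaly formula (Theorem~\ref{208}) for how the analytic torsion forms change under the twist'' together with the double complex formula (Theorem~\ref{214}) applied to a ``natural bicomplex built from $\overline{\Psi}\otimes\overline{\mathcal{L}}^{\otimes\bullet}$.'' Neither ingredient does what you need. The anomaly formula compares torsion forms of the \emph{same} bundle under a change of metric or of K\"ahler fibration; it says nothing about replacing $\xi_k$ by $\xi_k\otimes\mathcal{L}^{\otimes n}$, which is a change of bundle, not of metric. And there is no natural complex of sheaves whose terms are the successive twists $\overline{\Psi}\otimes\overline{\mathcal{L}}^{\otimes n}$: tensoring by increasing powers of a line bundle produces no differential to hand to Theorem~\ref{214}.

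What the paper uses instead — and what your argument is missing — is the factorization of $h$ through a relative projective bundle $p:\mathbb{P}(\mathcal{E}^\vee)\to Y$ with $\mathcal{E}$ locally free of rank $r+1$ on $Y$, and the restriction to $Z$ of the canonical Euler--Koszul resolution
\begin{displaymath}
0\to \mathcal{O}_P\to p^*(\mathcal{E}^\vee)(1)\to\cdots\to p^*(\wedge^{r+1}\mathcal{E}^\vee)(r+1)\to 0,
\end{displaymath}
which gives a \emph{bounded} exact sequence
\begin{displaymath}
0\to \Psi\to \Psi\otimes h^*(\mathcal{E}^\vee)(1)\to\cdots\to \Psi\otimes h^*(\wedge^{r+1}\mathcal{E}^\vee)(r+1)\to 0.
\end{displaymath}
This converts the untwisted data into an alternating sum of twisted data with $n=1,\ldots,r+1$ (for which $\delta$ vanishes by Theorem~\ref{301}), at the cost of a secondary class $\widetilde{{\rm ch}}_g(\overline{\mathcal{H}})$ which must then be shown to cancel against the corresponding corrections to the three integrals; that cancellation, plus Theorem~\ref{214} for the Bott--Chern currents and the definition of $\widetilde{{\rm Td}}$, constitutes the bulk of the paper's proof of Theorem~\ref{603}. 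Since $\mathcal{E}$ is pulled back from $Y$, it passes through the push-forwards harmlessly by the projection formula, and the twist $(j)$ is by $\mathcal{L}^{\otimes j}$ with $1\le j\le r+1$ — this is the bounded descent your proposal needs but does not supply.
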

\begin{proof}
Following the same arguments given in the proof of
Lemma~\ref{309}, we may show that the deformation to the normal
cone $W(i)$ admits an equivariant hermitian very ample invertible
sheaf $\overline{\mathcal{L}}$ which is relative to the morphism
$l: W(i)\to Y$. By Theorem~\ref{301} and the fact that
$\mathcal{L}$ is very ample, we conclude that there exists an
integer $k_0>0$ such that for $n\geq k_0$, $\mathcal{L}^{\otimes
n}$ is $l$-acyclic and $\delta(\overline{\Psi}(n)_\C)=0$. Then $l$
factors through an equivariant projective space bundle
$\mathbb{P}(\mathcal{E}^\vee)$ where $\mathcal{E}$ is locally free
of rank $r+1$ on $Y$ and $l_*\mathcal{L}^{\otimes k_0}$ is an
equivariant quotient of $\mathcal{E}$. Denote by $p:
\mathbb{P}(\mathcal{E}^\vee)\to Y$ the canonical projection. On
$P:=\mathbb{P}(\mathcal{E}^\vee)$, we have a canonical exact
sequence
\begin{displaymath}
\mathcal{H}:\quad 0\to \mathcal{O}_P\to
p^*(\mathcal{E}^\vee)(1)\to \cdots \to
p^*(\wedge^{r+1}\mathcal{E}^\vee)(r+1)\to 0. \end{displaymath}
Restricting this sequence to $Z$, we obtain an exact sequence of
exact sequences \begin{displaymath} 0\to \Psi\to \Psi\otimes
h^*(\mathcal{E}^\vee)(1)\to \cdots \to \Psi\otimes
h^*(\wedge^{r+1}\mathcal{E}^\vee)(r+1)\to 0. \end{displaymath}
Endow $\mathcal{E}$ with any $\mu_n(\C)$-invariant hermitian
metric. We claim that the assumption that Theorem~\ref{603} holds
for $\overline{\Psi}\otimes
h^*(\wedge^n\overline{\mathcal{E}}^\vee)(n)$ with $n\geq1$ implies
that it holds for $\overline{\Psi}$. In fact, since $\mathcal{H}$
is an exact sequence of flat modules, for any $k\geq0$ we have the
following exact sequence on $X_{\mu_n}$
\begin{align*}
0\to {\rm
Tor}^k_{\mathcal{O}_Z}(i_*\overline{\eta},\overline{\mathcal{O}}_{Z_{\mu_n}})\to&
{\rm
Tor}^k_{\mathcal{O}_Z}(i_*\overline{\eta},\overline{\mathcal{O}}_{Z_{\mu_n}})\otimes
f_{\mu_n}^*(\overline{\mathcal{E}}^\vee)(1)\to \cdots \\
\to& {\rm
Tor}^k_{\mathcal{O}_Z}(i_*\overline{\eta},\overline{\mathcal{O}}_{Z_{\mu_n}})\otimes
f_{\mu_n}^*(\wedge^{r+1}\overline{\mathcal{E}}^\vee)(r+1)\to 0.
\end{align*}
We compute
\begin{align*}
&{f^Z_{\mu_n}}_*({\rm
Tor}^k_{\mathcal{O}_Z}(i_*\overline{\eta},\overline{\mathcal{O}}_{Z_{\mu_n}}))\\
=&{f^Z_{\mu_n}}_*(-\sum_{j=1}^{r+1}(-1)^j{\rm
Tor}^k_{\mathcal{O}_Z}(i_*\overline{\eta},\overline{\mathcal{O}}_{Z_{\mu_n}})\otimes
f_{\mu_n}^*(\wedge^j\overline{\mathcal{E}}^\vee)(j))\\
&+\int_{X_g/Y}{\rm Td}(Tf_g,\omega^Z_X){\rm
ch}_g(\wedge^k\overline{F}^\vee){\rm
ch}_g(\overline{\eta})(-1)^{r+1}\widetilde{{\rm
ch}}_g(\overline{\mathcal{H}})
\end{align*} and
\begin{align*}
&\sum_{k=0}^m(-1)^k{h_{\mu_n}}_*(\overline{\xi}_k\mid_{Z_{\mu_n}})\\
=&\sum_{k=0}^m(-1)^k{h_{\mu_n}}_*(-\sum_{j=1}^{r+1}(-1)^j\overline{\xi}_k\mid_{Z_{\mu_n}}\otimes
h_{\mu_n}^*(\wedge^j\overline{\mathcal{E}}^\vee)(j))\\
&+\sum_{k=0}^m(-1)^k\int_{Z_g/Y}{\rm Td}(\overline{Th_g}){\rm
ch}_g(\overline{\xi}_k)(-1)^{r+1}\widetilde{{\rm
ch}}_g(\overline{\mathcal{H}}).
\end{align*}
Moreover, we have
\begin{align*}
&\int_{X_g/Y}{\rm Td}(Tf_g){\rm ch}_g({\rm
Tor}^k_{\mathcal{O}_Z}(i_*\eta,\mathcal{O}_{Z_{\mu_n}}))R(N_g)\\
=&\int_{X_g/Y}-\sum_{j=1}^{r+1}(-1)^j {\rm ch}_g({\rm
Tor}^k_{\mathcal{O}_Z}(i_*\eta,\mathcal{O}_{Z_{\mu_n}})\otimes
f_{\mu_n}^*(\wedge^j\mathcal{E}^\vee)(j))R(N_g){\rm Td}(Tf_g)
\end{align*}
and
\begin{align*}
&\int_{Z_g/Y}T_g(\overline{\xi.}){\rm
Td}(\overline{Th_g})\\
=&\int_{Z_g/Y}{\rm
Td}(\overline{Th_g})\{\delta_{X_g}{\rm
Td}_g^{-1}(\overline{N}){\rm
ch}_g(\overline{\eta})(-1)^{r+1}\widetilde{{\rm
ch}}_g(\overline{\mathcal{H}})\\
&-\sum_{k=0}^m(-1)^k{\rm
ch}_g(\overline{\xi}_k)(-1)^{r+1}\widetilde{{\rm
ch}}_g(\overline{\mathcal{H}})-\sum_{j=1}^{r+1}(-1)^jT_g(\overline{\xi.}){\rm
ch}_g(h_{\mu_n}^*(\wedge^j\overline{\mathcal{E}}^\vee)(j))\}
\end{align*}
by the double complex formula of equivariant Bott-Chern singular
currents. At last, we also have
\begin{align*}
&\int_{X_g/Y}{\rm ch}_g(\overline{\eta}){\rm
Td}_g^{-1}(\overline{N})\widetilde{{\rm
Td}}((Tf_g,\omega^Z_X),\overline{Th_g}\mid_{X_g})\\
=&\int_{X_g/Y}\{{\rm dd}^c(-1)^{r+1}\widetilde{{\rm
ch}}_g(\overline{\mathcal{H}}){\rm
ch}_g(\overline{\eta})-\sum_{j=1}^{r+1}(-1)^j{\rm
ch}_g(\overline{\eta}\otimes
f^*(\wedge^j\overline{\mathcal{E}}^\vee)(j))\}\\
&\qquad\qquad\qquad\qquad\qquad\qquad\qquad\quad\cdot{\rm
Td}_g^{-1}(\overline{N})\widetilde{{\rm
Td}}((Tf_g,\omega^Z_X),\overline{Th_g}\mid_{X_g})\\
=&-\int_{X_g/Y}(-1)^{r+1}\widetilde{{\rm
ch}}_g(\overline{\mathcal{H}}){\rm ch}_g(\overline{\eta})\cdot\{{\rm
Td}_g^{-1}(\overline{N}){\rm Td}(\overline{Th_g})\\
&\qquad\qquad\qquad\qquad\qquad\qquad\qquad\quad-{\rm
Td}(Tf_g,\omega^Z_X){\rm
Td}_g^{-1}(\overline{F})\}\\
&-\int_{X_g/Y}\sum_{j=1}^{r+1}(-1)^j{\rm
ch}_g(\overline{\eta}\otimes
f^*(\wedge^j\overline{\mathcal{E}}^\vee)(j)){\rm
Td}_g^{-1}(\overline{N})\widetilde{{\rm
Td}}((Tf_g,\omega^Z_X),\overline{Th_g}\mid_{X_g}).
\end{align*}
Gathering all these computations above and using our assumption,
we get
\begin{align*}
&{f^Z_{\mu_n}}_*(\sum(-1)^k{\rm
Tor}^k_{\mathcal{O}_Z}(i_*\overline{\eta},\overline{\mathcal{O}}_{Z_{\mu_n}}))-{h_{\mu_n}}_*(\sum(-1)^k\overline{\xi}_k\mid_{Z_{\mu_n}})\\
&-\int_{Z_g/Y}T_g(\overline{\xi.}){\rm
Td}(\overline{Th_g})-\int_{X_g/Y}{\rm Td}(Tf_g){\rm Td}_g^{-1}(F){\rm
ch}_g(\eta)R(N_g)\\
&\qquad\qquad\qquad\quad-\int_{X_g/Y}{\rm ch}_g(\overline{\eta}){\rm
Td}_g^{-1}(\overline{N})\widetilde{{\rm
Td}}((Tf_g,\omega^Z_X),\overline{Th_g}\mid_{X_g})\\
=&\int_{X_g/Y}{\rm Td}(Tf_g,\omega^Z_X){\rm
Td}_g^{-1}(\overline{F}){\rm
ch}_g(\overline{\eta})(-1)^{r+1}\widetilde{{\rm
ch}}_g(\overline{\mathcal{H}})\\
&-\sum_{k=0}^m(-1)^k\int_{Z_g/Y}{\rm Td}(\overline{Th_g}){\rm
ch}_g(\overline{\xi}_k)(-1)^{r+1}\widetilde{{\rm
ch}}_g(\overline{\mathcal{H}})\\
&-\int_{X_g/Y}{\rm Td}(\overline{Th_g}){\rm
Td}_g^{-1}(\overline{N}){\rm
ch}_g(\overline{\eta})(-1)^{r+1}\widetilde{{\rm
ch}}_g(\overline{\mathcal{H}})\\
&+\sum_{k=0}^m(-1)^k\int_{Z_g/Y}{\rm Td}(\overline{Th_g}){\rm
ch}_g(\overline{\xi}_k)(-1)^{r+1}\widetilde{{\rm
ch}}_g(\overline{\mathcal{H}})\\
&+\int_{X_g/Y}(-1)^{r+1}\widetilde{{\rm
ch}}_g(\overline{\mathcal{H}}){\rm ch}_g(\overline{\eta})\{{\rm
Td}_g^{-1}(\overline{N}){\rm Td}(\overline{Th_g})\\
&\qquad\qquad\qquad\qquad\qquad\qquad\qquad\qquad\quad-{\rm
Td}(Tf_g,\omega^Z_X){\rm Td}_g^{-1}(\overline{F})\}
\end{align*}
which vanishes. This ends the proof of our claim.

By the construction of the projective space bundle $P$, we have
already known that $\delta(\overline{\Psi}(n)_\C)$ vanishes from
$n=1$ to $n=r+1$. Moreover, according to the projection formula of
higher direct images, the operation of tensoring with the element
$l^*(\wedge^n\overline{\mathcal{E}}^\vee)$ doesn't change the
property of $l$-acyclicity. Hence we also have
$\delta(\overline{\Psi}\otimes
h^*(\wedge^n\overline{\mathcal{E}}^\vee)(n)_\C)=0$. By the
generating relations and the definition of push-forward morphisms
of arithmetic $G_0$-groups, this is equivalent to say that
Theorem~\ref{603} holds for $\overline{\Psi}\otimes
h^*(\wedge^n\overline{\mathcal{E}}^\vee)(n)$. Therefore the
equality in the statement of this theorem follows from our claim
before.
\end{proof}

\begin{cor}\label{604}
Let notations and assumptions be as in Theorem~\ref{603}, and let
$x$ be any element in $\widehat{K_0}(Z,\mu_n,\emptyset)_{\rho}$.
Then the formula
\begin{align*}
&{f_{\mu_n}^Z}_*(i^*x\mid_{X_{\mu_n}}\cdot\sum(-1)^k{\rm
Tor}^k_{\mathcal{O}_Z}(i_*\overline{\eta},\overline{\mathcal{O}}_{Z_{\mu_n}}))\\
&\qquad\qquad\qquad\qquad\qquad-{h_{\mu_n}}_*(x\mid_{Z_{\mu_n}}\cdot\sum(-1)^k(\overline{\xi}_k\mid_{Z_{\mu_n}}))\\
=&\int_{Z_g/Y}T_g(\overline{\xi.}){\rm Td}(\overline{Th_g}){\rm
ch}_g(x)\\
&+\int_{X_g/Y}{\rm Td}(Tf_g){\rm Td}_g^{-1}(F){\rm
ch}_g(\eta)R(N_g){\rm ch}_g(i^*x)\\
&+\int_{X_g/Y}{\rm ch}_g(\overline{\eta}){\rm
Td}_g^{-1}(\overline{N}){\rm ch}_g(i^*x)\widetilde{{\rm
Td}}((Tf_g,\omega^Z_X),\overline{Th_g}\mid_{X_g})\\
\end{align*}
holds in $\widehat{G_0}(Y,\mu_n,\emptyset)_{\rho}$.
\end{cor}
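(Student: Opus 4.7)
The plan is to reduce Corollary~\ref{604} to Theorem~\ref{603} by varying the input sheaf. Since both sides of the claimed identity are additive in $x$ and since after localization $\widehat{K_0}(Z,\mu_n,\emptyset)_\rho$ is generated by classes of equivariant hermitian vector bundles $\overline{V}$ on $Z$ and by classes coming from $\widetilde{\mathcal{U}}(Z_{\mu_n},\emptyset)_\rho$, it suffices to check the identity when $x$ is of one of these two types.

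For $x=\overline{V}$ an equivariant hermitian vector bundle on $Z$, I would apply Theorem~\ref{603} to the equivariant hermitian sheaf $\overline{\eta}\otimes i^*\overline{V}$ on $X$ together with the resolution $\overline{\Psi}\otimes \overline{V}$ of $i_*(\overline{\eta}\otimes i^*\overline{V})$ on $Z$; the tensor metrics still satisfy Bismut's assumption~(A) because $\overline{V}$ is locally free. The projection formula provides the equivariant identification $i_*(\overline{\eta}\otimes i^*\overline{V})\cong i_*\overline{\eta}\otimes \overline{V}$, and local freeness of $\overline{V}$ yields canonical isometric identifications
\begin{displaymath}
{\rm Tor}^k_{\mathcal{O}_Z}\!\bigl(i_*\overline{\eta}\otimes\overline{V},\overline{\mathcal{O}}_{Z_{\mu_n}}\bigr)\;\cong\;{\rm Tor}^k_{\mathcal{O}_Z}\!\bigl(i_*\overline{\eta},\overline{\mathcal{O}}_{Z_{\mu_n}}\bigr)\otimes i^*\overline{V}\mid_{X_{\mu_n}}
\end{displaymath}
on $X_{\mu_n}$, where the right--hand factor is legitimate because the Tor sheaves are supported on $X_{\mu_n}$. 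Combining this with the standard multiplicativity $T_g(\overline{\xi.}\otimes\overline{V})={\rm ch}_g(\overline{V})\cdot T_g(\overline{\xi.})$ of equivariant Bott--Chern singular currents (a consequence of \cite[Lemma 3.15]{KR1}), the multiplicativity of ${\rm ch}_g$, and the obvious compatibility ${\rm ch}_g(\overline{V})\mid_{X_g}={\rm ch}_g(i^*\overline{V})$, Theorem~\ref{603} applied to $\overline{\eta}\otimes i^*\overline{V}$ transforms term by term into the identity claimed in Corollary~\ref{604} for $x=\overline{V}$.

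For $x=\alpha$ represented by an element of $\widetilde{\mathcal{U}}(Z_{\mu_n},\emptyset)_\rho$, the module structure on the arithmetic Grothendieck groups recalled in Section~4, together with Definition~\ref{407} of the push--forwards ${f_{\mu_n}^Z}_*$ and ${h_{\mu_n}}_*$, rewrites the left--hand side as the difference of two honest integrals of smooth forms over $X_g/Y$ and $Z_g/Y$ respectively. The differential equation for $T_g(\overline{\xi.})$ from Theorem~\ref{213}, integration by parts modulo $\mathrm{Im}\,\partial+\mathrm{Im}\,\overline{\partial}$, and the Atiyah--Segal--Singer type identity ${i_g}_*({\rm Td}_g^{-1}(\overline{N}){\rm ch}_g(\cdot))={\rm ch}_g(i_*(\cdot))$ of \cite[Theorem 6.16]{KR1} then collapse this difference to the right--hand side of Corollary~\ref{604} evaluated at $x=\alpha$, where ${\rm ch}_g(x)$ is interpreted as ${\rm dd}^c\alpha$.

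The main obstacle I anticipate is the bookkeeping in the vector bundle case: verifying that the canonical isomorphism of ${\rm Tor}^k_{\mathcal{O}_Z}(i_*(\overline{\eta}\otimes i^*\overline{V}),\overline{\mathcal{O}}_{Z_{\mu_n}})$ with $\wedge^k\overline{F}^\vee\otimes(\overline{\eta}\otimes i^*\overline{V})\mid_{X_g}$ really is an isometry for the tensor metrics, and carefully matching which of the three integrals in Theorem~\ref{603} picks up a factor ${\rm ch}_g(x)$ (the one on $Z_g/Y$) versus ${\rm ch}_g(i^*x)$ (the two on $X_g/Y$). Once this tracking is settled, the remainder of the argument follows mechanically from the vanishing theorem of Section~3 encoded in Theorem~\ref{603} and from the formal properties of equivariant characteristic classes and torsion currents collected in Section~2.
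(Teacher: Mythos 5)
Your proposal matches the paper's proof in all essentials: both arguments reduce to the two generator types, dispatch the vector-bundle case by applying Theorem~\ref{603} to $\overline{\eta}\otimes i^*\overline{V}$ with the tensored resolution $\overline{\xi.}\otimes\overline{V}$ together with multiplicativity of $T_g$ and ${\rm ch}_g$, and handle the form-class case by direct integration using the differential equation for $T_g(\overline{\xi.})$ and the Atiyah--Segal--Singer identity, concluding by additivity. The technical checkpoints you flag (that the tensor metrics still satisfy assumption~(A), and the ${\rm ch}_g(x)$ versus ${\rm ch}_g(i^*x)$ bookkeeping on $Z_g/Y$ versus $X_g/Y$) are indeed the places the paper glosses over, and your resolution of them is correct.
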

\begin{proof}
If $x=\overline{E}$ is an equivariant hermitian vector bundle on
$Z$, then $\overline{\xi.}\otimes \overline{E}$ provides a
resolution of $i_*(\overline{\eta}\otimes i^*\overline{E})$. Hence
the formula follows from Theorem~\ref{603} in this case. If
$x=\alpha$ is represented by some smooth form, then
\begin{align*}
&{f_{\mu_n}^Z}_*(i^*x\mid_{X_{\mu_n}}\cdot\sum(-1)^k{\rm
Tor}^k_{\mathcal{O}_Z}(i_*\overline{\eta},\overline{\mathcal{O}}_{Z_{\mu_n}}))\\
=&\int_{Z_g/Y}{\rm Td}(Tf_g,\omega^Z_X){\rm
Td}_g^{-1}(\overline{F}){\rm
ch}_g(\overline{\eta})\delta_{X_g}\alpha
\end{align*}
and
\begin{displaymath}
{h_{\mu_n}}_*(x\mid_{Z_{\mu_n}}\cdot\sum(-1)^k(\overline{\xi}_k\mid_{Z_{\mu_n}}))=\int_{Z_g/Y}{\rm
Td}(\overline{Th_g})\alpha\sum(-1)^k{\rm ch}_g(\overline{\xi}_k).
\end{displaymath} Moreover, by the definition of ${\rm ch}_g(x)$ we have
\begin{align*}
\int_{Z_g/Y}T_g(\overline{\xi.}){\rm Td}(\overline{Th_g}){\rm
ch}_g(x)=&\int_{Z_g/Y}{\rm ch}_g(\overline{\eta}){\rm
Td}_g^{-1}(\overline{N})\delta_{X_g}{\rm
Td}(\overline{Th_g})\alpha\\
&-\int_{Z_g/Y}\sum(-1)^k{\rm ch}_g(\overline{\xi}_k){\rm
Td}(\overline{Th_g})\alpha
\end{align*} and
\begin{displaymath}
\int_{X_g/Y}{\rm Td}(Tf_g){\rm Td}_g^{-1}(F){\rm
ch}_g(\eta)R(N_g){\rm ch}_g(i^*x)=0. \end{displaymath} Finally,
using the definition of $\widetilde{{\rm Td}}$ we compute
\begin{align*}
&\int_{X_g/Y}{\rm ch}_g(\overline{\eta}){\rm
Td}_g^{-1}(\overline{N}){\rm ch}_g(i^*x)\widetilde{{\rm
Td}}((Tf_g,\omega^Z_X),\overline{Th_g}\mid_{X_g})\\
=&\int_{Z_g/Y}{\rm Td}(Tf_g,\omega^Z_X){\rm
Td}_g^{-1}(\overline{F}){\rm
ch}_g(\overline{\eta})\delta_{X_g}\alpha\\
&\qquad\qquad\qquad\qquad\qquad-\int_{Z_g/Y}{\rm
ch}_g(\overline{\eta}){\rm
Td}_g^{-1}(\overline{N})\delta_{X_g}{\rm
Td}(\overline{Th_g})\alpha.
\end{align*} Gathering all
computations above, we know that the formula still holds for $x$
which is represented by smooth form. Since both two sides are
additive, we are done.
\end{proof}

\begin{cor}\label{605}
Let notations and assumptions be as in Theorem~\ref{603}, and let
$y$ be any element in
$\widehat{K_0}(Z_{\mu_n},\mu_n,\emptyset)_{\rho}$. Then the
formula
\begin{align*}
&{f_{\mu_n}^Z}_*(i_{\mu_n}^*y\cdot\sum(-1)^k{\rm
Tor}^k_{\mathcal{O}_Z}(i_*\overline{\eta},\overline{\mathcal{O}}_{Z_{\mu_n}}))-{h_{\mu_n}}_*(y\cdot\sum(-1)^k(\overline{\xi}_k\mid_{Z_{\mu_n}}))\\
=&\int_{Z_g/Y}T_g(\overline{\xi.}){\rm Td}(\overline{Th_g}){\rm
ch}_g(y)\\
&+\int_{X_g/Y}{\rm Td}(Tf_g){\rm Td}_g^{-1}(F){\rm
ch}_g(\eta)R(N_g){\rm ch}_g(i_{\mu_n}^*y)\\
&+\int_{X_g/Y}{\rm ch}_g(\overline{\eta}){\rm
Td}_g^{-1}(\overline{N}){\rm ch}_g(i_{\mu_n}^*y)\widetilde{{\rm
Td}}((Tf_g,\omega^Z_X),\overline{Th_g}\mid_{X_g})
\end{align*}
holds in $\widehat{G_0}(Y,\mu_n,\emptyset)_{\rho}$.
\end{cor}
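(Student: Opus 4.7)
The plan is to derive Corollary~\ref{605} directly from Corollary~\ref{604} by invoking the arithmetic concentration theorem (Theorem~\ref{503}) to lift classes from $Z_{\mu_n}$ to $Z$. The key observation is that the fixed-point embeddings fit in a commutative square
\[
\begin{CD}
X_{\mu_n} @>i_{\mu_n}>> Z_{\mu_n} \\
@Vi_XVV @VVi_ZV \\
X @>i>> Z,
\end{CD}
\]
which gives the identity $i_{\mu_n}^* \circ i_Z^* = i_X^* \circ i^*$ on arithmetic $K_0$-groups. Because $Z$ is regular, Remark~\ref{404}(ii) identifies $\widehat{K_0}(Z,\mu_n,\emptyset)_\rho$ with $\widehat{G_0}(Z,\mu_n,\emptyset)_\rho$, and Theorem~\ref{503} applied to the fixed-point immersion $i_Z\colon Z_{\mu_n}\hookrightarrow Z$ asserts that $i_{Z*}$ is an isomorphism with inverse $\lambda_{-1}^{-1}(\overline{N}^\vee_{Z/{Z_{\mu_n}}})\cdot i_Z^*$. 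Writing $\lambda:=\lambda_{-1}(\overline{N}^\vee_{Z/{Z_{\mu_n}}})$, which is invertible in $\widehat{K_0}(Z_{\mu_n},\mu_n,\emptyset)_\rho$ by Lemma~\ref{411}, the inverse formula forces $i_Z^* \circ i_{Z*}(z)=\lambda\cdot z$ for every $z$. Consequently, for an arbitrary $y\in \widehat{K_0}(Z_{\mu_n},\mu_n,\emptyset)_\rho$, the class $x:=i_{Z*}(\lambda^{-1}\cdot y)\in \widehat{K_0}(Z,\mu_n,\emptyset)_\rho$ satisfies $i_Z^* x = y$.

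Next, I would apply Corollary~\ref{604} to this class $x$. Using $i_Z^* x = y$, the two terms on the left-hand side of Corollary~\ref{604} become $i^*x|_{X_{\mu_n}} = i_X^*i^*x = i_{\mu_n}^* i_Z^* x = i_{\mu_n}^* y$ and $x|_{Z_{\mu_n}} = i_Z^* x = y$, so the left-hand side of Corollary~\ref{604} for $x$ reads identically as the left-hand side of Corollary~\ref{605} for $y$. For the right-hand side, all the geometric and analytic data $T_g(\overline{\xi.})$, $\overline{Th_g}$, $R(N_g)$, $\widetilde{\mathrm{Td}}((Tf_g,\omega^Z_X),\overline{Th_g}|_{X_g})$, etc.\ depend only on the fixed scheme-theoretic data of $i,f,h,\overline{\eta}$ and not on the auxiliary lift $x$; the only $x$-dependent factors are ${\rm ch}_g(x)$ on $Z_g$ and ${\rm ch}_g(i^*x)$ on $X_g$. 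Under the GAGA identifications $Z_g=Z_{\mu_n}(\mathbb C)$ and $X_g=X_{\mu_n}(\mathbb C)$, these equivariant Chern character forms coincide with ${\rm ch}_g(i_Z^*x)={\rm ch}_g(y)$ and ${\rm ch}_g(i_X^*i^*x)={\rm ch}_g(i_{\mu_n}^*y)$, respectively. Hence the right-hand side of Corollary~\ref{604} for $x$ matches term-by-term with the right-hand side of Corollary~\ref{605} for $y$, and Corollary~\ref{605} is proved.

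The argument is essentially a formal reduction, so the only step requiring real care — and the one I regard as the main (albeit minor) obstacle — is checking that the lift $x=i_{Z*}(\lambda^{-1}\cdot y)$ behaves well under the various products and push-forwards appearing in the two statements. Concretely, one must verify that $i_{Z*}$, defined at the level of $\widehat{K_0}$, commutes with the projection formulas along $i$ and $h$ after localization; this is exactly what Lemma~\ref{506} (projection formula for immersions) and Lemma~\ref{409} (projection formula for smooth push-forwards) provide. Once these compatibilities are recorded, Corollary~\ref{605} follows with no further computation.
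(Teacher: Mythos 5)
Your argument is correct and coincides with the paper's own proof: both construct the lift $x=i_{Z*}(\lambda_{-1}^{-1}(\overline{N}^\vee_{Z/Z_{\mu_n}})\cdot y)\in\widehat{K_0}(Z,\mu_n,\emptyset)_\rho$, verify $i_Z^*x=y$ via the arithmetic concentration theorem, and then apply Corollary~\ref{604}. Your closing remark about needing Lemmas~\ref{506} and~\ref{409} is unnecessary here — once $i_Z^*x=y$ is established, Corollary~\ref{604} applies directly and no further compatibility checks are required — but this does not affect the validity of the argument.
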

\begin{proof}
Provided Corollary~\ref{604}, it is enough to prove that for any
$y\in \widehat{K_0}(Z_{\mu_n},\mu_n,\emptyset)_{\rho}$ there
exists an element $x\in \widehat{K_0}(Z,\mu_n,\emptyset)_{\rho}$
such that $i_Z^*x=y$. Here $i_Z$ stands for the inclusion
$Z_{\mu_n}\hookrightarrow Z$. Actually, set
$x={i_Z}_*(\lambda_{-1}^{-1}(\overline{N}^\vee_{Z/{Z_{\mu_n}}})\cdot
y)$, we have
\begin{displaymath}
i_Z^*x=i_Z^*{i_Z}_*(\lambda_{-1}^{-1}(\overline{N}^\vee_{Z/{Z_{\mu_n}}})\cdot
y)=\lambda_{-1}(\overline{N}^\vee_{Z/{Z_{\mu_n}}})\cdot\lambda_{-1}^{-1}(\overline{N}^\vee_{Z/{Z_{\mu_n}}})\cdot
y=y. \end{displaymath} This follows from our arithmetic
concentration theorem.
\end{proof}

\subsection{Proof of the fixed point formula}
In this subsection, we provide a complete proof of
Theorem~\ref{601} the singular Lefschetz fixed point formula.
Before that, we need to translate Bismut-Ma's immersion formula to
an arithmetic $G_0$-theoretic version. That's the following.

\begin{thm}\label{606}
Let notations and assumptions be as in Section 6.1. Assume that
$\overline{\eta}$ is an equivariant hermitian sheaf on $X$ and
$\overline{\xi.}$ is a bounded complex of equivariant hermitian
vector bundles providing a resolution of $i_*\overline{\eta}$ on
$Z$ whose metrics satisfy Bismut assumption (A). Then the equality
\begin{align*}
f^Z_*(\overline{\eta})-\sum_{j=0}^m(-1)^jh_*(\overline{\xi}_j)=&\int_{X_g/Y}{\rm
ch}_g(\eta)R_g(N){\rm
Td}_g(Tf)\\
&+\int_{Z_g/Y}T_g(\overline{\xi}.){\rm
Td}_g(\overline{Th})\\
&+\int_{X_g/Y}{\rm ch}_g(\overline{\eta})\widetilde{{\rm
Td}}_g((Tf,\omega^Z_X),\overline{Th}\mid_X){\rm
Td}_g^{-1}(\overline{N})
\end{align*}
holds in $\widehat{G_0}(Y,\mu_n,\emptyset)$.
\end{thm}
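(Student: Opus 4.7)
The plan is to deduce Theorem~\ref{606} directly from Bismut--Ma's immersion formula (Theorem~\ref{216}), by translating that analytic identity into the arithmetic $\widehat{G_0}$-theoretic language.

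\textbf{Step 1: Reduction to the acyclic case.} Exactly as in the proof of Theorem~\ref{603}, I would first tensor $\overline{\Psi}$ with a sufficiently high power of an equivariant hermitian $h$-very ample invertible sheaf $\overline{\mathcal{L}}$ on $Z$. For large enough twisting both $\eta\otimes i^*\overline{\mathcal{L}}^{\otimes n}$ is $f$-acyclic and each $\xi_j\otimes\overline{\mathcal{L}}^{\otimes n}$ is $h$-acyclic, and the Koszul-type exact sequence $\mathcal{H}$ used in the proof of Theorem~\ref{603} reduces the statement for $\overline{\Psi}$ to the statement for its twists. So it suffices to prove the theorem in the acyclic situation.

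\textbf{Step 2: Unpacking the push-forwards.} Under acyclicity, Definition~\ref{407} gives
\begin{displaymath}
f^Z_*(\overline{\eta})=(f_*\eta,f_*h^\eta)-T_g(\omega^Z_X,h^\eta),\qquad h_*(\overline{\xi}_j)=(h_*\xi_j,h_*h^{\xi_j})-T_g(\omega^Z,h^{\xi_j}),
\end{displaymath}
where the direct images carry their $L^2$-metrics. Pushing the resolution forward by $h$ yields an exact sequence of equivariant hermitian sheaves on $Y$,
\begin{displaymath}
\Xi:\quad 0\to h_*\xi_m\to\cdots\to h_*\xi_0\to f_*\eta\to 0.
\end{displaymath}
Splitting $\Xi$ iteratively into short exact sequences with $L^2$-metrics on the intermediate kernels/images and applying relation (i) of Definition~\ref{403} yields in $\widehat{G_0}(Y,\mu_n,\emptyset)$ the identity
\begin{displaymath}
(f_*\eta,h^{L^2})-\sum_j(-1)^j(h_*\xi_j,h^{L^2})=\widetilde{{\rm ch}}_g(\Xi,h^{L^2}),
\end{displaymath}
the right-hand side being precisely the iterated secondary Bott--Chern class appearing in Theorem~\ref{216}. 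Combining everything, the left-hand side of Theorem~\ref{606} becomes
\begin{displaymath}
\widetilde{{\rm ch}}_g(\Xi,h^{L^2})+\sum_j(-1)^j T_g(\omega^Z,h^{\xi_j})-T_g(\omega^Z_X,h^\eta),
\end{displaymath}
which is literally the left-hand side of the immersion formula of Theorem~\ref{216} applied to our geometric data, noting that in the Bismut--Ma set-up the K\"{a}hler metric on the source of the immersion is induced from the one on the target, i.e.\ it is $\omega^Z_X$ in our notation.

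\textbf{Step 3: Applying Bismut--Ma and collapsing the R-genus terms.} The immersion formula now rewrites the above expression as
\begin{align*}
&\int_{Z_g/Y}{\rm Td}_g(\overline{Th})T_g(\overline{\xi}.)+\int_{X_g/Y}\frac{\widetilde{{\rm Td}}_g(\overline{Tf},\overline{Th}|_X)}{{\rm Td}_g(\overline{N})}{\rm ch}_g(\overline{\eta})\\
&\quad+\int_{Z_g/Y}{\rm Td}_g(Th)R_g(Th)\sum_i(-1)^i{\rm ch}_g(\xi_i)-\int_{X_g/Y}{\rm Td}_g(Tf)R_g(Tf){\rm ch}_g(\eta),
\end{align*}
with $Tf$ carrying its $\omega^Z_X$-induced metric. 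The first two integrals are exactly the second and third terms of the right-hand side of Theorem~\ref{606}. To process the two R-genus integrals, I would use Theorem~\ref{213} to express, modulo ${\rm Im}\,\partial+{\rm Im}\,\overline{\partial}$, $\sum_k(-1)^k{\rm ch}_g(\overline{\xi}_k)|_{Z_g}={i_g}_*({\rm ch}_g(\overline{\eta}){\rm Td}_g^{-1}(\overline{N}))$; the projection formula then converts the $Z_g/Y$ integral to an $X_g/Y$ integral of ${\rm Td}_g(Th|_X)R_g(Th|_X){\rm ch}_g(\eta){\rm Td}_g^{-1}(N)$. Using the normal exact sequence $0\to Tf\to Th|_X\to N\to 0$ together with the multiplicativity of ${\rm Td}_g$ and the additivity of $R_g$ modulo exact forms, the $R_g(Tf)$-contributions cancel and what remains is exactly $\int_{X_g/Y}{\rm ch}_g(\eta)R_g(N){\rm Td}_g(Tf)$, the first term of the right-hand side of Theorem~\ref{606}.

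\textbf{Main obstacle.} The delicate point is Step 2: one must verify carefully that the iterated secondary class produced in $\widehat{G_0}$ by splitting $\Xi$ into short exact sequences and invoking relation (i) of Definition~\ref{403} really coincides (up to the correct sign convention) with the object $\widetilde{{\rm ch}}_g(\Xi,h^{L^2})$ appearing in Bismut--Ma's formula, and one must keep careful track throughout of which K\"{a}hler form ($\omega^X$ versus $\omega^Z_X$) induces the metric on $Tf$ in every occurrence of a relative Todd or secondary Todd form---this is the reason why the correction term $\widetilde{{\rm Td}}(Tf_g,\omega^X,\omega^Z_X)$ enters Theorem~\ref{601} but not the auxiliary Theorem~\ref{606}. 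Once these bookkeeping issues are settled, the remaining manipulation is routine modulo ${\rm Im}\,\partial+{\rm Im}\,\overline{\partial}$.
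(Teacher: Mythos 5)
Your proposal is correct and follows essentially the same route as the paper's own proof: reduce to the $l$-acyclic case by twisting (as in Theorem~\ref{603}), unwind $f^Z_*$ and $h_*$ via Definition~\ref{407} so that the left side becomes $\widetilde{{\rm ch}}_g(h_*\overline{\Xi})-T_g(\omega^Z_X,h^\eta)+\sum(-1)^jT_g(\omega^Z,h^{\xi_j})$, recognize this as the left side of Bismut--Ma's immersion formula (Theorem~\ref{216}), and then collapse the two $R$-genus integrals with the Atiyah--Segal--Singer type identity ${i_g}_*({\rm Td}_g^{-1}(N){\rm ch}_g(x))={\rm ch}_g(i_*x)$ together with multiplicativity of ${\rm Td}_g$ and additivity of $R_g$ along $0\to Tf\to Th|_X\to N\to 0$. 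The ``main obstacle'' you flag---matching the iterated secondary class produced by relation (i) of Definition~\ref{403} with $\widetilde{{\rm ch}}_g(\Xi,h^{L^2})$---is in fact just the definition of the secondary class of a long exact sequence with fixed metrics on the intermediate images/kernels, which the paper uses silently, so nothing is actually missing.
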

\begin{proof}
We first suppose that $\eta$ and $\xi.$ are all acyclic, then the
verification follows rather directly from the generating relations
of arithmetic $G_0$-theory. In fact
\begin{align*}
f^Z_*(\overline{\eta})-\sum_{j=0}^m(-1)^jh_*(\overline{\xi}_j)=&
\overline{f_*\eta}-T_g(\omega^Z_X,h^{\eta})-(\sum_{j=0}^m(-1)^j(\overline{h_*{\xi_j}}-T_g(\omega^Z,h^{\xi_j})))\\
=&\widetilde{{\rm
ch}}_g(h_*\overline{\Xi})-T_g(\omega^Z_X,h^{\eta})+\sum_{j=0}^m(-1)^jT_g(\omega^Z,h^{\xi_j}).
\end{align*}
And the right-hand side of the last equality is exactly the
left-hand side of Bismut-Ma's immersion formula. We emphasize
again that to simplify the right-hand side of Bismut-Ma's
immersion formula, we have used an Atiyah-Segal-Singer type
formula for immersion \begin{displaymath} {i_g}_*({\rm
Td}_g^{-1}(N){\rm ch}_g(x))={\rm ch}_g(i_*(x)). \end{displaymath}
To remove the condition of acyclicity, one can use the argument
which is essentially the same as in the proof of
Theorem~\ref{603}. Since it doesn't use any new techniques, we
omit it here.
\end{proof}

\begin{defn}\label{607}
The inclusion $i: X\hookrightarrow Z$ induces an embedding
morphism \begin{displaymath} i_*:\quad
\widehat{G_0}(X,\mu_n,\emptyset)\rightarrow
\widehat{K_0}(Z,\mu_n,N^\vee_{g,\R}\setminus\{0\})
\end{displaymath} which is defined as follows.

(i). For every $\mu_n$-equivariant hermitian sheaf
$\overline{\eta}$ on $X$, suppose that $\overline{\xi.}$ is a
resolution of $i_*\overline{\eta}$ on $Z$ whose metrics satisfy
Bismut assumption (A),
$i_*[\overline{\eta}]=\sum_k(-1)^k[\overline{\xi}_k]+T_g(\overline{\xi.})$.

(ii). For every $\alpha\in
\widetilde{\mathcal{U}}(X_{\mu_n},\emptyset)$,
$i_*\alpha=\alpha{\rm Td}_g^{-1}(\overline{N})\delta_{X_g}$.
\end{defn}

\begin{rem}\label{608}
Similar to Theorem~\ref{502} and Lemma~\ref{506}, one can prove
that the embedding morphism is a well-defined homomorphism of
$R(\mu_n)$-modules.
\end{rem}

\begin{proof}(of Theorem~\ref{601})
We first prove that this fixed point formula holds when $\omega^X$
is equal to $\omega^Z_X$, namely the K\"{a}hler metric on $X(\C)$
is induced by the K\"{a}hler metric on $Z(\C)$. By
Theorem~\ref{606} and Definition~\ref{607}, we have the following
equality
\begin{align*}
f^Z_*(\overline{\eta})=&h_*i_*(\overline{\eta})+\int_{X_g/Y}{\rm
ch}_g(\eta)R_g(N){\rm Td}_g(Tf)\\
&+\int_{X_g/Y}{\rm
ch}_g(\overline{\eta})\widetilde{{\rm
Td}}_g((Tf,\omega^Z_X),\overline{Th}\mid_X){\rm
Td}_g^{-1}(\overline{N})
\end{align*}
which holds in $\widehat{G_0}(Y,\mu_n,\emptyset)$. Now we claim
that for any element $y\in
\widehat{K_0}(Z_{\mu_n},\mu_n,N^\vee_{g,\R}\setminus\{0\})_{\rho}$,
we have
\begin{displaymath}
{h_{\mu_n}}_*(y)-h_*{i_Z}_*(y)={h_{\mu_n}}_*(y\cdot
R_g(N_{Z/{Z_{\mu_n}}})). \end{displaymath} Since all morphisms are
homomorphisms of $R(\mu_n)$-modules, we can only consider the
generators of
$\widehat{K_0}(Z_{\mu_n},\mu_n,N^\vee_{g,\R}\setminus\{0\})$.
Indeed, by applying Theorem~\ref{606} to the closed immersion
$i_Z$, for any equivariant hermitian vector bundle $\overline{E}$
on $Z_{\mu_n}$ we have
\begin{align*}
{h_{\mu_n}}_*(\overline{E})-h_*{i_Z}_*(\overline{E})=&\int_{Z_g/Y}{\rm
ch}_g(E)R_g(N_{Z/{Z_{\mu_n}}}){\rm Td}_g(Th_g)\\
=&{h_{\mu_n}}_*({\rm
ch}_g(E)R_g(N_{Z/{Z_{\mu_n}}})).
\end{align*}
The first equality holds because the exact sequence
\begin{displaymath}
0\rightarrow \overline{Th_g}\rightarrow
\overline{Th}\mid_{Z_g}\rightarrow
\overline{N}_{Z/{Z_g}}\rightarrow 0 \end{displaymath} is
orthogonally split on $Z_g$ so that $\widetilde{{\rm
Td}}_g(\overline{Th_g},\overline{Th}\mid_{Z_g})=0$. The second
equality follows from \cite[Lemma 7.3]{KR1} and the fact that
${\rm ch}_g(E)R_g(N_{Z/{Z_{\mu_n}}})$ is ${\rm dd}^c$-closed. On
the other hand, let $\alpha$ be an element in
$\widetilde{\mathcal{U}}(Z_{\mu_n},N^\vee_{g,\R}\setminus\{0\})$,
we have
\begin{align*}
{h_{\mu_n}}_*(\alpha)-h_*{i_Z}_*(\alpha)=&\int_{Z_g/Y}\alpha{\rm
Td}_g(\overline{Th_g})-\int_{Z_g/Y}\alpha{\rm
Td}_g^{-1}(\overline{N}_{Z/{Z_{\mu_n}}}){\rm
Td}_g(\overline{Th})\\
=&\int_{Z_g/Y}\alpha{\rm
Td}_g^{-1}(\overline{N}_{Z/{Z_{\mu_n}}}){\rm dd}^c\widetilde{{\rm
Td}}_g(\overline{Th_g},\overline{Th}\mid_{Z_g})=0.
\end{align*}
Combing these two computations, we get our claim by linear
extension.

Now using arithmetic concentration theorem, we compute
\begin{align*}
h_*i_*(\overline{\eta})=&h_*{i_Z}_*{i_Z}_*^{-1}i_*(\overline{\eta})\\
=&{h_{\mu_n}}_*({i_Z}_*^{-1}i_*(\overline{\eta})\cdot(1-R_g(N_{Z/{Z_{\mu_n}}})))\\
=&{h_{\mu_n}}_*(\lambda_{-1}^{-1}(\overline{N}^\vee_{Z/{Z_{\mu_n}}})\cdot{i_Z}^*i_*(\overline{\eta})\cdot(1-R_g(N_{Z/{Z_{\mu_n}}})))\\
=&{h_{\mu_n}}_*(\lambda_{-1}^{-1}(\overline{N}^\vee_{Z/{Z_{\mu_n}}})\cdot\{\sum_k(-1)^k(\overline{\xi}_k\mid_{Z_{\mu_n}})\\
&\qquad\qquad\qquad\qquad\qquad\qquad+T_g(\overline{\xi.})\}\cdot(1-R_g(N_{Z/{Z_{\mu_n}}})))\\
=&{h_{\mu_n}}_*(\lambda_{-1}^{-1}(\overline{N}^\vee_{Z/{Z_{\mu_n}}})\cdot\sum_k(-1)^k(\overline{\xi}_k\mid_{Z_{\mu_n}}))\\
&+{h_{\mu_n}}_*({\rm
Td}_g(\overline{N}_{Z/{Z_{\mu_n}}})T_g(\overline{\xi.}))\\
&-{h_{\mu_n}}_*({\rm Td}_g(N_{Z/{Z_{\mu_n}}})\sum_k(-1)^k{\rm
ch}_g(\xi_k)R_g(N_{Z/{Z_{\mu_n}}})).
\end{align*} According to
Corollary~\ref{605}, by setting
$y=\lambda_{-1}^{-1}(\overline{N}^\vee_{Z/{Z_{\mu_n}}})$, we
compute
\begin{align*}
&{h_{\mu_n}}_*(\lambda_{-1}^{-1}(\overline{N}^\vee_{Z/{Z_{\mu_n}}})\cdot\sum_k(-1)^k(\overline{\xi}_k\mid_{Z_{\mu_n}}))\\
=&{f_{\mu_n}^Z}_*(i_{\mu_n}^*(\lambda_{-1}^{-1}(\overline{N}^\vee_{Z/{Z_{\mu_n}}}))\cdot\sum(-1)^k{\rm
Tor}^k_{\mathcal{O}_Z}(i_*\overline{\eta},\overline{\mathcal{O}}_{Z_{\mu_n}}))\\
&-\int_{Z_g/Y}T_g(\overline{\xi.}){\rm Td}(\overline{Th_g}){\rm
Td}_g(\overline{N}_{Z/{Z_g}})\\
&-\int_{X_g/Y}{\rm Td}(Tf_g){\rm
Td}_g^{-1}(F){\rm ch}_g(\eta)R(N_g){\rm
Td}_g(N_{Z/{Z_g}})\\
&-\int_{X_g/Y}{\rm ch}_g(\overline{\eta}){\rm
Td}_g^{-1}(\overline{N}){\rm
Td}_g(\overline{N}_{Z/{Z_g}})\widetilde{{\rm
Td}}((Tf_g,\omega^Z_X),\overline{Th_g}\mid_{X_g})\\
=&{f_{\mu_n}^Z}_*(i_{\mu_n}^*(\lambda_{-1}^{-1}(\overline{N}^\vee_{Z/{Z_{\mu_n}}}))\cdot\sum(-1)^k{\rm
Tor}^k_{\mathcal{O}_Z}(i_*\overline{\eta},\overline{\mathcal{O}}_{Z_{\mu_n}}))\\
&-\int_{Z_g/Y}T_g(\overline{\xi.}){\rm
Td}_g(\overline{Th})-\int_{X_g/Y}{\rm Td}_g(Tf){\rm
ch}_g(\eta)R(N_g)\\
&-\int_{X_g/Y}{\rm ch}_g(\overline{\eta}){\rm
Td}_g^{-1}(\overline{N}){\rm
Td}_g(\overline{N}_{Z/{Z_g}})\widetilde{{\rm
Td}}((Tf_g,\omega^Z_X),\overline{Th_g}\mid_{X_g}).
\end{align*}
Here we have used various relations of character forms or
characteristic classes arising from the following double complex
\begin{displaymath}
\xymatrix{ & 0 \ar[d] & 0 \ar[d] & 0 \ar[d] & \\
0 \ar[r] & (Tf_g,\omega^Z_X) \ar[r] \ar[d] & \overline{Th_g}
\ar[d]
\ar[r] & \overline{N}_g \ar[r] \ar[d] & 0 \\
0 \ar[r] & (Tf,\omega^Z_X) \ar[r] \ar[d] & \overline{Th} \ar[d]
\ar[r] & \overline{N} \ar[r] \ar[d] & 0 \\
0 \ar[r] & (N_{X/{X_g}},\omega^Z_X) \ar[r] \ar[d] &
\overline{N}_{Z/{Z_g}} \ar[r] \ar[d] & \overline{F} \ar[d] \ar[r]
& 0 \\
& 0 & 0  & 0 &} \end{displaymath} whose columns are all
orthogonally split. Also, for this double complex, one may use
Example~\ref{s204} (iv) to compute that
\begin{align}
\widetilde{{\rm
Td}}_g((Tf,\omega^Z_X),\overline{Th}\mid_X)=&\widetilde{{\rm
Td}}_g(\overline{\mathcal{F}},\omega^Z_X){\rm
Td}(\overline{N}_g){\rm Td}(Tf_g,\omega^Z_X)\notag\\
&+\widetilde{{\rm
Td}}((Tf_g,\omega^Z_X),\overline{Th_g}\mid_{X_g}){\rm
Td}_g(\overline{N}_{Z/{Z_g}}). \label{eq10}
\end{align}
We deduce from (\ref{eq10}) that
\begin{align}
&\int_{X_g/Y}{\rm ch}_g(\overline{\eta})\widetilde{{\rm
Td}}_g((Tf,\omega^Z_X),\overline{Th}\mid_X){\rm
Td}_g^{-1}(\overline{N})\notag\\
=&\int_{X_g/Y}{\rm
ch}_g(\overline{\eta})\widetilde{{\rm
Td}}((Tf_g,\omega^Z_X),\overline{Th_g}\mid_{X_g}){\rm
Td}_g^{-1}(\overline{N}){\rm
Td}_g(\overline{N}_{Z/{Z_g}})\notag\\
&+\int_{X_g/Y}{\rm
ch}_g(\overline{\eta})\widetilde{{\rm
Td}}_g(\overline{\mathcal{F}}){\rm Td}_g^{-1}(\overline{F}){\rm
Td}(Tf_g,\omega^Z_X).\label{eq11}
\end{align}

Moreover, we have
\begin{align}
{h_{\mu_n}}_*({\rm
Td}_g(\overline{N}_{Z/{Z_{\mu_n}}})T_g(\overline{\xi.}))=&\int_{Z_g/Y}T_g(\overline{\xi.}){\rm
Td}(\overline{Th_g}){\rm
Td}_g(\overline{N}_{Z/{Z_g}})\notag\\
=&\int_{Z_g/Y}T_g(\overline{\xi.}){\rm
Td}_g(\overline{Th})\label{eq12}
\end{align} and
\begin{align}
&{h_{\mu_n}}_*({\rm Td}_g(N_{Z/{Z_{\mu_n}}})\sum_k(-1)^k{\rm
ch}_g(\xi_k)R_g(N_{Z/{Z_{\mu_n}}}))\notag\\
=&\int_{Z_g/Y}{\rm Td}_g(N_{Z/{Z_{\mu_n}}})\delta_{X_g}{\rm
ch}_g(\eta){\rm Td}_g^{-1}(N)R_g(N_{Z/{Z_g}}){\rm
Td}(Th_g)\notag\\
=&\int_{X_g/Y}{\rm Td}_g(N_{X/{X_g}}){\rm Td}_g(F){\rm
ch}_g(\eta){\rm Td}_g^{-1}(N)\notag\\
&\qquad\cdot[R_g(N_{X/{X_g}})+R_g(N)-R(N_g)]{\rm
Td}(Tf_g){\rm Td}(N_g)\notag\\
=&\int_{X_g/Y}{\rm Td}_g(Tf){\rm
ch}_g(\eta)[R_g(N_{X/{X_g}})+R_g(N)-R(N_g)].\label{eq13}
\end{align}

Gathering (\ref{eq11}), (\ref{eq12}) and (\ref{eq13}) we finally
get
\begin{align*}
f^Z_*(\overline{\eta})=&{f_{\mu_n}^Z}_*(i_{\mu_n}^*(\lambda_{-1}^{-1}(\overline{N}^\vee_{Z/{Z_{\mu_n}}}))\cdot\sum_k(-1)^k{\rm
Tor}_{\mathcal{O}_Z}^k(i_*\overline{\eta},\overline{\mathcal{O}}_{Z_{\mu_n}}))\\
&+\int_{X_g/Y}{\rm Td}(Tf_g,\omega^Z_X){\rm
ch}_g(\overline{\eta})\widetilde{{\rm
Td}}_g(\overline{\mathcal{F}},\omega^Z_X){\rm
Td}_g^{-1}(\overline{F})\\
&-\int_{X_g/Y}{\rm Td}_g(Tf){\rm
ch}_g(\eta)R_g(N_{X/{X_g}})
\end{align*}
which completes the
proof of Theorem~\ref{601} in the case where the K\"{a}hler metric
on $X(\C)$ is induced by the K\"{a}hler metric on $Z(\C)$.

In general, in analogy with the notation $\widetilde{{\rm
Td}}(Tf_g,\omega^X,\omega^Z_X)$, we write $\widetilde{{\rm
Td}}_g(N_{X/{X_g}},\omega^X,\omega^Z_X)$ for the secondary
characteristic class of the exact sequence
\begin{displaymath}
\xymatrix{0 \ar[r] & N_{X/{X_g}} \ar[r]^-{{\rm Id}} & N_{X/{X_g}}
\ar[r] & 0 \ar[r] & 0 }\end{displaymath} where the middle term
carries the metric induced by $\omega^Z_X$ and the sub term
carries the metric induced by $\omega^X$. Similarly, we have the
notation $\widetilde{{\rm Td}}_g(Tf,\omega^X,\omega^Z_X)$. Then by
applying the argument in the proof of (\ref{eq10}) to the double
complex
\begin{displaymath}
\xymatrix{ & 0 \ar[d] & 0 \ar[d] & 0 \ar[d] & \\
0 \ar[r] & (Tf_g,\omega^X) \ar[r] \ar[d] & (Tf_g,\omega^Z_X)
\ar[d]
\ar[r] & 0 \ar[r] \ar[d] & 0 \\
0 \ar[r] & (Tf,\omega^X) \ar[r] \ar[d] & (Tf,\omega^Z_X) \ar[d]
\ar[r] & 0 \ar[r] \ar[d] & 0 \\
0 \ar[r] & (N_{X/{X_g}},\omega^X) \ar[r] \ar[d] &
(N_{X/{X_g}},\omega^Z_X) \ar[r] \ar[d] & 0 \ar[d] \ar[r]
& 0 \\
& 0 & 0  & 0 &} \end{displaymath} We get
\begin{align*}
\widetilde{{\rm
Td}}_g(Tf,\omega^X,\omega^Z_X)=&\widetilde{{\rm
Td}}(Tf_g,\omega^X,\omega^Z_X){\rm
Td}_g(N_{X/{X_g}},\omega^Z_X)\\
&+\widetilde{{\rm
Td}}_g(N_{X/{X_g}},\omega^X,\omega^Z_X){\rm Td}(Tf_g,\omega^X).
\end{align*}
Moreover, by \cite[Proposition 2.8]{T2}, we obtain that
\begin{displaymath}
\widetilde{{\rm
Td}}_g(\overline{\mathcal{F}},\omega^X)=\widetilde{{\rm
Td}}_g(\overline{\mathcal{F}},\omega^Z_X)+\widetilde{{\rm
Td}}_g(N_{X/{X_g}},\omega^X,\omega^Z_X){\rm Td}_g(\overline{F}).
\end{displaymath}

With these two comparison formulae, we can compute
\begin{align*}
&\int_{X_g/Y}{\rm Td}(Tf_g,\omega^X){\rm
ch}_g(\overline{\eta})\widetilde{{\rm
Td}}_g(\overline{\mathcal{F}},\omega^X){\rm
Td}_g^{-1}(\overline{F})\\
&\qquad\qquad\qquad-\int_{X_g/Y}{\rm
Td}(Tf_g,\omega^Z_X){\rm ch}_g(\overline{\eta})\widetilde{{\rm
Td}}_g(\overline{\mathcal{F}},\omega^Z_X){\rm
Td}_g^{-1}(\overline{F})\\
=&\int_{X_g/Y}{\rm Td}(Tf_g,\omega^X){\rm
ch}_g(\overline{\eta})\widetilde{{\rm
Td}}_g(\overline{\mathcal{F}},\omega^X){\rm
Td}_g^{-1}(\overline{F})\\
&-\int_{X_g/Y}{\rm Td}(Tf_g,\omega^X){\rm
ch}_g(\overline{\eta})\widetilde{{\rm
Td}}_g(\overline{\mathcal{F}},\omega^Z_X){\rm
Td}_g^{-1}(\overline{F})\\
&+\int_{X_g/Y}{\rm Td}(Tf_g,\omega^X){\rm
ch}_g(\overline{\eta})\widetilde{{\rm
Td}}_g(\overline{\mathcal{F}},\omega^Z_X){\rm
Td}_g^{-1}(\overline{F})\\
&-\int_{X_g/Y}{\rm Td}(Tf_g,\omega^Z_X){\rm
ch}_g(\overline{\eta})\widetilde{{\rm
Td}}_g(\overline{\mathcal{F}},\omega^Z_X){\rm
Td}_g^{-1}(\overline{F})\\
=&\int_{X_g/Y}\widetilde{{\rm Td}}(Tf_g,\omega^X,\omega^Z_X){\rm
ch}_g(\overline{\eta})\\
&\quad\cdot[{\rm Td}_g(N_{X/{X_g}},\omega^Z_X){\rm
Td}_g(\overline{F})-{\rm Td}_g(\overline{N}_{Z/{Z_g}})]{\rm
Td}_g^{-1}(\overline{F})\\
&+\int_{X_g/Y}{\rm Td}(Tf_g,\omega^X){\rm
ch}_g(\overline{\eta})\widetilde{{\rm
Td}}_g(N_{X/{X_g}},\omega^X,\omega^Z_X)\\
=&\int_{X_g/Y}{\rm ch}_g(\overline{\eta})\widetilde{{\rm
Td}}_g(Tf,\omega^X,\omega^Z_X)\\
&-\int_{X_g/Y}\widetilde{{\rm
Td}}(Tf_g,\omega^X,\omega^Z_X){\rm ch}_g(\overline{\eta}){\rm
Td}_g(\overline{N}_{Z/{Z_g}}){\rm Td}_g^{-1}(\overline{F}).
\end{align*}

At last, using \cite[Lemma 7.3]{KR1}, we get the equality
\begin{displaymath}
f_*(\overline{\eta})-f^Z_*(\overline{\eta})=\int_{X_g/Y}{\rm
ch}_g(\overline{\eta})\widetilde{{\rm
Td}}_g(Tf,\omega^X,\omega^Z_X). \end{displaymath} Together with
the fact that the other two terms have nothing to do with the
choice of the metric $\omega^X$, we finally obtain that
\begin{align*}
f_*(\overline{\eta})=&{f_{\mu_n}^Z}_*(i_{\mu_n}^*(\lambda_{-1}^{-1}(\overline{N}^\vee_{Z/{Z_{\mu_n}}}))\cdot\sum_k(-1)^k{\rm
Tor}_{\mathcal{O}_Z}^k(i_*\overline{\eta},\overline{\mathcal{O}}_{Z_{\mu_n}}))\\
&+\int_{X_g/Y}{\rm Td}(Tf_g,\omega^X){\rm
ch}_g(\overline{\eta})\widetilde{{\rm
Td}}_g(\overline{\mathcal{F}},\omega^X){\rm
Td}_g^{-1}(\overline{F})\\
&-\int_{X_g/Y}{\rm Td}_g(Tf){\rm
ch}_g(\eta)R_g(N_{X/{X_g}})\\
&+\int_{X_g/Y}\widetilde{{\rm Td}}(Tf_g,\omega^X,\omega^Z_X){\rm
ch}_g(\overline{\eta}){\rm Td}_g(\overline{N}_{Z/{Z_g}}){\rm
Td}_g^{-1}(\overline{F})
\end{align*} which ends the proof of
Theorem~\ref{601}.
\end{proof}

\begin{rem}\label{609}
Let $Y$ be an affine arithmetic scheme ${\rm Spec}(D)$, and choose
$\omega^X$ to be the induced K\"{a}hler form $\omega^Z_X$. Then
the formula in Theorem~\ref{601} is the content of
\cite[Conjecture 5.1]{MR}.
\end{rem}

\Addresses
\end{document}